\pgfplotsset{compat=1.15}
 \newcommand{\col}[1]{#1}           
\title[wave systems with antisymmetric potential]{On wave systems with antisymmetric potential in dimension \texorpdfstring{$\lowercase{d} \geq 4$}{d >= 4} and well-posedness for (half-)wave maps}
\newcommand{\N}{{\mathbb N}}
\renewcommand{\S}{{\mathbb S}}
\newtheorem{theorem}{Theorem}
\newtheorem{lemma}[theorem]{Lemma}
\newtheorem{corollary}[theorem]{Corollary}
\newtheorem{proposition}[theorem]{Proposition}
\theoremstyle{definition}
\newtheorem{definition}[theorem]{Definition}
\theoremstyle{remark}
\newcommand\curl{{\rm curl\,}}
\newcommand\lip{{\rm Lip\,}}
\newcommand\supp{{\rm supp\,}}
\newcommand{\R}{\mathbb{R}}
\newcommand{\Z}{\mathbb{Z}}
\newcommand{\brac}[1]{\left (#1 \right )}
\newcommand{\norm}[1]{\left \|#1 \right \|}
\newcommand{\scpr}[2]{\left \langle #1 , #2\right \rangle}
\newcommand{\abs}[1]{\left\lvert #1 \right \rvert}
\renewcommand{\vec}[1]{{\bf #1}}
\renewcommand{\i}{{\rm \bf i}}
\newcommand{\barint}{
\rule[.036in]{.12in}{.009in}\kern-.16in \displaystyle\int }
\newcommand{\barcal}{\text{$ \rule[.036in]{.11in}{.007in}\kern-.128in\int $}}
\def\mvint_#1{\mathchoice
          {\mathop{\vrule width 6pt height 3 pt depth -2.5pt
                  \kern -8pt \intop}\nolimits_{\kern -3pt #1}}%
          {\mathop{\vrule width 5pt height 3 pt depth -2.6pt
                  \kern -6pt \intop}\nolimits_{#1}}%
          {\mathop{\vrule width 5pt height 3 pt depth -2.6pt
                  \kern -6pt \intop}\nolimits_{#1}}%
          {\mathop{\vrule width 5pt height 3 pt depth -2.6pt
                  \kern -6pt \intop}\nolimits_{#1}}}
\numberwithin{theorem}{section} \numberwithin{equation}{section}
\newcommand{\lap}{\Delta }
\newcommand{\aleq}{\lesssim}
\newcommand{\aeq}{\approx}
\newcommand{\Rz}{\mathscr{R}}
\newcommand{\laps}[1]{(-\Delta)^{\frac{#1}{2}}}
\newcommand{\Ds}[1]{|\nabla|^{#1}}
\newcommand{\Dso}{|\nabla|}
\newcommand{\lapms}[1]{\Ds{-#1}}
\def\avint{\,\ThisStyle{\ensurestackMath{%
			\stackinset{c}{.2\LMpt}{c}{.5\LMpt}{\SavedStyle-}{\SavedStyle\phantom{\int}}}%
		\setbox0=\hbox{$\SavedStyle\int\,$}\kern-\wd0}\int}
\renewcommand{\div}{\operatorname{div}}
\let\latexchi\chi
\renewcommand\chi{\@ifnextchar_\sub@chi\latexchi}
\newcommand{\sub@chi}[2]{
  \@ifnextchar^{\subsup@chi{#2}}{\latexchi^{}_{#2}}%
}
\newcommand{\subsup@chi}[3]{
  \latexchi_{#1}^{#3}%
}
\newcommand{\eps}{\varepsilon}
\author{Silvino Reyes Farina}
\email[Silvino Reyes Farina]{sir25@pitt.edu}
\author{Armin Schikorra}
\email[Armin Schikorra]{armin@pitt.edu}
\address{Department of Mathematics,
University of Pittsburgh,
301 Thackeray Hall,
Pittsburgh, PA 15260, USA}
\begin{document}
\begin{abstract}
We prove a priori estimates for wave systems of the type
\[
 \partial_{tt} \vec{u} - \lap \vec{u} = \Omega \cdot d\vec{u} + F(\vec{u}) \quad \text{in $\R^d \times \R$}
\]
where $d \geq 4$ and $\Omega$ is a suitable antisymmetric potential. We show that the assumptions on $\Omega$ are applicable to wave- and half-wave maps, the latter by means of the Krieger-Sire reduction. We thus obtain well-posedness of those equations for small initial data in $\dot{H}^{\frac{d}{2}}(\R^d)$.
\end{abstract}

\subjclass{35L05, 35B40}.
\keywords{wave map equation, fractional wave maps, halfwave maps, well-posedness}
\maketitle 
\tableofcontents

\section{Introduction}
In \cite{Riv2007} Rivi\`ere investigated the system 
\begin{equation}\label{eq:riviereeq}
 \lap \vec{u} = \Omega \cdot \nabla \vec{u} \quad \text{in $\R^2$}
\end{equation}
for maps $\vec{u}: \R^2 \to \R^N$, and showed that if $\Omega$ is an antisymmetric $L^2$-potential, namely if
\[
 \Omega_{ij} = -\Omega_{ji} \in L^2(\R^2,\R^2), \quad i,j \in \{1,\ldots,N\}
\]
then an adaptation of Uhlenbeck's Coulomb gauge \cite{Uhlenbeck1982} leads to compensation phenomena, an effect closely related to the moving frame technique by H\`elein \cite{Hel91,S10}. In particular, any $W^{1,2}$-solution $u$ to the above equation must be Lipschitz continuous. This has applications, among many other things, to the regularity theory of geometric equations such as the harmonic map equation. The methods have also been extended to half-harmonic maps in \cite{DaLioRiv11}, and several other elliptic settings, e.g. \cite{LR08,RS08,DL13,SharpTopping13,Seps15,Schikorra18,MS18,GWX23}.

Motivated by this elliptic theory we investigate the wave-version of \eqref{eq:riviereeq}. Namely we consider maps $\vec{u}: \R^d \times (-T,T) \to \R^N$ which solve
\begin{equation}\label{eq:generalwave}
 \begin{cases} 
 \partial_{tt} \vec{u} - \lap \vec{u} = \Omega \cdot d\vec{u} + F(\vec{u}) \quad &\text{in $\R^d \times (-T,T)$}\\
 \vec{u}(0) = u_0 \quad &\text{in $\R^d$} \\
 \partial_t\vec{u}(0) = v_0 \quad &\text{in $\R^d$}
 \end{cases}
\end{equation}

Here $\Omega = \brac{\Omega_{ij}^\alpha}_{\alpha \in \{0,\ldots,d\},\ i,j \in \{1,\ldots,N\} } \in so(N) \otimes \R^d$ is antisymmetric \[\Omega_{ij} =-\Omega_{ji} \in \R^d, \quad i,j \in \{1,\ldots,N\}\] and we denote
\[
\brac{\Omega \cdot d\vec{u}}^i = \sum_{\alpha=0}^d \sum_{j=1}^N \Omega_{ij}^\alpha \partial_\alpha u^j \quad i \in \{1,\ldots,N\}.
\]
We adapt the convention that the $0$-th derivative is the time derivative, i.e. $\partial_0 f = \partial_t f$.

We focus on the case where $\vec{u}$ maps into the sphere $\S^2 \subset \R^3$. Our main result is the following a priori estimate
\begin{theorem}\label{th:aprioriest}
There exists $s_0 > 1$ such that the following holds for any $s \in (1,s_0)$. There exists $\eps > 0$ with the following properties:

Whenever $\Omega$ is admissible in the sense of \Cref{def:admissibleomega} below and $F$ is admissible in the sense of \Cref{def:admissibleF}, and $u: \R^d \times [-T,T] \to {\S^2}$ is a smooth solution of \eqref{eq:generalwave} with initial data bounds
\[
 \|\vec{u_0} \|_{H^{\frac{d}{2}}(\R^d)} + \|\vec{\dot{u}_0} \|_{H^{\frac{d-2}{2}}(\R^d)}< \eps
\]
and
\begin{equation}\label{eq:ap:smallness}
 \sup_{t \in (-T,T)} \|\Ds{\frac{d-2}{2}} d\vec{u}(t)\|_{L^2(\R^d)}  < \eps,
\end{equation}
then if
\[
 \|\Ds{\frac{d-2}{2}} d\vec{u}\|_{C^0_t L^2(\R^d\times (-T,T))} + \|\Ds{s-1}d \vec{u}\|_{L^2_t L^{(\frac{2d}{2s-1},2)}_x(\R^d \times (-T,T))} < \infty
\]
we actually have
\begin{equation}\label{eq:apriori:est}
\begin{split}
 &\|\Ds{\frac{d-2}{2}} d\vec{u}\|_{L^\infty_t L^2(\R^d\times (-T,T))} + \|\Ds{s-1}d \vec{u}\|_{L^2_t L^{(\frac{2d}{2s-1},2)}_x(\R^d \times (-T,T))}\\
 \aleq&\|\Ds{\frac{d}{2}} \vec{u_0}\|_{L^2(\R^d)} + \|\Ds{\frac{d-2}{2}} \vec{\dot{u}_0}\|_{L^2(\R^d)}.
 \end{split}
\end{equation}
\end{theorem}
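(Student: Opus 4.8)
The plan is to run a bootstrap/fixed-point argument in the spaces appearing on the left-hand side of \eqref{eq:apriori:est}, using Strichartz estimates for the wave equation in dimension $d\geq 4$ to absorb the curvature and nonlinear terms. Write the solution via Duhamel's formula as $\vec u(t) = \cos(t\Dso)\vec u_0 + \frac{\sin(t\Dso)}{\Dso}\vec{\dot u}_0 + \int_0^t \frac{\sin((t-\tau)\Dso)}{\Dso}\brac{\Omega\cdot d\vec u + F(\vec u)}(\tau)\,d\tau$, and apply $\Ds{\frac{d-2}{2}}d$ (respectively $\Ds{s-1}d$) to it. The homogeneous part is controlled by the energy-type norm of the data $\|\Ds{\frac d2}\vec u_0\|_{L^2} + \|\Ds{\frac{d-2}{2}}\vec{\dot u}_0\|_{L^2}$ directly via the conservation/Strichartz estimate; the entire content is therefore to estimate the Duhamel term $\int_0^t \frac{\sin((t-\tau)\Dso)}{\Dso}\brac{\Omega\cdot d\vec u + F(\vec u)}$ in the norm $X \defeq L^\infty_t L^2 \cap L^2_t L^{(\frac{2d}{2s-1},2)}_x$ (at the level of $\Ds{\frac{d-2}{2}}d$ and $\Ds{s-1}d$ derivatives) by a small multiple of $\|d\vec u\|_X$.

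The key steps are: (i) State the Strichartz estimate: for $d\geq 4$ the pair $(q,r) = (2, \frac{2d}{2s-1})$ with the Lorentz refinement $(\frac{2d}{2s-1},2)$ is wave-admissible at the relevant regularity when $s>1$ is close enough to $1$ (this is where $s_0>1$ enters — $s-1$ must be small so that the gap $\frac{d-2}{2} - (s-1)$ of derivatives matches the Strichartz gain, and one needs $d\geq 4$ to have room for the $L^2_t$ endpoint to be non-endpoint or to use the Keel–Tao endpoint). (ii) Estimate the curvature term: using admissibility of $\Omega$ (\Cref{def:admissibleomega}), bound $\|\Omega\cdot d\vec u\|$ in the dual Strichartz space by $\|\Omega\|_{(\text{the norm in the def})}\,\|d\vec u\|_X$, which by \eqref{eq:ap:smallness} and the admissibility hypothesis carries a factor $\eps$ — this uses the antisymmetric structure only insofar as it is baked into what "admissible" permits (in the wave setting, unlike Rivière's elliptic case, the antisymmetry is not needed for the a priori estimate itself, only later for constructing good $\Omega$ from half-wave maps). (iii) Estimate $F(\vec u)$: using admissibility of $F$ (\Cref{def:admissibleF}), which should give a bound of the form $\|F(\vec u)\| \aleq \|d\vec u\|_X^2$ or $\|d\vec u\|_X^{1+\theta}$, again small by the smallness hypotheses; here one uses that $\vec u$ maps into $\S^2$ so $F$ is the expected quadratic-in-$d\vec u$ geometric term. (iv) Combine: the map $\Phi(w) = (\text{homogeneous part}) + (\text{Duhamel of } \Omega\cdot dw + F)$ is a contraction on a small ball of $X$, and its fixed point (which is $\vec u$, by uniqueness of smooth solutions) satisfies \eqref{eq:apriori:est} with the homogeneous-part bound on the right-hand side. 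Since the hypothesis already assumes the left-hand norm is finite, one does not even need the full contraction — a single a priori inequality $\|d\vec u\|_X \aleq \|\text{data}\| + \eps\|d\vec u\|_X + \|d\vec u\|_X^2$ suffices, and smallness lets one absorb to conclude.

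The main obstacle I expect is matching the function spaces precisely: getting a Strichartz estimate at the exact Lorentz-space endpoint $L^2_t L^{(\frac{2d}{2s-1},2)}_x$ with the correct number of derivatives, and then verifying that the bilinear/nonlinear estimates in steps (ii)–(iii) actually close \emph{in these spaces} — i.e., that the product $\Omega\cdot d\vec u$ (with $\Omega$ in whatever space \Cref{def:admissibleomega} specifies) lands in the dual Strichartz space with a gain, and likewise that $F(\vec u)$, a quadratic expression in $d\vec u$ plus lower order, maps $X\times X$ (or $X$) into that dual space. This product estimate is essentially a fractional Leibniz rule / paraproduct decomposition in Lorentz spaces, and the delicate point is the high-high interaction where both factors are at high frequency — this is exactly where the extra room from $d\geq 4$ and the freedom to take $s$ close to $1$ is spent. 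I would handle it by a Littlewood–Paley decomposition, treating high-low, low-high, and high-high separately, using Bernstein and Hölder in Lorentz spaces, and checking that the resulting sum of geometric series converges precisely under the constraint $1<s<s_0$.
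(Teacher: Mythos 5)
There is a genuine gap at your step (ii), and it is precisely the point the whole paper is built to circumvent. For an admissible $\Omega$ (think of the wave-map case $\Omega \sim \vec{u}\, d\vec{u}$), the term $\Omega\cdot d\vec{u}$ is a \emph{quadratic} expression of type $|d\vec{u}|^2$, and it does \emph{not} satisfy a bound of the form $\|\Ds{\frac{d-2}{2}}(\Omega\cdot d\vec{u})\|_{L^1_tL^2_x}\aleq \eps\,\|d\vec{u}\|_X$ in the Strichartz-type space $X=L^\infty_t \dot H^{\frac{d-2}{2}}\cap L^2_tL^{(\frac{2d}{2s-1},2)}_x$: distributing $\frac{d-2}{2}$ derivatives over $d\vec{u}\cdot d\vec{u}$ forces one factor to carry essentially the full energy regularity and the other to be measured in (roughly) $L^2_tL^\infty_x$ or a comparable high-integrability norm, which is not controlled by $X$ when $s$ is close to $1$; using the $L^2_tL^{(\frac{2d}{2s-1},2)}_x$ norm on both factors loses about $\frac{d-2}{2}$ derivatives. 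The smallness \eqref{eq:ap:smallness} does not rescue this, because the product estimate itself fails, not merely its constant. This is the classical obstruction to closing a direct Duhamel/Strichartz iteration for wave maps at the critical Sobolev regularity $\dot H^{\frac d2}$ (it is why iteration arguments require Besov data $\dot B^{\frac d2,1}_2$), and it is also why your parenthetical claim that ``the antisymmetry is not needed for the a priori estimate itself'' is misleading: the antisymmetry, together with the curl condition in \Cref{def:admissibleomega}, is exactly what the estimate needs.

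The paper's route is structurally different and not optional. One first applies Uhlenbeck's Coulomb gauge (\Cref{la:uhlenbeckgauge}) to produce $P\in SO(N)$ with $\sum_a\partial_a\Omega_P^a=0$, and the curl/cancellation hypotheses then upgrade $\Omega_P$ to a \emph{quadratic} quantity, e.g. $\|\Ds{\frac{d-2}{2}}\Omega_P\|_{L^2}\aleq\|\Ds{\frac{d-2}{2}}d\vec{u}\|_{L^2}^2$ and $\|\Omega_P\|_{L^\infty}\aleq\|d\vec{u}\|_{L^{(2d,2)}}^2$ (\Cref{la:fullOmegaPLdest}, \Cref{la:omegaPLinfty}). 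Then one performs the space--time Hodge-type splitting \eqref{eq:WPuv}, $W=P\,d\vec{u}-d\vec{v}$, where $\vec{v}$ solves \eqref{eq:ap:veq} with the now effectively \emph{cubic} right-hand side $\Omega_P\cdot P\,d\vec{u}+PF(\vec{u})$ (this cubic term, and the admissible $F$ via \eqref{eq:Fcond:3}, are the only places where your perturbative Strichartz scheme actually works, cf. \Cref{la:estimatesv}), while $W$ solves a wave equation whose source is in \emph{divergence form} with quadratic coefficients (\Cref{la:ap:Wwave}), so the derivative in the divergence is absorbed by the Duhamel integral rather than by a product estimate. Only after these two reductions are Strichartz estimates applied, and the conclusion follows by the continuity/absorption argument you describe in (iv). So your step (iv) is fine in outline, but steps (ii)--(iii) as stated would not close, and no choice of $s_0$ or Littlewood--Paley bookkeeping repairs the quadratic high-high interaction without the gauge and the divergence structure.
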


A priori estimates such as the one above, combined with a uniqueness results, can often be used to obtain an existence theory.

And indeed, as a consequence of \Cref{th:aprioriest} we recover in particular the well-posedness result from Shatah-Struwe \cite{SS02} for wave maps. Moreover, following the idea of Krieger--Sire \cite{KS18} to reduce the half-wave map equation to an equation structurally similar to the wave map equation, we also obtain the following well-posedness result for halfwave maps.

\begin{corollary}[Global Existence half-wave equation]\label{co:wellposedness}
Let $d \geq 4$. There exists $\eps > 0$, $s>1$ such that the following holds.

Assume that $\vec{u_0}: \R^d \to \S^2$ such that $\supp \brac{\vec{u_0}-\vec{q}}$ is compact for some constant vector $\vec{q} \in \S^2$, and we also assume
\begin{equation}\label{eq:smallnessassumptionwellposedness}
 \|\Ds{\frac{d}{2}} \vec{u_0}\|_{L^2(\R^d)} < \eps.
\end{equation}
Then there exists a unique solution $\vec{u}: \R^d \times (-\infty,\infty) \to \S^2$ such that
\[
 \begin{cases} \partial_t \vec{u} = \vec{u} \wedge \laps{1} \vec{u} \quad &\text{in $\R^d \times (-\infty,\infty)$}\\
  \vec{u}(0) = \vec{u_0}\quad &\text{in $\R^d$}
 \end{cases}
\]
with the estimate for all suitably small $s > 1$
\begin{equation}\label{eq:Wellposedest}
\begin{split}
\|\Ds{s-1}d \vec{u}\|_{L^2_t L^{(\frac{2d}{2s-1},2)}_x(\R^d \times (-\infty,\infty))} \aleq  \|\Ds{\frac{d}{2}} \vec{u_0}\|_{L^2(\R^d)}
\end{split}
\end{equation}
\end{corollary}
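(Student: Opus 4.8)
The plan is to realise the half-wave map flow as a particular solution of the wave system \eqref{eq:generalwave}, apply \Cref{th:aprioriest}, and then upgrade the resulting a priori bound to global well-posedness by a continuity argument combined with the local existence and uniqueness theory.

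\textbf{Step 1: the Krieger--Sire reduction.} Let $\vec u:\R^d\times I\to\S^2\subset\R^3$ solve $\partial_t\vec u=\vec u\wedge\laps1\vec u$, with $\wedge$ the cross product. Differentiating in $t$ and re-using the equation in the last term,
\[
 \partial_{tt}\vec u=\partial_t\vec u\wedge\laps1\vec u+\vec u\wedge\laps1\brac{\vec u\wedge\laps1\vec u}.
\]
Commuting $\laps1$ past multiplication by $\vec u$ we write $\laps1(\vec u\wedge\laps1\vec u)=-\vec u\wedge\lap\vec u+C(\vec u,\laps1\vec u)$ with $C=[\laps1,M_{\vec u\wedge}]\laps1\vec u$ a bilinear commutator which is of order zero in $\laps1\vec u$ and carries one derivative on the remaining $\vec u$-factor (Coifman--Meyer / commutator estimates). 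Since $|\vec u|\equiv1$ gives $\vec u\cdot\partial_\alpha\vec u=0$ and $\vec u\cdot\lap\vec u=-|\nabla\vec u|^2$, the identity $\vec a\wedge(\vec b\wedge\vec c)=\vec b(\vec a\cdot\vec c)-\vec c(\vec a\cdot\vec b)$ yields $\vec u\wedge(\vec u\wedge\lap\vec u)=-\lap\vec u-\vec u\,|\nabla\vec u|^2$, so that
\[
 \partial_{tt}\vec u-\lap\vec u=\underbrace{\partial_t\vec u\wedge\laps1\vec u+\vec u\wedge C(\vec u,\laps1\vec u)}_{=:\ \Omega\cdot d\vec u}+\underbrace{\vec u\,|\nabla\vec u|^2}_{=:\ F(\vec u)}.
\]
One reads $\partial_t\vec u\wedge\laps1\vec u$ as the $\alpha=0$ part of $\Omega\cdot d\vec u$ by setting $\Omega^{0}_{ij}=\sum_k\varepsilon_{ijk}(\laps1 u)^k$, which is antisymmetric in $i,j$, and extracts the $\partial_\alpha u^j$, $\alpha\in\{1,\dots,d\}$, from the derivative-on-$\vec u$ factor of $\vec u\wedge C$; antisymmetry holds up to pieces that are cubic or have their derivatives favourably placed, which are absorbed into $F$. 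It then remains to verify that this $\Omega$ and $F$ meet \Cref{def:admissibleomega} and \Cref{def:admissibleF}; schematically $\Omega\sim\vec u\cdot(\text{zeroth-order operator of }\laps1\vec u\text{ or }\nabla\vec u)$ with $\vec u$ bounded and near the constant $\vec q$, so that the norms of $\Omega$ required there are controlled by $\|\Ds{\frac{d-2}{2}}d\vec u\|_{L^\infty_tL^2}$ and $\|\Ds{s-1}d\vec u\|_{L^2_tL^{(\frac{2d}{2s-1},2)}_x}$, and similarly for $F$.

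\textbf{Step 2: local theory, initial data, and continuity argument.} By the standard local well-posedness theory for the half-wave map equation near $\vec q$ at regularity $\dot H^{\frac d2}$, for smooth $\vec u_0:\R^d\to\S^2$ with $\supp(\vec u_0-\vec q)$ compact and $\|\Ds{\frac d2}\vec u_0\|_{L^2}$ small there is a smooth solution on a maximal interval $(-T^*,T^*)$, with the continuation criterion that it extends past $T^*$ whenever the left-hand side of \eqref{eq:apriori:est} over $(-T^*,T^*)$ is finite. The compact support controls the low frequencies of $\vec u_0-\vec q$, and since $\partial_t\vec u(0)=\vec u_0\wedge\laps1\vec u_0$ a fractional Leibniz estimate with $|\vec u_0|\equiv1$ gives $\|\Ds{\frac{d-2}{2}}\partial_t\vec u(0)\|_{L^2}\aleq\|\Ds{\frac d2}\vec u_0\|_{L^2}+\|\Ds{\frac d2}\vec u_0\|_{L^2}^2$, so that the hypotheses of \Cref{th:aprioriest} hold for $\eps$ small and its right-hand side is $\aleq\|\Ds{\frac d2}\vec u_0\|_{L^2}$. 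Set $B(T)=\sup_{|t|<T}\|\Ds{\frac{d-2}{2}}d\vec u(t)\|_{L^2}$; it is continuous on $[0,T^*)$ with $B(0^+)\aleq\|\Ds{\frac d2}\vec u_0\|_{L^2}$, and for $T<T^*$ the left-hand side of \eqref{eq:apriori:est} over $(-T,T)$ is finite. Let $\mathcal{I}=\{T\in[0,T^*):B(T)<\eps_0\}$ with $\eps_0$ the threshold in \Cref{th:aprioriest}. On $\mathcal{I}$ the hypotheses of \Cref{th:aprioriest} are satisfied, so \eqref{eq:apriori:est} gives $B(T)\le C\|\Ds{\frac d2}\vec u_0\|_{L^2}<\tfrac12\eps_0$ once $\eps$ in the corollary is small; hence $\mathcal{I}$ is nonempty, open and closed in $[0,T^*)$, so $\mathcal{I}=[0,T^*)$ and the left-hand side of \eqref{eq:apriori:est} is $\aleq\eps$ uniformly in $T<T^*$. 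By the continuation criterion $T^*=\infty$, and letting $T\to\infty$ yields \eqref{eq:Wellposedest}. For general $\vec u_0$ as in the corollary, mollify $\vec{u_0^\eta}\to\vec u_0$, project onto $\S^2$ (preserving compact support and smallness for $\eta$ small), run the above to get $\eta$-uniform bounds, and pass to the limit using the Strichartz bounds to handle the nonlinearity $\vec u\wedge\laps1\vec u$; the limit is again a half-wave map satisfying \eqref{eq:Wellposedest}. Uniqueness within this class follows from the same circle of estimates applied to the difference $\vec w=\vec u-\vec v$ of two solutions: $\vec w$ solves a wave-type equation linear in $\vec w$ with an admissible antisymmetric-potential structure and vanishing data, which forces $\vec w\equiv0$.

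\textbf{Main obstacle.} The crux is Step 1 --- showing that the commutator $[\laps1,M_{\vec u\wedge}]$ produced by the reduction decomposes into a genuinely antisymmetric part obeying the quantitative hypotheses of \Cref{def:admissibleomega} plus a remainder of admissible-$F$ type, rather than an uncontrolled quadratic derivative term. Once this structural fact and the local theory are in hand, the continuity argument, the data bookkeeping, and the mollification limit are routine.
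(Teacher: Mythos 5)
Your overall strategy (Krieger--Sire reduction, then \Cref{th:aprioriest}, then a continuity/globalization step and an approximation for rough data) is the same as the paper's, but the two steps you treat as routine or defer are exactly where the paper's work lies, and as written they are genuine gaps. First, the structural step you flag as ``the main obstacle'' is not a technicality: the paper does \emph{not} try to extract an antisymmetric potential from the nonlocal commutator $[\laps{1},\vec{u}\wedge]$. Instead it splits $\partial_{tt}\vec{u}-\lap\vec{u}$ into its component along $\vec{u}$ and its tangential part, takes for $\Omega$ the \emph{same} pointwise potential as for wave maps, $\Omega^{\alpha}_{ij}=\pm(u^i\partial_\alpha u^j-u^j\partial_\alpha u^i)$ (for which the curl and cancellation conditions \eqref{eq:omeacommdu} of \Cref{def:admissibleomega} hold essentially for free since $\vec{u}\cdot\partial_\alpha\vec{u}=0$), and pushes all the half-wave--specific terms, including $\vec{u}\wedge\Dso(\vec{u}\wedge\Dso\vec{u})-\vec{u}\wedge(\vec{u}\wedge(-\lap)\vec{u})$ and $\Dso\vec{u}\,\langle\vec{u},\Dso\vec{u}\rangle$, into $F$; verifying \eqref{eq:Fcond:2}, \eqref{eq:Fcond:2v2} and especially \eqref{eq:Fcond:3} for these terms occupies all of Section 5.2 and needs the bilinear and trilinear commutator machinery ($H_{\Dso}$, $H_{\Dso,3}$, \Cref{la:reallyworstterm}, \Cref{la:reallyworsttermv2}). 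Your alternative choice $\Omega^0_{ij}=\varepsilon_{ijk}(\laps{1}u)^k$ would still have to satisfy the curl-type and cancellation conditions of \Cref{def:admissibleomega} (e.g.\ $\partial_a\Omega^0+\partial_t\Omega^a$ and $\Omega^a_{ij}\partial_t u^j+\Omega^0_{ij}\partial_a u^j$ perturbative), which is not addressed, and the remainder $[\laps{1},\vec{u}\wedge]\laps{1}\vec{u}$ is not of the form (pointwise matrix)$\cdot d\vec{u}$, so ``extracting $\partial_\alpha u^j$ from it up to favourable pieces'' is precisely the unproven content, not a bookkeeping step.

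Second, your globalization rests on ``the standard local well-posedness theory for the half-wave map equation at regularity $\dot H^{\frac d2}$'' together with a continuation criterion at critical regularity. No such theory is available --- existence at $\dot H^{\frac d2}$ is essentially what the corollary asserts (previous results required the Besov space $B^{\frac d2,1}_2$), so this is circular. The paper avoids it: it proves a high-regularity local existence theorem (\Cref{th:existsmoothhalfwave}, via a fixed point for the second-order system \eqref{eq:solks}, then propagating the constraint $|\vec{u}|=1$ and recovering the first-order half-wave flow by Gronwall arguments), and, crucially, a persistence-of-regularity estimate (\Cref{pr:dvecucontrol}) showing that $\sup_t\|d\vec{u}\|_{H^{K-1}}$ stays bounded by the data as long as $\|d\vec{u}\|_{L^2_tL^{2d}_x}$ is small --- the latter smallness being what \Cref{th:aprioriest} supplies. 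Global existence then follows by concatenating local solutions on time steps of uniform length, using the uniqueness of \cite{ERFS22}, and rough data are handled by smoothing and lower semicontinuity. Your proposal is missing the analogue of \Cref{pr:dvecucontrol} (the mechanism that converts the critical a priori bound into control of the norms governing the lifespan), and also treats uniqueness by a sketched difference estimate rather than the cited result; without these, the continuity argument in Step 2 does not close.
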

Halfwave maps appear in the physics literature as the continuum limit of Calagero-Moser spin systems, see \cite{LenzmannSok20} for details. Their mathematical study has been recently initiated in \cite{KS18,LS2018}, see also \cite{LenzmannPrimer18,GL18,Berntson_2020,SWZ21}.

To the best of our knowledge, \Cref{co:wellposedness} is new in dimension $d=4$. Uniqueness was proven for wave maps in \cite{SS02} and for halfwave maps in \cite{ERFS22}. In dimension $d \geq 5$ \cite{KS18}, and in dimension $d=4$ \cite{KK21}, a corresponding existence result for half-wave maps was obtained, but under stronger assumptions. Namely instead of the $H^{\frac{d}{2}}$-assumption \eqref{eq:smallnessassumptionwellposedness} the assumed one involving the Besov space $B^{\frac{d}{2},1}_2$. \cite{Liu2021} obtained the existence part of \Cref{co:wellposedness} for $d\geq 5$ with the condition \eqref{eq:smallnessassumptionwellposedness} not only in the sphere-case $\S^2$, but also for maps into hyperbolic space. \cite{Silvino24} obtained uniqueness results for the hyperbolic version of halfwave maps.
In \cite{Liu2023} existence in the most interesting case $d=1$ is obtained by parabolic approximation. In \cite{Marsden24} further well-posedness in dimension $d=3$ was obtained for small initial data in the critical Besov space with additional assumptions angular regularity and weighted decay of the derivative.

{\bf Outline.} In \Cref{s:admissibility} we discuss the admissibility conditions in \Cref{th:aprioriest} -- these are, besides the antisymmetry of $\Omega$, natural (albeit technical) growth conditions and a curl-condition on $\Omega$. In \Cref{s:gauge} we follow essentially the spirit of Rivi\`ere's \cite{Riv2007} and use Uhlenbeck's gauge to transform $\Omega$ into a divergence-free vectorfield, that thanks to the $curl$-condition grows quadratically. We then discuss the regularity dependency of the transformed $\Omega$ and the gauge, a major technical unpleasantness that takes inspiration from estimates on the Coulomb frame e.g. in Shatah-Struwe's \cite{SS02} for the wave-map equation into general manifolds. In \Cref{s:apriori} we then obtain the proof of \Cref{th:aprioriest} using a space-time Hodge decomposition, Strichartz estimates (hence the dimension $d \geq 4$), and combining all the previous estimates. In \Cref{s:halfwavewave} we show that wavemaps into spheres and halfwave maps fall into the form of our theorem. For the wavemaps this is a direct application of the observation in Rivi\`ere's \cite{Riv2007}; for the halfwave map equation this is more work but similar in spirit. Lastly, in \Cref{s:wellposed} we prove the well-posedness result for half-wave maps under small $H^{\frac{d}{2}}$-initial data, \Cref{co:wellposedness}.

{\bf Notation:}
We work on maps $\vec{u}: \R^d \to \R^N$ (although most of the time we have in mind $\vec{u}:\R^d \to \S^2 \subset \R^3$. Vectorial maps will be written bold-faced.

For presentational reasons we will divide indices of domain vectors in $\R^n$ in Latin script
\[
 a,b,c \in \{1,\ldots,d\},
\]
and Greek script (if time could be considered)
\[
 \alpha,\beta,\gamma \in \{0,1,\ldots,d\}.
\]
Coefficients of target vectors in $\R^N$ are denoted by
\[
 i,j,k, \ell \in \{1,\ldots,N\}.
\]
The vector of first order derivatives is
\[
 d = (\partial_t,\partial_1,\ldots,\partial_d).
\]

\subsection*{Acknowledgment} 
A.S. is an Alexander-von-Humboldt Fellow. S.R.F. and A.S. received funding from NSF Career DMS-2044898.

\section{Growth and compatibility conditions of \texorpdfstring{$\Omega$}{Omega} and \texorpdfstring{$F$}{F}}\label{s:admissibility}

In this section we discuss the admissibility conditions assumed in \Cref{th:aprioriest}.
They may look very technical, but the growth conditions in \Cref{def:FOmegagrowth} below essentially assume that $\Omega$ behaves growth-wise like $u du$ In \Cref{def:admissibleomega} we additionally assume a curl-type growth structure, namely $\curl (\Omega) \aeq |\nabla u|^2$. Observe this is the case if $\Omega = A(u) \nabla u$.

\begin{definition}[Growth conditions for $\Omega$]\label{def:FOmegagrowth}
Assume that for any $\vec{u}: \R^d \times [-T,T] \to {\S}^2$

we have
\begin{itemize}
\item For any $s \in [0,\frac{d-4}{2}]$, $p \in (1,\frac{d}{s+1})$ and $q \in [1,\infty]$
\begin{equation}\label{eq:Omegacond:growthpq}
\|\Ds{s} d\Omega\|_{L^{(p,q)}(\R^d)} \aleq (1+\|\Ds{\frac{d-2}{2}} d\vec{u}\|_{L^2(\R^d)}^\gamma)\, \|\Ds{s+1} d\vec{u}\|_{L^{(p,q)}(\R^d)};
\end{equation}
For $s \in [0,\frac{d-4}{2}]$, $p \in (1,\frac{d}{s})$ and $q \in [1,\infty]$
\begin{equation}\label{eq:Omegacond:growthpq2}
\|\Ds{s} \Omega\|_{L^{(p,q)}(\R^d)} \aleq (1+\|\Ds{\frac{d-2}{2}} d\vec{u}\|_{L^2(\R^d)}^\gamma)\, \|\Ds{s+1} d\vec{u}\|_{L^{(p,q)}(\R^d)}
\end{equation}

In particular we then have
\begin{equation}\label{eq:Omegacond:growth}
\|\Ds{\frac{d-4}{2}} d\Omega\|_{L^2(\R^d)} \aleq (1+\|\Ds{\frac{d-2}{2}} d\vec{u}\|_{L^2(\R^d)}^\gamma)\, \|\Ds{\frac{d-2}{2}} d\vec{u}\|_{L^2(\R^d)}
\end{equation}
and for $q \in [1,\infty]$
\begin{equation}\label{eq:Omegacond:growth2d}
\|\Omega\|_{L^{(2d,q)}(\R^d)} \aleq (1+\|\Ds{\frac{d-2}{2}} d\vec{u}\|_{L^2(\R^d)}^\gamma)\, \|d \vec{u}\|_{L^{(2d,q)}(\R^d)}
\end{equation}
\end{itemize}
\end{definition}

\begin{definition}[Perturbative Assumptions of $F$]\label{def:admissibleF}
\begin{itemize}
 \item For $t \in [1,\frac{d-2}{2}]$, $\sigma \in [0,1]$ such that $\frac{2d}{1+\sigma+2t} \in (1,\infty)$,
\begin{equation}\label{eq:Fcond:2}
 \|\Ds{t-1}  F(\vec{u})\|_{L^{\frac{2d}{1+\sigma+2t}}(\R^d)} \aleq   \brac{\|\Ds{\frac{d-2}{2}} d\vec{u}\|_{L^2(\R^d)}^{\gamma_1} + \|\Ds{\frac{d-2}{2}} d\vec{u}\|_{L^2(\R^d)}^{\gamma_2}}\|\Ds{t} d\vec{u}\|_{L^{\frac{2d}{1+\sigma+2t}}(\R^d)}
\end{equation}
and
\begin{equation}\label{eq:Fcond:2v2}
 \|F(\vec{u})\|_{L^{(\frac{2d}{3},2)}(\R^d)} \aleq   \brac{1 + \|\Ds{\frac{d-2}{2}} d\vec{u}\|_{L^2(\R^d)}^{\gamma_2}}\|d\vec{u}\|_{L^{(2d,2)}(\R^d)}
\end{equation}

 \item  There exists $\gamma_1, \gamma_2 > 0$ such that for any $\vec{u}: \R^d \to \S^2$, and any $s > 1$ we have the following estimate which essentially means $F$ is perturbative
\begin{equation}\label{eq:Fcond:3}
 \|\Ds{\frac{d-2}{2}} F(\vec{u})\|_{L^{2}(\R^d)} \aleq_s
  \brac{1 + \|\Ds{\frac{d-2}{2}} d\vec{u}\|_{L^2(\R^d)}^{\gamma}}
  \|\Ds{s-1} d\vec{u}\|_{L^{(\frac{2d}{2s-1},2)}(\R^d)}^2.
\end{equation}
\end{itemize}
\end{definition}

With the growth conditions on $\Omega$ and perturbative behavior of $F$ we can estimate repeated time derivatives. Namely we have
\begin{lemma}\label{la:dealingwithddu}
Assume $d \geq 4$ and that $F$ and $\Omega$ satisfy the assumptions of \Cref{def:FOmegagrowth}. Assume that 
$\vec{u}: \R^d \times [-T,T] \to \S^2$ solves
\[
 \partial_{tt} \vec{u} - \lap \vec{u} = \Omega \cdot d \vec{u} + F(\vec{u})
\]
Then for some $\gamma > 0$,
\begin{equation}\label{eq:dduest1}
 \|\Ds{\frac{d-4}{2}} dd\vec{u}\|_{L^2(\R^d)} \aleq (1+\|\Ds{\frac{d-2}{2}} d\vec{u}\|_{L^2(\R^d)}^\gamma) \|\Ds{\frac{d-2}{2}} d\vec{u}\|_{L^2(\R^d)}
\end{equation}
and for any $s\in[1,2]$
\begin{equation}\label{eq:dduest2}
\|\Ds{s-2} dd\vec{u}\|_{L^{(\frac{2d}{2s-1},2)}(\R^d)} \aleq  (1+\|\Ds{\frac{d-2}{2}} d\vec{u}\|_{L^2(\R^d)}^\gamma) \|\Ds{s-1}d\vec{u}\|_{L^{(\frac{2d}{2s-1},2)}(\R^d)}.
 \end{equation}

In particular, if \eqref{eq:ap:smallness} holds then so does
\begin{equation}\label{eq:ap:smallnessdd}
 \sup_{t \in (-T,T)} \|\Ds{\frac{d-4}{2}} dd\vec{u}(t)\|_{L^2(\R^d)}  < \eps,
\end{equation}

Moreover, there exists a uniform $\eps > 0$ such that if we assume additionally that 
\begin{equation}\label{eq:411Ssmallv1}
 \|\Ds{\frac{d-4}{2}}dd\vec{u}\|_{L^2(\R^d)} < \eps
\end{equation}
then for any $t \in [1,\frac{d-2}{2}]$, $\sigma \in [0,1]$ so that $\frac{2d}{1+\sigma+2t} \in (1,\infty)$ we have
\begin{equation}\label{eq:Dstm1estddvu:orig}
\begin{split}
 \|\Ds{t-1} dd\vec{u}\|_{L^{\frac{2d}{1+\sigma+2t}}(\R^d)}
 \aleq\|\Ds{t} d\vec{u}\|_{L^{\frac{2d}{1+\sigma+2t}}(\R^d)}.
\end{split}
 \end{equation}

\end{lemma}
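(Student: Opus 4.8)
The plan is to use the wave equation to trade two derivatives: $\partial_{tt}\vec u = \lap\vec u + \Omega\cdot d\vec u + F(\vec u)$. Write $dd\vec u$ schematically as the collection of second derivatives $\partial_\alpha\partial_\beta u$. The purely spatial second derivatives $\partial_a\partial_b\vec u$ are controlled directly by $\partial_a d\vec u$ (they are components of $d(d\vec u)$ with only spatial derivatives, so $\|\Ds{s}\partial_a\partial_b\vec u\| \aleq \|\Ds{s+1} d\vec u\|$ up to Riesz transforms, which are bounded on the Lorentz spaces $L^{(p,q)}$ for $p\in(1,\infty)$). The mixed derivatives $\partial_t\partial_a\vec u$ are likewise components of $\Ds{}d\vec u$. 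The only genuinely new term is $\partial_{tt}\vec u$, for which we substitute the equation. So the whole estimate reduces to bounding $\lap\vec u$, $\Omega\cdot d\vec u$, and $F(\vec u)$ in the relevant norms, plus handling Riesz transforms.

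For \eqref{eq:dduest1}: apply $\Ds{\frac{d-4}{2}}$ to the equation and take $L^2$ norms. The $\lap\vec u$ piece gives $\|\Ds{\frac{d-4}{2}}\lap\vec u\|_{L^2} \aleq \|\Ds{\frac{d-2}{2}}d\vec u\|_{L^2}$. For $F(\vec u)$, since $\frac{d-4}{2} = \frac{d-2}{2}-1$, the term $\|\Ds{\frac{d-4}{2}}F(\vec u)\|_{L^2}$ — well, one rather wants $\|\Ds{\frac{d-2}{2}-1}F\|$, which is not quite \eqref{eq:Fcond:2}; here I would use \eqref{eq:Fcond:2} with $t = \frac{d-2}{2}$, $\sigma = 1$, giving exponent $\frac{2d}{1+1+(d-2)} = 2$, so $\|\Ds{\frac{d-4}{2}}F(\vec u)\|_{L^2} \aleq (1+\|\Ds{\frac{d-2}{2}}d\vec u\|_{L^2}^{\gamma_1}+\cdots)\|\Ds{\frac{d-2}{2}}d\vec u\|_{L^2}$. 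For $\Omega\cdot d\vec u$, use a Leibniz/paraproduct decomposition: distribute the $\frac{d-4}{2}$ derivatives and use the growth bounds \eqref{eq:Omegacond:growthpq2} on $\Omega$ together with Sobolev embedding into Lorentz spaces and Hölder in Lorentz spaces; each factor of $d\vec u$ or $d\Omega$ is bounded by $\|\Ds{\frac{d-2}{2}}d\vec u\|_{L^2}$ (using \eqref{eq:Omegacond:growth}), yielding the quadratic-plus-higher structure absorbed into $(1+\|\cdot\|^\gamma)\|\Ds{\frac{d-2}{2}}d\vec u\|_{L^2}$.

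For \eqref{eq:dduest2} the argument is parallel: apply $\Ds{s-2}$ to the equation, measure in $L^{(\frac{2d}{2s-1},2)}_x$. The $\lap$ term gives $\|\Ds{s-1}d\vec u\|_{L^{(\frac{2d}{2s-1},2)}}$ directly. The $\Omega\cdot d\vec u$ term is estimated by a paraproduct split using \eqref{eq:Omegacond:growthpq}, \eqref{eq:Omegacond:growthpq2}, putting one factor in the scaling-critical Lorentz/Lebesgue space (controlled by $\|\Ds{\frac{d-2}{2}}d\vec u\|_{L^2}$ via embedding) and the other carrying the $\Ds{s-1}d\vec u$ norm. For $F(\vec u)$ with $s\in[1,2]$, use \eqref{eq:Fcond:2} with $t = s-1$ and a suitable $\sigma$ so that the target exponent matches $\frac{2d}{2s-1}$ — i.e. $1+\sigma+2(s-1) = 2s-1$, forcing $\sigma = 0$, which is in $[0,1]$; that gives exactly $\|\Ds{s-2}F(\vec u)\|_{L^{\frac{2d}{2s-1}}} \aleq (\cdots)\|\Ds{s-1}d\vec u\|_{L^{\frac{2d}{2s-1}}}$, and the Lorentz refinement $L^{(\frac{2d}{2s-1},2)}$ follows by real interpolation / the Lorentz-space Leibniz rules. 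The assertion \eqref{eq:ap:smallnessdd} is then immediate from \eqref{eq:dduest1} applied at each time slice: the prefactor $(1+\eps^\gamma)$ times $\eps$ is still $\lesssim \eps$ (after possibly shrinking $\eps$).

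Finally, \eqref{eq:Dstm1estddvu:orig}: this is the same computation as \eqref{eq:dduest2} but now we are given the extra smallness \eqref{eq:411Ssmallv1}, which lets us absorb the $dd\vec u$-type contributions. Apply $\Ds{t-1}$ to the equation; $\Ds{t-1}\lap\vec u = \Ds{t}d(\text{Riesz of }d\vec u)$ is bounded by $\|\Ds{t}d\vec u\|_{L^{\frac{2d}{1+\sigma+2t}}}$; $\Ds{t-1}F$ is handled by \eqref{eq:Fcond:2} directly with the same $(t,\sigma)$; and $\Ds{t-1}(\Omega\cdot d\vec u)$ is split so that the dangerous high-high or high-derivative-on-$\Omega$ term reproduces a $dd\vec u$ or $d\Omega$ factor times $\|\Ds{\frac{d-2}{2}}d\vec u\|_{L^2}$ or $\|\Ds{\frac{d-4}{2}}dd\vec u\|_{L^2} < \eps$, whence by \eqref{eq:dduest1}/\eqref{eq:dduest2} it is bounded by $\eps\,\|\Ds{t}d\vec u\|_{L^{\frac{2d}{1+\sigma+2t}}}$ and absorbed into the left-hand side. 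After absorption the constant $1$ prefactor survives (no $(1+\|\cdot\|^\gamma)$), as claimed.

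\textbf{Main obstacle.} The bookkeeping in the paraproduct decomposition of $\Ds{s}(\Omega\cdot d\vec u)$ — choosing, for each frequency-interaction regime, which factor absorbs the derivatives and into which Lorentz–Sobolev space each factor embeds, while making sure every exponent stays in the admissible ranges $p\in(1,\frac{d}{s})$ etc. demanded by \eqref{eq:Omegacond:growthpq}–\eqref{eq:Omegacond:growthpq2}, and that in the final (fourth) estimate exactly one derivative-loaded factor is a genuine $dd\vec u$ whose norm is the small quantity \eqref{eq:411Ssmallv1} so it can be moved to the left-hand side. Getting the Lorentz second index right (using that Hölder in Lorentz spaces behaves like $L^{(p_1,q_1)}\cdot L^{(p_2,q_2)}\hookrightarrow L^{(p,q)}$ with $\frac1q\le\frac1{q_1}+\frac1{q_2}$, and that Sobolev embedding gains the sharp second index) is the delicate point; everything else is the routine "substitute the equation and count derivatives" scheme.
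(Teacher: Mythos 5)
Your overall strategy coincides with the paper's: the only genuinely new component of $dd\vec u$ is $\partial_{tt}\vec u$, which you replace via the equation and then estimate $\lap\vec u$, $\Omega\cdot d\vec u$ and $F(\vec u)$ using the fractional Leibniz rule, the growth conditions on $\Omega$, Sobolev embedding, and (for \eqref{eq:Dstm1estddvu:orig}) the smallness \eqref{eq:411Ssmallv1} to tame the prefactors. Your treatment of \eqref{eq:dduest1} and of \eqref{eq:Dstm1estddvu:orig} matches the paper's proof essentially line by line (for the latter, note that no absorption into the left-hand side is actually needed: the smallness, via \eqref{eq:Omegacond:growth} and \eqref{eq:Fcond:2}, simply bounds the multiplicative prefactors by a constant, which also spares you the a priori finiteness of the left-hand side that an absorption argument would require).

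There is, however, a genuine gap in your treatment of the $F$-term in \eqref{eq:dduest2}. You propose to apply \eqref{eq:Fcond:2} with $t=s-1$ and $\sigma=0$; but for $s\in[1,2)$ this gives $t=s-1\in[0,1)$, which lies outside the admissible range $t\in[1,\frac{d-2}{2}]$ in \Cref{def:admissibleF}, so the estimate is simply not available there. Moreover, \eqref{eq:Fcond:2} only yields a Lebesgue bound, and your claim that the Lorentz refinement $L^{(\frac{2d}{2s-1},2)}$ "follows by real interpolation" does not work: \eqref{eq:Fcond:2} is a nonlinear estimate in $\vec u$, not a bound for a fixed linear operator, so real interpolation cannot be invoked to upgrade the second Lorentz index. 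The paper circumvents both problems by first using $s\le 2$ and Sobolev embedding to lower to $\|\Ds{s-2}F(\vec u)\|_{L^{(\frac{2d}{2s-1},2)}}\aleq\|F(\vec u)\|_{L^{(\frac{2d}{3},2)}}$ and then invoking \eqref{eq:Fcond:2v2}, which is stated directly in Lorentz norms and is precisely the admissibility condition introduced for this step; the $\Omega\cdot d\vec u$ term is likewise handled after the same Sobolev lowering by a plain Lorentz--H\"older estimate $\|\Omega\|_{L^{(d,2)}}\|d\vec u\|_{L^{2d}}$ rather than a paraproduct split, with $\|d\vec u\|_{L^{(2d,2)}}\aleq\|\Ds{s-1}d\vec u\|_{L^{(\frac{2d}{2s-1},2)}}$ recovering the claimed right-hand side. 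With that substitution your argument goes through; as written, the step for \eqref{eq:dduest2} does not.
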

\begin{proof}
\underline{As for \eqref{eq:dduest1}} we observe that using fractional Leibniz-rule \Cref{la:basicLeibniz}, \eqref{eq:Fcond:2} for $\sigma = 1$ and $t = \frac{d-2}{2}$, then Sobolev embedding
\[
\begin{split}
 \|\Ds{\frac{d-4}{2}} dd\vec{u}\|_{L^2(\R^d)} \aleq& \|\Ds{\frac{d-4}{2}} \brac{\partial_{tt}-\lap} \vec{u}\|_{L^2(\R^d)} +\|\Ds{\frac{d-2}{2}} d\vec{u}\|_{L^2(\R^d)}\\
 \aleq&\|\Ds{\frac{d-4}{2}} \brac{\Omega \cdot d\vec{u}}\|_{L^2(\R^d)} +\|\Ds{\frac{d-4}{2}} F\brac{\vec{u}}\|_{L^2(\R^d)} + \|\Ds{\frac{d-2}{2}} d\vec{u}\|_{L^2(\R^d)}\\
 \aleq&\max_{s \in [0,\frac{d-4}{2}]} \|\Ds{s} \Omega\|_{L^{\frac{d}{s+1}}(\R^d)} \|\Ds{\frac{d-4-2s}{2}} d\vec{u}\|_{L^{\frac{2d}{d-2s-2}}(\R^d)}\\
 &+
 \brac{\|\Ds{\frac{d-2}{2}} d\vec{u}\|_{L^2(\R^d)}^{\gamma_1} + \|\Ds{\frac{d-2}{2}} d\vec{u}\|_{L^2(\R^d)}^{\gamma_2}}\|\Ds{\frac{d-2}{2}} d\vec{u}\|_{L^{2}(\R^d)}\\
 &+ \|\Ds{\frac{d-2}{2}} d\vec{u}\|_{L^2(\R^d)}\\
 \aleq& \|\Ds{\frac{d-2}{2}} \Omega\|_{L^{2}(\R^d)} \|\Ds{\frac{d-2}{2}} d\vec{u}\|_{L^{2}(\R^d)}\\
 &+
 \brac{\|\Ds{\frac{d-2}{2}} d\vec{u}\|_{L^2(\R^d)}^{\gamma_1} + \|\Ds{\frac{d-2}{2}} d\vec{u}\|_{L^2(\R^d)}^{\gamma_2}}\|\Ds{\frac{d-2}{2}} d\vec{u}\|_{L^{2}(\R^d)}\\
 &+ \|\Ds{\frac{d-2}{2}} d\vec{u}\|_{L^2(\R^d)}\\
 \overset{\eqref{eq:Omegacond:growth}}{\aleq}&\brac{1+ \|\Ds{\frac{d-2}{2}} d\vec{u}\|_{L^2(\R^d)}^{\tilde{\gamma}}}\|\Ds{\frac{d-2}{2}} d\vec{u}\|_{L^{2}(\R^d)}\\
  \end{split}
\]
In the last line we also used Young's inequality $a^{\gamma_1} \leq 1+a^{\gamma_2}$ for $\gamma_1 < \gamma_2$, choosing $\tilde{\gamma} >0$ large enough.

\underline{As for \eqref{eq:dduest2}}, with a similar argument 
\[
\begin{split}
& \|\Ds{s-2} dd\vec{u}\|_{L^{(\frac{2d}{2s-1},2)}(\R^d)}\\
\aleq&  \|\Ds{s-2} \brac{\Omega \cdot d\vec{u}}\|_{L^{(\frac{2d}{2s-1},2)}(\R^d)} + \|\Ds{s-2} F(\vec{u})\|_{L^{(\frac{2d}{2s-1},2)}(\R^d)} +\|\Ds{s-1} d\vec{u}\|_{L^{(\frac{2d}{2s-1},2)}(\R^d)}\\
\overset{s \leq 2}{\aleq} & \|\Omega \cdot d\vec{u}\|_{L^{(\frac{2d}{3},2)}(\R^d)} + \| F(\vec{u})\|_{L^{\frac{2d}{3}}(\R^d)} +\|\Ds{s-1} d\vec{u}\|_{L^{(\frac{2d}{2s-1},2)}(\R^d)}\\
  \aleq & \|\Omega \|_{L^{(d,2)}(\R^d)}\, \|d\vec{u}\|_{L^{2d}(\R^d)} + \| F(\vec{u})\|_{L^{(\frac{2d}{3},2)}(\R^d)} +\|\Ds{s-1} d\vec{u}\|_{L^{(\frac{2d}{2s-1},2)}(\R^d)}\\
  \aleq & \|\Ds{\frac{d-2}{2}}\Omega \|_{L^{2}(\R^d)}\, \|d\vec{u}\|_{L^{2d}(\R^d)} + \| F(\vec{u})\|_{L^{(\frac{2d}{3},2)}(\R^d)} +\|\Ds{s-1} d\vec{u}\|_{L^{(\frac{2d}{2s-1},2)}(\R^d)}\\
  \overset{\eqref{eq:Omegacond:growth}}{\aleq}& \brac{1+\|\Ds{\frac{d-2}{2}} d\vec{u}\|_{L^2(\R^d)}^{1+\gamma} } \|d\vec{u}\|_{L^{2d}(\R^d)} + \| F(\vec{u})\|_{L^{(\frac{2d}{3},2)}(\R^d)} +\|\Ds{s-1} d\vec{u}\|_{L^{(\frac{2d}{2s-1},2)}(\R^d)}\\
 \end{split}
 \]
 We conclude with \eqref{eq:Fcond:2v2}.

 \underline{As for \eqref{eq:Dstm1estddvu:orig}}
  \[
\begin{split}
 &\|\Ds{t-1} dd\vec{u}\|_{L^{\frac{2d}{1+\sigma+2t}}(\R^d)}\\
 \aleq& \|\Ds{t} d\vec{u}\|_{L^{\frac{2d}{1+\sigma+2t}}(\R^d)} + \|\Ds{t-1}  (\partial_{tt} -\lap)\vec{u}\|_{L^{\frac{2d}{1+\sigma+2t}}(\R^d)}\\
  \aleq& \|\Ds{t} d\vec{u}\|_{L^{\frac{2d}{1+\sigma+2t}}(\R^d)}  + \|\Ds{t-1}  \brac{\Omega \cdot d\vec{u}}\|_{L^{\frac{2d}{1+\sigma+2t}}(\R^d)} +\|\Ds{t-1}  F(\vec{u})\|_{L^{\frac{2d}{1+\sigma+2t}}(\R^d)} \\
  \overset{\eqref{eq:Fcond:2}, \eqref{eq:411Ssmallv1}}{\aleq}&\|\Ds{t} d\vec{u}\|_{L^{\frac{2d}{1+\sigma+2t}}(\R^d)}  + \max_{\tilde{s} \in [0,t-1]}\|\Ds{s} \Omega\|_{L^{\frac{d}{1+s}}(\R^d)}\, \|\Ds{t-1-s} d\vec{u}\|_{L^{\frac{2d}{1+\sigma+2(t-1-s)}}(\R^d)}\\
  \aleq&\brac{1+\|\Ds{\frac{d-2}{2}} \Omega\|_{L^{2}(\R^d)}}\|\Ds{t} d\vec{u}\|_{L^{\frac{2d}{1+\sigma+2t}}(\R^d)}  \\
  \overset{\eqref{eq:Omegacond:growth}, \eqref{eq:411Ssmallv1}}{\aleq}&\|\Ds{t} d\vec{u}\|_{L^{\frac{2d}{1+\sigma+2t}}(\R^d)}.
\end{split}
 \]

\end{proof}

The next conditions essentially state that $\curl \Omega =0$ -- up to perturbative quantities, and that $\Omega_{ij}$ is essentially collinear with $\nabla u$, in the sense that $\Omega_{ij}^\alpha \partial_\beta u =\Omega_{ij}^\beta \partial_\alpha u$ up to perturbative quantities -- a condition we call cancellation.

\begin{definition}[Admissible $\Omega$]\label{def:admissibleomega}
For a map $\vec{u}: \R^d \times \R \to \S^{2}$ we assume that $\Omega = \Omega(\vec{u})$ satisfies the following conditions:

\begin{itemize}
\item Antisymmetry: $\Omega_{ij}^\alpha = -\Omega_{ji}^\alpha$.
\item $\Omega$ satisfies the growth conditions of \Cref{def:FOmegagrowth}.
\item $\Omega$ has a gradient-structure up to perturbative quantities, in the sense we have an estimate on its curl:
\[
 \partial_a \Omega^b - \partial_a \Omega^b = G_{1;a,b}(\vec{u})
\]
and
\[
 \partial_a \Omega^0 + \partial_t \Omega^a = G_{1;a,0}(\vec{u})
\]
where $G_1(u)$ satisfies the following estimates, which all would follow if we knew that $G_1(\vec{u})\aeq g(\vec{u})|d\vec{u}|^2$:

For some $\gamma > 0$:

If $s \in [1,\frac{d-2}{2}]$, $p \in (1,\frac{2d}{s+1})$, $q \in [1,\infty]$
\begin{equation}\label{eq:G1cond:1Lpq}
 \|\Ds{s-1} G_{1;\alpha,\beta}(\vec{u})\|_{L^{(p,q)}(\R^d)} \aleq (1+\|\Ds{\frac{d-2}{2}}d\vec{u}\|_{L^2(\R^d)}^\gamma)\ \|\Ds{\frac{d-2}{2}}d\vec{u}\|_{L^2(\R^d)}\, \|\Ds{s}d\vec{u}\|_{L^{(p,q)}(\R^d)}.
\end{equation}

We also have
\begin{equation}\label{eq:G1cond:1}
 \|\Ds{\frac{d-6}{2}}dG_{1;\alpha,\beta}(\vec{u})\|_{L^{2}(\R^d)} \aleq (1+\|\Ds{\frac{d-2}{2}}d\vec{u}\|_{L^2(\R^d)}^\gamma)\ \brac{\|\Ds{\frac{d-2}{2}}d\vec{u}\|_{L^2(\R^d)}^2+\|\Ds{\frac{d-4}{2}}dd\vec{u}\|_{L^2(\R^d)}^2}.
\end{equation}
and
\begin{equation}\label{eq:G1cond:3}
 \|dG_{1;\alpha,\beta}(\vec{u})\|_{L^{(\frac{2d}{5-\sigma},q)}(\R^d)} \aleq  \|dd \vec{u}\|_{L^{\frac{d}{2}}(\R^d)}\, \|d\vec{u}\|_{L^{(2d,q)}(\R^d)}.
\end{equation}
also for any $q \in [1,\infty]$, and whenever $(1-\sigma) \in [\frac{2}{d-2},1]$,
\begin{equation}\label{eq:G1cond:3sigma}
 \|dG_{1;\alpha,\beta}(\vec{u})\|_{L^{(\frac{2d}{5-\sigma},q)}(\R^d)} \aleq
 \|\Ds{-1}dd\vec{u}\|_{L^{2d}(\R^d)}^\sigma\ \|\Ds{\frac{d-4}{2}} dd\vec{u}\|_{L^2(\R^d)}^{1-\sigma} \, \|d\vec{u}\|_{L^{(2d,q)}(\R^d)}
%
\end{equation}

\begin{equation}\label{eq:G1cond:2}
 \|G_{1;\alpha,\beta}(\vec{u})\|_{L^{(d,1)}(\R^d)} \aleq \|d\vec{u}\|_{L^{(2d,2)}(\R^d)}^2.
\end{equation}

Whenever $(1-\sigma) \in [\frac{2(s-1)}{d-2},1]$ and $s \in [1, \frac{d-2}{2}]$
\begin{equation}\label{eq:G1cond:GN1spatial}
 \|\Ds{s-1}G_{1;\alpha,\beta}(\vec{u})\|_{L^{\frac{2d}{1+2s-\sigma}}(\R^d)} \aleq \|d\vec{u}\|_{L^{2d}(\R^d)}^{2-(1-\sigma)} \ \|\Ds{\frac{d-2}{2}} d\vec{u}\|_{L^2(\R^d)}^{1-\sigma}.
\end{equation}

Also, for $s \geq 2$ and $(1-\sigma) \in [\frac{2(s-1)}{d-2},1]$
\begin{equation}\label{eq:G1cond:GN1}
 \|\Ds{s-2}dG_{1;\alpha,\beta}(\vec{u})\|_{L^{\frac{2d}{1+2s-\sigma}}(\R^d)} \aleq \|\Ds{-1}d\vec{u}\|_{L^{2d}(\R^d)}^{1+\sigma} \ \|\Ds{\frac{d-4}{2}} dd\vec{u}\|_{L^2(\R^d)}^{1-\sigma}.
\end{equation}

\item We have the cancellation condition 
\begin{equation}
 \label{eq:omeacommdu}
 \begin{cases}
\sum_{j=1}^N \brac{\Omega^a_{ij} \partial_b u^{j} - \Omega^b_{ij} \partial_a u^j} = G_{2;a,b}(\vec{u}) \quad &\text{$a,b \in \{1,\ldots,N\}$}\\
\sum_{j=1}^N \brac{\Omega^a_{ij} \partial_t u^{j} + \Omega^0_{ij} \partial_a u^j} = G_{2;a,0}(\vec{u}) \quad &\text{$a \in \{1,\ldots,N\}$}\\
 \end{cases}
\end{equation}
and for some $\gamma_1 > 0$, with perturbative estimates
\begin{equation}\label{eq:G2cond:1}
 \|\Ds{\frac{d-4}{2}}dG_{2;\alpha,\beta}(\vec{u})\|_{L^{2}(\R^d)} \aleq \|\Ds{\frac{d-2}{2}} d \vec{u}\|_{L^2(\R^d)}^{\gamma_1}\, \|d \vec{u}\|_{L^{(2d,2)}(\R^d)}^2.
\end{equation}
\end{itemize}

\end{definition}

\section{Coulomb gauge estimates}\label{s:gauge}

The following is due to Uhlenbeck \cite{Uhlenbeck1982} and Rivi\`ere \cite[Lemma A.3]{Riv2007}, see also the presentation e.g. in  \cite[Theorem 4.2.]{DLS14}; the choice of $P$ is related to the Coulomb gauge and the moving frame used for harmonic maps, \cite{S10}.

For a given $\Omega: \R^d \to so(N) \otimes \R^d$ we set
\begin{equation}\label{eq:OmegaPalpha}
 \Omega_P^\alpha := \begin{cases}
                     \partial_t P P^T+ P\Omega^0\, P^T \quad& \alpha = 0\\
                     -\partial_\alpha P\, P^T + P \Omega^\alpha P^T  \quad & \alpha \in \{1,\ldots,N\}
                    \end{cases}
\end{equation}

\begin{lemma}\label{la:uhlenbeckgauge}
There exists $\eps > 0$ and $C > 0$ depending only on the dimensions $d$, $N$ such that the following holds.

Assume $\Omega_{ij}^a \in L^d(\R^d)$, $i,j \in \{1,\ldots,N\}$ and $a \in \{1,\ldots,d\}$ with the condition
\[
 \|\Omega\|_{L^d(\R^d)} < \eps.
\]
Then there exists $P \in W^{1,d}(\R^d,SO(N))$ satisfying 
\begin{equation}\label{eq:uhlenbeckPest}
 \|\nabla P\|_{L^d(\R^d)} \leq C\, \|\Omega\|_{L^d(\R^d)}
\end{equation}
and
\[
 \sum_{a=1}^d \partial_a \Omega_P^a = 0,
\]
\end{lemma}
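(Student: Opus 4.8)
The plan is to prove this by the classical continuity (``deformation'') method of Uhlenbeck, in the form used by Rivi\`ere in his Lemma A.3, with the only change being that $\R^2$-Sobolev and Calder\'on--Zygmund estimates are replaced by their $\R^d$ counterparts; the algebraic structure is unchanged. For $t\in[0,1]$ put $\Omega_t:=t\,\Omega$, and let $\mathcal{U}\subset[0,1]$ be the set of $t$ for which there exists a measurable $P_t:\R^d\to SO(N)$ with $\nabla P_t\in L^d(\R^d)$, satisfying $\operatorname{div}\bigl(\Omega_{P_t}^{(t)}\bigr)=0$ (here $\Omega^{(t)}_{P}$ is the transform \eqref{eq:OmegaPalpha} applied to $\Omega_t$), together with a fixed \emph{threshold} bound $\|\nabla P_t\|_{L^d(\R^d)}\le\delta_0$ for a small absolute constant $\delta_0=\delta_0(d,N)$. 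The goal is to show $\mathcal{U}$ is nonempty, open and closed in $[0,1]$, so that $1\in\mathcal{U}$; the a priori estimate below then upgrades the threshold bound to the asserted \eqref{eq:uhlenbeckPest}. Nonemptiness is immediate since $P_0=\mathrm{Id}$ works for $t=0$.

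The heart of the matter is an a priori estimate for any admissible $P_t$. Writing $\omega_a:=\partial_a P_t\,P_t^T\in so(N)$ we have $\partial_a P_t=\omega_a P_t$ and two structural identities: the equation $\operatorname{div}\Omega^{(t)}_{P_t}=0$ is exactly $\sum_a\partial_a\omega_a=\sum_a\partial_a\bigl(P_t\,\Omega_t^a\,P_t^T\bigr)$, while a pointwise computation using $P_tP_t^T=\mathrm{Id}$ gives $\partial_b\omega_a-\partial_a\omega_b=[\omega_b,\omega_a]$; thus $\omega$ has a divergence equal to $\operatorname{div}$ of an $L^d$ quantity and a curl that is pointwise quadratic in $\omega$. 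The $L^d$ Hodge/div--curl estimate on $\R^d$, $\|\omega\|_{L^d}\aleq\|\operatorname{div}\omega\|_{\dot W^{-1,d}}+\|\operatorname{curl}\omega\|_{\dot W^{-1,d}}$, combined with $|P_t|\le\sqrt N$ and the endpoint embedding $L^{d/2}(\R^d)\hookrightarrow\dot W^{-1,d}(\R^d)$ (so that $\|\operatorname{curl}\omega\|_{\dot W^{-1,d}}\aleq\||\omega|^{2}\|_{L^{d/2}}=\|\omega\|_{L^d}^{2}$) yields
\[
 \|\omega\|_{L^d(\R^d)}\le C\,\|\Omega_t\|_{L^d(\R^d)}+C\,\|\omega\|_{L^d(\R^d)}^{2}.
\]
Since $\|\omega\|_{L^d}\aeq\|\nabla P_t\|_{L^d}\le\delta_0$, choosing $\delta_0$ small enough absorbs the quadratic term and gives $\|\nabla P_t\|_{L^d}\le 2C\,t\,\|\Omega\|_{L^d}$; if the $\eps$ of the statement is small enough this is \emph{strictly} below $\delta_0$, which is what keeps the deformation inside the class.

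For openness, at $t_0\in\mathcal{U}$ write $P_t=P_{t_0}e^{\xi}$ with $\xi\in\dot W^{1,d}(\R^d,so(N))$ small; pointwise $e^{\xi(x)}\in SO(N)$ and $|\nabla e^{\xi}|\aleq|\nabla\xi|$ because $e^{s\xi}$ is orthogonal, so $P_t$ stays in the class. The map $(\xi,t)\mapsto\operatorname{div}\Omega^{(t)}_{P_{t_0}e^{\xi}}\in\dot W^{-1,d}(\R^d)$ is $C^1$, vanishes at $(0,t_0)$, and its $\xi$-differential there is $\xi\mapsto-P_{t_0}\,\Delta\xi\,P_{t_0}^T+(\text{terms linear in }\xi\text{ with coefficients }\nabla P_{t_0}\text{ and }t_0\Omega)$; as $\|\nabla P_{t_0}\|_{L^d}$ and $\|t_0\Omega\|_{L^d}$ are as small as we like (shrink $\eps$), this is a small perturbation of the isomorphism $\Delta:\dot W^{1,d}\to\dot W^{-1,d}$, hence an isomorphism, and the implicit function theorem produces $\xi(t)$ for $t$ near $t_0$ with $\|\nabla\xi(t)\|_{L^d}$ small; then $\|\nabla P_t\|_{L^d}$ is close to $\|\nabla P_{t_0}\|_{L^d}<\delta_0$, so $P_t$ is admissible and the a priori estimate even returns $\|\nabla P_t\|_{L^d}\le 2Ct\|\Omega\|_{L^d}$. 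For closedness, if $t_n\to t_\infty$ with admissible $P_n$, then $|P_n|\le\sqrt N$ and $\|\nabla P_n\|_{L^d}\le\delta_0$; along a subsequence $P_n\rightharpoonup P_\infty$ in $W^{1,d}_{\mathrm{loc}}$, $P_n\to P_\infty$ a.e.\ and in $L^q_{\mathrm{loc}}$ for all $q<\infty$, with $P_\infty(x)\in SO(N)$ a.e. Then $P_n\Omega_{t_n}P_n^T\to P_\infty\Omega_{t_\infty}P_\infty^T$ in $L^d$ (using $\Omega_{t_n}\to\Omega_{t_\infty}$ in $L^d$ and boundedness of $P_n$) and $\partial_aP_nP_n^T\rightharpoonup\partial_aP_\infty P_\infty^T$ (weak $\times$ strong), so the linear constraint $\operatorname{div}\Omega^{(t_n)}_{P_n}=0$, tested against $C_c^\infty$, passes to the limit; weak lower semicontinuity together with the a priori estimate gives $\|\nabla P_\infty\|_{L^d}\le 2Ct_\infty\|\Omega\|_{L^d}\le\delta_0$. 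Hence $t_\infty\in\mathcal{U}$, so $\mathcal{U}=[0,1]$ and $P:=P_1$ is the desired gauge.

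The main obstacle is not any single inequality but the functional-analytic setup on the non-compact domain $\R^d$: $W^{1,d}$ does not embed into $L^\infty$ and has no trace at infinity, so one must track $P$ only through $\nabla P\in L^d$ and $\Omega_P\in L^d$ (never $P$ itself in a Banach space), handle $e^{\xi}$ for unbounded $\xi$ via the orthogonality of $e^{s\xi}$, and work with the \emph{homogeneous} spaces $\dot W^{\pm1,d}$, the endpoint embedding $L^{d/2}\hookrightarrow\dot W^{-1,d}$, and the $L^d$ Hodge decomposition (Calder\'on--Zygmund) in place of their two-dimensional analogues. Keeping $\|\nabla P_t\|_{L^d}$ strictly below the fixed threshold along the entire deformation $t\in[0,1]$ — which is precisely what the absorption step delivers once $\eps$ is small — is the conceptual crux; everything else is a verbatim transcription of the arguments of \cite{Uhlenbeck1982} and \cite{Riv2007} (see also \cite{DLS14}), which one may simply cite.
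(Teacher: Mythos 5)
The paper offers no proof of this lemma at all — it is quoted directly from Uhlenbeck \cite{Uhlenbeck1982} and Rivi\`ere \cite[Lemma A.3]{Riv2007} (see also \cite[Theorem 4.2]{DLS14}) — and your continuity-method argument, with $\div\,\omega=\div(P\Omega P^T)$, the quadratic curl identity $\partial_b\omega_a-\partial_a\omega_b=[\omega_b,\omega_a]$, the embedding $L^{d/2}(\R^d)\hookrightarrow \dot W^{-1,d}(\R^d)$ and the absorption of $\|\omega\|_{L^d}^2$ below a fixed threshold, is precisely the standard argument behind those citations, correctly adapted to $\R^d$. The one step that is thinner than in the cited proofs is openness: the implicit function theorem cannot be applied naively in the critical space $\dot W^{1,d}$, since products such as $\nabla P_{t_0}\,\eta$ or $\Omega\,\eta$ with an unbounded increment $\eta\in\dot W^{1,d}$ need not lie in $L^d$, so the map you differentiate is not obviously $C^1$ there; the standard treatments avoid this by running the deformation in a more regular class (e.g.\ smooth or compactly supported $\Omega$, where the linearization is an isomorphism in better spaces) and recovering the general case by approximation and weak compactness using the uniform a priori bound — a detail worth adding if you do not simply cite the references, as the paper itself does.
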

It is important to observe that this gives (for now) no information about $\Omega^0$ whatsoever.

In the following we will always assume that $P$ is from \Cref{la:uhlenbeckgauge} and that $\Omega$ is admissible in the sense of \Cref{def:admissibleomega}; we will derive several useful estimates.

\begin{lemma}\label{la:spatialOmegaPLdest}
Assume that $\Omega$ is admissible in the sense of \Cref{def:admissibleomega} and \eqref{eq:dduest1} holds.

There exists a uniform $\eps > 0$ such that if
\begin{equation}\label{eq:411Ssmall}
 \|\Ds{\frac{d-4}{2}}dd\vec{u}\|_{L^2(\R^d)} < \eps
\end{equation}
then we have 
\begin{equation}\label{eq:411SS3}
 \|\Ds{\frac{d-2}{2}}\Omega_P^a\|_{L^2(\R^d)} \aleq \|\Ds{\frac{d-2}{2}}d\vec{u}\|_{L^2(\R^d)}^2
\end{equation}
and
\begin{equation}\label{eq:411SS2}
 \|\Ds{\frac{d}{2}}P\|_{L^2(\R^d)}  \aleq \|\Ds{\frac{d-2}{2}}d\vec{u}\|_{L^2(\R^d)}
\end{equation}
\end{lemma}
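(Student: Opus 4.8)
The idea is to estimate the two spatial components $\Omega_P^a$ of the gauged potential via the defining formula \eqref{eq:OmegaPalpha}, $\Omega_P^a = -\partial_a P\,P^T + P\Omega^a P^T$, together with the div-curl structure provided by \Cref{la:uhlenbeckgauge} and the admissibility conditions. First I would record the elliptic system satisfied by $P$: from $\sum_a \partial_a \Omega_P^a = 0$ one gets, after differentiating the formula for $\Omega_P^a$, that $\lap P$ can be written schematically as $\nabla P \cdot \nabla P + P\,\partial_a\Omega^a + \nabla P\cdot\Omega$, where $\partial_a\Omega^a$ is controlled by the curl condition (namely $\partial_a\Omega^b-\partial_b\Omega^a = G_{1;a,b}$ together with div-freeness of $\Omega_P$) so that effectively $\lap P \aeq \nabla P\,\nabla P + \nabla P\,\Omega + P\,G_1 + (\text{lower order})$. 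The plan is then to run a fixed-point/bootstrap estimate in the space $\dot H^{d/2}$ for $P$: apply $\Ds{d/2-2}$ to this equation, estimate the right-hand side in $L^2$ using the fractional Leibniz rule \Cref{la:basicLeibniz}, Sobolev embeddings, and the quadratic bounds \eqref{eq:G1cond:1} and \eqref{eq:Omegacond:growth} for $G_1$ and $\Omega$; the smallness \eqref{eq:411Ssmall} (combined with \eqref{eq:dduest1} to absorb $\|\Ds{(d-4)/2}dd\vecu\|_{L^2}$ into $\|\Ds{(d-2)/2}d\vecu\|_{L^2}$) lets one absorb the $\|\nabla P\|$-terms on the left, yielding \eqref{eq:411SS2}.

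More concretely, for \eqref{eq:411SS2} I would use that $\Dso P$ solves an equation of the form $\lap(\nabla P) = \nabla(\text{quadratic})$, invert the Laplacian, and bound
\[
\|\Ds{\tfrac{d}{2}}P\|_{L^2} \aleq \|\Ds{\tfrac{d-2}{2}}\Omega\|_{L^2} + \|\Ds{\tfrac{d-4}{2}}(\nabla P\,\nabla P)\|_{L^2} + \|\Ds{\tfrac{d-4}{2}}(\nabla P\,\Omega)\|_{L^2} + \|\Ds{\tfrac{d-6}{2}}(P\,G_1)\|_{L^2}.
\]
Each quadratic term is handled by splitting derivatives via Leibniz, using $\dot H^{d/2}\hookrightarrow$ bounded and the various Sobolev embeddings $\dot H^{(d-2)/2}\hookrightarrow L^{2d}$ etc.; the outcome is $\|\Ds{d/2}P\|_{L^2}\aleq (\|\Ds{(d-2)/2}d\vecu\|_{L^2}+\eps)\|\Ds{d/2}P\|_{L^2} + \|\Ds{(d-2)/2}d\vecu\|_{L^2}$, so for $\eps$ small the first term absorbs and \eqref{eq:411SS2} follows. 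For \eqref{eq:411SS3}, once $P$ is controlled, I would go back to $\Omega_P^a=-\partial_aP\,P^T+P\Omega^aP^T$: the term $P\Omega^aP^T$ is directly estimated by \eqref{eq:Omegacond:growth} and the bound on $P$. The subtle point is that \emph{neither} $\partial_aP\,P^T$ \emph{nor} $P\Omega^aP^T$ is individually quadratic in $d\vecu$ — only their sum is, because of the curl/cancellation structure. So one must not estimate $\Omega_P^a$ termwise but rather exploit that $\Omega_P^a$ is div-free and that its curl is $\partial_a\Omega_P^b-\partial_b\Omega_P^a$, which after a computation equals $G_{1;a,b}$ plus commutator terms $[\nabla P,\nabla P]$, $[\nabla P,\Omega]$, $[\Omega,\Omega]$ modulo $P$-conjugation — all genuinely quadratic. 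Then a div-curl (Hodge) argument gives $\|\Ds{(d-2)/2}\Omega_P^a\|_{L^2}\aleq \|\Ds{(d-4)/2}\curl\Omega_P\|_{L^2}$, and the right side is $\aleq (\|\Ds{(d-2)/2}d\vecu\|_{L^2}+\|\Ds{(d-4)/2}dd\vecu\|_{L^2})\cdot\|\Ds{(d-2)/2}d\vecu\|_{L^2}$ by \eqref{eq:G1cond:1}, \eqref{eq:Omegacond:growth}, and the bound on $P$, which by \eqref{eq:dduest1} and \eqref{eq:411Ssmall} collapses to the quadratic bound \eqref{eq:411SS3}.

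\textbf{Main obstacle.} The principal difficulty is bookkeeping the exponents so that every bilinear term on the right-hand side lands in exactly $L^2$ with one factor measured in the scale-invariant norm $\|\Ds{(d-2)/2}d\vecu\|_{L^2}$ (to get the quadratic gain) and the other in $\|\Ds{d/2}P\|_{L^2}$ or $\|\Ds{(d-2)/2}\Omega\|_{L^2}$ (so it can be absorbed). This forces a careful choice in the Leibniz splitting and uses the full strength of the $L^{(p,q)}$ Lorentz-refined growth hypotheses \eqref{eq:Omegacond:growthpq}, \eqref{eq:Omegacond:growthpq2}, \eqref{eq:G1cond:1Lpq} rather than just their $L^2$ corollaries — in particular near the endpoint cases where ordinary $L^p$ Sobolev embedding fails but the Lorentz-space version holds. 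A secondary subtlety, already flagged in the remark after \Cref{la:uhlenbeckgauge}, is that the gauge gives no control on $\Omega^0$, so one must be careful to use only the spatial equations here; the control of the temporal components is deferred. I expect the $P$-estimate \eqref{eq:411SS2} to be the easier half (it is a standard Uhlenbeck-type bootstrap in $\dot H^{d/2}$), and the quadratic bound \eqref{eq:411SS3} on $\Omega_P^a$ — which genuinely needs the curl plus cancellation conditions and a Hodge decomposition, in the spirit of the Coulomb-frame estimates of Shatah–Struwe — to be where the real work lies.
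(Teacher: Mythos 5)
Your overall strategy is the right one and is essentially the paper's: exploit the spatial divergence-freeness of $\Omega_P$ from \Cref{la:uhlenbeckgauge} together with the curl condition (so that the curl part of $\Omega$, and hence of $\Omega_P$, is the genuinely quadratic quantity $G_1$ plus $\nabla P\,\nabla P$ and $\nabla P\,\Omega$ terms), and close the estimates by absorption under the smallness \eqref{eq:411Ssmall}. Your decoupled organization — first bound $\|\Ds{\frac{d}{2}}P\|_{L^2}$ from the elliptic equation $\lap P\,P^T+\sum_a\partial_aP\,\partial_aP^T=\sum_a\partial_a(P\Omega^aP^T)$, then bound $\Omega_P^a$ through $\|\Ds{\frac{d-2}{2}}\Omega_P^a\|_{L^2}\aleq\|\Ds{\frac{d-4}{2}}\curl\Omega_P\|_{L^2}$ using the analogue of \eqref{eq:asdcioxckj2} — is a legitimate variant of the paper's argument, which instead writes $\Omega_P'=\Rz^\perp\Omega_P'$, expands via commutators $[\Rz^\perp,P]$ and the identity $\Rz^\perp\Omega=\lapms{1}G_1$, and runs the $P$- and $\Omega_P$-bounds as a coupled estimate.

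The one step you state too optimistically is the absorption for \eqref{eq:411SS2}. When you apply the Leibniz rule to $\Ds{\frac{d-4}{2}}$ of the right-hand side of the equation for $\lap P$, the intermediate factors $\|\Ds{t}P\|_{L^{d/t}}$ (and the conjugations $P\cdot P^T$) produce terms that are quadratic and cubic in $\|\Ds{\frac{d}{2}}P\|_{L^2}$ with small prefactors $\aleq\eps$, not merely the linear term $(\eps+\|\Ds{\frac{d-2}{2}}d\vec{u}\|_{L^2})\|\Ds{\frac{d}{2}}P\|_{L^2}$ you wrote; a straight "move it to the left" does not close by itself. This is exactly why the paper proves the intermediate bounds $\|\Ds{s}P\|_{L^{\frac{d}{s}}}\aleq\|\Ds{\frac{d-2}{2}}d\vec{u}\|_{L^2}$ by induction in half-steps of $s$, starting from the small base case $\|\nabla P\|_{L^d}\aleq\|\Omega\|_{L^d}$ of \eqref{eq:uhlenbeckPest}, so that the top-order norm never appears nonlinearly on the right (see \eqref{eq:Pest:21241235}–\eqref{eq:Pest:21241237}); alternatively one can keep your one-shot estimate but must then invoke qualitative finiteness of $\|\Ds{\frac{d}{2}}P\|_{L^2}$ and a continuity/polynomial-inequality argument of the type used later for $A(\tau)$ in the proof of \Cref{th:aprioriest}. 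Two small inaccuracies worth correcting: the proof of this lemma does not need the Lorentz-refined hypotheses \eqref{eq:Omegacond:growthpq}, \eqref{eq:G1cond:1Lpq} beyond their $L^{d/s}$ and $L^2$ instances, and it does not use the cancellation condition \eqref{eq:omeacommdu} at all — only the antisymmetry, the growth condition \eqref{eq:Omegacond:growth}, and the curl condition \eqref{eq:G1cond:1} enter; the $G_2$-structure is needed only later for the equation of $W$.
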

Let us remark that the smallness assumption \eqref{eq:411Ssmall} is not needed for this specific lemma, an estimate $\aleq 1$ would be enough. But for the overall estimates of the gauge, they are needed, see \eqref{eq:smallnessneeded}.
\begin{proof}
Denote by 
\[
 \Rz^\perp := \lapms{1}\curl.
\]
Let $s \in [1,\frac{d}{2}]$. We consider by a slight abuse of notation for $\Omega$ only the spatial part, then we have
\[
\begin{split}
&\|\Ds{s-1} \brac{\nabla P^T \, P + P \Omega P^T}\|_{L^{\frac{d}{s}}(\R^d)}\\
\aleq& \|\Ds{s-1}\underbrace{\Rz \cdot (\nabla P^T \, P + P \Omega P^T)}_{=0}\|_{L^{\frac{d}{s}}(\R^d)} + \|\Ds{s-1}\Rz^\perp  (\nabla P^T \, P + P \Omega P^T)\|_{L^{\frac{d}{s}}(\R^d)}\\
 \aleq&\|\Ds{s-1} [\Rz^\perp, P_{ij}](\nabla P_{k\ell})\|_{L^{\frac{d}{s}}(\R^d)}+\|\Ds{s-1}\brac{\Rz^\perp [P_{ij},\Omega_{k\ell}] P_{mn}}\|_{L^{\frac{d}{s}}(\R^d)} + \|\Ds{s-1} \brac{P\, \Rz^\perp \Omega\, P^T}\|_{L^{\frac{d}{s}}(\R^d)}\\
\aleq& \max_{t \in [0,s-1]} \|\Ds{\frac{1}{2}+t} P\|_{L^{\frac{d}{\frac{1}{2}+t}}(\R^d)} \|\Ds{\frac{1}{2}+s-1-t} P\|_{L^{\frac{d}{s-\frac{1}{2}-t}}(\R^d)}\\
&+\max_{t_1+t_2+t_3 =s-1} \|\Ds{t_1+\frac{1}{2}} P\|_{L^{\frac{d}{t_1+\frac{1}{2}}}(\R^d)}\, \|\Ds{t_2-\frac{1}{2}} \Omega\|_{L^{\frac{d}{t_2-\frac{1}{2}}}(\R^d)}\, \|\Ds{t_3} P\|_{L^{\frac{d}{t_3}}(\R^d)}\\
&+\max_{t_1+t_2+t_3 = s-1}\, \|\Ds{t_1} P\|_{L^{\frac{d}{t_1}}(\R^d)}\, \|\Ds{t_2} \Rz^\perp \Omega\|_{L^{\frac{d}{t_2+1}}(\R^d)}\, \|\Ds{t_3} P\|_{L^{\frac{d}{t_3}}(\R^d)}\\
\aleq&\|\Ds{s-\frac{1}{2}} P\|_{L^{\frac{d}{s-\frac{1}{2}}}(\R^d)}^2\\
&+\|\Ds{s-\frac{1}{2}} P\|_{L^{\frac{d}{s-\frac{1}{2}}}(\R^d)}\, \|\Ds{\frac{d-2}{2}} \Omega\|_{L^{2}(\R^d)}\, \brac{\|P\|_{L^\infty} + \|\Ds{s-\frac{1}{2}} P\|_{L^{\frac{d}{s-\frac{1}{2}}}(\R^d)}}\\
&+\brac{\|P\|_{L^\infty} + \|\Ds{s-\frac{1}{2}} P\|_{L^{\frac{d}{s-\frac{1}{2}}}(\R^d)}}\|\Ds{\frac{d-2}{2}} \Rz^\perp \Omega\|_{L^{2}(\R^d)}\, \\
 \end{split}
\]
We observe that in view of \Cref{def:admissibleomega},
\[
 \Rz^\perp \cdot (\Omega) = \lapms{1} \curl \Omega = \lapms{1} G(\vec{u})
\]
This and \eqref{eq:Omegacond:growth} implies 
\[
\begin{split}
&\|\Ds{s-1} \brac{\nabla P^T \, P + P \Omega P^T}\|_{L^{\frac{d}{s}}(\R^d)}\\
\aleq&\|\Ds{s-\frac{1}{2}} P\|_{L^{\frac{d}{s-\frac{1}{2}}}(\R^d)}^2\\
&+\|\Ds{s-\frac{1}{2}} P\|_{L^{\frac{d}{s-\frac{1}{2}}}(\R^d)}\, \brac{(1+\|\Ds{\frac{d-2}{2}} d\vec{u}\|_{L^2(\R^d)}^\sigma)\, \|\Ds{\frac{d-2}{2}} d\vec{u}\|_{L^2(\R^d)}}\, \brac{1+ \|\Ds{s-\frac{1}{2}} P\|_{L^{\frac{d}{s-\frac{1}{2}}}(\R^d)}}\\
&+\brac{1 + \|\Ds{s-\frac{1}{2}} P\|_{L^{\frac{d}{s-\frac{1}{2}}}(\R^d)}}\|\Ds{\frac{d-4}{2}} G(\vec{u})\|_{L^{2}(\R^d)}\, \\
 \end{split}
\]
With the help of \eqref{eq:G1cond:1} and \eqref{eq:411Ssmall} we arrive at 
\begin{equation}\label{eq:Pest:21241235}
\begin{split}
&\|\Ds{s-1} \brac{\Omega_P^a}\|_{L^{\frac{d}{s}}(\R^d)}\\
\aleq&\brac{1 + \|\Ds{s-\frac{1}{2}} P\|_{L^{\frac{d}{s-\frac{1}{2}}}(\R^d)}}  \brac{\|\Ds{s-\frac{1}{2}} P\|_{L^{\frac{d}{s-\frac{1}{2}}}(\R^d)}^2+\|\Ds{\frac{d-2}{2}} d\vec{u}\|_{L^2(\R^d)}^2}\\
 \end{split}
\end{equation}
On the other hand, for any $s > 1$
\[
\begin{split}
 &\|\Ds{s}P\|_{L^{\frac{d}{s}}(\R^d)}\\
 \aeq& \|\Ds{s-1}\nabla P\|_{L^{\frac{d}{s}}(\R^d)} =\|\Ds{s-1}(\nabla P\, P^T P)\|_{L^{\frac{d}{s}}(\R^d)}\\
 \aeq& \max_{\tilde{s} \in [0,s-1]}\|\Ds{s-1-\tilde{s}} P\|_{L^{\frac{d}{s-\tilde{s}-1}}(\R^d)}\, \|\Ds{\tilde{s}}(\nabla P\, P^T)\|_{L^{\frac{d}{\tilde{s}+1}}(\R^d)} \\
  \aleq& \brac{\|P\|_{L^\infty} + \|\Ds{s-1} P\|_{L^{\frac{d}{s-1}}(\R^d)}}\, \|\Ds{s-1}(\nabla P\, P^T)\|_{L^{\frac{d}{s}}(\R^d)} \\
  \aleq& \brac{1+ \|\|\Ds{s-1} P\|_{L^{\frac{d}{s-1}}}}\, \brac{\|\Ds{s-1}\Omega_P\|_{L^{\frac{d}{s}}(\R^d)} + \brac{\|P\|_{L^\infty} +\|\Ds{s-1} P\|_{L^{\frac{d}{s-1}}(\R^d)}}^2\, \|\Ds{s-1} \Omega\|_{L^{\frac{d}{s}}(\R^d)} }\\
  \aleq& \brac{1+ \|\|\Ds{s-1} P\|_{L^{\frac{d}{s-1}}}}\, \brac{\|\Ds{s-1}\Omega_P\|_{L^{\frac{d}{s}}(\R^d)} + \brac{\|P\|_{L^\infty} +\|\Ds{s-1} P\|_{L^{\frac{d}{s-1}}(\R^d)}}^2\, \|\Ds{\frac{d-2}{2}} \Omega\|_{L^{2}(\R^d)} }\\
  \overset{\eqref{eq:Omegacond:growth},\eqref{eq:411Ssmall}}{\aleq}& \brac{1+ \|\Ds{s-1} P\|_{L^{\frac{d}{s-1}}}}^3  \brac{ \|\Ds{s-1}\Omega_P\|_{L^{\frac{d}{s}}(\R^d)} +\|\Ds{\frac{d-2}{2}} d\vec{u}\|_{L^{2}(\R^d)}} \\
 \end{split}
\]
Applying \eqref{eq:Pest:21241235} we then have 
\begin{equation}\label{eq:Pest:21241236}
\begin{split}
&\|\Ds{s}P\|_{L^{\frac{d}{s}}(\R^d)}\\
\aleq& \brac{1+ \|\Ds{s-\frac{1}{2}} P\|_{L^{\frac{d}{s-\frac{1}{2}}}(\R^d)}^4}\, \brac{\|\Ds{s-\frac{1}{2}} P\|_{L^{\frac{d}{s-\frac{1}{2}}}(\R^d)}^2+\|\Ds{\frac{d-2}{2}} d\vec{u}\|_{L^2(\R^d)}^2+\|\Ds{\frac{d-2}{2}} d\vec{u}\|_{L^2(\R^d)}}\\
 \end{split}
\end{equation}
This implies that for any $s \in [1,\frac{d-2}{2}]$,
\begin{equation}\label{eq:Pest:21241237}
 \|\Ds{s}P\|_{L^{\frac{d}{s}}(\R^d)} \aleq \|\Ds{\frac{d-2}{2}} d\vec{u}\|_{L^2(\R^d)}.
\end{equation}
Indeed, for $s =\frac{3}{2}$ this follows  from \eqref{eq:uhlenbeckPest} and \eqref{eq:Omegacond:growth} and \eqref{eq:Pest:21241236}, for $s > \frac{3}{2}$ this follows by induction and \eqref{eq:Pest:21241236}.

This establishes \eqref{eq:411SS2}. Plugging \eqref{eq:Pest:21241237} into \eqref{eq:Pest:21241235}, we also have 
\[
\|\Ds{s-1} \brac{\Omega_P^a}\|_{L^{\frac{d}{s}}(\R^d)} \aleq\underbrace{\brac{1 + \|\Ds{\frac{d-2}{2}} d\vec{u}\|_{L^2(\R^d)}}}_{\overset{\eqref{eq:411Ssmall}}{\aleq} 2} \|\Ds{\frac{d-2}{2}} d\vec{u}\|_{L^2(\R^d)}^2\\
 \]
This implies \eqref{eq:411SS3}.

\end{proof}

Similarly we also observe the following estimates
\begin{lemma}\label{la:L2destP}
Under the assumptions of \Cref{la:uhlenbeckgauge}, for $\eps$ possibly smaller, we also have
\[
 \|\nabla P\|_{L^{(p,q)}(\R^d)} \aleq \|\Omega\|_{L^{(p,q)}(\R^d)}.
\]
and
\[
\max_{a \in \{1,\ldots,d\}}\|\Omega_P^a\|_{L^{(p,q)}(\R^d)} \aleq \|\Omega\|_{L^d(\R^d)} \|\Omega\|_{L^{(p,q)}(\R^d)} +  \|\Rz^\perp \Omega\|_{L^{(p,q)}(\R^d)}.
\]
\end{lemma}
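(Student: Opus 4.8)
The plan is to repeat the structure of the proof of Lemma \ref{la:spatialOmegaPLdest}, but now working at the "base level'' $s=1$ where no fractional derivatives of $P$ appear, only $\nabla P$ itself, and tracking Lorentz exponents throughout. First I would recall the construction behind Lemma \ref{la:uhlenbeckgauge}: $P$ solves (after the Uhlenbeck/Rivi\`ere argument) a fixed-point/variational problem whose Euler--Lagrange system, in Hodge form, reads $\div(\nabla P\,P^T + P\Omega P^T)=0$ together with a curl identity expressing $\nabla P\,P^T$ in terms of $\Omega$ and lower-order commutator terms. Concretely, writing $W:=\nabla P\,P^T$, one has $W = \Rz^\perp\big(\nabla P\,P^T + P\Omega P^T\big) + \Rz\big(\cdots\big)$ where the $\Rz$-part vanishes by the divergence-free condition. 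Since $\Rz^\perp = \lapms{1}\curl$ is a Calder\'on--Zygmund operator of order $0$, it is bounded on every Lorentz space $L^{(p,q)}(\R^d)$ for $1<p<\infty$, $1\le q\le\infty$; this is the one external ingredient I need beyond what is in the excerpt.

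For the first estimate I would start from $\nabla P = W P$ (using $PP^T=\mathrm{Id}$), so $\|\nabla P\|_{L^{(p,q)}} \aleq \|P\|_{L^\infty}\|W\|_{L^{(p,q)}} \aleq \|W\|_{L^{(p,q)}}$ since $P\in SO(N)$ is bounded. Then I apply $\Rz^\perp$-boundedness on $L^{(p,q)}$ to the identity for $W$: the term $\Rz^\perp(P\Omega P^T)$ is controlled by $\|P\|_{L^\infty}^2\|\Omega\|_{L^{(p,q)}} \aleq \|\Omega\|_{L^{(p,q)}}$, while the commutator term $[\Rz^\perp,P](\nabla P)$ (which is genuinely the source of the smallness requirement) is estimated by the H\"older-type inequality in Lorentz spaces $\|[\Rz^\perp,P]\nabla P\|_{L^{(p,q)}} \aleq \|\nabla P\|_{L^d(\R^d)}\,\|\nabla P\|_{L^{(p,q)}(\R^d)}$, using the commutator bound together with $\|\nabla P\|_{L^d}\le C\|\Omega\|_{L^d}<C\eps$. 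Absorbing the small factor $\|\nabla P\|_{L^d}<C\eps$ into the left-hand side (here is where "$\eps$ possibly smaller'' is used) yields $\|\nabla P\|_{L^{(p,q)}} \aleq \|\Omega\|_{L^{(p,q)}}$. For the second estimate, I expand $\Omega_P^a = -\partial_a P\,P^T + P\Omega^a P^T$ directly. Writing $\partial_a P\,P^T = W_a$ and using the identity $W = \Rz^\perp(W + P\Omega P^T) + (\text{the vanishing }\Rz\text{-part})$ once more, the term $\Rz^\perp(P\Omega P^T)$ is split as $P(\Rz^\perp\Omega)P^T$ plus commutators; the clean piece gives the stated $\|\Rz^\perp\Omega\|_{L^{(p,q)}}$, the commutator pieces are bounded by $\|\nabla P\|_{L^d}\|\nabla P\|_{L^{(p,q)}} \aleq \|\Omega\|_{L^d}\|\Omega\|_{L^{(p,q)}}$ using the first part of the lemma, and the remaining direct term $P\Omega^a P^T$ contributes $\aleq\|\Omega\|_{L^{(p,q)}}$, which is dominated by $\|\Omega\|_{L^d}\|\Omega\|_{L^{(p,q)}} + \|\Rz^\perp\Omega\|_{L^{(p,q)}}$ after noting $\Omega = \Rz\Omega + \Rz^\perp\Omega$ and the $\Rz$-part of $\Omega$ is again small relative to its $L^d$ norm — alternatively, one simply keeps $\|\Omega\|_{L^{(p,q)}}$ on the right and observes it is absorbed once $\eps$ is small; I would phrase it so the stated bound comes out cleanly.

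The main obstacle I anticipate is the bookkeeping for the commutator $[\Rz^\perp,P](\nabla P)$ in Lorentz spaces: one needs a Coifman--Rochberg--Weiss-type commutator estimate valid with a Lorentz target, which follows from the $L^{(p,q)}$-boundedness of $\Rz^\perp$ combined with the $L^\infty$--$BMO$ (or $\dot W^{1,d}\hookrightarrow BMO$) control of $P$, but stating it precisely with the correct Lorentz H\"older exponents ($\tfrac1p = \tfrac1d + \tfrac1{p'}$ with matching second indices via $\tfrac1q \le \tfrac1q$) requires care. A second, milder subtlety is ensuring that the fixed-point argument producing $P$ actually survives when the data $\Omega$ is only in $L^{(p,q)}\cap L^d$ rather than $L^d$ alone — but since $P$ is already fixed by Lemma \ref{la:uhlenbeckgauge} and we are only running \emph{a posteriori} elliptic estimates on the already-constructed $P$, this is not an issue: the identities for $W$ hold for the constructed $P$, and everything is a linear-in-$\nabla P$ estimate with a small coefficient, closed by absorption. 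Everything else is a routine application of Lorentz-space H\"older and the order-zero mapping properties of $\Rz^\perp$.
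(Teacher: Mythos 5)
Your first estimate follows essentially the paper's own route: use the Coulomb condition to recover $P\nabla P^T$ through $\Rz^\perp$, bound $\Rz^\perp(P\nabla P^T)=[\Rz^\perp,P](\nabla P^T)$ by a Coifman--Rochberg--Weiss-type commutator estimate with $[P]_{BMO}\aleq\|\nabla P\|_{L^d}\aleq\|\Omega\|_{L^d}<\eps$, and absorb. One caveat in your write-up: the inequality $\|[\Rz^\perp,P]\nabla P\|_{L^{(p,q)}}\aleq\|\nabla P\|_{L^d}\|\nabla P\|_{L^{(p,q)}}$ is \emph{not} a Lorentz--H\"older product estimate (the exponents $\tfrac1d+\tfrac1p\neq\tfrac1p$ do not match); it is exactly the $BMO$-commutator bound you mention later, which is also what the paper uses. (Also note the sign: the divergence-free field is $-\nabla P\,P^T+P\Omega P^T=P\nabla P^T+P\Omega P^T$, not $\nabla P\,P^T+P\Omega P^T$; this does not affect the estimates.)

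The second estimate has a genuine gap. You expand $\Omega_P^a=-\partial_aP\,P^T+P\Omega^aP^T$ and substitute your identity for $W=\nabla P\,P^T$ alone, which leaves the ``remaining direct term'' $P\Omega^aP^T$ and hence a bare $\|\Omega\|_{L^{(p,q)}}$ on the right. Neither of your remedies repairs this: the gradient (``$\Rz$'') part of $\Omega$ is the image of $\Omega$ under a bounded, not small, projection, so it is in no way controlled by $\|\Omega\|_{L^d}\|\Omega\|_{L^{(p,q)}}$; and a term $\|\Omega\|_{L^{(p,q)}}$ cannot be ``absorbed,'' since the left-hand side is $\|\Omega_P^a\|_{L^{(p,q)}}$ and the term is not a small multiple of it. Your route therefore only yields the linear bound $\|\Omega_P^a\|_{L^{(p,q)}}\aleq\|\Omega\|_{L^{(p,q)}}$, whereas the entire point of the lemma is the absence of a linear term: the quadratic-plus-$\Rz^\perp\Omega$ structure is what later gives, e.g., $\|\Omega_P\|_{L^{(2d,q)}}\aleq\|d\vec{u}\|_{L^d}\|d\vec{u}\|_{L^{(2d,q)}}$ in \Cref{la:fullOmegaPLdest} and the $L^\infty$ bound in \Cref{la:omegaPLinfty}. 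The fix is the paper's: apply the divergence-free property to the full spatial field $\Omega_P'=(\Omega_P^a)_a$, i.e.\ use $\div\Omega_P'=0$ to write $\|\Omega_P'\|_{L^{(p,q)}}\aleq\|\Rz^\perp\Omega_P'\|_{L^{(p,q)}}$, and only then split: $\Rz^\perp(P\nabla P^T)$ is a pure commutator (since $\Rz^\perp$ annihilates gradients), contributing $[P]_{BMO}\|\nabla P\|_{L^{(p,q)}}$, while $\Rz^\perp(P\Omega P^T)=P\,(\Rz^\perp\Omega)\,P^T$ plus commutators contributes $\|\Rz^\perp\Omega\|_{L^{(p,q)}}+[P]_{BMO}\|\Omega\|_{L^{(p,q)}}$; combined with your first estimate and $[P]_{BMO}\aleq\|\Omega\|_{L^d}$ this gives the stated bound with no leftover linear term.
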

\begin{proof}
We have
\[
 \div(P\nabla P^T + P \Omega P^T)=0 \quad \text{in $\R^d$}
\]
and thus
\[
 \Rz \cdot (P \nabla P^T + P \Omega P^T) = 0.
\]
Then
\[
\begin{split}
 \|\nabla P\|_{L^{(p,q)}(\R^d)} \aleq &\|\Rz^\perp (P \nabla P^T + P \Omega P^T)\|_{L^{(p,q)}(\R^d)} + \|\Omega\|_{L^{(p,q)}(\R^d)}\\
\aleq &[P]_{BMO}\|\nabla P\|_{L^{(p,q)}(\R^d)} + (1+[P]_{BMO}) \|\Omega\|_{L^{(p,q)}(\R^d)}\\
\aleq &\eps\, \|\nabla P\|_{L^{(p,q)}(\R^d)} + \|\Omega\|_{L^{(p,q)}(\R^d)}
 \end{split}
 \]
Choosing $\eps \ll 1$ we obtain the
\[
 \|\nabla P\|_{L^{(p,q)}(\R^d)} \aleq \|\Omega\|_{L^{(p,q)}(\R^d)}.
\]
Also,
\[
\begin{split}
 \|\Omega_P\|_{L^{(p,q)}(\R^d)} \aleq& \|\Rz^\perp( P\nabla P^T)\|_{L^{(p,q)}(\R^d)} + \|\Rz^\perp (P\Omega P^T)\|_{L^{(p,q)}(\R^d)}\\
 &[P]_{BMO} \|\nabla P\|_{L^{(p,q)}(\R^d)} + [P]_{BMO} \|\Omega\|_{L^{(p,q)}(\R^d)} +\|\Rz^\perp \Omega \|_{L^{(p,q)}(\R^d)}\\
 \aleq&\|\nabla P\|_{L^d(\R^d)}^2 + \|\nabla P\|_{L^d(\R^d)}\|\Omega\|_{L^{(p,q)}(\R^d)}+\|\Rz^\perp \Omega \|_{L^{(p,q)}(\R^d)}.
 \end{split}
 \]
With the previous estimate for $\nabla P$ we conclude.
\end{proof}

Next we show that \eqref{eq:411SS3} etc. holds also for $\Omega_P^0$.

\begin{lemma}\label{la:fullOmegaPLdest}
Under the assumptions of \Cref{la:spatialOmegaPLdest}, we actually have 
\begin{equation}\label{eq:411SSfull}
 \|\Ds{\frac{d-2}{2}} \Omega_P\|_{L^2(\R^d)}  \aleq \|\Ds{\frac{d-2}{2}}d \vec{u}\|_{L^2(\R^d)}^2
\end{equation}
and 
\[
  \|\Ds{\frac{d-2}{2}} d P\|_{L^2(\R^d)}  \aleq \|\Ds{\frac{d-2}{2}} d\vec{u}\|_{L^2(\R^d)}.
\]
as well as (for any $q \in [1,\infty])$
\[
 \|dP\|_{L^{(2d,q)}(\R^d)} \aleq \|d \vec{u}\|_{L^{(2d,q)}(\R^d)}
\]
and
\[
 \|\Omega_P\|_{L^{(2d,q)}(\R^d)} \aleq \|d\vec{u}\|_{L^{d}(\R^d)}\ \|d \vec{u}\|_{L^{(2d,q)}(\R^d)}.
\]
\end{lemma}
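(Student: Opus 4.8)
The estimates for the spatial pieces are essentially already in hand: the $\dot H^{\frac{d-2}{2}}$-bounds on $\nabla P$ and $\Omega_P^a$, $a\in\{1,\ldots,d\}$, are \eqref{eq:411SS2} and \eqref{eq:411SS3} of \Cref{la:spatialOmegaPLdest}, while the corresponding $L^{(2d,q)}$-bounds follow from \Cref{la:L2destP} with $p=2d$ once one inserts the growth estimate \eqref{eq:Omegacond:growth2d} for $\|\Omega\|_{L^{(2d,q)}}$ and rewrites $\Rz^\perp\Omega=\Ds{-1}\curl\Omega=\Ds{-1}G_1(\vec u)$ via the curl condition of \Cref{def:admissibleomega}, estimating $G_1$ by \eqref{eq:G1cond:2} and \eqref{eq:G1cond:GN1spatial}. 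So the whole point is to produce the bounds for the time component $\Omega_P^0=\partial_t P\,P^T+P\Omega^0 P^T$ --- about which \Cref{la:uhlenbeckgauge} says nothing --- and the consequent control of $\partial_t P$.

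The plan is to recover $\Omega_P^0$ from a Hodge-type identity that couples the spatial divergence-free property $\sum_a\partial_a\Omega_P^a=0$ from \Cref{la:uhlenbeckgauge} with the gradient structure $\partial_a\Omega^0+\partial_t\Omega^a=G_{1;a,0}(\vec u)$ of \Cref{def:admissibleomega}. Differentiating \eqref{eq:OmegaPalpha} the mixed second derivatives cancel and one obtains
\[
 \partial_a\Omega_P^0+\partial_t\Omega_P^a
 =\big(\partial_t P\,\partial_a P^T-\partial_a P\,\partial_t P^T\big)
 +P\,G_{1;a,0}(\vec u)\,P^T
 +\big(\partial_a P\,\Omega^0 P^T+P\Omega^0\,\partial_a P^T+\partial_t P\,\Omega^a P^T+P\Omega^a\,\partial_t P^T\big)=:E_a .
\]
Applying $\sum_a\partial_a$ and using $\sum_a\partial_a\partial_t\Omega_P^a=\partial_t\sum_a\partial_a\Omega_P^a=0$ gives $\Delta\Omega_P^0=\sum_a\partial_a E_a$, hence $\Omega_P^0=-\Ds{-2}\sum_{a=1}^d\partial_a E_a$. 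Since $\Ds{-2}\partial_a$ is a Calder\'on--Zygmund operator of order $-1$, this yields $\|\Ds{\frac{d-2}{2}}\Omega_P^0\|_{L^2}\aleq\|\Ds{\frac{d-4}{2}}E\|_{L^2}$ and, through the Lorentz--Sobolev mapping $L^{(\frac{2d}{3},q)}\to L^{(2d,q)}$, also $\|\Omega_P^0\|_{L^{(2d,q)}}\aleq\|E\|_{L^{(\frac{2d}{3},q)}}$.

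It then remains to estimate $E$. The conjugated term $P\,G_{1;a,0}(\vec u)\,P^T$ is controlled, after fractional Leibniz, by \eqref{eq:G1cond:1} together with \Cref{la:dealingwithddu} (to turn $\|\Ds{\frac{d-4}{2}}dd\vec u\|_{L^2}$ into $\|\Ds{\frac{d-2}{2}}d\vec u\|_{L^2}$) and $\|P\|_{L^\infty}\aleq1$, $\|\Ds{\frac{d}{2}}P\|_{L^2}\aleq\|\Ds{\frac{d-2}{2}}d\vec u\|_{L^2}$, contributing $\aleq\|\Ds{\frac{d-2}{2}}d\vec u\|_{L^2}^2$. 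The terms carrying $\partial_t P$ are rewritten via $\partial_t P=\Omega_P^0\,P-P\,\Omega^0$ and estimated by Leibniz and Sobolev embedding; because each such term carries a factor $\|\Ds{\frac{d-2}{2}}\nabla P\|_{L^2}$ or $\|\Ds{\frac{d-2}{2}}\Omega\|_{L^2}$ --- both $\aleq\|\Ds{\frac{d-2}{2}}d\vec u\|_{L^2}<\eps$ by \eqref{eq:411SS2}, \eqref{eq:Omegacond:growth}, \eqref{eq:411Ssmall} --- the $\Omega_P^0$-contribution produced this way is absorbed into the left-hand side, leaving a remainder $\aleq\|\Ds{\frac{d-2}{2}}d\vec u\|_{L^2}^2$; the purely quadratic $\nabla P\cdot\Omega^0$ terms are likewise $\aleq\|\Ds{\frac{d-2}{2}}d\vec u\|_{L^2}^2$. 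Thus $\|\Ds{\frac{d-2}{2}}\Omega_P^0\|_{L^2}\aleq\eps\,\|\Ds{\frac{d-2}{2}}\Omega_P^0\|_{L^2}+\|\Ds{\frac{d-2}{2}}d\vec u\|_{L^2}^2$, and since the left-hand side is a priori finite (the solution being smooth with the assumed decay) an absorption argument yields \eqref{eq:411SSfull}; the bound on $\|\Ds{\frac{d-2}{2}}dP\|_{L^2}$ follows from \eqref{eq:411SS2} together with $\partial_t P=\Omega_P^0\,P-P\Omega^0$ and \eqref{eq:Omegacond:growth}. The $L^{(2d,q)}$-bounds for $\Omega_P^0$ and $\partial_t P$ come out of the same identity, now estimating $E$ in $L^{(\frac{2d}{3},q)}$: the $G_1$-term by \eqref{eq:G1cond:2} and \eqref{eq:G1cond:GN1spatial}, the products by H\"older in Lorentz spaces with the small prefactors $\|\nabla P\|_{L^d}$ and $\|\Omega\|_{L^d}\aleq\|d\vec u\|_{L^d}$, and one closes with a second short absorption, producing $\|\Omega_P\|_{L^{(2d,q)}}\aleq\|d\vec u\|_{L^d}\|d\vec u\|_{L^{(2d,q)}}$ and $\|dP\|_{L^{(2d,q)}}\aleq\|d\vec u\|_{L^{(2d,q)}}$.

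The step I expect to be the main obstacle is precisely this self-referential structure: $\Omega_P^0$ (hence $\partial_t P$) reappears on the right-hand side of its own estimate, so one has to (i) secure a priori finiteness of the relevant $\dot H$- and Lorentz-norms, and (ii) keep careful track that \emph{every} occurrence of $\partial_t P$ or $\Omega_P^0$ is multiplied by one of the small quantities $\|\Ds{\frac{d-2}{2}}d\vec u\|_{L^2}$, $\|\nabla P\|_{L^d}$, $\|\Omega\|_{L^d}$, so that the absorption really closes. A secondary nuisance is matching the precise Lorentz integrability produced by the Hodge operator $\Ds{-2}\div$ with the exact scaling built into the $G_1$- and growth-conditions of \Cref{def:admissibleomega} --- which is why those conditions appear with their particular exponents.
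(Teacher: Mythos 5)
Your proposal follows essentially the same route as the paper: use the spatial estimates from Lemma~\ref{la:spatialOmegaPLdest} and Lemma~\ref{la:L2destP}, represent $\Omega_P^0$ through the elliptic identity $\lap\Omega_P^0=\sum_a\partial_a\brac{\partial_t\Omega_P^a+\partial_a\Omega_P^0}$ (using $\sum_a\partial_a\Omega_P^a=0$ and the curl condition producing $G_{1;a,0}$), estimate the resulting right-hand side with the admissibility/growth conditions, and close the self-referential occurrences of $\partial_t P$ and $\Omega_P^0$ by the smallness \eqref{eq:411Ssmall} and absorption — exactly the structure of \eqref{eq:asdcioxckj1}, \eqref{eq:aklsdmlkmcd2}, \eqref{eq:smallnessneeded} in the paper's proof, for both the $\dot H^{\frac{d-2}{2}}$ and the Lorentz bounds. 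Apart from minor bookkeeping (the paper invokes \eqref{eq:G1cond:3} rather than \eqref{eq:G1cond:2}/\eqref{eq:G1cond:GN1spatial} for the $L^{(\frac{2d}{3},q)}$-control of $G_{1;a,0}$), this is the same argument.
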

\begin{proof}[Proof of \Cref{la:fullOmegaPLdest}]

We first observe that
\[
\begin{split}
 \|\Ds{\frac{d-2}{2}} \partial_t P\|_{L^2(\R^d)}
 \leq& \|\Ds{\frac{d-2}{2}} \brac{\Omega^0_P P}\|_{L^2(\R^d)} + \|\Ds{\frac{d-2}{2}} \brac{\Omega^0 P}\|_{L^2(\R^d)}\\
\aleq& \brac{\|\Ds{\frac{d-2}{2}} \Omega^0_P\|_{L^2(\R^d)}+\|\Ds{\frac{d-2}{2}} \Omega^0\|_{L^2(\R^d)}}\,  \brac{1+\|\Ds{\frac{d}{2}} P\|_{L^{2}(\R^d)}}
 \end{split}
\]
Thus, with \Cref{la:spatialOmegaPLdest}, and \eqref{eq:Omegacond:growth}
\begin{equation}\label{eq:Ddm22dPl2}
\begin{split}
 \|\Ds{\frac{d-2}{2}} d P\|_{L^2(\R^d)}
\aleq& \|\Ds{\frac{d-2}{2}} \Omega^0_P\|_{L^2(\R^d)}+\|\Ds{\frac{d-2}{2}} d\vec{u}\|_{L^2(\R^d)}.
 \end{split}
\end{equation}
And we recall that we already have from \Cref{la:spatialOmegaPLdest}
\begin{equation}\label{eq:Ddm22NPl2}
\begin{split}
 \|\Ds{\frac{d-2}{2}} \nabla P\|_{L^2(\R^d)}
\aleq& \|\Ds{\frac{d-2}{2}} d\vec{u}\|_{L^2(\R^d)}.
 \end{split}
\end{equation}

Next we consider the $\Omega^0_P$-estimate. We have
\[
\begin{split}
 \lap \Omega_P^0 = &\sum_{a = 1}^d \partial_{a a } \Omega_P^0\\
 =&-\sum_{a = 1}^d \underbrace{\partial_{a t} \Omega_P^a}_{=0} + \sum_{a = 1}^d\partial_a \brac{\partial_t \Omega_P^a+\partial_a \Omega_P^0 }\\
 \end{split}
\]
From the definition of $\Omega_P$, \eqref{eq:OmegaPalpha}, 
\begin{equation}\label{eq:asdcioxckj1}
\begin{split}
 &\partial_a \Omega_P^0 + \partial_t (\Omega_P^a)  \\
 =&\partial_a (\partial_t P P^T+ P\Omega^0\, P^T) + \partial_t ( -\partial_a P\, P^T + P \Omega^a P^T)\\
 =&P \brac{\partial_a \Omega^0 +\partial_t \Omega^a} P^T \\
 &+\partial_t P \partial_a P^T - \partial_a P \partial_t P^T\\
 &+ \partial_a P\,  \Omega^0 P^T\\
 &-\partial_t P\, \Omega^a P^T\\
 &+ P  \Omega^0\, \partial_a P^T\\
 &-\partial_t P\, \Omega^a P^T\\
 \end{split}
\end{equation}
Thus, we may write
\begin{equation}\label{eq:asdcioxckj12}
\begin{split}
 \Omega^0_P =& \lapms{1} \Rz_a (P \underbrace{\brac{\partial_a \Omega^0 +\partial_t \Omega^a}}_{=G_{1;a,0}(\vec{u})} P^T) \\
 &+\lapms{1} \Rz_a \brac{\partial_t P \partial_a P^T - \partial_a P \partial_t P^T}\\
 &+ \lapms{1} \Rz_a \brac{\partial_a P\,  \Omega^0 P^T}\\
 &-\lapms{1} \Rz_a \brac{\partial_t P\, \Omega^a P^T}\\
 &+ \lapms{1} \Rz_a \brac{P  \Omega^0\, \partial_a P^T}\\
 &-\lapms{1} \Rz_a \brac{\partial_t P\, \Omega^a P^T}
\end{split}  
\end{equation}
Thus
\begin{equation}\label{eq:aklsdmlkmcd2}
\begin{split}
 \|\Ds{\frac{d-2}{2}}\Omega^0_P\|_{L^{2}(\R^d)} \aleq& (1+\|\Ds{\frac{d-4}{2}} P\|_{L^{\frac{2d}{d-4}}(\R^d)}) \brac{\|G_{1;a,0}(\vec{u})\|_{L^{\frac{d}{2}}(\R^d)}+\|\Ds{\frac{d-4}{2}} G_{1;a,0}(\vec{u})\|_{L^{2}(\R^d)}}\\
 &+\|\Ds{\frac{d-4}{2}} \nabla P\|_{L^{\frac{2d}{d-2}}(\R^d)}\, \|\Ds{\frac{d-4}{2}}\partial_t P\|_{L^{\frac{2d}{d-2}}(\R^d)}\\
 &+\|\Ds{\frac{d-2}{2}} dP\|_{L^2} \|\Ds{\frac{d-2}{2}}\Omega^0\|_{L^{2}} \brac{1+\|\Ds{\frac{d}{2]}} P\|_{L^{2}}}\\
\end{split}
 \end{equation}%
So in particular we obtain with the help of \eqref{eq:Ddm22NPl2}, \eqref{eq:G1cond:1}, \eqref{eq:411Ssmall},\eqref{eq:Omegacond:growth}
\[
\|\Ds{\frac{d-2}{2}}\Omega^0_P\|_{L^{2}(\R^d)}  \aleq \|\Ds{\frac{d-2}{2}}d\vec{u}\|_{L^2(\R^d)}^2 +\eps \|\Ds{\frac{d-2}{2}} dP\|_{L^2} \\
\]
which together with \eqref{eq:Ddm22dPl2}, for $\eps \ll 1$, implies
\begin{equation}\label{eq:smallnessneeded}
 \|\Ds{\frac{d-2}{2}} dP\|_{L^2} \aleq \|\Ds{\frac{d-2}{2}}d\vec{u}\|_{L^2(\R^d)}.
\end{equation}
Plugging this again into \eqref{eq:aklsdmlkmcd2}, again with the help of \eqref{eq:Ddm22NPl2}, \eqref{eq:G1cond:1}, \eqref{eq:411Ssmall},\eqref{eq:Omegacond:growth}, we have
\[
 \|\Ds{\frac{d-2}{2}}\Omega^0_P\|_{L^{2}(\R^d)} \aleq \|\Ds{\frac{d-2}{2}}d\vec{u}\|_{L^2(\R^d)}^2.
 \]

 Similarly,
 \[
 \begin{split}
  \|\Omega^0_P\|_{L^{(2d,q)}(\R^d)} \aleq & \|G_{1;a,0}(\vec{u})\|_{L^{(\frac{2d}{3},q)}(\R^d)} \\
 &+\|\partial_t P\|_{L^{(2d,q)}(\R^d)} \|\nabla P\|_{L^d(\R^d)}\\
 &+ \|\Omega^0\|_{L^{(2d,q)}(\R^d)} \|\nabla P\|_{L^d(\R^d)}\\
 &+\|\partial_t P\|_{L^{(2d,q)}(\R^d)}\ \| \Omega^a \|_{L^d(\R^d)}\\
\overset{\eqref{eq:G1cond:3}}{\aleq}& \|d \vec{u}\|_{L^d(\R^d)}\, \|d \vec{u}\|_{L^{(2d,q)}(\R^d)}\\
&+\|\partial_t P\|_{L^{(2d,q)}(\R^d)} \|d\vec{u}\|_{L^d(\R^d)}
 \end{split}
 \]
 On the other hand,
 \[
  \|\partial_t P\|_{L^{(2d,q)}(\R^d)} \aleq \|\Omega^0_P\|_{L^{(2d,q)}(\R^d)} + \|d \vec{u}\|_{L^{(2d,q)}(\R^d)}
 \]
we conclude that by the smallness assumption \eqref{eq:411Ssmall}
\[
 \|\partial_t P\|_{L^{(2d,q)}(\R^d)} \aleq \|d \vec{u}\|_{L^{(2d,q)}(\R^d)}
\]
and thus also
\[
 \|\Omega^0_P\|_{L^{(2d,q)}(\R^d)} \aleq \|d\vec{u}\|_{L^{d}(\R^d)}\ \|d \vec{u}\|_{L^{(2d,q)}(\R^d)}.
\]

\end{proof}

\begin{lemma}\label{la:fullOmegaPLdestdd}
Under the assumptions of \Cref{la:spatialOmegaPLdest}, we actually have
\begin{equation}\label{eq:411SSfulldd}
 \|\Ds{\frac{d-4}{2}} d\Omega_P\|_{L^2(\R^d)}  \aleq \|\Ds{\frac{d-2}{2}}d \vec{u}\|_{L^2(\R^d)}^2
\end{equation}
and
\begin{equation}\label{eq:411SSfullddP}
  \|\Ds{\frac{d-4}{2}} d dP\|_{L^2(\R^d)}  \aleq \|\Ds{\frac{d-2}{2}} d\vec{u}\|_{L^2(\R^d)}.
\end{equation}

\end{lemma}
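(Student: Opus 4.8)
The plan is to differentiate the defining relations for $\Omega_P$ once more and bound each resulting term, mirroring the structure of the proof of \Cref{la:fullOmegaPLdest} but with one extra derivative distributed and one fewer power of $\Ds{\cdot}$ on each factor; throughout I keep in mind the smallness hypothesis \eqref{eq:411Ssmall} together with the already established estimates \eqref{eq:411SS2}, \eqref{eq:411SS3}, \eqref{eq:411SSfull}, \eqref{eq:smallnessneeded} (so $\|\Ds{\frac{d-2}{2}}dP\|_{L^2}\aleq\|\Ds{\frac{d-2}{2}}d\vecu\|_{L^2}$), and \eqref{eq:dduest1} (so $\|\Ds{\frac{d-4}{2}}dd\vecu\|_{L^2}\aleq(1+\ldots)\|\Ds{\frac{d-2}{2}}d\vecu\|_{L^2}$). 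First I would treat the spatial components $d\Omega_P^a$ for $a\in\{1,\ldots,d\}$. Starting from $\Omega_P^a=-\partial_a P\,P^T+P\Omega^a P^T$, one has $d\Omega_P^a=-dd_aP\,P^T-\partial_aP\,dP^T+dP\,\Omega^aP^T+P\,d\Omega^aP^T+P\,\Omega^a\,dP^T$; applying $\Ds{\frac{d-4}{2}}$ and the fractional Leibniz rule, the genuinely new quantity is $\|\Ds{\frac{d-4}{2}}ddP\|_{L^2}$, which I will solve for. The remaining pieces are products of two factors among $\{dP,\ dd_aP\ (\text{only its tangential, already-controlled part}),\ \Omega^a,\ d\Omega^a,\ P\}$ and are all estimated by interpolating $L^2$-type norms at level $\frac{d-2}{2}$ against $L^\infty$ (using $P\in W^{1,d}$, hence $\|\Ds{s}P\|_{L^{d/s}}\aleq\|\Ds{\frac{d-2}{2}}d\vecu\|_{L^2}$ from \eqref{eq:411SS2}) and then invoking \eqref{eq:Omegacond:growth} to replace $d\Omega$ by $\Ds{\frac{d-4}{2}}d\Omega$, i.e.\ by $\|\Ds{\frac{d-2}{2}}d\vecu\|_{L^2}$.

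Next I would handle the time component $d\Omega_P^0$. The key is again the Hodge-type representation \eqref{eq:asdcioxckj12} of $\Omega_P^0$, but now we apply $\Ds{\frac{d-4}{2}}d$ rather than $\Ds{\frac{d-2}{2}}$. Since $\lapms{1}\Rz_a d$ is again a zeroth-order Calderón–Zygmund operator, we get $\Ds{\frac{d-4}{2}}d\Omega_P^0$ as a sum of terms $\lapms{1}\Rz_a\big(\Ds{\frac{d-4}{2}}d(\cdots)\big)$ where $(\cdots)$ runs over $P\,G_{1;a,0}(\vecu)\,P^T$, $\partial_tP\,\partial_aP^T-\partial_aP\,\partial_tP^T$, $\partial_aP\,\Omega^0P^T$, and $\partial_tP\,\Omega^aP^T$. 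For the $G_1$-term the decisive input is \eqref{eq:G1cond:1}, namely $\|\Ds{\frac{d-6}{2}}dG_{1;\alpha,\beta}(\vecu)\|_{L^2}\aleq(1+\ldots)(\|\Ds{\frac{d-2}{2}}d\vecu\|_{L^2}^2+\|\Ds{\frac{d-4}{2}}dd\vecu\|_{L^2}^2)$, together with \eqref{eq:G1cond:1Lpq} for the undifferentiated piece and \eqref{eq:411SS2} to absorb the $P$-factors; then \eqref{eq:dduest1} converts the $\|\Ds{\frac{d-4}{2}}dd\vecu\|_{L^2}^2$ contribution into $\|\Ds{\frac{d-2}{2}}d\vecu\|_{L^2}^2$. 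The $PdPdP$-type quadratic terms are bounded by $\|\Ds{\frac{d-4}{2}}ddP\|_{L^2}\,\|dP\|_{L^d}$ plus $\|\Ds{\frac{d-2}{2}}dP\|_{L^2}^2$-type quantities, and the mixed $dP\,\Omega$ terms by $\|\Ds{\frac{d-4}{2}}ddP\|_{L^2}\|\Omega\|_{L^d}+\|dP\|_{L^d}\|\Ds{\frac{d-4}{2}}d\Omega\|_{L^2}+\ldots$; all the $\|\cdot\|_{L^d}$ factors are $\aleq\|\Ds{\frac{d-2}{2}}d\vecu\|_{L^2}$ which is $<\eps$, so these are lower-order.

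Finally I close the estimate by the same absorption argument used for \eqref{eq:smallnessneeded}: combining the spatial bound (which expresses $\|\Ds{\frac{d-4}{2}}ddP\|_{L^2}$ in terms of $\|\Ds{\frac{d-4}{2}}d\Omega_P^0\|_{L^2}$, $\|\Ds{\frac{d-2}{2}}d\vecu\|_{L^2}$, and $\eps\|\Ds{\frac{d-4}{2}}ddP\|_{L^2}$) with the time bound (which controls $\|\Ds{\frac{d-4}{2}}d\Omega_P^0\|_{L^2}$ by $\|\Ds{\frac{d-2}{2}}d\vecu\|_{L^2}^2+\eps\,\|\Ds{\frac{d-4}{2}}ddP\|_{L^2}$), for $\eps$ small enough all $ddP$-terms on the right move to the left, yielding \eqref{eq:411SSfullddP}, and then plugging back gives \eqref{eq:411SSfulldd}. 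I expect the main obstacle to be purely bookkeeping: keeping the Leibniz-rule splittings at the exact integrability exponents $\tfrac{2d}{d-2}$, $\tfrac{2d}{d-4}$, $d$, $2$ consistent so that every product lands in $L^2$ with total smoothness $\frac{d-4}{2}$, and making sure the single new quantity $\|\Ds{\frac{d-4}{2}}ddP\|_{L^2}$ appears linearly (never quadratically) on the right-hand side so the absorption is legitimate — the term requiring the most care is the one where the extra derivative hits a $P$-factor inside a triple product $P\Omega^aP^T$, since one must peel off $\Ds{\frac{d-4}{2}}dP$ and estimate it against $\|\Omega\|_{L^d}$ and $\|P\|_{L^\infty}$ rather than naively using the $\frac{d-2}{2}$-level norm of $dP$.
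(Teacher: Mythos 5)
There is a genuine gap, and it sits exactly where the gauge structure has to be used. The claim \eqref{eq:411SSfulldd} is \emph{quadratic} in $\|\Ds{\frac{d-2}{2}}d\vec{u}\|_{L^2}$, but for the spatial components you propose to differentiate the raw definition $\Omega_P^a=-\partial_aP\,P^T+P\Omega^aP^T$ and estimate term by term. Two of the resulting terms are only \emph{linear}: $\|\Ds{\frac{d-4}{2}}\brac{d\partial_aP\,P^T}\|_{L^2}\aleq\|\Ds{\frac{d-2}{2}}dP\|_{L^2}\aleq\|\Ds{\frac{d-2}{2}}d\vec{u}\|_{L^2}$ by \eqref{eq:smallnessneeded}, and $\|\Ds{\frac{d-4}{2}}\brac{P\,d\Omega^a\,P^T}\|_{L^2}$, which via \eqref{eq:Omegacond:growth} is again only $\aleq(1+\ldots)\|\Ds{\frac{d-2}{2}}d\vec{u}\|_{L^2}$. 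These linear contributions do not cancel by inspection — that their combination is quadratic \emph{is} the content of the Coulomb gauge — so this route cannot produce the right-hand side of \eqref{eq:411SSfulldd}. The paper instead (i) gets the purely spatial derivatives $\nabla\Omega_P^a$ for free from the already quadratic bound \eqref{eq:411SS3} (which rests on $\div\Omega_P'=0$ and the curl condition), and (ii) rewrites the time derivative $\partial_t\Omega_P^a$ through the compatibility relation \eqref{eq:asdcioxckj1} as $-\partial_a\Omega_P^0+P\,G_{1;a,0}(\vec{u})\,P^T+(\text{products of }dP,\ \Omega,\ \Omega_P)$, where every term is quadratic, using \Cref{la:fullOmegaPLdest} for $\partial_a\Omega_P^0$ and \eqref{eq:G1cond:1Lpq}, \eqref{eq:G1cond:1} for $G_1$; only $\partial_t\Omega_P^0$ then needs the time-differentiated representation \eqref{eq:ptomega0pest} with the $\eps$-absorption, and that is the one part of your plan that is essentially right.

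A secondary bookkeeping error: you say you will ``solve for'' $\|\Ds{\frac{d-4}{2}}ddP\|_{L^2}$ from the spatial relation, but $d\Omega_P^a$ contains no $\partial_{tt}P$ at all — it only produces $\partial_\alpha\partial_aP$, which always carries a spatial derivative and is already controlled by \eqref{eq:smallnessneeded}. The genuinely new quantity $\partial_{tt}P$ is reached only through the time component, writing $\partial_tP=\Omega_P^0P-P\Omega^0$ and differentiating in $t$; this is how the paper reduces \eqref{eq:411SSfullddP} to the bound on $d\Omega_P$, and it is missing from your outline. (Also, $\lapms{1}\Rz_a\,d$ is not a zeroth-order operator when the derivative is $\partial_t$; what you actually need, and implicitly use, is only that $\partial_t$ commutes with the spatial multipliers.) Your coupled absorption is legitimate in principle, but without the curl relation for $\partial_t\Omega_P^a$ and the divergence-free input for $\nabla\Omega_P^a$, the system you close can only yield a linear bound on $d\Omega_P$, not \eqref{eq:411SSfulldd}.
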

\begin{proof}
We observe that
\[
 \|\Ds{\frac{d-4}{2}} d dP\|_{L^2(\R^d)} \aleq \|\Ds{\frac{d-2}{2}} d P\|_{L^2(\R^d)} + \|\Ds{\frac{d-4}{2}} \partial_{tt}P\|_{L^2(\R^d)}
\]
The first term is already estimated in \Cref{la:fullOmegaPLdest}. We then observe
\[
\begin{split}
 &\|\Ds{\frac{d-4}{2}} \partial_{tt}P\|_{L^2(\R^d)}\\
 =&\|\Ds{\frac{d-4}{2}} \brac{\partial_{tt}P\, P^T P}\|_{L^2(\R^d)}\\
 =&\|\Ds{\frac{d-4}{2}} \brac{\partial_t\brac{\partial_{t}P\, P^T} P}\|_{L^2(\R^d)}+\|\Ds{\frac{d-4}{2}} \brac{\partial_{t}P\, \partial_t P^T\, P}\|_{L^2(\R^d)}\\
 =&\|\Ds{\frac{d-4}{2}} \brac{\partial_t\Omega_P^0\ P}\|_{L^2(\R^d)}+\|\Ds{\frac{d-4}{2}} \brac{\partial_t(P^t \Omega^0 P)\ P}\|_{L^2(\R^d)}+\|\Ds{\frac{d-4}{2}} \brac{\partial_{t}P\, \partial_t P^T\, P}\|_{L^2(\R^d)}\\
 \aleq& (1+\|\Ds{\frac{d}{2}} P\|_{L^2(\R^d)})\, \|\Ds{\frac{d-4}{2}} \brac{\partial_t\Omega_P^0}\|_{L^2(\R^d)}\\
 &+\brac{1+\|\Ds{\frac{d-2}{2}} dP\|_{L^2(\R^d)}+ \|\Ds{\frac{d-2}{2}} dP\|_{L^2(\R^d)}^2+\|\Ds{\frac{d-2}{2}} dP\|_{L^2(\R^d)}^3}\, \|\Ds{\frac{d-4}{2}} d \Omega^0\|_{L^2(\R^d)}\\
 &+(1+\|\Ds{\frac{d}{2}} P\|_{L^2(\R^d)})\|\Ds{\frac{d-2}{2}} dP\|_{L^2(\R^d)}^2\\
 \end{split}
\]
In view of \Cref{la:fullOmegaPLdest}, \eqref{eq:Omegacond:growth}, and the smallness assumption \eqref{eq:411Ssmall}, we obtain
\[
 \|\Ds{\frac{d-4}{2}} d dP\|_{L^2(\R^d)} \aleq \|\Ds{\frac{d-2}{2}}d\vec{u}\|_{L^2(\R^d)}+\|\Ds{\frac{d-4}{2}} d\Omega_P\|_{L^2(\R^d)}.
\]
So it remains to estimate $d \Omega_P$. For $a \in \{1,\ldots,d\}$ we use \eqref{eq:asdcioxckj1}
\[
 \begin{split}
 \partial_t \Omega^a_P =& \partial_a \Omega^P\\
 &+P \brac{G_{1;a,0}(\vec{u})} P^T \\
 &+\partial_t P \partial_a P^T - \partial_a P \partial_t P^T\\
 &+ \partial_a P\,  \Omega^0 P^T\\
 &-\partial_t P\, \Omega^a P^T\\
 &+ P  \Omega^0\, \partial_a P^T\\
 &-\partial_t P\, \Omega^a P^T\\
 \end{split}
\]
and thus with \Cref{la:fullOmegaPLdest}, \eqref{eq:G1cond:1}, \eqref{eq:Omegacond:growth}, and the smallness assumption \eqref{eq:411Ssmall},
\[
 \max_{a \in \{1,\ldots,d\}}\|\Ds{\frac{d-4}{2}} d \Omega^a_P\|_{L^2(\R^d))} \aleq \|\Ds{\frac{d-2}{2}}d\vec{u}\|_{L^2(\R^d)}^2.
\]
So what we actually only need to estimate is $\partial_t \Omega^0_P$, and we observe again from \eqref{eq:asdcioxckj1}
\begin{equation}\label{eq:ptomega0pest}
\begin{split}
 \Ds{\frac{d-4}{2}} \partial_t\Omega^0_P =& \Ds{\frac{d-6}{2}}  \Rz_a \partial_t (P \underbrace{\brac{\partial_a \Omega^0 +\partial_t \Omega^a}}_{=G_{1;a,0}(\vec{u})} P^T) \\
 &+\Ds{\frac{d-6}{2}}  \Rz_a \partial_t \brac{\partial_t P \partial_a P^T - \partial_a P \partial_t P^T}\\
 &+ \Ds{\frac{d-6}{2}}  \Rz_a \partial_t\brac{\partial_a P\,  \Omega^0 P^T}\\
 &-\Ds{\frac{d-6}{2}}  \Rz_a \partial_t \brac{\partial_t P\, \Omega^a P^T}\\
 &+ \Ds{\frac{d-6}{2}}  \Rz_a \partial_t \brac{P  \Omega^0\, \partial_a P^T}\\
 &-\Ds{\frac{d-6}{2}}  \Rz_a \partial_t \brac{\partial_t P\, \Omega^a P^T}\\
 \overset{\eqref{eq:OmegaPalpha}}{=}& \Ds{\frac{d-6}{2}}  \Rz_a \partial_t (P\, G_{1;a,0}(\vec{u})\, P^T) \\
 &+\Ds{\frac{d-6}{2}}  \Rz_a \partial_t \brac{\Omega_P^0 \partial_a P^T - \partial_a P\, P^T  \Omega_P^0}\\
 &+\Ds{\frac{d-6}{2}}  \Rz_a \partial_t \brac{-P \Omega^0\, \partial_a P^T + \partial_a P\, \Omega^0 P^T}\\
 &+ \Ds{\frac{d-6}{2}}  \Rz_a \partial_t\brac{\partial_a P\,  \Omega^0 P^T}\\
 &+\Ds{\frac{d-6}{2}}  \Rz_a \partial_t \brac{P\Omega^0  \Omega^a P^T}\\
 &-\Ds{\frac{d-6}{2}}  \Rz_a \partial_t \brac{\Omega^0_P P \Omega^a P^T}\\
 &+ \Ds{\frac{d-6}{2}}  \Rz_a \partial_t \brac{P  \Omega^0\, \partial_a P^T}\\
 &+\Ds{\frac{d-6}{2}}  \Rz_a \partial_t \brac{\Omega^0\, \Omega^a P^T}\\
&-\Ds{\frac{d-6}{2}}  \Rz_a \partial_t \brac{\Omega^0_P P \Omega^a P^T}
\end{split}
\end{equation}
Then we have \Cref{la:fullOmegaPLdest}, \eqref{eq:G1cond:1}, \eqref{eq:Omegacond:growth}, and the smallness assumption \eqref{eq:411Ssmall},
\[
\begin{split}
 &\|\Ds{\frac{d-4}{2}}\partial_t\Omega^0_P \|_{L^2(\R^d)} \\
 \aleq &\brac{1+\|\Ds{\frac{d-2}{2}}dP\|_{L^2(\R^d)}+\|\Ds{\frac{d-2}{2}}dP\|_{L^2(\R^d)}^2} \|\Ds{\frac{d-2}{2}} d\vec{u}\|_{L^2(\R^d)}^2\\
 &+ \brac{1+\|\Ds{\frac{d-2}{2}}dP\|_{L^2(\R^d)}+\|\Ds{\frac{d-2}{2}}dP\|_{L^2(\R^d)}^2}\, \brac{\|\Ds{\frac{d-2}{2}}\Omega \|_{L^2(\R^d)}+\|\Ds{\frac{d-2}{2}}dP\|_{L^2(\R^d)}}\, \|\Ds{\frac{d-4}{2}}\partial_t  \Omega^0_P\|_{L^2(\R^d)}\\
 \aleq& \|\Ds{\frac{d-2}{2}} d\vec{u}\|_{L^2(\R^d)}^2\\
 &+ \eps \|\Ds{\frac{d-4}{2}}\partial_t  \Omega^0_P\|_{L^2(\R^d)}\\
 \end{split}
\]
For $\eps$ sufficiently small we can absorb the last term and obtain the desired result, \eqref{eq:411SSfulldd} and \eqref{eq:411SSfullddP}.
\end{proof}

\begin{lemma}
Let $s \in [0,\frac{d-{2}}{2}{]}$, $p \in (\frac{d}{d+s-1} ,\frac{d}{s})$ then, assuming \eqref{eq:411Ssmall} is satisfied,
\begin{equation}\label{eq:411SSfullLqdd1}
 \|\Ds{s}\Omega_P\|_{L^p(\R^d)} \aleq \|\Ds{\frac{d}{2}} \vec{u}\|_{L^2(\R^d)}\, \|\Ds{s} d\vec{u} \|_{L^p(\R^d)}
\end{equation}
and if $s \geq 1$ and $p > \frac{d}{d+s-2}$
\begin{equation}\label{eq:411SSfullLqdd}
 \|\Ds{s-1}d\Omega_P\|_{L^p(\R^d)} \aleq \|\Ds{\frac{d}{2}} \vec{u}\|_{L^2(\R^d)}\, \|\Ds{s} d\vec{u} \|_{L^p(\R^d)}.
\end{equation}
Also
\begin{equation}\label{eq:411SSfullLqddP1}
 \|\Ds{s}dP\|_{L^p(\R^d)} \aleq \|\Ds{s} d\vec{u} \|_{L^p(\R^d)}.
\end{equation}
and if $s \geq 1$
\begin{equation}\label{eq:411SSfullLqddP}
 \|\Ds{s-1}ddP\|_{L^p(\R^d)} \aleq \|\Ds{s} d\vec{u} \|_{L^p(\R^d)}.
\end{equation}
We also have  as a variation of \eqref{eq:411SSfullLqdd}, cf. \eqref{eq:G1cond:3},
\begin{equation}\label{eq:411SSfullG1cond:3:OmegaP}
 \|d \Omega_P\|_{L^{(\frac{2d}{3},2)}(\R^d)} \aleq \|d\vec{u}\|_{L^{(2d,2)}(\R^d)}\, \|\Ds{\frac{d-2}{2}} d\vec{u}\|_{L^{2}(\R^d)}
\end{equation}

\end{lemma}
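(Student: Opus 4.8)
The plan is to re-run, nearly verbatim, the bootstrap already carried out in \Cref{la:spatialOmegaPLdest}, \Cref{la:fullOmegaPLdest} and \Cref{la:fullOmegaPLdestdd}, but now keeping, in each bilinear or multilinear term, one factor in the critical homogeneous scale --- where it is $\aleq\eps$ by the smallness assumption \eqref{eq:411Ssmall} together with the already-proven $L^2$-scale bounds \eqref{eq:411SS2}, \eqref{eq:411SS3}, \eqref{eq:smallnessneeded}, \eqref{eq:411SSfull}, \eqref{eq:411SSfulldd}, \eqref{eq:411SSfullddP} --- and letting the complementary factor carry the $L^p$-norm. The only new inputs relative to those lemmas are the $L^{(p,q)}$-forms of the growth and curl hypotheses from \Cref{def:FOmegagrowth} and \Cref{def:admissibleomega}, namely \eqref{eq:Omegacond:growthpq}, \eqref{eq:Omegacond:growthpq2}, \eqref{eq:G1cond:1Lpq}, \eqref{eq:G1cond:2} and \eqref{eq:G1cond:3}; everything else --- the Hodge decomposition of a vector field into curl-free and divergence-free parts, the gauge identity \eqref{eq:OmegaPalpha}, the elliptic identities \eqref{eq:asdcioxckj1} and \eqref{eq:ptomega0pest} expressing $\Omega_P^0$ and $\partial_t\Omega_P^0$, the fractional Leibniz rule \Cref{la:basicLeibniz}, Sobolev embedding, and the $L^2$-scale $P$-bounds --- is reused unchanged. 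The stated ranges of $p$ are precisely those for which every intermediate Sobolev/Hardy--Littlewood--Sobolev/H\"older exponent lands in $(1,\infty)$.

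For the first-order estimates \eqref{eq:411SSfullLqdd1} and \eqref{eq:411SSfullLqddP1} I would first treat the spatial components. Since the Uhlenbeck gauge yields $\sum_a\partial_a\Omega_P^a=0$, one has, writing $\Omega$ for its spatial part as in \Cref{la:spatialOmegaPLdest}, $\Omega_P=\Rz^\perp(\nabla P^T P+P\Omega P^T)=[\Rz^\perp,P]\nabla P+\Rz^\perp([P,\Omega]P)+P\,(\Rz^\perp\Omega)\,P^T$, with $\Rz^\perp\Omega=\lapms{1}\curl\Omega=\lapms{1}G_1(\vec u)$. Applying $\Ds{s}$ and distributing with \Cref{la:basicLeibniz}, each $P$-factor goes into $L^\infty$ or into a scale bounded via \eqref{eq:411SS2} (hence $\aleq\eps$), while the remaining derivative falls either on $\Omega$, where \eqref{eq:Omegacond:growthpq2} converts it into $\|\Ds{s}d\vec u\|_{L^p}$, or on $\lapms{1}G_1$, where \eqref{eq:G1cond:1Lpq} does the same. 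For $\Omega_P^0$ I would feed into this scheme the representation $\Omega_P^0=\lapms{1}\Rz_a(\partial_t\Omega_P^a+\partial_a\Omega_P^0)$ together with the explicit expansion \eqref{eq:asdcioxckj1}: the leading term $P\,G_{1;a,0}(\vec u)\,P^T$ is controlled by $\Ds{s}\lapms{1}\Rz_a$ and \eqref{eq:G1cond:1Lpq}, the remaining terms, bilinear in $dP$ and in $(dP,\Omega)$, by \Cref{la:basicLeibniz}, the $L^2$-scale bounds, and the spatial estimate just obtained. Estimate \eqref{eq:411SSfullLqddP1} then follows from $\nabla P=(\Omega_P^a-P\Omega^a P^T)P$ and $\partial_t P=(\Omega_P^0-P\Omega^0 P^T)P$ (from \eqref{eq:OmegaPalpha}), applying $\Ds{s}$, inserting the $\Omega_P$-bound and \eqref{eq:Omegacond:growthpq2}; as in \Cref{la:fullOmegaPLdest} the coupled $\eps$-small contributions of $(\Omega_P^0,\partial_t P)$ are absorbed by choosing $\eps$ in \eqref{eq:411Ssmall} small.

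The second-order estimates \eqref{eq:411SSfullLqdd} and \eqref{eq:411SSfullLqddP} are obtained by repeating the previous step with $\Ds{s-1}d$ in place of $\Ds{s}$: for spatial $a$, $\partial_b\Omega_P^a$ comes from differentiating the representation above, while $\partial_t\Omega_P^a=\partial_a\Omega_P^0+(\text{terms quadratic in }dP,\Omega)$ by \eqref{eq:asdcioxckj1}; $\partial_t\Omega_P^0$ is expanded through \eqref{eq:ptomega0pest}; and $\partial_{tt}P=\partial_t\big((\Omega_P^0-P\Omega^0 P^T)P\big)$, exactly as in \Cref{la:fullOmegaPLdestdd}. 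The new $L^2$-scale inputs are \eqref{eq:411SSfulldd}, \eqref{eq:411SSfullddP} and \eqref{eq:dduest1}, and the coupled $\partial_t\Omega_P^0$- and $\partial_{tt}P$-terms are again absorbed by smallness; the constraint $p>\tfrac{d}{d+s-2}$ is what makes the extra $\lapms{1}$ in the $\Omega_P^0$-term act on an exponent in $(1,\infty)$. Finally, \eqref{eq:411SSfullG1cond:3:OmegaP} is the $d\Omega_P$-computation once more, this time placing the active leg at the $L^{(2d,2)}$-level: wherever $d$ meets $G_{1;\alpha,\beta}$ one invokes \eqref{eq:G1cond:3} with $\sigma=0$ together with $\|dd\vec u\|_{L^{\frac d2}}\aleq\|\Ds{\frac{d-4}{2}}dd\vec u\|_{L^2}\aleq\|\Ds{\frac{d-2}{2}}d\vec u\|_{L^2}$ (Sobolev and \eqref{eq:dduest1}), and elsewhere one uses \Cref{la:basicLeibniz}, \eqref{eq:Omegacond:growth2d} and the $L^2$-scale bounds, which produces the claimed bilinear right-hand side.

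I expect no new analytic phenomenon here; the difficulty is entirely organisational. One must (a) choose the intermediate exponents in each multilinear splitting so that they remain in $(1,\infty)$ --- which is exactly what forces the stated $p$-ranges --- and (b) verify that every resulting term genuinely has the form (a factor $\aleq\eps$ in the critical scale) times ($\|\Ds{s}d\vec u\|_{L^p}$, via an $L^{(p,q)}$ growth or curl condition), which requires tracking the $P$-factors through the chain of identities so that they always land in $L^\infty$ or in the scale controlled by \eqref{eq:411SS2}, \eqref{eq:smallnessneeded}, \eqref{eq:411SSfullddP}. Because $\Omega_P^0$, $\partial_t\Omega_P^0$ and $\partial_{tt}P$ each recur on the right-hand side of their own identity, the estimates must be established in the order spatial $\Omega_P\to dP\to\Omega_P^0\to$ their derivatives, shrinking $\eps$ in \eqref{eq:411Ssmall} at each absorption --- precisely as already done in \Cref{la:spatialOmegaPLdest,la:fullOmegaPLdest,la:fullOmegaPLdestdd}.
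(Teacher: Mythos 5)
Your proposal follows essentially the same route as the paper's proof: spatial part via the divergence-free property and the $\Rz^\perp$/commutator decomposition with the curl condition turning $\Rz^\perp\Omega$ into $G_1$, then $\Omega_P^0$ through the elliptic identity \eqref{eq:asdcioxckj12}, then $dP$, $\partial_t\Omega_P^0$, $\partial_{tt}P$ via \eqref{eq:OmegaPalpha} and \eqref{eq:ptomega0pest}, with the same Lorentz-space growth/curl hypotheses, the same prior $L^2$-scale lemmas, the same absorption-by-smallness bootstrap, and the same use of \eqref{eq:G1cond:3} for \eqref{eq:411SSfullG1cond:3:OmegaP}. I see no gap in the outline beyond the exponent bookkeeping you already flag, which is exactly what the paper carries out.
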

\begin{proof}
We begin again with the \underline{spatial part} $\Omega_P' = (\Omega_P^a)_{a=1\ldots,d}$ and then observe that since $\div(\Omega_P') = 0$,
\[
\begin{split}
 \|\Ds{s} \Omega_P'\|_{L^p(\R^d)} \aleq &\|\Ds{s} \Rz^\perp \brac{\Omega_P'}\|_{L^p(\R^d)}\\
  \aleq& \|\Ds{s} [\Rz^\perp,P](\nabla P^T)\|_{L^p(\R^d)}+\|\Ds{s+1} \Rz^\perp \brac{P \Omega' P^T}\|_{L^p(\R^d)}\\
  \aleq& \brac{\|\Ds{\frac{d-2}{2}} dP\|_{L^2(\R^d)}+\|\Ds{\frac{d-2}{2}} dP\|_{L^2(\R^d)}^2}\, \brac{\| \Ds{s+1}P \|_{L^{p}(\R^d)}+\|\Ds{s}\Omega'\|_{L^p(\R^d)}}
\\
&+ \|\Ds{s-1}\curl(\Omega')\|_{L^p(\R^d)}
 \end{split}
\]
With \Cref{la:fullOmegaPLdest}, \eqref{eq:Omegacond:growthpq}, \eqref{eq:G1cond:1Lpq}, and \eqref{eq:411Ssmall}, we find for any $s \in (0,\frac{d}{2})$,
\begin{equation}\label{eq:Lpqest:1}
\begin{split}
 \|\Ds{s} \Omega_P'\|_{L^p(\R^d)}   \aleq& \|\Ds{\frac{d-2}{2}} d\vec{u}\|_{L^2(\R^d)}\, \brac{\| \Ds{s+1}P \|_{L^{p}(\R^d)}+\|\Ds{s}d\vec{u}\|_{L^p(\R^d)}}
 \end{split}
\end{equation}
Moreover,
\[
 \nabla P = \Omega_P'\, P - P \Omega'
\]
so that again with the same estimates as above, whenever $p < \frac{d}{s}$,
\begin{equation}\label{eq:Lpqest:2}
\begin{split}
 \|\Ds{s+1} P\|_{L^p(\R^d)} \aleq &\max_{t \in [0,s]}  \|\Ds{t} \Omega_P'\|_{L^{\frac{pd}{d-(s-t)p}}(\R^d)}\, \|\Ds{s-t} P\|_{L^{\frac{d}{s-t}}(\R^d)}\\
 &+\max_{t \in [0,s]} \in \|\Ds{t} \Omega'\|_{L^{\frac{pd}{d-(s-t)p}}(\R^d)}\, \|\Ds{s-t} P\|_{L^{\frac{d}{s-t}}(\R^d)}\\
 \aleq&(1+\|\Ds{\frac{d-2}{2}} dP\|_{L^{2}(\R^d)}) \brac{\|\Ds{s} \Omega_P'\|_{L^p(\R^d)}+\|\Ds{s} d\vec{u}\|_{L^{\frac{p}{p}}(\R^d)}}
 \end{split}
\end{equation}
Combining \eqref{eq:Lpqest:1} and \eqref{eq:Lpqest:2}, together with \Cref{la:fullOmegaPLdest} and \eqref{eq:411Ssmall} we readily obtain \eqref{eq:411SSfullLqdd} and the spatial version of \eqref{eq:411SSfullLqddP1}:
\begin{equation}\label{eq:Lpqest:spatialP}
\begin{split}
 \|\Ds{s+1} P\|_{L^p(\R^d)} \aleq& \|\Ds{s} d\vec{u}\|_{L^p(\R^d)}\\
\max_{a \in \{1,\ldots,d\}} \|\Ds{s} \Omega_P^a\|_{L^p(\R^d)} \aleq& \|\Ds{\frac{d-2}{2}} d\vec{u}\|_{L^2(\R^d)}\, \|\Ds{s} d\vec{u}\|_{L^p(\R^d)}
 \end{split}
\end{equation}

With the help of \eqref{eq:asdcioxckj12} we obtain under the assumption that $p> \frac{d}{d-(1-s)}$, for $\bar{s} := \max\{s-1,0\}$ and $\underline{s} := -\min\{s-1,0\}$ (observe that we can choose the spatial objects for the $L^p$-norm and can apply \eqref{eq:Lpqest:spatialP})
\[
\begin{split}
 &\|\Ds{s}\Omega^0_P\|_{L^p(\R^d)}\\
 \aleq& \|\Ds{\bar{s}} \brac{P G_{1;a,0}(\vec{u}) P^T}\|_{L^{\frac{dp}{d+\underline{s} p}}(\R^d)}\\
 &+\max_{a \in \{1,\ldots,d\}}\|\Ds{\bar{s}}\brac{\partial_t P \partial_a P^T}\|_{L^{\frac{dp}{d+\underline{s} p}}(\R^d)}\\
 &+\max_{a \in \{1,\ldots,d\}}\|\Ds{\bar{s}}\brac{\partial_t P\, \Omega^a P^T}\|_{L^{\frac{dp}{d+\underline{s} p}}(\R^d)}\\
 &+\max_{a \in \{1,\ldots,d\}}\|\Ds{\bar{s}}\brac{\partial_a P\,  \Omega^0 P^T}\|_{L^{\frac{dp}{d+\underline{s} p}}(\R^d)}\\
 &+\max_{a \in \{1,\ldots,d\}}\|\Ds{\bar{s}}\brac{P  \Omega^0\, \partial_a P^T}\|_{L^{\frac{dp}{d+\underline{s} p}}(\R^d)}\\
 &+\max_{a \in \{1,\ldots,d\}}\|\Ds{\bar{s}}\brac{\partial_t P\, \Omega^a P^T}\|_{L^{\frac{dp}{d+\underline{s} p}}(\R^d)}\\
 \aleq&\brac{\|\Ds{\frac{d-2}{2}} dP\|_{L^2(\R^d)} + \|\Ds{\frac{d-2}{2}} d\vec{u}\|_{L^2(\R^d)}+\|\Ds{\frac{d-2}{2}} \Omega\|_{L^{2}(\R^d)}} \brac{\|\Ds{s}d\vec{u}\|_{L^p(\R^d)}+\|\Ds{s+1}P\|_{L^p(\R^d)}+\|\Omega'\|_{L^p(\R^d)}}\\
 \aleq&\|\Ds{\frac{d-2}{2}} d\vec{u}\|_{L^2(\R^d)} \|\Ds{s}d\vec{u}\|_{L^p(\R^d)}.
 \end{split}
\]
Since
\[
 \partial_t P = \Omega_P^0\, P + P \Omega^0,
\]
we can argue as before, and we have established \eqref{eq:411SSfullLqdd1} and \eqref{eq:411SSfullLqddP1}.

We continue by using \eqref{eq:ptomega0pest}. Let $\bar{s} := \max\{s-2,0\}$ and $\underline{s} := -\min\{s-2,0\}$ (observe that $\bar{s} \leq s-1$ since $s \geq 1$ by assumption), then we have for $p > \frac{d}{d+s-2}$, again by choosing the $L^p$-norms carefully,
\begin{equation}\label{eq:alkcmlzcipvxj1}
\begin{split}
&\|\Ds{s-1}\partial_t\Omega^0_P\|_{L^p(\R^d)}\\
\aleq& \|\Ds{\bar{s}}\partial_t (P\, G_{1;a,0}(\vec{u})\, P^T)\|_{L^{\frac{dp}{d+\underline{s}p}}(\R^d)} \\
 &+\|\Ds{\bar{s}}\partial_t \brac{\Omega_P^0 \partial_a P^T - \partial_a P\, P^T  \Omega_P^0}\|_{L^{\frac{dp}{d+\underline{s}p}}(\R^d)}\\
 &+\|\Ds{\bar{s}}\partial_t \brac{-P \Omega^0\, \partial_a P^T + \partial_a P\, \Omega^0 P^T}\|_{L^{\frac{dp}{d+\underline{s}p}}(\R^d)}\\
 &+ \|\Ds{\bar{s}}\partial_t\brac{\partial_a P\,  \Omega^0 P^T}\|_{L^{\frac{dp}{d+\underline{s}p}}(\R^d)}\\
 &+\|\Ds{\bar{s}}\partial_t \brac{P\Omega^0  \Omega^a P^T}\|_{L^{\frac{dp}{d+\underline{s}p}}(\R^d)}\\
 &+\|\Ds{\bar{s}}\partial_t \brac{\Omega^0_P P \Omega^a P^T}\|_{L^{\frac{dp}{d+\underline{s}p}}(\R^d)}\\
 &+ \|\Ds{\bar{s}}\partial_t \brac{P  \Omega^0\, \partial_a P^T}\|_{L^{\frac{dp}{d+\underline{s}p}}(\R^d)}\\
 &+\|\Ds{\bar{s}}\partial_t \brac{\Omega^0\, \Omega^a P^T}\|_{L^{\frac{dp}{d+\underline{s}p}}(\R^d)}\\
&+\|\Ds{\bar{s}}\partial_t \brac{\Omega^0_P P \Omega^a P^T}\|_{L^{\frac{dp}{d+\underline{s}p}}(\R^d)}\\
\aleq&\|\Ds{\frac{d-2}{2}}dP\|_{L^2(\R^d)}\|G_{1;a,0}(\vec{u})\|_{L^{p}(\R^d)} +  \|\partial_t G_{1;a,0}(\vec{u})\|_{L^{\frac{dp}{d+\underline{s}p}}(\R^d)}\\
&+\brac{\|\Ds{\frac{d-2}{2}}dP\|_{L^2(\R^d)}+ \|\Ds{\frac{d-4}{2}}d\Omega^0_P\|_{L^2(\R^d)}+\|\Ds{\frac{d-4}{2}}d\Omega\|_{L^2(\R^d)}}\cdot \\
&\quad \, \cdot \brac{\|\Ds{s}\Omega^0_P\|_{L^p(\R^d)}+\|\Ds{s+1}P\|_{L^p(\R^d)}}\\
\overset{\eqref{eq:411SSfullLqdd1},\eqref{eq:411SSfullLqddP1}}{\aleq}& \|\Ds{\frac{d-2}{2}} d\vec{u}\|_{L^2(\R^d)} \|\Ds{s}d\vec{u}\|_{L^p(\R^d)}
\end{split}
\end{equation}
Observing \eqref{eq:asdcioxckj12} and choosing again wisely the $L^p$-norms and \eqref{eq:411SSfullddP} we conclude
\[
 \|\Ds{s-1}\partial_t\Omega_P\|_{L^p(\R^d)}\aleq \|\Ds{s-1}\partial_t\Omega_P^0\|_{L^p(\R^d)}+\|\Ds{\frac{d-2}{2}} d\vec{u}\|_{L^2(\R^d)} \|\Ds{s}d\vec{u}\|_{L^p(\R^d)}.
\]
This establishes \eqref{eq:411SSfullLqdd}.

For \eqref{eq:411SSfullLqddP} we observe again that
\[
\begin{split}
 \partial_{tt} P =& \partial_t (\partial_t P\, P^T) P - (\partial_t P\, \partial_t P^T) P\\
  =& \partial_t (\Omega^0_P) P + \partial_t (P\Omega^0 P^T)\, P- (\partial_t P\, \partial_t P^T) P\\
 \end{split}
\]
Then we have similar to the arguments above, making also full use of \eqref{eq:411SSfullLqddP1},
\[
 \|\Ds{s-1}ddP\|_{L^p(\R^d)} \overset{\eqref{eq:411SSfullLqddP1}}{\aleq }  \|\Ds{s}d\vec{u}\|_{L^p(\R^d)}.
\]

Lastly, for \eqref{eq:411SSfullG1cond:3:OmegaP}, we slightly adapt the arguments e.g. as in \eqref{eq:alkcmlzcipvxj1}. By collecting only $dP$ and $\Omega_P$-terms (without additional derivatives) in $L^{(2d,2)}$-norms, we find
\[
\begin{split}
&\|\partial_t\Omega^0_P\|_{L^{(\frac{2d}{3},2)}(\R^d)}\\
\aleq& \|\partial_t (P\, G_{1;a,0}(\vec{u})\, P^T)\|_{L^{(\frac{2d}{5},2)}(\R^d)} \\
 &+\|\partial_t \brac{\Omega_P^0 \partial_a P^T - \partial_a P\, P^T  \Omega_P^0}\|_{L^{(\frac{2d}{5},2)}(\R^d)}\\
 &+\|\partial_t \brac{-P \Omega^0\, \partial_a P^T + \partial_a P\, \Omega^0 P^T}\|_{L^{(\frac{2d}{5},2)}(\R^d)}\\
 &+ \|\partial_t\brac{\partial_a P\,  \Omega^0 P^T}\|_{L^{(\frac{2d}{5},2)}(\R^d)}\\
 &+\|\partial_t \brac{P\Omega^0  \Omega^a P^T}\|_{L^{(\frac{2d}{5},2)}(\R^d)}\\
 &+\|\partial_t \brac{\Omega^0_P P \Omega^a P^T}\|_{L^{(\frac{2d}{5},2)}(\R^d)}\\
 &+ \|\partial_t \brac{P  \Omega^0\, \partial_a P^T}\|_{L^{(\frac{2d}{5},2)}(\R^d)}\\
 &+\|\partial_t \brac{\Omega^0\, \Omega^a P^T}\|_{L^{(\frac{2d}{5},2)}(\R^d)}\\
&+\|\partial_t \brac{\Omega^0_P P \Omega^a P^T}\|_{L^{(\frac{2d}{5},2)}(\R^d)}\\
\aleq&(1+\|\partial_t P\|_{L^{d}(\R^d)})\, \|dG_{1;a,0}(\vec{u})\|_{L^{(\frac{2d}{5},2)}(\R^d)} \\
&+ \brac{\|d\Omega_P\|_{L^{\frac{d}{2}}(\R^d)} + \|d\Omega\|_{L^{\frac{d}{2}}(\R^d)}+ \|d\Dso P\|_{L^{\frac{d}{2}}(\R^d)} }\brac{\|dP\|_{L^{(2d,2)}} +\|\Omega\|_{L^{(2d,2)}(\R^d)}}\\
\aleq&\|d\vec{u}\|_{L^{2d}(\R^d)}\, \|\Ds{\frac{d-2}{2}} d\vec{u}\|_{L^{2}(\R^d)}.
\end{split}
\]
In the second to last inequality we used \eqref{eq:411Ssmall} to avoid writing squares of $L^d$-norms.
In the last inequality we used \eqref{eq:411SSfullLqddP1},\eqref{eq:411Ssmall},\eqref{eq:G1cond:3},\eqref{eq:Omegacond:growth},\eqref{eq:Omegacond:growth2d}.

The other terms of $d\Omega_P$ are similar.

We can conclude.
\end{proof}

\begin{lemma}[$L^\infty$-estimates]\label{la:omegaPLinfty}
Under the assumptions of \Cref{la:spatialOmegaPLdest} we have
\[
 \|\Omega_P\|_{L^\infty} \aleq \|d\vec{u}\|_{L^{(2d,2)}(\R^d)}^2.
\]
\end{lemma}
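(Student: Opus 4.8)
The plan is to recover the $L^\infty$-bound by combining a subcritical integrability bound on $\Omega_P$ with a bound one derivative higher, and then interpolating via Sobolev embedding. Concretely, I would first recall from \eqref{eq:411SSfullLqddP1} and \eqref{eq:411SSfullLqdd1} (applied with suitable exponents, e.g. $s$ slightly below $\tfrac{d-2}{2}$ and $s$ slightly above) that $\Omega_P$ sits in $L^{p_1}$ and $\Ds{1}\Omega_P \in L^{p_2}$ for exponents straddling the critical relation $\tfrac{1}{p} = \tfrac{1}{d}$, so that one of the two embeddings $\dot W^{1,p_2} \hookrightarrow L^\infty$ (when $p_2 > d$) or a combination of the two controls $\|\Omega_P\|_{L^\infty}$. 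Since both estimates in that lemma carry the factor $\|\Ds{\frac{d-2}{2}} d\vec{u}\|_{L^2}$ times $\|\Ds{s} d\vec{u}\|_{L^p}$, and under the smallness assumption \eqref{eq:411Ssmall} (together with \eqref{eq:dduest2}, \eqref{eq:Dstm1estddvu:orig}) the quantity $\|\Ds{s} d\vec{u}\|_{L^p}$ for the relevant $(s,p)$ can be traded against a scaling-equivalent norm of $d\vec{u}$, the product of the two resulting bounds should collapse to $\|d\vec{u}\|_{L^{(2d,2)}}^2$.

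More precisely, I would argue directly from the representation formulas rather than black-boxing the previous lemma. For the spatial part, $\div(\Omega_P') = 0$ gives $\Omega_P' = \Rz^\perp(\Omega_P') = [\Rz^\perp,P](\nabla P^T) + \Rz^\perp \Rz^\perp(P \curl(\Omega') P^T)$-type terms; estimating these in $L^{(2d,2)}$ and in a space one derivative higher, using \Cref{la:fullOmegaPLdest}, \Cref{la:fullOmegaPLdestdd}, the curl estimate \eqref{eq:G1cond:3} (which furnishes $\|dG_1\|_{L^{(\frac{2d}{5-\sigma},q)}}$), and \eqref{eq:Omegacond:growth2d}, yields that $\Omega_P' \in L^{(2d,2)}$ with the square bound and $d\Omega_P' \in L^{(\frac{2d}{3},2)}$ with the same square bound by \eqref{eq:411SSfullG1cond:3:OmegaP}. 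A Sobolev--Lorentz embedding $\dot W^{1,(\frac{2d}{3},2)} \cap L^{(2d,2)} \hookrightarrow L^\infty$ (valid since $\tfrac{3}{2d} - \tfrac1d = \tfrac{1}{2d} > 0$ and the second Lorentz index is $2 < \infty$, so the endpoint embedding into $L^\infty$ holds) then gives the spatial part. For $\Omega_P^0$ I would use \eqref{eq:asdcioxckj12} and the just-proven estimate \eqref{eq:411SSfullG1cond:3:OmegaP} on $d\Omega_P^0$ together with the $L^{(2d,q)}$-bound on $\Omega_P^0$ from \Cref{la:fullOmegaPLdest}, and apply the same embedding.

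The main obstacle I expect is the endpoint nature of the $L^\infty$-embedding: one cannot simply use $\dot W^{1,d}\hookrightarrow L^\infty$ (false), so it is essential to exploit the Lorentz refinement — $\Omega_P \in L^{(2d,2)}$ and $d\Omega_P \in L^{(\frac{2d}{3},2)}$ with finite second index — to land in $L^\infty$, which forces careful bookkeeping of Lorentz exponents through every fractional Leibniz and commutator estimate (ensuring the summability index stays $\leq 2$, or at worst finite, at each step). A secondary technical point is that $dd\vec{u}$ (via $\partial_{tt}\vec{u}$) must be controlled in $L^{d/2}$ to use \eqref{eq:G1cond:3}; this is exactly provided by \eqref{eq:411Ssmallv1}/\eqref{eq:ap:smallnessdd} and \Cref{la:dealingwithddu}, so no new input is needed, but one must invoke it. Once the Lorentz-space Sobolev embedding into $L^\infty$ is in place, the proof is a direct assembly of the estimates already established in \Cref{la:fullOmegaPLdest}, \Cref{la:fullOmegaPLdestdd}, and the preceding lemma, and everything multiplies down to $\|d\vec{u}\|_{L^{(2d,2)}(\R^d)}^2$.
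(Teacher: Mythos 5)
Your proposal has a genuine gap at its central analytic step: the claimed embedding $\dot W^{1,(\frac{2d}{3},2)} \cap L^{(2d,2)} \hookrightarrow L^\infty$ is false. One derivative in $L^{\frac{2d}{3}}$ is \emph{subcritical} with respect to $L^\infty$: the Sobolev exponent computation $\frac{3}{2d}-\frac{1}{d}=\frac{1}{2d}>0$ means $\dot W^{1,(\frac{2d}{3},2)}$ embeds into $L^{(2d,2)}$, not into $L^\infty$ (to reach $L^\infty$ with one derivative you would need $1 \geq \frac{d}{p}$, i.e. the gradient in $L^{(d,1)}$ — first index exactly $d$, second index $1$). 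Moreover, the two pieces of information you intersect, $\Omega_P \in L^{(2d,2)}$ and $d\Omega_P \in L^{(\frac{2d}{3},2)}$, live at the \emph{same} scaling level: any Gagliardo--Nirenberg interpolation between them returns only $L^{(2d,q)}$-type norms, so no choice of Lorentz second index can produce boundedness. Your fallback in the first paragraph ($\dot W^{1,p_2}\hookrightarrow L^\infty$ with $p_2>d$) is also unavailable: the estimates of the previous lemmas control $d\Omega_P$ only in exponents at or below the critical one, and any supercritical bound would require norms of $\vec u$ not controlled by $\|d\vec u\|_{L^{(2d,2)}}^2$, which is the specific (small, Strichartz-type) quantity that must appear on the right-hand side.

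The missing idea is to get a \emph{full} derivative of $\Omega_P$ into $L^{(d,1)}$, and this is exactly where the gauge and the curl structure enter in the paper's argument. Since $\sum_a \partial_a \Omega_P^a=0$, one has $\Dso\Omega_P^b = c\sum_a \Rz_a(\partial_b\Omega_P^a-\partial_a\Omega_P^b)$ (and the analogous identity for $\Omega_P^0$ via \eqref{eq:asdcioxckj1}), so only the curl of $\Omega_P$ needs to be estimated. By \eqref{eq:asdcioxckj2} this curl is $P\,G_{1}(\vec u)\,P^T$ plus terms of the schematic form $dP\, dP$ and $dP\,\Omega$; each such term is a product of two factors in $L^{(2d,2)}$, and by the Lorentz--H\"older product rule $L^{(2d,2)}\cdot L^{(2d,2)}\subset L^{(d,1)}$, together with the assumption \eqref{eq:G1cond:2} (which is designed precisely for this step) and \Cref{la:L2destP}, \eqref{eq:Omegacond:growth2d}, one gets $\|\Dso\Omega_P\|_{L^{(d,1)}}\aleq \|d\vec u\|_{L^{(2d,2)}}^2$. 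Only then does the endpoint Sobolev--Lorentz embedding $W^{1,(d,1)}(\R^d)\subset C^0\cap L^\infty$ apply and yield the quadratic $L^\infty$-bound. So the structural inputs you need are the divergence-free gauge plus the $L^{(d,1)}$-bound on the curl (i.e. \eqref{eq:G1cond:2}), not the estimates \eqref{eq:411SSfullLqdd1}, \eqref{eq:411SSfullG1cond:3:OmegaP} you propose to interpolate.
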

\begin{proof}

 We have for $b \in \{1,\ldots,n\}$
 \[
 \begin{split}
 \lap \Omega_P^b = &\sum_{a = 1}^d \partial_{a a } \Omega_P^b\\
 =&\sum_{a = 1}^d \underbrace{\partial_{a b} \Omega_P^a}_{=0} - \sum_{a = 1}^d\partial_a \brac{\partial_b \Omega_P^a-\partial_a \Omega_P^b }\\
 \end{split}
 \]

and thus
\[
  \Dso \Omega_P^b = c\sum_{a = 1}^d\Rz_a\brac{\partial_b \Omega_P^a-\partial_a \Omega_P^b }
 \]
Similar to \eqref{eq:asdcioxckj1}, using \eqref{eq:OmegaPalpha}
 \begin{equation}\label{eq:asdcioxckj2}
\begin{split}
 &\partial_a \Omega_P^b {-} \partial_b (\Omega_P^b)  \\
 =&\partial_a (-\partial_b P P^T+ P\Omega^b\, P^T) - \partial_b ( -\partial_a P\, P^T + P \Omega^a P^T)\\
 =&P \brac{\partial_a \Omega^b {-}\partial_b \Omega^a} P^T \\
 &-\partial_b P \partial_a P^T + \partial_a P \partial_b P^T\\
 &+ \partial_a P\,  \Omega^b P^T-\partial_b P\, \Omega^a P^T\\
 &+ P  \Omega^b\, \partial_a P^T-\partial_b P\, \Omega^a P^T\\
 \end{split}
\end{equation}
Consequently, using also \eqref{eq:Omegacond:growth2d},
\[
\begin{split}
 \|\Dso \Omega_P^b\|_{L^{(d,1)}(\R^d)} \aleq &\|\partial_b \Omega_P^a-\partial_a \Omega_P^b \|_{L^{(d,1)}(\R^d)}\\
&+\||DP|^2\|_{L^{(d,1)}(\R^d)} + \||DP| |\Omega|\|_{L^{(d,1)}(\R^d)}  \\
\overset{\eqref{eq:Omegacond:growth2d}}{\aleq}& \|\Dso P\|_{L^{(2d,2)}(\R^d)}^2 + \|d\vec{u}\|_{L^{(2d,2)}(\R^d)}^2\\
\overset{L.~\ref{la:L2destP},\eqref{eq:Omegacond:growth2d},\eqref{eq:411Ssmall}}{\aleq}&\|\Omega\|_{L^{(2d,2)}(\R^d)}^2 + \|d\vec{u}\|_{L^{(2d,2)}(\R^d)}^2.
  \end{split}
 \]
By Sobolev embedding $W^{1,(d,1)}(\R^d) \subset C^0(\R^d)$,
\[
 \max_{b \in \{1,\ldots,d\}} \|\Omega_P^b\|_{L^\infty(\R^d)} \aleq \|d\vec{u}\|_{L^{(2d,2)}(\R^d)}^2.
\]

We also have
\[
\begin{split}
 \|\Omega_P^a\|_{L^{2d,2}(\R^d)} \aleq & \|\Dso \Omega_P^a\|_{L^{\frac{2d}{3},2}(\R^d)}
\end{split}
 \]

\underline{Time estimates}: Again we write
 \[
 \begin{split}
  \Dso \Omega_P^0 = &-c\sum_{a = 1}^d\Rz_a\brac{\partial_0 \Omega_P^a+\partial_a \Omega_P^0 }\\
  \overset{\eqref{eq:asdcioxckj1}}{=}  &-c\sum_{a = 1}^d\Rz_a\brac{P \brac{G_{1;a,0}(\vec{u})} P^T} \\
 &-c\sum_{a = 1}^d\Rz_a\brac{\partial_t P \partial_a P^T - \partial_a P \partial_t P^T}\\
 &-c\sum_{a = 1}^d\Rz_a\brac{ \partial_a P\,  \Omega^0 P^T}\\
 &-c\sum_{a = 1}^d\Rz_a\brac{-\partial_t P\, \Omega^a P^T}\\
 & -c\sum_{a = 1}^d\Rz_a\brac{P  \Omega^0\, \partial_a P^T}\\
 &-c\sum_{a = 1}^d\Rz_a\brac{-\partial_t P\, \Omega^a P^T}\\
 \end{split}
 \]
and thus
\[
\begin{split}
 \|\Dso \Omega_P^0 \|_{L^{(d,1)}(\R^d)} \aleq& \|G_{1;a,0}(\vec{u})\|_{L^{d,1}(\R^d)}\\
 &+ \|dP\|_{L^{(2d,2)}(\R^d)}\, \|\nabla P\|_{L^{(2d,2)}(\R^d)}\\
 &+\|dP\|_{L^{(2d,2)}(\R^d)}\, \|\Omega\|_{L^{2d,2}(\R^d)}.
\end{split}
 \]
 With \eqref{eq:G1cond:2}, \Cref{la:fullOmegaPLdest}, \eqref{eq:Omegacond:growth2d}, \eqref{eq:411Ssmall},
 \[
\begin{split}
 \|\Dso \Omega_P^0 \|_{L^{(d,1)}(\R^d)} \aleq \|d\vec{u}\|_{L^{(2d,2)}(\R^d)}^2.
\end{split}
 \]
We conclude
\[
 \|\Omega_P\|_{L^{\infty}(\R^d)} \aleq \|d\vec{u}\|_{L^{(2d,2)}(\R^d)}^2.
\]
\end{proof}

\begin{lemma}
Let $d \geq 4$. For $\sigma \in [0,1]$ such that $(1-\sigma) \in {(}\frac{2(s_1-1)}{d-2},1]$ and $s_1 \in [0, \frac{d-2}{2}{]}$ we have
\begin{equation}\label{eq:OmegaPest:3542}
 \|\Ds{s_1} \Omega_P\|_{L^{\frac{2d}{1+2s_1  -\sigma}}} \aleq\|d \vec{u}\|_{L^{2d}(\R^d)}^{2-(1-\sigma)} \ \|\Ds{\frac{d-2}{2}} d\vec{u}\|_{L^2(\R^d)}^{1-\sigma}
\end{equation}
If additionally $s_1 \geq 1$ and assuming additionally that ${1-\sigma > \frac{2}{d-2}}$ (in particular: assume $d \geq 5$)
\begin{equation}\label{eq:OmegaPest:3542d}
 \|\Ds{s_1-1} d\Omega_P\|_{L^{\frac{2d}{1+2s_1  -\sigma}}} \aleq\|d \vec{u}\|_{L^{2d}(\R^d)}^{2-(1-\sigma)} \ \|\Ds{\frac{d-2}{2}} d\vec{u}\|_{L^2(\R^d)}^{1-\sigma}
\end{equation}

Note that both estimates have a replacement that holds for $s_1 = \frac{d-2}{2}$ and $\sigma = 0$ in \Cref{la:fullOmegaPLdestdd}.

A special limit estimate: for $1-\sigma \in [\frac{2}{d-2},1]$ then
\begin{equation}\label{eq:OmegaPest:3542dspecial}
 \|d\Omega_P\|_{L^{\frac{2d}{3  -\sigma}}} \aleq\|d \vec{u}\|_{L^{2d}(\R^d)}^{1+\sigma} \ \|\Ds{\frac{d-2}{2}} d\vec{u}\|_{L^2(\R^d)}^{1-\sigma}
\end{equation}
\end{lemma}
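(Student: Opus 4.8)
\emph{Setup and strategy.} Throughout we have at our disposal the full suite of bounds from the previous lemmas: $P\in SO(N)$ is pointwise bounded, and by \Cref{la:fullOmegaPLdest}, \Cref{la:fullOmegaPLdestdd}, \Cref{la:L2destP} and the preceding lemma we control $dP$, $ddP$, $\Omega_P$, $d\Omega_P$ and their fractional derivatives in the relevant $L^2$- and $L^{(2d,q)}$-norms by $\|\Ds{\frac{d-2}{2}}d\vec u\|_{L^2}$ and $\|d\vec u\|_{L^{2d}}$. The only genuinely new inputs are the Gagliardo--Nirenberg growth conditions \eqref{eq:G1cond:GN1spatial}, \eqref{eq:G1cond:GN1}, \eqref{eq:G1cond:3sigma} on $G_1$. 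The plan is to rerun the scheme of the previous lemma with the $\sigma$-dependent exponent $p=\frac{2d}{1+2s_1-\sigma}$. For the spatial part $\Omega_P'=(\Omega_P^a)_a$ one uses $\div\Omega_P'=0$ to write $\Omega_P'=\Rz^\perp\Omega_P'$, expands $\Omega_P^a=-\partial_aP\,P^T+P\Omega^aP^T$ via \eqref{eq:OmegaPalpha}, and commutes $\Rz^\perp$ past $P$. This produces a commutator term $\Ds{s_1}[\Rz^\perp,P](\nabla P)$, the curl term $\Ds{s_1}\Rz^\perp(P\,\curl(\Omega')\,P^T)=\Ds{s_1-1}(P\,G_{1;a,b}(\vec u)\,P^T)$ modulo harmless commutators, and quadratic products of $P$, $dP$, $\Omega$ carrying at most $s_1$ derivatives. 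Moving the bounded factors $P$, $P^T$ into $L^\infty$ and applying \eqref{eq:G1cond:GN1spatial} (with $s=s_1$) produces exactly $\|d\vec u\|_{L^{2d}}^{2-(1-\sigma)}\|\Ds{\frac{d-2}{2}}d\vec u\|_{L^2}^{1-\sigma}$, which is where the hypothesis $(1-\sigma)\in(\frac{2(s_1-1)}{d-2},1]$ is used; the degenerate case $s_1=0$ is handled directly from $\Omega_P^a=-\partial_aP\,P^T+P\Omega^aP^T$, \Cref{la:L2destP}, \eqref{eq:Omegacond:growthpq2} and interpolation. The commutator and quadratic terms are controlled by fractional Leibniz / commutator estimates together with the bounds for $dP$, $\Omega$ and their fractional derivatives, splitting the $\sigma$- and $(1-\sigma)$-powers between $L^{2d}$ and $L^2$ by Gagliardo--Nirenberg, using the smallness \eqref{eq:411Ssmall} to absorb the superquadratic contributions; as in \Cref{la:spatialOmegaPLdest} the relation $\nabla P=\Omega_P'P-P\Omega'$ furnishes the auxiliary bound $\|\Ds{s_1+1}P\|_{L^p}\aleq\|\Ds{s_1}d\vec u\|_{L^p}+\|\Ds{s_1}\Omega_P'\|_{L^p}$ needed to close the loop.

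\emph{Time component and \eqref{eq:OmegaPest:3542d}.} For $\Omega_P^0$ one uses the representation \eqref{eq:asdcioxckj12}: the leading term $\Dso^{-1}\Rz_a(P\,G_{1;a,0}(\vec u)\,P^T)$ is again handled by \eqref{eq:G1cond:GN1spatial} (stated for all indices), the remaining five terms being quadratic products treated as above. Since $\nabla=\Dso\Rz$, the spatial gradient in \eqref{eq:OmegaPest:3542d} is, for $s_1\ge1$ and $p\in(1,\infty)$, equivalent to the already-established \eqref{eq:OmegaPest:3542}, so only $\partial_t\Omega_P$ is genuinely new. For $\partial_t\Omega_P^a$ ($a\ge1$) one rewrites it via \eqref{eq:asdcioxckj1} as $\partial_a\Omega_P^0$ plus $P\,G_{1;a,0}(\vec u)\,P^T$ plus quadratic $dP$-/$\Omega$-products, reducing it to the already-controlled $\Ds{s_1}\Omega_P^0$ and to terms estimated as before. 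For $\partial_t\Omega_P^0$ one uses \eqref{eq:ptomega0pest}: the delicate pieces are $\Ds{s_1-2}\Rz_a\partial_t(P\,G_{1;a,0}(\vec u)\,P^T)$, which needs $\|\Ds{s_1-2}dG_{1;a,0}(\vec u)\|$ and is exactly \eqref{eq:G1cond:GN1} (valid for $s_1\ge2$ and $(1-\sigma)\in[\frac{2(s_1-1)}{d-2},1]$), and the terms carrying $ddP$, controlled by \eqref{eq:411SSfullLqddP} and \Cref{la:fullOmegaPLdestdd}; all schematic terms $(\text{lower }dP/\Omega)\cdot(\Ds{s_1-1}\partial_t\Omega_P^0)$ are moved to the left-hand side and absorbed using \eqref{eq:411Ssmall}, exactly as in the proof of \Cref{la:fullOmegaPLdestdd}. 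The condition $1-\sigma>\frac{2}{d-2}$ (hence effectively $d\ge5$) is precisely what keeps \eqref{eq:G1cond:GN1} and the corresponding $ddP$-interpolation meaningful.

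\emph{The endpoint \eqref{eq:OmegaPest:3542dspecial}.} This is the $s_1=1$ case that the previous argument cannot reach because \eqref{eq:G1cond:GN1} is only stated for $s\ge2$. One reruns the $\partial_t\Omega_P^0$-computation, replacing \eqref{eq:G1cond:GN1} by the limit bound \eqref{eq:G1cond:3sigma} for $\|dG_{1;a,0}(\vec u)\|_{L^{(\frac{2d}{5-\sigma},q)}}$ and the $ddP$-interpolation by the endpoint of \Cref{la:fullOmegaPLdestdd} that combines $\|\Dso^{-1}dd\vec u\|_{L^{2d}}$ with $\|\Ds{\frac{d-2}{2}}dd\vec u\|_{L^2}$; this forces precisely the power $\|d\vec u\|_{L^{2d}}^{1+\sigma}$ and the range $1-\sigma\in[\frac{2}{d-2},1]$, the latter also covering $d=4$ (only $\sigma=0$).

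\emph{Main obstacle.} The bottleneck is the $\partial_t\Omega_P^0$ estimate: after expanding \eqref{eq:ptomega0pest} one has to match each of the many product terms to the correct Gagliardo--Nirenberg splitting between $L^{2d}$ and the critical $L^2$-space, convert $\partial_tG_1$ into the right norm of $dd\vec u$ through \eqref{eq:G1cond:GN1}/\eqref{eq:G1cond:3sigma}, and absorb the self-referential terms via \eqref{eq:411Ssmall}. Making the exponents $2-(1-\sigma)$ and $1-\sigma$ come out consistently across all terms, and pinning down the threshold $1-\sigma>\frac{2}{d-2}$ at which the general-$s_1$ scheme breaks down and the separate endpoint \eqref{eq:OmegaPest:3542dspecial} must be invoked, is the crux of the argument.
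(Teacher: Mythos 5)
Your proposal follows essentially the same route as the paper: decompose the divergence-free spatial part via $\Rz^\perp$ and the curl/commutator structure with \eqref{eq:G1cond:GN1spatial} and \eqref{eq:GN:gagliardo6}, treat $\Omega_P^0$ and $\partial_t\Omega_P^0$ through \eqref{eq:asdcioxckj1}--\eqref{eq:asdcioxckj12} with \eqref{eq:G1cond:GN1}, \eqref{eq:411SSfullLqddP} and the Gagliardo--Nirenberg splittings, and handle the endpoint \eqref{eq:OmegaPest:3542dspecial} by substituting \eqref{eq:G1cond:3sigma}. The only differences (reducing the spatial derivatives in \eqref{eq:OmegaPest:3542d} to \eqref{eq:OmegaPest:3542} by Riesz transforms, and an optional absorption step that the paper sidesteps by using the non-self-referential representation of $\Omega_P^0$) are cosmetic bookkeeping, so the argument is correct as proposed.
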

\begin{proof}

\underline{Spatial part of \eqref{eq:OmegaPest:3542}}: Denoting by $\Omega_P' := (\Omega_P^a)_{a=1,\ldots,n}$ we have by assumption $\Rz \cdot \Omega_P' = 0$ and thus
\[
\begin{split}
&\|\Ds{s_1} \Omega_P^a\|_{L^{\frac{2d}{1+2s_1  -\sigma}}}\\
\aleq& \|\Ds{s_1} \Rz^\perp(\Omega_P')\|_{L^{\frac{2d}{1+2s_1  -\sigma}}}\\
\aleq& \|\Ds{s_1} \Rz^\perp(P \nabla P^T)\|_{L^{\frac{2d}{1+2s_1  -\sigma}}}+\|\Ds{s_1} \Rz^\perp(P \Omega P^T)\|_{L^{\frac{2d}{1+2s_1  -\sigma}}}\\
\end{split}
\]
For the first term we have
\[
 \Ds{s_1}\Rz^\perp (P \nabla P^T) = \Ds{s_1-1} \curl (P \nabla P^T),
\]
so that with \Cref{la:fullOmegaPLdest} (where we use $1-\sigma$ instead of $\sigma$ for the application of \eqref{eq:GN:234})
\[
\begin{split}
 \|\Ds{s_1} \Rz^\perp(P \nabla P^T)\|_{L^{\frac{2d}{1+2s_1  -\sigma}}}
 \aleq&\max_{a,b \in \{1,\ldots,d\}}\|\Ds{s_1-1} \brac{\partial_a P \partial_b P^T}\|_{L^{\frac{2d}{1+2s_1  -\sigma}}}\\
 \overset{\eqref{eq:GN:234}}{\aleq}& \|\Dso P\|_{L^{2d}(\R^d)}^{2-(1-\sigma)} \ \|\Ds{\frac{d}{2}} P\|_{L^2(\R^d)}^{1-\sigma}\\
 \aleq&\|\Dso \vec{u}\|_{L^{2d}(\R^d)}^{2-(1-\sigma)} \ \|\Ds{\frac{d}{2}} \vec{u}\|_{L^2(\R^d)}^{1-\sigma}\\
 \end{split}
 \]
For the second term we have
\[
\begin{split}
 &\|\Ds{s_1} \Rz^\perp(P \Omega P^T)\|_{L^{\frac{2d}{1+2s_1  -\sigma}}} \\
 \aleq&\max_{i,\ldots,m}\|\Ds{s_1} \brac{[\Rz^\perp,P_{ij}] (P_{ml}\Omega_{kl})}\|_{L^{\frac{2d}{1+2s_1  -\sigma}}}\\
 &+\max_{i,\ldots,m}\|\Ds{s_1} \brac{P_{ij} [\Rz^\perp, P_{ml}](\Omega_{kl}}\|_{L^{\frac{2d}{1+2s_1  -\sigma}}}\\
 &+\max_{i,\ldots,m}\|\Ds{s_1} \brac{P_{ij} P_{ml} \Rz^\perp \Omega_{kl}}\|_{L^{\frac{2d}{1+2s_1  -\sigma}}}\\
  \aleq&\max_{i,\ldots,m}\|\Ds{s_1} \brac{[\Rz^\perp,P_{ij}] (P_{ml}\Omega_{kl})}\|_{L^{\frac{2d}{1+2s_1  -\sigma}}}\\
 &+\max_{t \in [0,s_1]} \max_{i,\ldots,m}
 \|\Ds{s_1-t} P_{ij}\|_{L^{\frac{d}{s_1-t}}(\R^d)}\,
 \|\Ds{t} [\Rz^\perp, P_{ml}](\Omega_{kl})\|_{L^{\frac{2d}{1+2t  -\sigma}}}\\
 &+\max_{i,\ldots,m} \max_{t_1+t_2+t_3 \in [0,s_1]} \|\Ds{t_1} P_{ij}\|_{L^{\frac{d}{t_1}}(\R^d)}\, \|\Ds{t_2} P_{ij}\|_{L^{\frac{d}{t_2}}(\R^d)}\, \|\Ds{t_3} \Rz^\perp \Omega_{kl}\|_{L^{\frac{2d}{1+2t_3 -\sigma}}}\\
\end{split}
 \]
So estimating the extra terms by $\|\Ds{d} P_{ij}\|_{L^{2}(\R^d)} \aleq \|\Ds{d} \vec{u}\|_{L^{2}(\R^d)} \leq 1$ and $\|P\|_{L^\infty} \leq 1$ we arrive at
\[
\begin{split}
 &\|\Ds{s_1} \Rz^\perp(P \Omega P^T)\|_{L^{\frac{2d}{1+2s_1  -\sigma}}} \\
  \aleq &\max_{i,\ldots,m} \max_{t_1+t_2+t_3 \in [0,s_1]} \|\Ds{t_3} \Rz^\perp \Omega_{kl}\|_{L^{\frac{2d}{1+2t_3 -\sigma}}}\\
&+\max_{i,\ldots,m}\|\Ds{s_1} \brac{[\Rz^\perp,P_{ij}] (P_{ml}\Omega_{kl})}\|_{L^{\frac{2d}{1+2s_1  -\sigma}}}\\
 &+\max_{t \in [0,s_1]} \max_{i,\ldots,m}
 \,
 \|\Ds{t} [\Rz^\perp, P_{ml}](\Omega_{kl})\|_{L^{\frac{2d}{1+2t  -\sigma}}}\\
\end{split}
 \]
Observe that since $(1-\sigma) \in {(}\frac{2(s_1-1)}{d-2},1]$ then $(1-\sigma) \in {(}\frac{2(t-1)}{d-2},1]$ for any $t \leq s_1$. Thus, by the curl-condition for the admissibility of $\Omega$,
\[
\begin{split}
& \|\Ds{t_3} \Rz^\perp \Omega_{kl}\|_{L^{\frac{2d}{1+2t_3 -\sigma}}} \\
\aeq&\|\Ds{t_3-1} \curl \Omega_{kl}\|_{L^{\frac{2d}{1+2t_3 -\sigma}}}\\
\overset{\eqref{eq:G1cond:GN1spatial}}{\aleq} &\|\Dso \vec{u}\|_{L^{2d}(\R^d)}^{2-(1-\sigma)} \ \|\Ds{\frac{d}{2}} \vec{u}\|_{L^2(\R^d)}^{1-\sigma}.
\end{split}
\]
From \eqref{eq:GN:gagliardo6} we have
\[
\begin{split}
 &\|\Ds{s_1} \brac{[\Rz^\perp,P_{ij}] (P_{ml}\Omega_{kl})}\|_{L^{\frac{2d}{1+2s_1  -\sigma}}}\\
 \aleq& \brac{\|\Dso P\|_{L^{2d}(\R^d)}^{1+\sigma}+\|P_{ml}\Omega_{kl}\|_{L^{2d}(\R^d)}^{1+\sigma}} \brac{\|\Ds{\frac{d}{2}} P\|_{L^2(\R^d)}^{1-\sigma}+\|\Ds{\frac{d-2}{2}} \brac{P_{ml}\Omega_{kl}}\|_{L^2(\R^d)}^{1-\sigma}}\\
 \aleq&\brac{\|\Dso P\|_{L^{2d}(\R^d)}^{1+\sigma}+\|\Omega\|_{L^{2d}(\R^d)}^{1+\sigma}} \brac{\|\Ds{\frac{d}{2}} P\|_{L^2(\R^d)}^{1-\sigma}+
 \max_{t \in [0,\frac{d-2}{2}]} \|\Ds{t} P\|_{L^{\frac{d}{t}}(\R^d)}^{1-\sigma} \|\Ds{\frac{d-2}{2}-t} \Omega\|_{L^{\frac{2d}{d-t2}}(\R^d)}^{1-\sigma}}\\
 \aleq&\|d\vec{u}\|_{L^{2d}(\R^d)}^{1+\sigma}\, \|\Ds{\frac{d-2}{2}} d\vec{u}\|_{L^2(\R^d)}^{1-\sigma}\\
 \end{split}
 \]
In the last step we used \Cref{la:fullOmegaPLdest}, \eqref{eq:Omegacond:growth2d}, the smallness of $\|\Ds{\frac{d-2}{2}}d u\|_{L^2(\R^d)}$ to estimate additional terms by $1$.
For exactly the same reasons we have
\[
\begin{split}
 &\|\Ds{s_1} \brac{[\Rz^\perp,P_{ml}] (\Omega_{kl})}\|_{L^{\frac{2d}{1+2s_1  -\sigma}}}\\
 \aleq&\|d\vec{u}\|_{L^{2d}(\R^d)}^{1+\sigma}\, \|\Ds{\frac{d-2}{2}} d\vec{u}\|_{L^2(\R^d)}^{1-\sigma}\\
 \end{split}
 \]

\underline{time part of \eqref{eq:OmegaPest:3542}}
Again we write
 \[
 \begin{split}
  \Ds{s_1}\Omega_P^0 \overset{\eqref{eq:asdcioxckj1}}{=}  &-c\Ds{s_1-1} \sum_{a = 1}^d\Rz_a\brac{P \brac{G_{1;a,0}(\vec{u})} P^T} \\
 &-c\sum_{a = 1}^d\Ds{s_1-1}\Rz_a\brac{\partial_t P \partial_a P^T - \partial_a P \partial_t P^T}\\
 &-c\sum_{a = 1}^d\Ds{s_1-1}\Rz_a\brac{ \partial_a P\,  \Omega^0 P^T}\\
 &-c\sum_{a = 1}^d\Ds{s_1-1}\Rz_a\brac{-\partial_t P\, \Omega^a P^T}\\
 & -c\sum_{a = 1}^d\Ds{s_1-1}\Rz_a\brac{P  \Omega^0\, \partial_a P^T}\\
 &-c\sum_{a = 1}^d\Ds{s_1-1}\Rz_a\brac{-\partial_t P\, \Omega^a P^T}\\
 \end{split}
 \]
From \eqref{eq:GN:234} (again, for $1-\sigma$ in place of $\sigma$) combined yet again with the observation that
\[
 \|\Ds{\frac{d-2}{2}} (PA) \|_{L^{2}(\R^d)} \aleq \brac{\|P\|_{L^\infty} +\|\Ds{\frac{d}{2}} P\|_{L^2(\R^d)}}\ \|\Ds{\frac{d-2}{2}} a\|_{L^{2}(\R^d)} \aleq \|\Ds{\frac{d-2}{2}} A\|_{L^{2}(\R^d)}.
\]
we readily find
 \[
 \begin{split}
    \|\Omega_P^0\|_{L^{\frac{2d}{1+2s_1  -\sigma}}} \aleq & \|\Ds{s_1-1}\brac{P \brac{G_{1;}(\vec{u})} P^T}\|_{L^{\frac{2d}{1+2s_1  -\sigma}}} \\
 &+\|d\vec{u}\|_{L^{2d}(\R^d)}^{1+\sigma}\, \|\Ds{\frac{d-2}{2}} d\vec{u}\|_{L^2(\R^d)}^{1-\sigma}.
 \end{split}
 \]
For the first term, if $s_1 \leq 1$ we use Sobolev embedding
\[
\begin{split}
 &\|\Ds{s_1-1}\brac{P \brac{G_{1;}(\vec{u})} P^T}\|_{L^{\frac{2d}{1+2s_1  -\sigma}}} \\
 \aleq &\|P \brac{G_{1;}(\vec{u})} P^T\|_{L^{\frac{2d}{3-\sigma}}}\\
 \aleq &\|G_{1;}(\vec{u})\|_{L^{\frac{2d}{3-\sigma}}}\\
 \overset{\eqref{eq:G1cond:GN1spatial}}{\aleq}& \|d\vec{u}\|_{L^{2d}(\R^d)}^{2-(1-\sigma)} \ \|\Ds{\frac{d-2}{2}} d\vec{u}\|_{L^2(\R^d)}^{1-\sigma}.
\end{split}
 \]
If $s_1 \geq 1$, using again \Cref{la:fullOmegaPLdest} and the smallness $\|\Ds{\frac{d-2}{2}}d u\|_{L^2(\R^d)}$ to estimate unnecessary terms by one,
\begin{equation}\label{eq:3489sdfisdf}
\begin{split}
 &\|\Ds{s_1-1}\brac{P \brac{G_{1;}(\vec{u})} P^T}\|_{L^{\frac{2d}{1+2s_1  -\sigma}}} \\
 \aleq &\brac{1+\|\Ds{\frac{d}{2}} P\|_{L^{2}(\R^d)}+\|\Ds{\frac{d}{2}} P\|_{L^{2}(\R^d)}^2}\, \max_{t \in [0,s_1]}\|\Ds{t-1}\brac{G_{1;}(\vec{u})} \|_{L^{\frac{2d}{1+2t  -\sigma}}}\\
 \overset{\eqref{eq:G1cond:GN1spatial}}{\aleq}& \|d\vec{u}\|_{L^{2d}(\R^d)}^{2-(1-\sigma)} \ \|\Ds{\frac{d-2}{2}} d\vec{u}\|_{L^2(\R^d)}^{1-\sigma}.
\end{split}
 \end{equation}
This concludes the proof of \eqref{eq:OmegaPest:3542}.

As for \eqref{eq:OmegaPest:3542d} we first observe that with the arguments above, using \eqref{eq:asdcioxckj1}, \eqref{eq:asdcioxckj2}, \Cref{la:fullOmegaPLdest}, \eqref{eq:G1cond:GN1spatial}, \eqref{eq:411Ssmall}
\begin{equation}\label{eq:AAAAARGH223}
\begin{split}
 \|\Ds{s_1-1} d\Omega_P\|_{L^{\frac{2d}{1+2s_1  -\sigma}}} \aleq &\|\Ds{s_1-1} \partial_t \Omega_P^0\|_{L^{\frac{2d}{1+2s_1  -\sigma}}}\\
 &+\|\Ds{s_1-1} PG_{1;}(\vec{u})P^T\|_{L^{\frac{2d}{1+2s_1  -\sigma}}}\\
 &+\max_{\alpha,\beta \in \{0,\ldots,d\}}\|\Ds{s_1-1}  (\partial_\alpha P\, \partial_\beta P^T)\|_{L^{\frac{2d}{1+2s_1  -\sigma}}}\\
 &+\max_{\alpha,\beta \in \{0,\ldots,d\}}\|\Ds{s_1-1}  (\partial_\alpha P\, \Omega^\beta P^T)\|_{L^{\frac{2d}{1+2s_1  -\sigma}}}\\
 &+\max_{\alpha,\beta \in \{0,\ldots,d\}}\|\Ds{s_1-1}  (P\, \Omega^\beta \partial_\alpha P^T)\|_{L^{\frac{2d}{1+2s_1  -\sigma}}}\\
 \aleq& \|\Ds{s_1-1} \partial_t \Omega_P^0\|_{L^{\frac{2d}{1+2s_1  -\sigma}}}+\|d\vec{u}\|_{L^{2d}(\R^d)}^{2-(1-\sigma)} \ \|\Ds{\frac{d-2}{2}} d\vec{u}\|_{L^2(\R^d)}^{1-\sigma}.
 \end{split}
\end{equation}
That is, it remains to estimate
\[
 \begin{split}
&\|\Ds{s_1-1} \partial_t \Omega_P^0\|_{L^{\frac{2d}{1+2s_1  -\sigma}}}  \\
\overset{\eqref{eq:asdcioxckj12}}{\aleq}& \max_{a \in \{1,\ldots,d\}}\|\Ds{s_1-2} \partial_t (P G_{1;a,0}(\vec{u}) P^T) \|_{L^{\frac{2d}{1+2s_1  -\sigma}}}\\
&+\max_{\alpha,\beta \in \{0,\ldots,d\}}\|\Ds{s_1-2} \brac{\partial_\alpha \partial_t P\, \partial_\beta P^T }   \|_{L^{\frac{2d}{1+2s_1  -\sigma}}}\\
&+\max_{\alpha,\beta \in \{0,\ldots,d\}}\|\Ds{s_1-2}  \partial_t\brac{\partial_\alpha P\,  \Omega^\beta P^T}  \|_{L^{\frac{2d}{1+2s_1  -\sigma}}}\\
&+\max_{\alpha,\beta \in \{0,\ldots,d\}}\|\Ds{s_1-2}  \partial_t\brac{P\,  \Omega^\beta\, \partial_\alpha P^T}  \|_{L^{\frac{2d}{1+2s_1  -\sigma}}}\\
 \end{split}
\]
Arguing similar to \eqref{eq:3489sdfisdf}
\[
\begin{split}
 &\|\Ds{s_1-2} \partial_t (P G_{1;a,0}(\vec{u}) P^T) \|_{L^{\frac{2d}{1+2s_1  -\sigma}}}\\
 \aleq& \|\partial_{t} \brac{P G_{1;}(\vec{u}) P^T}\|_{L^{\frac{2d}{5-\sigma}}(\R^d)} + (1+\|\Ds{\frac{d-2}{2}} dP\|_{L^2(\R^d)}^2)\max_{t \in [2,s_1]} \|\Ds{t-2} dG_{1;}(\vec{u})\|_{L^{\frac{2d}{1+2t-\sigma}}(\R^d)}\\
 \overset{\eqref{eq:411Ssmall}}{\aleq}&\max_{t \in [2,s_1]} \|\Ds{t-2} dG_{1;}(\vec{u})\|_{L^{\frac{2d}{1+2t-\sigma}}(\R^d)}\\
 \overset{\eqref{eq:G1cond:GN1}}{\aleq}& \|d \vec{u}\|_{L^{2d}}^{1+\sigma}\, \|\Ds{\frac{d-2}{2}} d\vec{u}\|_{L^2(\R^d)}^{1-\sigma}.
 \end{split}
\]
The above makes sense if $s_1 \geq 2$. If $s_1 \leq 2$ we obtain
\[
\begin{split}
 &\|\Ds{s_1-2} \partial_t (P G_{1;a,0}(\vec{u}) P^T) \|_{L^{\frac{2d}{1+2s_1  -\sigma}}}\\
\aleq&\|dG_{1;}(\vec{u})\|_{L^{\frac{2d}{1-\sigma}}(\R^d)}\\
 \overset{\eqref{eq:G1cond:GN1}}{\aleq}& \|d\vec{u}\|_{L^{2d}(\R^d)}^{2-(1-\sigma)} \ \|\Ds{\frac{d-4}{2}} dd\vec{u}\|_{L^2(\R^d)}^{1-\sigma}
 \end{split}
\]

Next, recall that $s_1 \in [0,\frac{d-2}{2}]$

If $s_1 -2 \leq 0$ then for $\sigma_1,\sigma_2 \in [0,1]$ with $1-\sigma_1 + 1-\sigma_2 = 1-\sigma$,
\[
\begin{split}
 &\|\Ds{s_1-2} \brac{\partial_\alpha \partial_t P\, \partial_\beta P^T }   \|_{L^{\frac{2d}{2+2(s_1-1) + 1-\sigma}}}\\
 \aleq&\|\partial_\alpha \partial_t P\, \partial_\beta P^T   \|_{L^{\frac{2d}{4 + 1-\sigma}}}\\
 \aleq& \|\partial_\alpha \partial_t P\|_{L^{\frac{2d}{4-\sigma_1}}(\R^d)} \| \partial_\beta P^T   \|_{L^{\frac{2d}{2-\sigma_2}}(\R^d)}\\
 \overset{\eqref{eq:411SSfullLqddP}}{\aleq}& \|\partial_\alpha d\vec{u}\|_{L^{\frac{2d}{3+(1-\sigma_1)}}(\R^d)} \| d P^T   \|_{L^{\frac{2d}{1+(1-\sigma_2)}}(\R^d)}\\
 \overset{\eqref{eq:GN:1}}{\aleq}& \|d\vec{u}\|_{L^{2d}(\R^d)}^{\sigma_1}\, \|\Ds{\frac{d-2}{2}} d\vec{u}\|_{L^2(\R^d)}^{1-\sigma_1}\, \| dP   \|_{L^{2d}(\R^d)}^{\sigma_2}\, \| \Ds{\frac{d-2}{2}}dP   \|_{L^{2d}(\R^d)}^{1-\sigma_2}\\
 \overset{\Cref{la:fullOmegaPLdest}}{\aleq}& \|d\vec{u}\|_{L^{2d}(\R^d)}^{1+\sigma}\, \|\Ds{\frac{d-2}{2}} d\vec{u}\|_{L^2(\R^d)}^{1-\sigma}.
 \end{split}
\]
This works for $1-\sigma_1 \in [\frac{2}{d-2},1]$, and by assumption we have $(1-\sigma) \in (\frac{2}{d-2},1]$, so we choose $1-\sigma_1$ just a little bit smaller than $1-\sigma$ (or equal, for that matter).

If $s_1 -2 \geq 0$, for $\sigma_1,\sigma_2\in [0,1]$ such that $\sigma_1 + \sigma_2 = 1-\sigma$,
\[
\begin{split}
 &\|\Ds{s_1-2} \brac{\partial_\alpha \partial_t P\, \partial_\beta P^T }   \|_{L^{\frac{2d}{2+2(s_1-1) +1 -\sigma}}}\\
\aleq&\max_{t \in [0,s_1-2]}\|\Ds{t} \partial_\alpha \partial_t P\|_{L^{\frac{2d}{1+2(t+1)+\sigma_1}}(\R^d)} \|\Ds{s_1-2-t} \partial_\beta P^T    \|_{L^{\frac{2d}{1+2(s_1-2-t)  +\sigma_2}}(\R^d)}\\
\overset{\eqref{eq:411SSfullLqddP}}{\aleq}&\max_{t \in [0,s_1-2]}\|\Ds{t+1} d\vec{u}\|_{L^{\frac{2d}{1+2(t+1)+\sigma_1}}(\R^d)} \|\Ds{s_1-2-t} d\vec{u}    \|_{L^{\frac{2d}{1+2(s_1-2-t)  +\sigma_2}}(\R^d)}\\
\overset{\eqref{eq:GN:1}}{\aleq}& \|d\vec{u}\|_{L^{2d}(\R^d)}^{1-\sigma_1}\|\Ds{\frac{d-2}{2}}d\vec{u}\|_{L^{2}(\R^d)}^{\sigma_1}\ \|d\vec{u}\|_{L^{2d}(\R^d)}^{1-\sigma_2}\|\Ds{\frac{d-2}{2}}d\vec{u}\|_{L^{2}(\R^d)}^{\sigma_2}\\
=& \|d\vec{u}\|_{L^{2d}(\R^d)}^{1+\sigma} \|\Ds{\frac{d-2}{2}}d\vec{u}\|_{L^{2}(\R^d)}^{1-\sigma}.
 \end{split}
\]
For this to work we need that $\sigma_1 \in [\frac{2(t+1)}{d-2},1]$ and $\sigma_2 \in [\frac{2(s_1-2-t)}{d-2},1]$. Since $\sigma_1+\sigma_2 = (1-\sigma) \in {(}\frac{2(s_1-1)}{d-2},1]$ this choice of $\sigma_1$, $\sigma_2$ is possible.

The last term we consider for \eqref{eq:AAAAARGH223} is the following (the remaining term goes exactly the same) is
\[
\begin{split}
\max_{\alpha,\beta \in \{0,\ldots,d\}}\|\Ds{s_1-2}  \partial_t\brac{\partial_\alpha P\,  \Omega^\beta P^T}  \|_{L^{\frac{2d}{1+2s_1  -\sigma}}}\\
\end{split}
\]

Again, if $s_1 -2 \leq 0$ then for $\sigma_1,\sigma_2 \in [0,1]$ with $1-\sigma_1 + 1-\sigma_2 = 1-\sigma$,
\[
\begin{split}
 &\|\Ds{s_1-2} \partial_t\brac{\partial_\alpha P\,  \Omega^\beta P^T}     \|_{L^{\frac{2d}{2+2(s_1-1) + 1-\sigma}}}\\
 \aleq&\|\partial_t\brac{\partial_\alpha P\,  \Omega^\beta P^T}   \|_{L^{\frac{2d}{4 + 1-\sigma}}}\\
 \aleq& \|P^T\|_{L^\infty} \|\partial_\alpha \partial_t P\|_{L^{\frac{2d}{4-\sigma_1}}(\R^d)} \| \Omega^\beta   \|_{L^{\frac{2d}{2-\sigma_2}}(\R^d)}\\
 &+\|P^T\|_{L^\infty} \|\partial_t \Omega^\beta   \|_{L^{\frac{2d}{4-\sigma_1}}(\R^d)} \|\partial_\alpha P\|_{L^{\frac{2d}{2-\sigma_2}}(\R^d)}\\
 &+\|\partial_t P^T\|_{L^{d}(\R^d)} \|\Omega^\beta   \|_{L^{\frac{2d}{2-\sigma_1}}(\R^d)} \|\partial_\alpha P\|_{L^{\frac{2d}{2-\sigma_2}}(\R^d)}\\
\overset{\eqref{eq:411SSfullLqddP}, \eqref{eq:Omegacond:growthpq}}{\aleq}& \|\partial_\alpha d\vec{u}\|_{L^{\frac{2d}{3+(1-\sigma_1)}}(\R^d)} \| d \vec{u}  \|_{L^{\frac{2d}{1+(1-\sigma_2)}}(\R^d)}\\
&+\|d\vec{u}\|_{L^{d}(\R^d)}\, \|d \vec{u}\|_{L^{\frac{2d}{2-\sigma_1}}(\R^d)} \|d \vec{u}\|_{L^{\frac{2d}{2-\sigma_2}}(\R^d)}\\
\overset{\eqref{eq:GN:1}}{\aleq}& \|d\vec{u}\|_{L^{2d}(\R^d)}^{1+\sigma}\, \|\Ds{\frac{d-2}{2}} d\vec{u}\|_{L^2(\R^d)}^{1-\sigma}\\
&+\|d\vec{u}\|_{L^{d}(\R^d)} \|d\vec{u}\|_{L^{2d}(\R^d)}^{1+\sigma} \ \|\Ds{\frac{d-2}{2}} d\vec{u}\|_{L^2(\R^d)}^{1-\sigma}\\
\overset{\eqref{eq:411Ssmall}}{\aleq}&\|d\vec{u}\|_{L^{2d}(\R^d)}^{1+\sigma}\, \|\Ds{\frac{d-2}{2}} d\vec{u}\|_{L^2(\R^d)}^{1-\sigma}.
 \end{split}
\]
Again by the choice of $\sigma$ we can choose $\sigma_1$, $\sigma_2$ accordingly to fit the assumptions of \eqref{eq:GN:1}.

As for \eqref{eq:OmegaPest:3542dspecial}, we argue similar to \eqref{eq:411SSfullG1cond:3:OmegaP}.
\[
\begin{split}
&\|\partial_t\Omega^0_P\|_{L^{\frac{2d}{3-\sigma}}(\R^d)}\\
\aleq& \|\partial_t (P\, G_{1;a,0}(\vec{u})\, P^T)\|_{L^{\frac{2d}{5-\sigma}}(\R^d)} \\
 &+\|\partial_t \brac{\Omega_P^0 \partial_a P^T - \partial_a P\, P^T  \Omega_P^0}\|_{L^{\frac{2d}{5-\sigma}}(\R^d)}\\
 &+\|\partial_t \brac{-P \Omega^0\, \partial_a P^T + \partial_a P\, \Omega^0 P^T}\|_{L^{\frac{2d}{5-\sigma}}(\R^d)}\\
 &+ \|\partial_t\brac{\partial_a P\,  \Omega^0 P^T}\|_{L^{\frac{2d}{5-\sigma}}(\R^d)}\\
 &+\|\partial_t \brac{P\Omega^0  \Omega^a P^T}\|_{L^{\frac{2d}{5-\sigma}}(\R^d)}\\
 &+\|\partial_t \brac{\Omega^0_P P \Omega^a P^T}\|_{L^{\frac{2d}{5-\sigma}}(\R^d)}\\
 &+ \|\partial_t \brac{P  \Omega^0\, \partial_a P^T}\|_{L^{\frac{2d}{5-\sigma}}(\R^d)}\\
 &+\|\partial_t \brac{\Omega^0\, \Omega^a P^T}\|_{L^{\frac{2d}{5-\sigma}}(\R^d)}\\
&+\|\partial_t \brac{\Omega^0_P P \Omega^a P^T}\|_{L^{\frac{2d}{5-\sigma}}(\R^d)}\\
\aleq&(1+\|\partial_t P\|_{L^{d}(\R^d)})\, \|dG_{1;a,0}(\vec{u})\|_{L^{\frac{2d}{5-\sigma}}(\R^d)} \\
&+ \brac{\|d\Omega_P\|_{L^{\frac{2d}{4-\sigma}}(\R^d)} + \|d\Omega\|_{L^{\frac{2d}{4-\sigma}}(\R^d)}+ \|d\Dso P\|_{L^{\frac{2d}{4-\sigma}}(\R^d)} }\brac{\|dP\|_{L^{2d}} +\|\Omega\|_{L^{2d}(\R^d)}}\\
\aleq&\|dG_{1;a,0}(\vec{u})\|_{L^{\frac{2d}{5-\sigma}}(\R^d)} + \|\Dso d\vec{u}\|_{L^{\frac{2d}{4-\sigma}}(\R^d)}\, \|d\vec{u}\|_{L^{2d}},
\end{split}
\]
where in the last inequality we used \eqref{eq:411Ssmall},\eqref{eq:411SSfullLqdd}, \eqref{eq:Omegacond:growthpq}, \eqref{eq:Omegacond:growthpq2}, \eqref{eq:411SSfullLqddP1}
If $d=3$ we'd need $\sigma > 0$ to apply \eqref{eq:411SSfullLqdd}, but since we assume $d \geq 4$ this is true for any $\sigma \in [0,1]$.

From \eqref{eq:G1cond:3sigma}, \eqref{eq:dduest1}, \eqref{eq:dduest2}, for $(1-\sigma) \in [\frac{2}{d-2},1]$,
\[
 \|dG_{1;a,0}(\vec{u})\|_{L^{\frac{2d}{5-\sigma}}(\R^d)} \aleq \| d \vec{u} \|_{L^{2d}(\R^d)}^{1+\sigma}\, \| \Ds{\frac{d-2}{2}} d \vec{u} \|_{L^{2}(\R^d)}^{1-\sigma}
\]

From \eqref{eq:GN:1}, whenever $(1-\sigma) \in [\frac{2}{d-2},1]$,
\[
 \|d\nabla \vec{u}\|_{L^{\frac{2d}{4-\sigma}}(\R^d)} \aleq \|d\vec{u}\|_{L^{2d}(\R^d)}^\sigma\ \|\Ds{\frac{d-2}{2}} d\vec{u}\|_{L^2(\R^d)}^{1-\sigma}.
\]
So we have shown that whenever $(1-\sigma) \in [\frac{2}{d-2},1]$,
\[
 \|\partial_t\Omega^0_P\|_{L^{\frac{2d}{3-\sigma}}(\R^d)} \aleq \| d \vec{u} \|_{L^{2d}(\R^d)}^{1+\sigma}\, \| \Ds{\frac{d-2}{2}} d \vec{u} \|_{L^{2}(\R^d)}^{1-\sigma}
\]
The other terms of $d\Omega_P$ are similar, we can conclude.
\end{proof}

\section{A priori estimates: Proof of Theorem~\ref{th:aprioriest}}\label{s:apriori}

Let $\vec{u}$ be a solution to \eqref{eq:generalwave} as in the assumptions of \Cref{th:aprioriest}.

In the following we take $P: \R^d\times (-T,T) \times \R \to SO(N)$ from \Cref{s:gauge} -- which is possible for $\eps$ small enough, by the growth assumptions on $\Omega$, \eqref{eq:Omegacond:growth}.

We compute
\[
\begin{split}
 &\partial_{t} (P\partial_t \vec{u}) - \div(P \nabla \vec{u})\\
 =& \partial_t P \partial_t \vec{u} - \nabla P\, \nabla \vec{u} + P (\partial_{tt} \vec{u}- \lap \vec{u})\\
 =& \partial_t P \partial_t \vec{u} - \nabla P\, \nabla \vec{u} + P \Omega^0 \partial_t \vec{u} + P \Omega \nabla \vec{u}\\
 =&(\partial_t P P^T+ P\Omega^0\, P^T)\, P\partial_t \vec{u} + (-\nabla P\, P^T + P \Omega P^T )\cdot P\nabla \vec{u} + P F(\vec{u})\\
 \end{split}
\]
With the definition \eqref{eq:OmegaPalpha} this becomes
\begin{equation}\label{eq:ap:Pvueq}
 \partial_{t} (P\partial_t \vec{u}) - \div(P \nabla \vec{u}) = \Omega_P \cdot P d \vec{u} + P F(\vec{u}).
\end{equation}

Now we perform essentially a space-time Hodge decomposition. Solve
\begin{equation}\label{eq:ap:veq}
 \begin{cases}
  \brac{\partial_{tt} - \lap} \vec{v} := \Omega_P \cdot P d \vec{u} + P F(\vec{u})\\
  \vec{v}(0) = 0\\
  \partial_t\vec{v}(0) = 0
 \end{cases}
\end{equation}
and set 
\begin{equation}\label{eq:WPuv}
 W_{i} := P_{ij} d u^j-dv^i \quad \text{$i = 1,\ldots N$}
\end{equation}

We will compute the equation of $W_i$ below, but first we observe the estimates for $v$ we obtain from \Cref{s:gauge} and the inversion of the wave operator in \eqref{eq:ap:veq}.

\begin{lemma}[Estimate for $\vec{v}$]\label{la:estimatesv}
Let $s \in (1,\frac{d^2-4d+1}{2(d-1)})$ and $d\geq 4$. There exists $\eps > 0$ such that if \eqref{eq:ap:smallnessdd} holds
and $v$ solves \eqref{eq:ap:veq} then
\begin{equation}\label{eq:estimatev:goal}
\begin{split}
 &\|\Ds{\frac{d-2}{2}} d\vec{v}\|_{C^0_t L^2_x((\R^d\times (-T,T))} + \|\Ds{s-1}{d} \vec{v}\|_{L^2_t L^{(\frac{2d}{2s-1},2)}_x(\R^d \times (-T,T))}\\
\aleq_s & \|\Ds{s-1} d\vec{u}\|_{L^2_t L^{(\frac{2d}{2s-1},2)}_x(\R^d \times (-T,T))}^2.
 \end{split}
 \end{equation}
\end{lemma}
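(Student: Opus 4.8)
The plan is to treat \eqref{eq:ap:veq} as a linear inhomogeneous wave equation with vanishing Cauchy data and source
\[
 G := \Omega_P \cdot P\, d\vec u + P F(\vec u),
\]
and to control $\vec v$ by the energy inequality and Strichartz estimates for the free wave group in $\R^d$. Using Duhamel's formula $\vec v(t)=\int_0^t \frac{\sin((t-\tau)\sqrt{-\lap})}{\sqrt{-\lap}}\,G(\tau)\,d\tau$, one has $\Ds{s-1}d\vec v(t)=\sum_{\pm}\int_0^t e^{\pm\mathrm{i}(t-\tau)\sqrt{-\lap}}\,m_\pm(\nabla)\,\Ds{s-1}G(\tau)\,d\tau$ with zeroth order Fourier multipliers $m_\pm$. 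I would then apply Minkowski's inequality in $\tau$ and, for each $\tau$, the homogeneous Lorentz-refined Strichartz estimate for the wave-admissible pair $(2,\tfrac{2d}{2s-1})$, together with the energy estimate for the $C^0_tL^2_x$-piece; this bounds the whole left-hand side of \eqref{eq:estimatev:goal} by $\|\Ds{\frac{d-2}{2}}G\|_{L^1_t L^2_x(\R^d\times(-T,T))}$. This is the step where the dimension $d\geq 4$ and the upper bound on $s$ in the statement enter, as they are precisely what is needed for admissibility of the relevant Strichartz exponents. It therefore remains to prove the purely spatial estimate, for a.e.\ fixed $t$,
\begin{equation}\label{eq:sketch:Gfixedt}
 \|\Ds{\frac{d-2}{2}}G(t)\|_{L^2(\R^d)} \aleq_s \brac{1+\|\Ds{\frac{d-2}{2}}d\vec u(t)\|_{L^2(\R^d)}^{\gamma}}\,\|\Ds{s-1}d\vec u(t)\|_{L^{(\frac{2d}{2s-1},2)}(\R^d)}^2 ,
\end{equation}
since integrating \eqref{eq:sketch:Gfixedt} in $t\in(-T,T)$ and invoking \eqref{eq:ap:smallnessdd} and \eqref{eq:ap:smallness} to absorb the prefactor into a constant yields \eqref{eq:estimatev:goal}.

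For \eqref{eq:sketch:Gfixedt} all the estimates of \Cref{s:gauge} apply verbatim, since they are stated at a fixed time slice. The $P F(\vec u)$ term is immediate from \eqref{eq:Fcond:3}, the elementary bound $\|\Ds{\frac{d-2}{2}}(P F(\vec u))\|_{L^2}\aleq (\|P\|_{L^\infty}+\|\Ds{\frac d2}P\|_{L^2})\,\|\Ds{\frac{d-2}{2}}F(\vec u)\|_{L^2}$, and \eqref{eq:411SS2}. For $\Omega_P\cdot P\,d\vec u$ I would expand $\Ds{\frac{d-2}{2}}$ by the fractional Leibniz rule over the three factors $\Omega_P$, $P$, $d\vec u$ and estimate each resulting product by H\"older and Sobolev embeddings: the $\Omega_P$-factor via \eqref{eq:411SSfull}, \eqref{eq:411SSfulldd}, \eqref{eq:411SSfullLqdd1}, \eqref{eq:OmegaPest:3542} and \Cref{la:omegaPLinfty}, the $P$-factor via \eqref{eq:411SS2}, \eqref{eq:411SSfullLqddP1} and $\|P\|_{L^\infty}\aleq 1$, and the left-over powers of $d\vec u$ via the Sobolev embedding $\|d\vec u\|_{L^{(2d,2)}}\aleq\|\Ds{s-1}d\vec u\|_{L^{(\frac{2d}{2s-1},2)}}$ (valid because $s>1$) and, when the full $\frac{d-2}{2}$ derivatives land on $d\vec u$, the smallness \eqref{eq:ap:smallness}. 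The structural reason this closes with exactly the quadratic right-hand side is that, by the curl-condition on $\Omega$, $\Omega_P$ already behaves like $\lapms{1}$ of a quadratic expression in $d\vec u$, so that after the Leibniz expansion every term carries exactly two factors controlled by the Strichartz norm $\|\Ds{s-1}d\vec u\|_{L^{(\frac{2d}{2s-1},2)}}$, the remaining factors being absorbed into the bounded quantity $\|P\|_{L^\infty}$ and the $\eps$-small quantity $\|\Ds{\frac{d-2}{2}}d\vec u\|_{L^2}$.

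I expect the main obstacle to be the bookkeeping in \eqref{eq:sketch:Gfixedt}: among the finitely many ways of splitting $\frac{d-2}{2}$ derivatives over $\Omega_P$, $P$ and $d\vec u$, one must pick in each case the intermediate Lebesgue/Lorentz exponents so that the pieces match an available estimate from \Cref{s:gauge}, while keeping track of the auxiliary parameter $\sigma$ in \eqref{eq:OmegaPest:3542} and \eqref{eq:OmegaPest:3542dspecial} (which has to stay in the admissible range recorded there, and is part of why $s$ must be restricted as in the statement). A secondary, routine technical point is the justification of the Lorentz-refined Strichartz estimates used in the first step, which follow from the Keel--Tao estimates by real interpolation.
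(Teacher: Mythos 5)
Your proposal is correct and follows essentially the same route as the paper: reduce via Duhamel/Strichartz (the paper's \Cref{la:strichartz}) to the fixed-time bound $\|\Ds{\frac{d-2}{2}}\brac{\Omega_P\cdot P\,d\vec{u}+PF(\vec{u})}\|_{L^2}\aleq\|\Ds{s-1}d\vec{u}\|_{L^{(\frac{2d}{2s-1},2)}}^2$, handle $PF(\vec{u})$ by \eqref{eq:Fcond:3} together with the gauge bounds on $P$, and handle $\Omega_P\cdot P\,d\vec{u}$ by a fractional Leibniz expansion whose pieces are matched to the Section~\ref{s:gauge} estimates (\Cref{la:omegaPLinfty}, \eqref{eq:OmegaPest:3542}, \eqref{eq:OmegaPest:3542d}, \eqref{eq:OmegaPest:3542dspecial}) with the same $\sigma$-bookkeeping, including the special treatment needed at the endpoint case (which the paper resolves for $d=4$ via \eqref{eq:411SSfullG1cond:3:OmegaP} and for $s_1=1$, $s_2=\frac{d-4}{2}$ via \eqref{eq:OmegaPest:3542dspecial}). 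The only cosmetic difference is that you keep the prefactor $\brac{1+\|\Ds{\frac{d-2}{2}}d\vec{u}\|_{L^2}^{\gamma}}$ until the end and absorb it by \eqref{eq:ap:smallness}, whereas the paper absorbs the smallness inside the gauge estimates.
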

\begin{proof}
At a fixed point in time $t \in (-T,T)$ we estimate
\[
 \|\Ds{\frac{d-2}{2}}(\partial_{tt}-\lap)\vec{v}\|_{L^2_x (\R^d)} \aleq \|\Ds{\frac{d-2}{2}}\brac{\Omega_P \cdot P d\vec{u}}\|_{L^2_x (\R^d)}+\|\Ds{\frac{d-2}{2}}\brac{P F(\vec{u})}\|_{L^2_x (\R^d)}
\]

We observe that 
\[
\begin{split}
 \|\Ds{\frac{d-2}{2}}\brac{P F(\vec{u})}\|_{L^2 (\R^d)} \aleq&
 \brac{\|P\|_{L^\infty(\R^d)} + \|\Ds{\frac{d}{2}} P\|_{L^2(\R^d)}}\, \|\Ds{\frac{d-2}{2}} F(\vec{u})\|_{L^2(\R^d)}.
 \end{split}
\]
By admissibility of $F$, \Cref{def:admissibleF} \eqref{eq:Fcond:3}, we have
\[
 \|\Ds{\frac{d-2}{2}}\brac{P F(\vec{u})}\|_{L^2 (\R^d)} \aleq \brac{1 + \|\Ds{\frac{d}{2}} P\|_{L^2(\R^d)}}\,
 \brac{1 + \|\Ds{\frac{d-2}{2}} d\vec{u}\|_{L^2(\R^d)}^{\gamma}}
  \|\Ds{s-1} d\vec{u}\|_{L^{(\frac{2d}{2s-1},2)}(\R^d)}^2
\]
In view of \Cref{la:fullOmegaPLdest}, the smallness assumption \eqref{eq:ap:smallnessdd} we have shown for suitably small $\eps$
\begin{equation}\label{eq:Festasdkjasd}
 \|\Ds{\frac{d-2}{2}}\brac{P F(\vec{u})}\|_{L^2 (\R^d)} \aleq
  \|\Ds{s-1} d\vec{u}\|_{L^{(\frac{2d}{2s-1},2)}(\R^d)}^2
  \end{equation}

On the other hand we compute, we will now show
\begin{equation}\label{eq:adasd2}
\max_{\alpha,\beta \in \{0,\ldots,d\}}\|\Ds{\frac{d-4}{2}} d (\Omega_P^\alpha \, P^T \partial_\beta \vec{u})\|_{L^2(\R^d)}
 \aleq\gamma  \|\Ds{s-2} dd\vec{u}\|_{L^{(\frac{2d}{2s-1},2)}(\R^d)}^2.
\end{equation}
and
\begin{equation}\label{eq:adasd2spatial}
\max_{\alpha,\beta \in \{0,\ldots,d\}}\|\Ds{\frac{d-2}{2}}  (\Omega_P^\alpha \, P^T \partial_\beta \vec{u})\|_{L^2(\R^d)}
 \aleq  \|\Ds{s-1} d\vec{u}\|_{L^{(\frac{2d}{2s-1},2)}(\R^d)}^2.
\end{equation}

Observe that this is slightly more general that what is needed here, but later for the equation in \Cref{la:ap:Wwave} we also need the time-derivative.

In order to obtain \eqref{eq:adasd2}, with the help of the Leibniz rule,
\begin{equation}\label{eq:vest:asdiuxcvziv2}
\begin{split}
 &\max_{\alpha,\beta \in \{0,\ldots,d\}}\|\Ds{\frac{d-4}{2}}d (\Omega_P^\alpha \, P \partial_\beta \vec{u})\|_{L^2(\R^d)}\\
 \aleq& \|\Omega_P\|_{L^\infty(\R^d)}\, \|\Ds{\frac{d-4}{2}}d  \brac{Pd\vec{u}}\|_{L^2(\R^d)}\\
 &+\|P\|_{L^\infty}\, \|\Ds{\frac{d-4}{2}}d\Omega_P\|_{L^{\frac{2d}{d-1}}(\R^d)}\, \| d\vec{u}\|_{L^{2d}(\R^d)}\\
%
  &+\max_{s_1+s_2=\frac{d-2}{2}, s_1,s_2 \geq 1} \|\Ds{s_1-1} d\Omega_P\|_{L^{\frac{2d}{1+2s_1-\sigma}}(\R^d)}\, \|\Ds{s_2} (P d \vec{u})\|_{L^{\frac{2d}{1+\sigma+2s_2}}(\R^d)}\\
    &+\max_{s_1+s_2=\frac{d-2}{2}, s_1,s_2 \geq 1} \|\Ds{s_1} \Omega_P\|_{L^{\frac{2d}{1+2s_1-\sigma}}(\R^d)}\, \|\Ds{s_2-1}d (P d \vec{u})\|_{L^{\frac{2d}{1+\sigma+2s_2}}(\R^d)}.
  \end{split}
\end{equation}
For the first line of \eqref{eq:vest:asdiuxcvziv2} by \Cref{la:omegaPLinfty}
\[
 \|\Omega_P\|_{L^\infty} \aleq \|d\vec{u}\|_{L^{(2d,2)}(\R^d)}^2,
\]
so that
\[
\begin{split}
& \|\Omega_P\|_{L^\infty(\R^d)}\, \|\Ds{\frac{d-4}{2}}d  \brac{Pd\vec{u}}\|_{L^2(\R^d)}\\
\aleq& \|d\vec{u}\|_{L^{(2d,2)}(\R^d)}^2\, (1+\|\Ds{\frac{d-2}{2}}d P\|_{L^2(\R^d)}) \brac{\|\Ds{\frac{d-2}{2}} d\vec{u}\|_{L^2(\R^d)}+\|\Ds{\frac{d-4}{2}} dd\vec{u}\|_{L^2(\R^d)}}\\
 \aleq&  \|d\vec{u}\|_{L^{(2d,2)}(\R^d)}^2,
 \end{split}
\]
For the second line of \eqref{eq:vest:asdiuxcvziv2}, if $d \geq 5$ we apply \eqref{eq:OmegaPest:3542d} (for $s_1 = \frac{d-2}{2}$, $\sigma=0$) to obtain
\[
 \|\Ds{\frac{d-4}{2}} d\Omega_P\|_{L^{\frac{2d}{d-1}}} \aleq\|d \vec{u}\|_{L^{2d}(\R^d)} \, \|\Ds{\frac{d-2}{2}} d\vec{u}\|_{L^2(\R^d)}.
\]
If $d=4$, we use and obtain \eqref{eq:411SSfullG1cond:3:OmegaP}
\[
 \|\Ds{\frac{d-4}{2}} d\Omega_P\|_{L^{\frac{2d}{d-1}}} \aleq \|d\vec{u}\|_{L^{(2d,2)}(\R^d)}\, \|\Ds{\frac{d-2}{2}} d\vec{u}\|_{L^{2}(\R^d)}
\]
Thus,
\[
\begin{split}
&\|P\|_{L^\infty}\, \|\Ds{\frac{d-4}{2}}d\Omega_P\|_{L^{\frac{2d}{d-1}}(\R^d)}\, \| d\vec{u}\|_{L^{2d}(\R^d)}\\
\aleq &\|d \vec{u}\|_{L^{(2d,2)}(\R^d)}^2 \, \|\Ds{\frac{d-2}{2}} d\vec{u}\|_{L^2(\R^d)}\\
\aleq & \|d \vec{u}\|_{L^{(2d,2)}(\R^d)}^2\\
\end{split}
\]
For $d=4$, this is all of \eqref{eq:vest:asdiuxcvziv2}, since the $\max$-lines are trivially zero.

So \underline{assume from now on $d \geq 5$}.
For the third line of \eqref{eq:vest:asdiuxcvziv2} we apply \eqref{eq:OmegaPest:3542d}
\[
 \|\Ds{s_1-1} d\Omega_P\|_{L^{\frac{2d}{1+2s_1  -\sigma}}} \aleq\|d \vec{u}\|_{L^{2d}(\R^d)}^{2-(1-\sigma)} \ \|\Ds{\frac{d-2}{2}} d\vec{u}\|_{L^2(\R^d)}^{1-\sigma}.
\]
which holds whenever
\begin{equation}\label{eq:xcvklxcklxmvxsigma1}
1-\sigma > \max\{\frac{2(s_1-1)}{d-2} ,\frac{2}{d-2}\}.
\end{equation}
and
\[
\begin{split}
 &\|\Ds{s_2} \brac{P d \vec{u}}\|_{L^{\frac{2d}{1+\sigma+2s_2}}(\R^d)}\\
 &\aleq \|P\|_{L^\infty} \|\Ds{s_2} d \vec{u}\|_{L^{\frac{2d}{1+\sigma+2s_2}}(\R^d)} + \max_{t_1+t_2 = s_2} \|\Ds{t_1} P\|_{L^{\frac{d}{t_1}}(\R^d)} \|\Ds{t_2} d \vec{u}\|_{L^{\frac{2d}{1+\sigma+2t_2}}(\R^d)}\\
  \overset{\eqref{eq:GN:1}}{\aleq}&\|d \vec{u}\|_{L^{2d}(\R^d)}^{1-\sigma}\, \|\Ds{\frac{d-2}{2}} d \vec{u}\|_{L^2(\R^d)}^\sigma +\|\Ds{\frac{d}{2}} P\|_{L^{2}(\R^d)} \|d \vec{u}\|_{L^{2d}(\R^d)}^{1-\sigma}\, \|\Ds{\frac{d-2}{2}} d \vec{u}\|_{L^2(\R^d)}^{\sigma}\\
  \aleq&\brac{1+\|\Ds{\frac{d}{2}} P\|_{L^{2}(\R^d)}}\, \|d \vec{u}\|_{L^{2d}(\R^d)}^{1-\sigma}\, \|\Ds{\frac{d-2}{2}} d \vec{u}\|_{L^2(\R^d)}^{\sigma}.
 \end{split}
\]
which holds if we have $ \sigma \in {(}\frac{2t_2}{d-2},1]$ so in particular if
\begin{equation}\label{eq:xcvklxcklxmvxsigma2}
 \sigma \in {(}\frac{2s_2}{d-2},1]
\end{equation}
So we need to choose $\sigma \in [0,1]$ such that \eqref{eq:xcvklxcklxmvxsigma1} and \eqref{eq:xcvklxcklxmvxsigma2} both hold (for a fixed $s_1,s_2 \in (0,\frac{d-2}{2})$ with $s_1+s_2 = \frac{d-2}{2}$).
That is we need to find $\sigma \in [0,1]$ such that
\[
(1-\sigma) >\max\{\frac{2(s_1-1)}{d-2} ,\frac{2}{d-2}\}, \quad  \sigma > \frac{2s_2}{d-2}
\]
which is possible if
\[
\begin{split}
&\frac{2(s_1-1)}{d-2} < 1-\frac{2s_2}{d-2}\\
\Leftrightarrow&s_1+s_2-1 < \frac{d-2}{2}\\
\end{split}
\]
(the latter one is true since $s_1+s_2 = \frac{d-2}{2}$), and if
\[
\begin{split}
&\frac{2}{d-2} < 1-\frac{2s_2}{d-2}\\
\Leftrightarrow&s_2 < \frac{d-4}{2}\\
\end{split}
\]
This is true unless $s_2 = \frac{d-4}{2}$, $s_1=1$, that is the remaining case is
\[
\begin{split}
& \|d\Omega_P\|_{L^{\frac{2d}{3-\sigma}}(\R^d)}\, \|\Ds{\frac{d-4}{2}} (P d \vec{u})\|_{L^{\frac{2d}{1+\sigma+2\frac{d-4}{2}}}(\R^d)}\\
 \aleq& (1+\|\Ds{d} P\|_{L^2(\R^d)})\, \|d\Omega_P\|_{L^{\frac{2d}{3-\sigma}}(\R^d)}\, \|\Ds{\frac{d-4}{2}} d \vec{u}\|_{L^{\frac{2d}{1+\sigma+2\frac{d-4}{2}}}(\R^d)}\\
 \end{split}
\]
In this case we apply \eqref{eq:OmegaPest:3542dspecial} and \eqref{eq:GN:1} for $\sigma =\frac{d-4}{d-2}$ which implies
\[
\|\Ds{\frac{d-4}{2}} d \vec{u}\|_{L^{\frac{2d}{1+\sigma+2\frac{d-4}{2}}}(\R^d)} \aleq
\|d \vec{u}\|_{L^{2d}(\R^d)}^{1-\sigma}\, \|\Ds{\frac{d-2}{2}} d \vec{u}\|_{L^{2}(\R^d)}^\sigma
\]
In summary, we have an estimate for the third line of \eqref{eq:vest:asdiuxcvziv2} (for the right choice of $\sigma$ depending on $s_1$ and $s_2$)
\[
\begin{split}
&\max_{s_1+s_2=\frac{d-2}{2}, s_1,s_2 \geq 1} \|\Ds{s_1-1} d\Omega_P\|_{L^{\frac{2d}{1+2s_1-\sigma}}(\R^d)}\, \|\Ds{s_2} (P d \vec{u})\|_{L^{\frac{2d}{1+\sigma+2s_2}}(\R^d)}\\
\aleq & \|d \vec{u}\|_{L^{2d}(\R^d)}^2\, \|\Ds{\frac{d-2}{2}} d \vec{u}\|_{L^{2}(\R^d)}.
\end{split}
\]

For the fourth line of \eqref{eq:vest:asdiuxcvziv2} we use \eqref{eq:OmegaPest:3542},

\[
 \|\Ds{s_1} \Omega_P\|_{L^{\frac{2d}{1+2s_1  -\sigma}}} \aleq\|d \vec{u}\|_{L^{2d}(\R^d)}^{2-(1-\sigma)} \ \|\Ds{\frac{d-2}{2}} d\vec{u}\|_{L^2(\R^d)}^{1-\sigma}
\]
which holds if \[1-\sigma > \frac{2(s_1-1)}{d-2}. \]
On the other hand, by Leibniz rule
\[
\begin{split}
\|\Ds{s_2-1}d (P d \vec{u})\|_{L^{\frac{2d}{1+\sigma+2s_2}}(\R^d)}
\aleq& (1+\|\Ds{\frac{d-2}{2}} dP\|_{L^2(\R^d)})\, \|\Ds{s_2-1}dd \vec{u}\|_{L^{\frac{2d}{1+\sigma+2s_2}}(\R^d)}
\end{split}
\]
Recall that $s_1+s_2 = \frac{d-2}{2}$ and $s_1,s_2 \geq 1$

Apply \eqref{eq:GN:1} to $f = \Ds{-1}dd \vec{u}$ then we have
\[
 \|\Ds{s_2-1} dd\vec{u}\|_{L^{\frac{2d}{1+\sigma+2s_2}}(\R^d)} \aleq \|\Ds{-1} dd\vec{u}\|_{L^{2d}(\R^d)}^{1-\sigma} \|\Ds{\frac{d-4}{2}} dd\vec{u}\|_{L^2(\R^d)}^{\sigma}
\]
which holds if $\sigma \geq \frac{2s_2}{d-2}$.

So we need to find $\sigma \in [0,1]$ such that
\[
\frac{2(s_1-1)}{d-2} < 1-\sigma \leq 1-\frac{2s_2}{d-2}
\]
which again is possible since $s_1+s_2-1 < \frac{d-2}{2}$.

That is, we have shown
\[
\begin{split}
&\max_{s_1+s_2=\frac{d-2}{2}, s_1,s_2 \geq 1} \|\Ds{s_1} \Omega_P\|_{L^{\frac{2d}{1+2s_1-\sigma}}(\R^d)}\, \|\Ds{s_2-1} d(P d \vec{u})\|_{L^{\frac{2d}{1+\sigma+2s_2}}(\R^d)}\\
\aleq & \|\Ds{-1}dd \vec{u}\|_{L^{2d}(\R^d)}^2\, \|\Ds{\frac{d-2}{2}} d \vec{u}\|_{L^{2}(\R^d)}.
\end{split}
\]

This proves \eqref{eq:adasd2} and \eqref{eq:adasd2spatial} (and since the last line of \eqref{eq:vest:asdiuxcvziv2} does not exist for \eqref{eq:adasd2spatial})

In particular, we have shown for any $t \in (-T,T)$
\[
 \|\Ds{\frac{d-2}{2}}(\partial_{tt}-\lap)\vec{v}\|_{L^2_x (\R^d)}  \aleq \gamma  \|\Ds{s-1} d\vec{u}\|_{L^{(\frac{2d}{2s-1},2)}(\R^d)}^2
\]
Integrating this inequality $L^1_t(-T,T)$ we have
\[
 \|\Ds{\frac{d-4}{2}}d(\partial_{tt}-\lap)\vec{v}\|_{L^1_tL^2_x (\R^d \times (-T,T))}  \aleq \gamma  \|\Ds{s-1} d\vec{u}\|_{L^2_t L^{(\frac{2d}{2s-1},2)}(\R^d \times (-T,T) )}^2
\]
From Strichartz estimates \Cref{la:strichartz}, we have
\[
\begin{split}
 &\|\Ds{\frac{d-2}{2}} d\vec{v}\|_{C^0_t L^2(\R^d \times (-T,T))} + \|\Ds{s-1} {d}\vec{v}\|_{L^2_t L^{(\frac{2d}{2s-1},2)}_x(\R^d \times (-T,T)}\\
 \aleq &\|\Ds{\frac{d-2}{2}}(\partial_{tt}-\lap)\vec{v}\|_{L^1_t L^2_x (\R^d \times (-T,T))}\\
 \aleq&  \|\Ds{s-1} d\vec{u}\|_{L^2_t L^{(\frac{2d}{2s-1},2)}_x(\R^d \times (-T,T) )}^2.
 \end{split}
\]
We can conclude.
\end{proof}

We need to obtain similar estimates for $W=P d \vec{u}-d\vec{v}$, which for $\alpha \in [0,\ldots,d]$ and $i \in \{1,\ldots,N\}$ we write as
\[
 W_{i}^\alpha := P_{ij} \partial_\alpha u^j-\partial_\alpha v^i.
\]
As usual we identify $\partial_0$ with $\partial_t$.

\begin{lemma}\label{la:ap:Wwave}
We have
\begin{equation}\label{eq:Wwavet}
\begin{split}
 (\partial_{tt} - \lap) \vec{W}^0 =
  &- \sum_{a=1}^d \partial_a (-\Omega_P^a\, P \partial_t \vec{u} - \Omega_P^0\, P\partial_a \vec{u}+P\, G_{2;a,0}(\vec{u})). \\
 \end{split}
\end{equation}
and for $b \in \{1,\ldots,d\}$,
\begin{equation}\label{eq:Wwaveb}
\begin{split}
 (\partial_{tt} - \lap) \vec{W}^b
  =&\partial_t (\Omega_P^0 \, P\partial_b \vec{u}+ \Omega_P^b\, P\partial_t \vec{u}-P\, G_{2;b,0}(\vec{u}))\\
  &- \sum_{a=1}^d \partial_a (-\Omega_P^a\, P \partial_b \vec{u} + \Omega_P^b\, P\partial_a \vec{u}+P\, G_{2;a,b}(\vec{u})). \\
 \end{split}
\end{equation}
where we chose  ``$+$'' for $\pm$ if $\beta = 0$, and $-$ for $\pm$ if $\beta \in \{1,\ldots,d\}$.
\end{lemma}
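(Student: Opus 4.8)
The plan is to regard $\vec{W}=(\vec{W}^0,\vec{W}^1,\dots,\vec{W}^d)$ as a spacetime vector field, to first derive first-order ``Hodge-type'' identities for it, and then to differentiate these once more. No estimate enters --- it is a purely algebraic computation --- so I only sketch the bookkeeping. \emph{Step 1: the divergence relation.} Subtracting \eqref{eq:ap:veq} from \eqref{eq:ap:Pvueq} gives $\partial_t(P\partial_t\vec{u})-\partial_{tt}\vec{v}=\div(P\nabla\vec{u})-\lap\vec{v}$, and since $\vec{W}^0=P\partial_t\vec{u}-\partial_t\vec{v}$ and $\vec{W}^a=P\partial_a\vec{u}-\partial_a\vec{v}$ this is exactly
\[
 \partial_t\vec{W}^0=\sum_{a=1}^d\partial_a\vec{W}^a.
\]

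\emph{Step 2: the curl relations.} From the definition \eqref{eq:OmegaPalpha} and $PP^T=\mathrm{Id}$ one reads off $\partial_tP=\Omega_P^0P-P\Omega^0$ and $\partial_aP=P\Omega^a-\Omega_P^aP$ for $a\in\{1,\dots,d\}$. Since mixed partials of $\vec{u}$ and of $\vec{v}$ commute, $\partial_b\vec{W}^0-\partial_t\vec{W}^b=(\partial_bP)\partial_t\vec{u}-(\partial_tP)\partial_b\vec{u}$; inserting the previous two identities and using the cancellation condition \eqref{eq:omeacommdu} in the form $\Omega^b\partial_t\vec{u}+\Omega^0\partial_b\vec{u}=G_{2;b,0}(\vec{u})$ gives
\[
 \partial_b\vec{W}^0-\partial_t\vec{W}^b=P\,G_{2;b,0}(\vec{u})-\Omega_P^b\,P\partial_t\vec{u}-\Omega_P^0\,P\partial_b\vec{u}.
\]
The same computation for $a,b\in\{1,\dots,d\}$, using $\Omega^a\partial_b\vec{u}-\Omega^b\partial_a\vec{u}=G_{2;a,b}(\vec{u})$, gives
\[
 \partial_a\vec{W}^b-\partial_b\vec{W}^a=P\,G_{2;a,b}(\vec{u})-\Omega_P^a\,P\partial_b\vec{u}+\Omega_P^b\,P\partial_a\vec{u}.
\]

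\emph{Step 3: second order.} For \eqref{eq:Wwavet} I would use Step 1 to write $\partial_{tt}\vec{W}^0=\sum_a\partial_a\partial_t\vec{W}^a$ and in $\lap\vec{W}^0=\sum_b\partial_b(\partial_b\vec{W}^0)$ replace $\partial_b\vec{W}^0$ by $\partial_t\vec{W}^b$ plus the perturbative terms furnished by Step 2; the two copies of $\sum_a\partial_a\partial_t\vec{W}^a$ cancel and only the spatial divergence of the perturbative terms survives, which is \eqref{eq:Wwavet}. For \eqref{eq:Wwaveb} with $b\in\{1,\dots,d\}$ I would expand $\partial_{tt}\vec{W}^b$ by first using the Step 2 identity to express $\partial_t\vec{W}^b$ through $\partial_b\vec{W}^0$, then Step 1 to convert $\partial_t\partial_b\vec{W}^0$ into $\partial_b\sum_a\partial_a\vec{W}^a$; in $\lap\vec{W}^b=\sum_a\partial_a(\partial_a\vec{W}^b)$ I would likewise substitute for $\partial_a\vec{W}^b$ from Step 2. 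The two copies of $\partial_b\sum_a\partial_a\vec{W}^a$ then cancel, leaving precisely \eqref{eq:Wwaveb} with the stated signs.

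The computation carries no genuine obstacle. The only point needing attention is the consistent use of the cancellation condition \eqref{eq:omeacommdu} --- this is exactly the mechanism that converts the ``bad'' term $P\,\Omega^\alpha\partial_\beta\vec{u}$ produced by $\partial_\alpha P$ into the perturbative $G_2$-terms --- together with keeping straight the sign convention in \eqref{eq:omeacommdu}: ``$+$'' when the second index is $0$, ``$-$'' when it is spatial.
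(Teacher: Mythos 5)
Your proposal is correct and follows essentially the same route as the paper: both rest on the divergence identity $\partial_t\vec{W}^0=\sum_a\partial_a\vec{W}^a$, the curl identities coming from the cancellation of mixed partials of $\vec{v}$, and the rewriting $\partial_tP=\Omega_P^0P-P\Omega^0$, $\partial_aP=P\Omega^a-\Omega_P^aP$ combined with the cancellation condition \eqref{eq:omeacommdu} to produce the $G_2$-terms. The only difference is presentational -- the paper packages the second-order step as the single identity $(\partial_{tt}-\lap)W^\beta=\partial_\beta(\partial_tW^0-\partial_aW^a)+\partial_t(\partial_tW^\beta-\partial_\beta W^0)-\partial_a(\partial_aW^\beta-\partial_\beta W^a)$, which is exactly the substitution-and-cancellation bookkeeping you describe.
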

\begin{proof}
Essentially by construction, we have for any $i \in \{1,\ldots,N\}$
\[
\begin{split}
 \partial_t W_i^0 - \sum_{a=1}^d\partial_a W_i^a =&\partial_t (P_{ij} \partial_t u^j - \partial_t v^i) - \partial_a (P_{ij} \partial_a u^j - \partial_a v^i) \\
 =&\partial_t (P_{ij} \partial_t u^j ) - \div (P_{ij} \nabla u^j ) 
 -\brac{\partial_{tt} v^i - \lap v^i} = 0.
 \end{split}
\]

We then have 
\[
\begin{split}
 (\partial_{tt} - \lap) W_i^\beta =& \partial_\beta (\underbrace{\partial_t W_i^t - \partial_a W_i^a}_{=0}) + \partial_t (\partial_t W_i^\beta - \partial_\beta W_i^t) - \partial_a (\partial_a W_i^\beta - \partial_\beta W_i^a) \\
  =&\partial_t (\partial_t(P_{ij} \partial_\beta u^j)- \partial_\beta (P_{ij} \partial_t u^j)) - \partial_a (\partial_a (P_{ij} \partial_\beta u^j) - \partial_\beta (P_{ij} \partial_a u^j)) \\
  =&\partial_t (\partial_t P_{ij}\, \partial_\beta u^j- \partial_\beta P_{ij}\, \partial_t u^j) - \partial_a (\partial_a P_{ij} \partial_\beta u^j - \partial_\beta P_{ij}\, \partial_a u^j). \\
 \end{split}
\]

We now need to distinguish between $\beta = 0$ and $\beta \neq 0$.

For $\beta = 0$ we have
\[
\begin{split}
 (\partial_{tt} - \lap) W_i^0 =
  &- \sum_{a=1}^d \partial_a (\partial_a P_{ij} \partial_t u^j - \partial_t P_{ij}\, \partial_a u^j). \\
 \end{split}
\]
We now use the cancellation condition \eqref{eq:omeacommdu}.
\[
\begin{split}
&\partial_a P \partial_t \vec{u} - \partial_t P\, \partial_a \vec{u}\\
=&\brac{\partial_a P\, P^T  -P\Omega^a P^T}P \partial_t \vec{u} - \brac{\partial_t P\, P^T + P \Omega^0 P^T}P\partial_a \vec{u}\\
&+P\brac{\Omega^a \partial_t \vec{u} + \Omega^0 \partial_a \vec{u}}\\
\overset{\eqref{eq:OmegaPalpha}, \eqref{eq:omeacommdu}}{=}&
-\Omega_P^a\, P \partial_t \vec{u} - \Omega_P^0\, P\partial_a \vec{u}+P\, G_{2;a,0}(\vec{u})\\
\end{split}
\]
That is, we have established \eqref{eq:Wwavet}.

For $\beta = b\in \{1,\ldots,N\}$ we observe
\[
\begin{split}
 &\partial_t P\, \partial_b \vec{u}- \partial_b P\, \partial_t \vec{u}\\
 =&\brac{\partial_t P\, P^T + P \Omega^0 P^T}\, P\partial_b \vec{u}+ \brac{- \partial_b P\, P^T + P \Omega^b P^T}P\partial_t \vec{u}\\
 &-P\brac{\Omega^0 \partial_b \vec{u} +  \Omega^b \partial_t \vec{u}}\\
 \overset{\eqref{eq:OmegaPalpha}, \eqref{eq:omeacommdu}}{=}& 
 \Omega_P^0 \, P\partial_b \vec{u}+ \Omega_P^b\, P\partial_t \vec{u}-P\, G_{2;b,0}(\vec{u}).
 \end{split}
\]
Also
\[
\begin{split}
&\partial_a P\, \partial_b \vec{u} - \partial_b P\, \partial_a \vec{u}\\
=&\brac{\partial_a P\, P^T  -P\Omega^a P^T}P \partial_b \vec{u} - \brac{\partial_b P\, P^T - P \Omega^b P^T}P\partial_a \vec{u}\\
&+P\brac{\Omega^a \partial_b \vec{u} - \Omega^b \partial_a \vec{u}}\\
\overset{\eqref{eq:OmegaPalpha}, \eqref{eq:omeacommdu}}{=}&
-\Omega_P^a\, P \partial_b \vec{u} + \Omega_P^b\, P\partial_a \vec{u}+P\, G_{2;a,b}(\vec{u})\\
\end{split}
\]
Thus, we have \eqref{eq:Wwaveb}.

We can conclude
\end{proof}

We also have control of the initial data of $W$:
\begin{lemma}\label{la:Winitial}
For $\eps$ in \eqref{eq:ap:smallnessdd} small enough we have
\[
 \|\Ds{\frac{d-2}{2}}\vec{W}(0)\|_{L^2(\R^d)} + \|\Ds{\frac{d-4}{2}}\partial_t \vec{W}(0)\|_{L^2(\R^d)} \aleq \|\Ds{\frac{d}{2}}\vec{u_0}\|_{L^2(\R^d)} + \|\Ds{\frac{d-2}{2}}\vec{\dot{u}_0}\|_{L^2(\R^d)}.
\]
\end{lemma}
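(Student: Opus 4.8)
The plan is to exploit that the auxiliary field $\vec v$ of \eqref{eq:ap:veq} was built with vanishing Cauchy data, so that at $t=0$ both $\vec W(0)$ and $\partial_t\vec W(0)$ collapse to expressions involving only the gauge $P(0)$ and the data $\vec{u_0},\vec{\dot{u}_0}$, which can then be controlled by the fixed-time gauge bounds of \Cref{la:spatialOmegaPLdest} and \Cref{la:fullOmegaPLdest} together with the fractional Leibniz rule. First I would record that, because $\vec v(0)=0$ and $\partial_t\vec v(0)=0$, for $a\in\{1,\ldots,d\}$ and $i\in\{1,\ldots,N\}$ we have $W_i^a(0)=P_{ij}(0)\,\partial_a u_0^j$ and $W_i^0(0)=P_{ij}(0)\,\dot u_0^j$, i.e. $\vec W(0)=P(0)\,d\vec{u}(0)$; and, since $\partial_t\partial_a\vec v(0)=\partial_a\partial_t\vec v(0)=0$, also $\partial_t W_i^a(0)=\partial_t P_{ij}(0)\,\partial_a u_0^j+P_{ij}(0)\,\partial_a\dot u_0^j$. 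For the time component of $\partial_t\vec W(0)$ I would not differentiate $\vec v$ twice at all, but instead use the constraint $\partial_t W_i^0=\sum_{a=1}^d\partial_a W_i^a$ established ``by construction'' in the proof of \Cref{la:ap:Wwave}; evaluated at $t=0$ it gives $\partial_t\vec W^0(0)=\div\brac{P(0)\nabla\vec{u_0}}$.

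Next I would invoke the gauge estimates on the single time slice $\{t=0\}$. Since \eqref{eq:ap:smallnessdd} gives in particular $\|\Ds{\frac{d-4}{2}}dd\vec{u}(0)\|_{L^2(\R^d)}<\eps$, the hypotheses of \Cref{la:spatialOmegaPLdest} and \Cref{la:fullOmegaPLdest} hold at $t=0$, so \eqref{eq:411SS2} and \eqref{eq:smallnessneeded} yield $\|\Ds{\frac{d}{2}}P(0)\|_{L^2(\R^d)}\aleq\|\Ds{\frac{d-2}{2}}d\vec{u}(0)\|_{L^2(\R^d)}$ and $\|\Ds{\frac{d-2}{2}}dP(0)\|_{L^2(\R^d)}\aleq\|\Ds{\frac{d-2}{2}}d\vec{u}(0)\|_{L^2(\R^d)}$, both $\aleq1$ by \eqref{eq:ap:smallness}. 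With the product estimate $\|\Ds{s}(P(0)A)\|_{L^2(\R^d)}\aleq\brac{\|P\|_{L^\infty}+\|\Ds{\frac{d}{2}}P(0)\|_{L^2(\R^d)}}\|\Ds{s}A\|_{L^2(\R^d)}$ for $s\in[0,\tfrac d2)$ (fractional Leibniz, \Cref{la:basicLeibniz}, plus Sobolev, exactly as used throughout \Cref{s:gauge}) applied to $A=d\vec{u}(0)$ and then to $A=\nabla\vec{u_0}$ this produces
\[
 \|\Ds{\frac{d-2}{2}}\vec W(0)\|_{L^2(\R^d)}\aleq\|\Ds{\frac{d-2}{2}}d\vec{u}(0)\|_{L^2(\R^d)}\aleq\|\Ds{\frac{d}{2}}\vec{u_0}\|_{L^2(\R^d)}+\|\Ds{\frac{d-2}{2}}\vec{\dot{u}_0}\|_{L^2(\R^d)},
\]
and $\|\Ds{\frac{d-4}{2}}\partial_t\vec W^0(0)\|_{L^2(\R^d)}=\|\Ds{\frac{d-4}{2}}\div(P(0)\nabla\vec{u_0})\|_{L^2(\R^d)}\aleq\|\Ds{\frac{d-2}{2}}(P(0)\nabla\vec{u_0})\|_{L^2(\R^d)}\aleq\|\Ds{\frac{d}{2}}\vec{u_0}\|_{L^2(\R^d)}$.

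It then remains to estimate the spatial components of $\partial_t\vec W(0)$. The summand $P(0)\,\partial_a\dot u_0^j$ is handled by the same product estimate (with $s=\tfrac{d-4}{2}$) and contributes $\aleq\|\Ds{\frac{d-2}{2}}\vec{\dot{u}_0}\|_{L^2(\R^d)}$. The only mildly delicate term is $\partial_t P_{ij}(0)\,\partial_a u_0^j$: both factors lie in $\dot H^{\frac{d-2}{2}}(\R^d)$, and since $\dot H^{\frac{d-2}{2}}(\R^d)\hookrightarrow L^{d}(\R^d)$ while $\Ds{\frac{d-4}{2}}$ maps $\dot H^{\frac{d-2}{2}}(\R^d)$ into $\dot H^{1}(\R^d)\hookrightarrow L^{\frac{2d}{d-2}}(\R^d)$ (valid for every $d\ge4$), fractional Leibniz gives
\[
 \|\Ds{\frac{d-4}{2}}\brac{\partial_t P(0)\,\nabla\vec{u_0}}\|_{L^2(\R^d)}\aleq\|\Ds{\frac{d-2}{2}}\partial_t P(0)\|_{L^2(\R^d)}\,\|\Ds{\frac{d-2}{2}}\nabla\vec{u_0}\|_{L^2(\R^d)}\aleq\|\Ds{\frac{d}{2}}\vec{u_0}\|_{L^2(\R^d)},
\]
using $\|\Ds{\frac{d-2}{2}}dP(0)\|_{L^2(\R^d)}\aleq\|\Ds{\frac{d-2}{2}}d\vec{u}(0)\|_{L^2(\R^d)}\aleq1$ once more. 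Summing all the contributions gives the claimed bound.

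The step I expect to need the most care --- though it is not a genuine obstacle --- is merely checking that the gauge estimates of \Cref{s:gauge}, which were derived under the smallness \eqref{eq:411Ssmall} of $\|\Ds{\frac{d-4}{2}}dd\vec{u}\|_{L^2}$, may legitimately be applied on the single time slice $\{t=0\}$: this is precisely what the standing hypothesis \eqref{eq:ap:smallnessdd} (uniform in $t$) furnishes. Everything else is routine bookkeeping with the fractional Leibniz rule and the Sobolev embeddings of $\dot H^{\frac{d-2}{2}}(\R^d)$ already used repeatedly in \Cref{s:admissibility} and \Cref{s:gauge}.
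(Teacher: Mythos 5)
Your proposal is correct and takes essentially the same route as the paper: you use $\vec{v}(0)=\partial_t\vec{v}(0)=0$ to reduce $\vec{W}(0)$ and $\partial_t\vec{W}(0)$ to $P(0)\,d\vec{u}(0)$, to $\partial_t P(0)\,\nabla\vec{u_0}+P(0)\,\nabla\vec{\dot{u}_0}$, and to $\div\brac{P(0)\nabla\vec{u_0}}$, and then close with the gauge bounds of \Cref{la:fullOmegaPLdest} under \eqref{eq:ap:smallnessdd} together with the fractional Leibniz rule. Your derivation of $\partial_t\vec{W}^0(0)=\div\brac{P(0)\nabla\vec{u_0}}$ via the constraint $\partial_t W_i^0=\sum_a\partial_a W_i^a$ is just a repackaging of the paper's computation from the $\vec{v}$-equation \eqref{eq:ap:veq}, so no substantive difference remains.
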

\begin{proof}
We observe that by the choice of $\vec{v}(0) = \partial_t \vec{v} (0) = 0$ we have,
\[
 W_i^a \Big|_{t=0} = P_{ij} \partial_a \vec{u_0}, \quad W_i^0 \Big|_{t=0} = P_{ij} \vec{\dot{u}_0}
\]
Then in view of \Cref{la:fullOmegaPLdest} and \eqref{eq:ap:smallnessdd}
\[
\begin{split}
 \|\Ds{\frac{d-2}{2}} W\|_{L^2(\R^d)} \aleq& \max_{s \in [0,\frac{d-2}{2}]} \|\Ds{s} P\|_{L^{\frac{d}{s}}(\R^d)}\, \brac{\|\Ds{\frac{d-2}{2}-s} \nabla \vec{u_0}\|_{L^{\frac{2d}{d-2s}}(\R^d)} + \|\Ds{\frac{d-2}{2}-s} \vec{\dot{u}_0}\|_{L^{\frac{2d}{d-2s}}(\R^d)}}\\
 \aleq&\brac{\|P\|_{L^\infty} + \|\Ds{\frac{d-2}{2}} P\|_{L^{2}(\R^d)}} \brac{\|\Ds{\frac{d}{2}}\vec{u_0}\|_{L^2(\R^d)} + \|\Ds{\frac{d-2}{2}}\vec{\dot{u}_0}\|_{L^2(\R^d)}}\\
 \aleq&\|\Ds{\frac{d}{2}}\vec{u_0}\|_{L^2(\R^d)} + \|\Ds{\frac{d-2}{2}}\vec{\dot{u}_0}\|_{L^2(\R^d)}.
 \end{split}
\]
Next, we have
\[
 \partial_t W^a \Big |_{t=0}= \partial_t (P \partial_a \vec{u}) \Big |_{t=0}= \partial_{t} P \partial_a \vec{u}\Big |_{t=0} + P \partial_{a} \partial_t \vec{u}\Big |_{t=0}.
\]
Consequently,
\[
\begin{split}
 &\|\Ds{\frac{d-4}{2}} \partial_t W^a(0)\|_{L^2(\R^d)}\\
 \aleq& \|\Ds{\frac{d-4}{2}} \brac{\partial_t P\, \partial_a \vec{u_0}}\|_{L^2(\R^d)} + \|\Ds{\frac{d-4}{2}} \brac{P\, \partial_a \vec{\dot{u}_0}}\|_{L^2(\R^d)}\\
 \aleq&\max_{s \in [0,\frac{d-4}{2}]} \brac{\|\Ds{s} dP\|_{L^{\frac{d}{s+1}}(\R^d)} \|\Ds{\frac{d-2}{2}-s} \vec{u_0}\|_{L^{\frac{2d}{d-2(s+1)}(\R^d)}}
  + \|\Ds{s} P\|_{L^{\frac{d}{s}}(\R^d)}\, \|\Ds{\frac{d-2}{2}-s}\vec{\dot{u}_0}\|_{L^{\frac{2d}{d-2s}}(\R^d)} }\\
  \aleq&\brac{\|P\|_{L^\infty} + \|\Ds{\frac{d-2}{2}} dP\|_{L^{2}(\R^d)}} \brac{\|\Ds{\frac{d}{2}} \vec{u_0}\|_{L^{2}(\R^d)}
  + \|\Ds{\frac{d-2}{2}}\vec{\dot{u}_0}\|_{L^{2}(\R^d)}}.\\
 \end{split}
\]
So again in view of \Cref{la:fullOmegaPLdest} and \eqref{eq:ap:smallnessdd},
\[
\max_{a \in \{1,\ldots,d\}} \|\Ds{\frac{d-4}{2}} \partial_t W^a(0)\|_{L^2(\R^d)}
  \aleq \|\Ds{\frac{d}{2}} \vec{u_0}\|_{L^{2}(\R^d)}
  + \|\Ds{\frac{d-2}{2}}\vec{\dot{u}_0}\|_{L^{2}(\R^d)}.
\]
Lastly, we observe that from the definition of $\vec{v}$, i.e. \eqref{eq:ap:veq} and \eqref{eq:ap:Pvueq},
\[
 \brac{\partial_{tt} - \lap} \vec{v} = \partial_{t} (P \partial_t \vec{u}) - \div(P \nabla \vec{u}),
\]
that is
\[
\partial_{t} (P \partial_t \vec{u}) - \partial_{tt}  \vec{v} = \div(P \nabla \vec{u})- \lap \vec{v}.
\]
Consequently,
\[
\begin{split}
 \partial_t W^0 =& \partial_t (P \partial_t \vec{u})-\partial_{tt} \vec{v}\\
 =&\div(P \nabla \vec{u})- \lap \vec{v}.
 \end{split}
\]
Since $\vec{v}(0) = 0$ we conclude
\[
 \partial_t W^0 \Big |_{t=0}= \div(P \nabla \vec{u})\Big |_{t=0} = \nabla P\Big |_{t=0} \cdot \nabla \vec{u_0} + P\Big |_{t=0} \lap \vec{u_0}.
\]
But then
\[
\begin{split}
 &\|\Ds{\frac{d-4}{2}} W^0(0)\|_{L^2(\R^d)}\\
 \aleq& \max_{s \in [0,\ldots,\frac{d-4}{2}]} \brac{\|\Ds{1+s}P\|_{L^{\frac{d}{1+s}}(\R^d)}\, \|\Ds{\frac{d-2}{2}-s} \vec{u}_0\|_{L^{\frac{2d}{d-2(1+s)}}(\R^d)} + \|\Ds{s} P\|_{L^{\frac{d}{s}}(\R^d)}\, \|\Ds{\frac{d}{2}-s}\vec{u_0}\|_{L^{\frac{2d}{d-2s}}(\R^d)}}\\
 \aleq& \brac{\|P\|_{L^\infty(\R^d)} + \|\Ds{\frac{d}{2}} P\|_{L^2(\R^d)}}\, \|\Ds{\frac{d}{2}} \vec{u_0}\|_{L^{2}(\R^d)}\\
 \aleq&\|\Ds{\frac{d}{2}} \vec{u_0}\|_{L^{2}(\R^d)}.
 \end{split}
\]
In the last step we used again \Cref{la:fullOmegaPLdest} and \eqref{eq:ap:smallnessdd}.

We can conclude.
\end{proof}

As a consequence of \Cref{la:ap:Wwave}  and \Cref{la:Winitial} we obtain
\begin{lemma}[Estimates of $W$]\label{la:estimatesw}
Let $s \in (1,\frac{d^2-4d+1}{2(d-1)})$. There exists $\eps > 0$ such that if \eqref{eq:ap:smallnessdd} is satisfied with that $\eps$,
\begin{equation}\label{eq:estimatew:goal}
\begin{split}
 \|\Ds{\frac{d-2}{2}} W\|_{C^0_t L^2(\R^d)} &+ \|\Ds{s-1} W\|_{L^2_t L^{(\frac{2d}{2s-1},2)}_x(\R^d) \times (0,T)}\\
 \aleq_s &\|\Ds{s-2} dd\vec{u}\|_{L^2_t L^{(\frac{2d}{2s-1},2)}_x(\R^d \times (-T,T))}^2\\
 &+\|\Ds{\frac{d}{2}} \vec{u_0}\|_{L^2(\R^d)} + \|\Ds{\frac{d-2}{2}} \vec{\dot{u}_0}\|_{L^2(\R^d)}.
 \end{split}
 \end{equation}
\end{lemma}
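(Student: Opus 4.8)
The plan is to feed the wave equations for $\vec{W}^0$ and $\vec{W}^b$ from \Cref{la:ap:Wwave} directly into the Strichartz estimate \Cref{la:strichartz}, exactly as was done for $\vec{v}$ in \Cref{la:estimatesv} but one derivative lower (we estimate $\vec{W}$ itself, not $d\vec{W}$). Since $s$ lies in the same admissible range as in \Cref{la:estimatesv}, this gives, over $\R^d\times(-T,T)$,
\[
\|\Ds{\frac{d-2}{2}}\vec{W}\|_{C^0_t L^2_x} + \|\Ds{s-1}\vec{W}\|_{L^2_t L^{(\frac{2d}{2s-1},2)}_x} \aleq \|\Ds{\frac{d-2}{2}}\vec{W}(0)\|_{L^2} + \|\Ds{\frac{d-4}{2}}\partial_t\vec{W}(0)\|_{L^2} + \|\Ds{\frac{d-4}{2}}\big((\partial_{tt}-\lap)\vec{W}\big)\|_{L^1_t L^2_x}.
\]
The first two terms on the right are bounded by $\|\Ds{\frac{d}{2}}\vec{u_0}\|_{L^2} + \|\Ds{\frac{d-2}{2}}\vec{\dot{u}_0}\|_{L^2}$ by \Cref{la:Winitial}, so everything reduces to estimating the source term.

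\textbf{Estimating the source.} By \Cref{la:ap:Wwave}, $(\partial_{tt}-\lap)\vec{W}^\beta$ is a sum of terms $\partial_\alpha Q$, where $Q$ is one of the quadratic expressions $\Omega_P^\mu\, P\,\partial_\nu\vec{u}$ or $P\, G_{2;\mu,\nu}(\vec{u})$. For a spatial outer derivative $\partial_a$ one gains a derivative, $\|\Ds{\frac{d-4}{2}}\partial_a Q\|_{L^2}\aleq\|\Ds{\frac{d-2}{2}}Q\|_{L^2}$, so it suffices to bound $\|\Ds{\frac{d-2}{2}}(\Omega_P^\mu P\partial_\nu\vec{u})\|_{L^2}$, which is \eqref{eq:adasd2spatial}, and $\|\Ds{\frac{d-2}{2}}(PG_{2;a,b}(\vec{u}))\|_{L^2}$. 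For the time derivative $\partial_t$ — present only in the $\vec{W}^b$-equation — no gain is available, and one bounds $\|\Ds{\frac{d-4}{2}}\partial_t Q\|_{L^2}\le\|\Ds{\frac{d-4}{2}}dQ\|_{L^2}$: for $Q=\Omega_P^\mu P\partial_\nu\vec{u}$ this is exactly \eqref{eq:adasd2}, and for $Q=PG_{2;b,0}(\vec{u})$ one uses the fractional Leibniz rule together with the $P$-bounds of \Cref{la:fullOmegaPLdest} and \Cref{la:fullOmegaPLdestdd} and the smallness \eqref{eq:ap:smallnessdd} to reduce to $\|\Ds{\frac{d-4}{2}}dG_{2;b,0}(\vec{u})\|_{L^2}$, which by \eqref{eq:G2cond:1} is $\aleq\|\Ds{\frac{d-2}{2}}d\vec{u}\|_{L^2}^{\gamma_1}\|d\vec{u}\|_{L^{(2d,2)}}^2\aleq\|d\vec{u}\|_{L^{(2d,2)}}^2$; the spatial $PG_2$-term is handled identically. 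Altogether, at a fixed time $t$,
\[
\|\Ds{\frac{d-4}{2}}\big((\partial_{tt}-\lap)\vec{W}\big)(t)\|_{L^2} \aleq \|\Ds{s-2}dd\vec{u}(t)\|_{L^{(\frac{2d}{2s-1},2)}}^2 + \|\Ds{s-1}d\vec{u}(t)\|_{L^{(\frac{2d}{2s-1},2)}}^2 + \|d\vec{u}(t)\|_{L^{(2d,2)}}^2.
\]
Each of the three terms on the right is $\aleq\|\Ds{s-2}dd\vec{u}(t)\|_{L^{(\frac{2d}{2s-1},2)}}^2$: the $L^{(\frac{2d}{2s-1},2)}$-norm of $\Ds{s-1}d\vec{u}=\Ds{s-2}\Dso d\vec{u}$ is controlled by that of the spatial components of $\Ds{s-2}dd\vec{u}$, and $\|d\vec{u}\|_{L^{(2d,2)}}\aleq\|\Ds{s-1}d\vec{u}\|_{L^{(\frac{2d}{2s-1},2)}}$ by the Sobolev embedding $\dot{W}^{s-1,(\frac{2d}{2s-1},2)}(\R^d)\hookrightarrow L^{(2d,2)}(\R^d)$ (the exponents match by scaling, and the secondary Lorentz index is preserved).

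\textbf{Conclusion.} Integrating the last display in $t\in(-T,T)$ and using $\int_{-T}^T f(t)^2\,dt=\|f\|_{L^2_t}^2$ yields
\[
\|\Ds{\frac{d-4}{2}}\big((\partial_{tt}-\lap)\vec{W}\big)\|_{L^1_t L^2_x(\R^d\times(-T,T))} \aleq \|\Ds{s-2}dd\vec{u}\|_{L^2_t L^{(\frac{2d}{2s-1},2)}_x(\R^d\times(-T,T))}^2,
\]
and combining this with the Strichartz inequality above and \Cref{la:Winitial} gives \eqref{eq:estimatew:goal}. The main obstacle is the time-derivative term in the $\vec{W}^b$-equation: there the divergence structure provides no gain of a derivative, forcing the use of \eqref{eq:adasd2} at the level $\Ds{\frac{d-4}{2}}d(\cdot)$, hence a bound quadratic in $dd\vec{u}$ rather than in $d\vec{u}$. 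This is precisely why the right-hand side of \eqref{eq:estimatew:goal} features $\Ds{s-2}dd\vec{u}$ and why smallness is imposed on $\Ds{\frac{d-4}{2}}dd\vec{u}$ via \eqref{eq:ap:smallnessdd}; the remaining effort is the bookkeeping that collapses every quadratic contribution under a single $\|\Ds{s-2}dd\vec{u}\|_{L^2_t L^{(\frac{2d}{2s-1},2)}_x}^2$ and the closing of the $PG_2$-terms by absorbing the $P$-factors through the gauge estimates of \Cref{s:gauge} and the smallness hypothesis.
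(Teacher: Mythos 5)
Your proposal is correct and follows essentially the same route as the paper: Strichartz (\Cref{la:strichartz}) plus the initial-data bounds of \Cref{la:Winitial} reduce everything to the fixed-time source estimate, which is then handled exactly as in the paper via \eqref{eq:adasd2}/\eqref{eq:adasd2spatial} for the $\Omega_P\,P\,d\vec{u}$-terms and the Leibniz rule with the gauge bounds and \eqref{eq:G2cond:1} for the $P\,G_2$-terms, before integrating in time. The only differences are cosmetic bookkeeping (your explicit split between spatial and temporal outer derivatives and the Sobolev step collapsing all quadratic contributions into $\|\Ds{s-2}dd\vec{u}\|_{L^{(\frac{2d}{2s-1},2)}}^2$, which the paper performs implicitly).
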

\begin{proof}
From Strichartz estimates, \Cref{la:strichartz}, taking into account also \Cref{la:Winitial},
\[
\begin{split}
 &\|\Ds{\frac{d-4}{2}} dW\|_{C^0_t L^2(\R^d)} + \|\Ds{s-1} W\|_{L^2_t L^{(\frac{2d}{2s-1},2)}_x(\R^d) \times (0,T)}\\
 \aleq&\|\Ds{\frac{d-2}{2}} W(0)\|_{L^2(\R^d)} + \|\Ds{\frac{d-4}{2}} \partial_t W(0)\|_{L^2(\R^d)}+\|\Ds{\frac{d-4}{2}} \brac{(\partial_{tt}-\lap) W)} \|_{L^1_t L^2_x(\R^d \times (-T,T))}\\
 \aleq&\|\Ds{\frac{d}{2}}\vec{u_0}\|_{L^2(\R^d)} + \|\Ds{\frac{d-2}{2}}\vec{\dot{u}_0}\|_{L^2(\R^d)}\\
 &+\|\Ds{\frac{d-4}{2}} \brac{(\partial_{tt}-\lap) W)} \|_{L^1_t L^2_x(\R^d \times (-T,T))}.
 \end{split}
\]
In view of \Cref{la:ap:Wwave} we have for fixed time $t \in (-T,T)$
\[
\begin{split}
 &\|\Ds{\frac{d-4}{2}} (\partial_{tt} - \lap) \vec{W}\|_{L^2(\R^d)}\\
 \aleq& \max_{\alpha,\beta \in \{0,\ldots,d\}}\|\Ds{\frac{d-2}{2}}\brac{\Omega_P^\alpha\, P^T \partial_\beta \vec{u} }\|_{L^2 (\R^d)}\\
 &+ \max_{\alpha,\beta \in \{0,\ldots,d\}}\|\Ds{\frac{d-4}{2}}d\brac{P G_{2;\alpha,\beta}}\|_{L^2 (\R^d)}\\
 &+\|\Ds{\frac{d-4}{2}} \partial_t (\Omega_P^0 \, P\partial_b \vec{u})\|_{L^2(\R^d)}+ \|\Ds{\frac{d-4}{2}} \partial_t (\Omega_P^b\, P\partial_t \vec{u})\|_{L^2(\R^d)}\\
 \overset{\eqref{eq:adasd2}}{\aleq}&  \|\Ds{s-2} dd\vec{u}\|_{L^{(\frac{2d}{2s-1},2)}(\R^d)}^2\\
 &+ \max_{\alpha,\beta \in \{0,\ldots,d\}}\|\Ds{\frac{d-4}{2}}d\brac{P G_{2;\alpha,\beta}}\|_{L^2 (\R^d)}\\
 \end{split}
\]
As before when obtaining the estimate \eqref{eq:Festasdkjasd} we observe that by \Cref{la:fullOmegaPLdest},
\[
\begin{split}
 &\|\Ds{\frac{d-4}{2}}d\brac{P G_{2;\alpha,\beta}}\|_{L^2 (\R^d)}\\
 \aleq&\brac{\|P\|_{L^\infty} + \|\Ds{\frac{d-2}{2}}dP\|_{L^d(\R^d)}} \|\Ds{\frac{d-4}{2}}dG_{2;\alpha,\beta}\|_{L^2 (\R^d)}\\
 \overset{\eqref{eq:G2cond:1}, \eqref{eq:411Ssmall}}{\aleq}&\gamma \|d \vec{u}\|_{L^{(2d,2)}(\R^d)}^2\\
\end{split}
 \]
Consequently, we have shown that for each fixed $t \in (-T,T)$ we have
\[
 \|\Ds{\frac{d-4}{2}} (\partial_{tt} - \lap) \vec{W}\|_{L^2(\R^d)}  \aleq \gamma \|\Ds{s-2} dd\vec{u}\|_{L^{(\frac{2d}{2s-1},2)}(\R^d)}^2
\]
Integrating this in time we find \eqref{eq:estimatew:goal}. We can conclude.
\end{proof}

Combining \Cref{la:estimatesw}, \Cref{la:estimatesv} we have 
\begin{proof}[Proof of \Cref{th:aprioriest}]
From \eqref{eq:WPuv}
\[
d\vec{u} = P^T \vec{W}+ P^t d\vec{v},
\]
Thus for each fixed time $t \in (-T,T)$
\begin{equation}\label{eq:ddvecuL2est}
\begin{split}
 \|\Ds{\frac{d-2}{2}} d\vec{u}\|_{L^2(\R^d)} \aleq& \brac{\|P\|_{L^\infty} + \|\Ds{\frac{d}{2}} P\|_{L^{2}(\R^d)}} \brac{\|\Ds{\frac{d-2}{2}} \vec{W}\|_{L^2(\R^d)} + \|\Ds{\frac{d-2}{2}} d \vec{v}\|_{L^2(\R^d)}}\\
 \aleq&\|\Ds{\frac{d-2}{2}} \vec{W}\|_{L^2(\R^d)} + \|\Ds{\frac{d-2}{2}} d \vec{v}\|_{L^2(\R^d)},
 \end{split}
\end{equation}
where in the last step we used \Cref{la:fullOmegaPLdest} and \eqref{eq:ap:smallnessdd}.

Similarly,
\begin{equation}\label{eq:ddvecuL2dest}
\begin{split}
\|\Ds{s-1}d \vec{u}\|_{L^{(\frac{2d}{2s-1},2)}_x(\R^d)} \aleq& \brac{\|P\|_{L^\infty(\R^d)} + \|\Ds{\frac{d}{2}} P\|_{L^2(\R^d)}} \brac{\|\Ds{s-1} \vec{W}\|_{L^{(\frac{2d}{2s-1},2)}_x(\R^d)} + \|\Ds{s-1} d\vec{v}\|_{L^{(\frac{2d}{2s-1},2)}_x(\R^d)}}\\
  \aleq&\|\Ds{s-1} \vec{W}\|_{L^{(\frac{2d}{2s-1},2)}_x(\R^d)} + \|\Ds{s-1} d\vec{v}\|_{L^{(\frac{2d}{2s-1},2)}_x(\R^d)}\\
\end{split}
\end{equation}

Combining \eqref{eq:ddvecuL2est}, \eqref{eq:ddvecuL2dest} with \eqref{eq:estimatev:goal} and \eqref{eq:estimatew:goal} we obtain
\begin{equation}\label{eq:contmethaskldjsad}
\begin{split}
 &\|\Ds{\frac{d-2}{2}} d\vec{u}\|_{L^\infty_t L^2(\R^d\times (-T,T))} + \|\Ds{s-1}d \vec{u}\|_{L^2_t L^{(\frac{2d}{2s-1},2)}_x(\R^d \times (-T,T))}\\
 \aleq& \|\Ds{s-1} d\vec{u}\|_{L^2_t L^{(\frac{2d}{2s-1},2)}_x(\R^d \times (-T,T))}^2\\
 &+\|\Ds{\frac{d}{2}} \vec{u_0}\|_{L^2(\R^d)} + \|\Ds{\frac{d-2}{2}} \vec{\dot{u}_0}\|_{L^2(\R^d)}
 \end{split}
\end{equation}
In particular, if we set
\[
 A(\tau) := \|\Ds{s-1}d \vec{u}\|_{L^2_t L^{(\frac{2d}{2s-1},2)}_x(\R^d \times (-\tau,\tau))}
\]
then $\tau \mapsto A(\tau)$ is continuous (by absolute continuity of the integral), and we have for any $\tau \leq T$
\[
 A(\tau) \leq C\, A(\tau)^2 + C\, B,
\]
where
\[
B:= \brac{\|\Ds{\frac{d}{2}} \vec{u_0}\|_{L^2(\R^d)} + \|\Ds{\frac{d-2}{2}} \vec{\dot{u}_0}\|_{L^2(\R^d)}}.
\]
If $B < \frac{1}{4C^2}$ then we conclude from the polynomial inequality above
\[
 \text{either }A(\tau) \leq \frac{1}{2C} - \sqrt{\frac{1}{4C^2} - B} \quad \text{or} \quad A(\tau) \geq \frac{1}{2C} - \sqrt{\frac{1}{4C^2} - B}
\]
Observe that the first option is equivalent to
\[
\begin{split}
 &\sqrt{\frac{1}{4C^2} - B} \leq \frac{1}{2C} - A(\tau) \\
 \Leftrightarrow&\frac{1}{4C^2} - B \leq \frac{1}{4C^2} + \brac{A(\tau)}^2  -\frac{1}{C} A(\tau)\\
 \Leftrightarrow& \frac{1}{C} A(\tau) \leq  B+ \brac{A(\tau)}^2 \\
 \end{split}
\]
Clearly for $\tau = 0$ this is satisfied (since $C \geq 1$). Since $\tau \mapsto A(\tau)$ is continuous, we find that
\[
 A(\tau) \leq \frac{1}{2C} - \sqrt{\frac{1}{4C^2} - B} \quad \forall \tau \in (0,T]
\]
and in particular we have
\[
 A(T) \aleq B.
\]
Thus, \eqref{eq:contmethaskldjsad} becomes
\[
\begin{split}
 &\|\Ds{\frac{d-2}{2}} d\vec{u}\|_{L^\infty_t L^2(\R^d\times (-T,T))} + \|\Ds{s-1}d \vec{u}\|_{L^2_t L^{(\frac{2d}{2s-1},2)}_x(\R^d \times (-T,T))}\\
 \aleq& (\|\Ds{\frac{d}{2}} \vec{u_0}\|_{L^2(\R^d)} + \|\Ds{\frac{d-2}{2}} \vec{\dot{u}_0}\|_{L^2(\R^d)})^2\\
 &+\|\Ds{\frac{d}{2}} \vec{u_0}\|_{L^2(\R^d)} + \|\Ds{\frac{d-2}{2}} \vec{\dot{u}_0}\|_{L^2(\R^d)}\\
 \aleq&\|\Ds{\frac{d}{2}} \vec{u_0}\|_{L^2(\R^d)} + \|\Ds{\frac{d-2}{2}} \vec{\dot{u}_0}\|_{L^2(\R^d)}.
 \end{split}
\]
That is, we have established \eqref{eq:apriori:est} and can conclude.
\end{proof}

\section{Halfwave map and wave map equation fit into Theorem~\ref{th:aprioriest}}\label{s:halfwavewave}
\subsection{Wave map equation}
Firstly, we observe that our assumptions are applicable to the classical wave map equation
\[
 \partial_{tt} \vec{u} - \lap \vec{u} \perp T_{\vec{u}} \S^{2}.
\]
That is
\[
 \partial_{tt} \vec{u} - \lap \vec{u} = \vec{u} \langle \vec{u}, \partial_{tt} \vec{u} - \lap \vec{u}\rangle
\]
Set
\[
 \Omega^{\alpha}_{ij} := \begin{cases}
                          u^i \partial_a u^j - u^j \partial_a u^i \quad& a=\alpha \in \{1,\ldots,n\}\\
                         -\brac{u^i \partial_t u^j - u^j \partial_t u^i} \quad& \alpha =0\\
                         \end{cases}
\]
Then we observe that
\[
\begin{split}
 &u^i \langle \vec{u}, \partial_{tt} \vec{u} - \lap \vec{u}\rangle \\
 =&-u^i \brac{\langle \partial_t \vec{u}, \partial_{t} \vec{u}\rangle- \sum_{a=1}^d\langle \partial_a \vec{u},\partial_a \vec{u}\rangle} \\
 =&-u^i \brac{\partial_t u^j\, \partial_{t} u^j- \sum_{a=1}^d \partial_a u^j\,\partial_a u^j} \\
 =&-\brac{u^i \partial_t u^j\, -u^j \partial_t u^i}\partial_{t} u^j\\
 &+ \sum_{a=1}^d  \brac{u^i\partial_a u^j -u^j \partial_a u^i} \partial_a u^j \\
 =&\brac{\Omega \cdot d\vec{u}}_i
 \end{split}
\]
So we can choose $F \equiv 0$ in \eqref{eq:generalwave}, which is clearly admissible in the sense of \Cref{def:admissibleF}.

As for $\Omega$ we now show
\begin{proposition}\label{pr:wavemap:omegaadmissible}
$\Omega$ as above for the wave maps into $\S^2$ satisfies the conditions of \Cref{def:admissibleomega}.
\end{proposition}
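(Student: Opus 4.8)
The plan is to check the three groups of hypotheses in \Cref{def:admissibleomega} — the growth bounds of \Cref{def:FOmegagrowth}, the curl estimates for $G_1$, and the cancellation estimate \eqref{eq:G2cond:1} for $G_2$ — using throughout that $\vec u$ takes values in $\S^2$, so $|\vec u|^2\equiv 1$ and hence $\sum_j u^j\partial_\alpha u^j = \tfrac12\partial_\alpha|\vec u|^2 = 0$ pointwise. Antisymmetry $\Omega^\alpha_{ij} = -\Omega^\alpha_{ji}$ is immediate from the definition. Since $\Omega$ is of the form $A(\vec u)\, d\vec u$ with $A$ affine in $\vec u$ and $\|\vec u\|_{L^\infty}\equiv 1$, the growth bounds \eqref{eq:Omegacond:growthpq2}--\eqref{eq:Omegacond:growth2d} for $\Omega$ follow from the fractional Leibniz rule \Cref{la:basicLeibniz}, Sobolev embedding, and Gagliardo--Nirenberg interpolation: one peels the highest derivative onto $d\vec u$, bounds $\vec u$ by $1$, writes $\Ds{s_1}\vec u = \Ds{s_1-1}\nabla\vec u$ for $s_1\geq 1$, and absorbs the subcritical factors into the $(1+\|\Ds{\frac{d-2}{2}}d\vec u\|_{L^2}^\gamma)$-prefactor.

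A direct computation in which the mixed second derivatives cancel gives the curl:
\[
 \partial_a\Omega^b_{ij} - \partial_b\Omega^a_{ij} = 2\brac{\partial_a u^i\,\partial_b u^j - \partial_a u^j\,\partial_b u^i} =: G_{1;a,b}(\vec u),
\]
\[
 \partial_a\Omega^0_{ij} + \partial_t\Omega^a_{ij} = 2\brac{\partial_a u^j\,\partial_t u^i - \partial_a u^i\,\partial_t u^j} =: G_{1;a,0}(\vec u),
\]
so $G_1$ is bilinear in $d\vec u$ (in particular of the admissible form $g(\vec u)|d\vec u|^2$ with $g$ constant). Consequently each $G_1$-estimate in \Cref{def:admissibleomega} is a product estimate for $\Ds{s-1}(d\vec u\otimes d\vec u)$ or $\Ds{s-2}d(d\vec u\otimes d\vec u) = \Ds{s-2}(dd\vec u\otimes d\vec u)$ — no use of the PDE is needed here, since $dd\vec u$ appears also on the right-hand sides — and follows from \Cref{la:basicLeibniz} together with the Gagliardo--Nirenberg inequalities \eqref{eq:GN:1}, \eqref{eq:GN:234}, \eqref{eq:GN:gagliardo6}, distributing derivatives so that the exponents match those displayed. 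The cancellation condition \eqref{eq:omeacommdu} in turn holds with $G_2\equiv 0$: using $\sum_j u^j\partial_\beta u^j = 0$,
\[
 \sum_j \Omega^a_{ij}\,\partial_b u^j = \sum_j\brac{u^i\partial_a u^j - u^j\partial_a u^i}\partial_b u^j = u^i\sum_j\partial_a u^j\,\partial_b u^j,
\]
which is symmetric in $(a,b)$, so $\sum_j(\Omega^a_{ij}\partial_b u^j - \Omega^b_{ij}\partial_a u^j) = 0$; the space--time line is identical once one also uses $\sum_j u^j\partial_t u^j = 0$. Hence \eqref{eq:G2cond:1} is trivially satisfied.

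The one step that is not pure bookkeeping is the growth bound \eqref{eq:Omegacond:growthpq} (hence \eqref{eq:Omegacond:growth}) for $d\Omega$: since $d\Omega$ contains $\partial_t\Omega^0_{ij} = -(u^i\partial_{tt}u^j - u^j\partial_{tt}u^i)$ and $\partial_{tt}\vec u$ is not controlled by $\Ds{s+1}d\vec u$ alone, I would substitute the wave map equation $\partial_{tt}\vec u - \lap\vec u = \vec u(|\nabla\vec u|^2 - |\partial_t\vec u|^2)$, which yields $u^i\partial_{tt}u^j - u^j\partial_{tt}u^i = u^i\lap u^j - u^j\lap u^i = \sum_a\partial_a\Omega^a_{ij}$ (the cubic terms cancel), so $\partial_t\Omega^0 = -\sum_a\partial_a\Omega^a$ and $\Ds{s}d\Omega$ reduces to $\Ds{s+1}$ of the purely spatial, $A(\vec u)\nabla\vec u$-type object, estimated as in the first paragraph. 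I expect this substitution, and the careful matching of intermediate Lebesgue/Lorentz exponents in the Leibniz and Gagliardo--Nirenberg steps, to be the only real work; there is no conceptual obstacle, since the $\S^2$-constraint makes the curl manifestly quadratic in $d\vec u$ and the cancellation term vanish identically.
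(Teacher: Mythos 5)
Your proposal is correct and follows essentially the same route as the paper: antisymmetry is immediate, $G_1$ is exactly the quadratic expression $2(\partial_\alpha u^i\partial_\beta u^j-\partial_\beta u^i\partial_\alpha u^j)$ handled by the Leibniz rule and Gagliardo--Nirenberg estimates, the cancellation holds with $G_2\equiv 0$ because $\sum_j u^j\partial_\alpha u^j=0$ on $\S^2$, and the only genuine issue — the $\partial_{tt}\vec u$ inside $d\Omega$ for \eqref{eq:Omegacond:growthpq} — is resolved by substituting the wave-map equation. The only (harmless) deviation is that you exploit the antisymmetric combination $u^i\partial_{tt}u^j-u^j\partial_{tt}u^i$ to drop the cubic term and write $\partial_t\Omega^0=-\sum_a\partial_a\Omega^a$, whereas the paper keeps the term $\vec u|d\vec u|^2$ and estimates it directly, at the cost of extra factors of $\|\Ds{\frac{d-2}{2}}d\vec u\|_{L^2}$ absorbed into the admissible prefactor; both are fine.
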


Indeed, it is clearly antisymmetric, $\Omega_{ij}^\alpha = -\Omega_{ji}^\alpha$.

\begin{lemma}
$\Omega$ satisfies the growth conditions \eqref{eq:Omegacond:growthpq}, \eqref{eq:Omegacond:growthpq2}
\end{lemma}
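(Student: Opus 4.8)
The plan is to exploit that, for the wave map, $\Omega$ is a \emph{fixed bilinear expression} in $(\vec{u},d\vec{u})$: up to signs $\Omega^\alpha_{ij}=u^i\partial_\alpha u^j-u^j\partial_\alpha u^i$, and since $\vec{u}$ takes values in the compact set $\S^2$ one has $\|\vec{u}\|_{L^\infty}\aleq 1$. Thus $\Ds{s}\Omega$ and $\Ds{s}d\Omega$ are, schematically, $\Ds{s}(\vec{u}\cdot\partial\vec{u})$ and $\Ds{s}(\partial\vec{u}\cdot\partial\vec{u}+\vec{u}\cdot\partial\partial\vec{u})$, and the fractional Leibniz rule \Cref{la:basicLeibniz} turns each into a finite sum of products in which one factor is $\Ds{t}\vec{u}$ and the others carry the remaining derivatives. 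A factor $\vec{u}$ with no derivative is harmless ($L^\infty$), while a factor $\Ds{t}\vec{u}$ with $t\ge1$ is written as $\Ds{t-1}\nabla\vec{u}$ and, by the Sobolev embedding $\dot H^{\frac d2}(\R^d)\hookrightarrow\dot W^{t,\frac dt}(\R^d)$, obeys $\|\Ds{t}\vec{u}\|_{L^{d/t}}\aleq\|\Ds{\frac{d-2}{2}}d\vec{u}\|_{L^2}$; this is the source of the factor $(1+\|\Ds{\frac{d-2}{2}}d\vec{u}\|_{L^2}^\gamma)$ (one may take $\gamma=1$).

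For \eqref{eq:Omegacond:growthpq2} I would split $\Ds{s}(u^i\partial_\alpha u^j)$ into the top term $\approx\vec{u}\,\Ds{s}\partial_\alpha\vec{u}$, bounded by $\|\vec{u}\|_{L^\infty}\|\Ds{s}\partial_\alpha\vec{u}\|_{L^{(p,q)}}\aleq\|\Ds{s}d\vec{u}\|_{L^{(p,q)}}$, plus mixed terms $\Ds{t}\vec{u}\cdot\Ds{s-t}\partial_\alpha\vec{u}$ with $1\le t\le s$, each estimated by Hölder in Lorentz spaces with $\Ds{t}\vec{u}=\Ds{t-1}\nabla\vec{u}$ placed in $L^{(d/t,\infty)}$ (bounded by $\|\Ds{\frac{d-2}{2}}d\vec{u}\|_{L^2}$ as above) and the remaining factor in $L^{(p_2,q)}$, $\tfrac1{p_2}=\tfrac1p-\tfrac td$, which by $\dot W^{s,p}\hookrightarrow\dot W^{s-t,p_2}$ is $\aleq\|\Ds{s}d\vec{u}\|_{L^{(p,q)}}$. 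The hypotheses $p\in(1,\tfrac ds)$ and $s\le\tfrac{d-4}2$ are precisely what make $p_2\in[p,\infty)$ and $t\le d/2$, so all embeddings are legitimate, and collecting the terms yields \eqref{eq:Omegacond:growthpq2}.

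For \eqref{eq:Omegacond:growthpq} I would first compute $d\Omega$ explicitly. Each $\partial_\beta\Omega^\alpha$ with $\beta$ spatial, and also $\partial_t\Omega^a$, has the form $\partial\vec{u}\,\partial\vec{u}+\vec{u}\,\partial_c\partial_\gamma\vec{u}$ where the second-order factor carries \emph{at least one spatial index}, hence is controlled; the single exceptional component is $\partial_t\Omega^0_{ij}=-(u^i\partial_{tt}u^j-u^j\partial_{tt}u^i)$. Here I would invoke the wave-map equation $\partial_{tt}\vec{u}-\lap\vec{u}\perp T_{\vec{u}}\S^2$: since $\partial_{tt}\vec{u}-\lap\vec{u}$ is parallel to $\vec{u}$, the antisymmetric combination satisfies $u^i\partial_{tt}u^j-u^j\partial_{tt}u^i=u^i\lap u^j-u^j\lap u^i$, putting $\partial_t\Omega^0$ back into the controlled form $\vec{u}\,\partial_c\partial_c\vec{u}$ with spatial indices. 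It then remains to bound $\Ds{s}(\vec{u}\,\partial_c\partial_\gamma\vec{u})$ and $\Ds{s}(\partial\vec{u}\,\partial\vec{u})$ in $L^{(p,q)}$. For the former the top term $\approx\Ds{s}\partial_c\partial_\gamma\vec{u}$ is $\aleq\|\Ds{s+1}\partial_\gamma\vec{u}\|_{L^{(p,q)}}\le\|\Ds{s+1}d\vec{u}\|_{L^{(p,q)}}$ (a Riesz transform, as $\partial_c$ is spatial), and the mixed terms go exactly as in the previous paragraph; for the latter the top term $(\Ds{s}d\vec{u})\cdot d\vec{u}$ is bounded by $\|\Ds{s}d\vec{u}\|_{L^{(p_1,q)}}\|d\vec{u}\|_{L^d}$ with $\tfrac1{p_1}=\tfrac1p-\tfrac1d$, and here $\|d\vec{u}\|_{L^d}\aleq\|\Ds{\frac{d-2}{2}}d\vec{u}\|_{L^2}$ and $\|\Ds{s}d\vec{u}\|_{L^{(p_1,q)}}\aleq\|\Ds{s+1}d\vec{u}\|_{L^{(p,q)}}$ by $\dot W^{s+1,p}\hookrightarrow\dot W^{s,p_1}$ (admissible since $p<\tfrac d{s+1}\le d$), the remaining distributions of the $s$ derivatives again being handled by Hölder-in-Lorentz plus Sobolev. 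Assembling the estimates gives \eqref{eq:Omegacond:growthpq} with $\gamma=1$.

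The only genuine subtlety is the $\partial_{tt}\vec{u}$ appearing in the component $\partial_t\Omega^0$ of $d\Omega$, which is not controlled by $d\vec{u}$ alone and must be removed through the wave-map equation; everything else is fractional Leibniz plus Sobolev embeddings, so the main labour is the exponent bookkeeping ensuring all Lorentz–Hölder splittings are admissible over the full stated range of $(s,p,q)$.
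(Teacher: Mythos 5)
Your argument is correct, and it follows the same overall strategy as the paper: expand $\Ds{s}\Omega$ and $\Ds{s}d\Omega$ by the fractional Leibniz rule, put factors $\Ds{t}\vec{u}$ with $t\ge 1$ into $L^{d/t}$ via Sobolev embedding (giving the prefactor $1+\|\Ds{\frac{d-2}{2}}d\vec{u}\|_{L^2}^{\gamma}$), and recognize that the only dangerous contribution is the $\partial_{tt}\vec{u}$ hidden in $\partial_t\Omega^0$, which must be removed through the equation. Where you genuinely differ is in how you remove it: the paper substitutes the full wave map equation $\partial_{tt}\vec{u}=\lap\vec{u}+\vec{u}\,|d\vec{u}|^2$ into $\Ds{s}\brac{\vec{u}\,dd\vec{u}}$ and then spends an extra Leibniz--Sobolev step estimating $\Ds{s}\brac{\vec{u}\,|d\vec{u}|^2}$, which is why its final bound carries a higher power ($\gamma=3$) of $\|\Ds{\frac{d-2}{2}}d\vec{u}\|_{L^2}$. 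You instead exploit the antisymmetric structure: since $\partial_{tt}\vec{u}-\lap\vec{u}$ is parallel to $\vec{u}$, the combination $u^i\partial_{tt}u^j-u^j\partial_{tt}u^i$ equals $u^i\lap u^j-u^j\lap u^i$, so the nonlinearity cancels identically and $\partial_t\Omega^0$ is already of the controlled form $\vec{u}\,\partial_c\partial_\gamma\vec{u}$ with a spatial index. This is slightly cleaner: it avoids the auxiliary estimate of $\vec{u}|d\vec{u}|^2$ and yields the statement with $\gamma=1$, at no cost, since the conditions in \Cref{def:FOmegagrowth} allow any fixed $\gamma>0$. Your exponent bookkeeping for \eqref{eq:Omegacond:growthpq} and \eqref{eq:Omegacond:growthpq2} (Lorentz--H\"older with $1/p_2=1/p-t/d$, the restrictions $p<\frac{d}{s+1}$ resp. $p<\frac{d}{s}$, and $\|d\vec{u}\|_{L^d}\aleq\|\Ds{\frac{d-2}{2}}d\vec{u}\|_{L^2}$) matches what the paper needs and is admissible over the stated range, so the proof goes through.
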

\begin{proof}
We have for $p < \frac{d}{s+1}$
\[
\begin{split}
 &\|\Ds{s} d\Omega\|_{L^{(p,q)}(\R^d)}\\
 \aleq& \|\Ds{s} (\vec{u}\, dd\vec{u})\|_{L^{(p,q)}(\R^d)}+\|\Ds{s} (d\vec{u}\, d\vec{u})\|_{L^{(p,q)}(\R^d)}\\
  \aleq& \max_{t \in [0,s]} \|\Ds{t} \vec{u}\|_{L^{\frac{d}{t}}(\R^d)} \|\Ds{s-t}dd\vec{u}\|_{L^{(\frac{dp}{d-tp},q)}(\R^d)}+\|\Ds{t} d\vec{u}\|_{L^{\frac{d}{t+1}}(\R^d)}\, \|\Ds{s-t} d\vec{u}\|_{L^{\frac{dp}{d-(t+1)p}}(\R^d)}\\
  \aleq& \brac{1+\|\Ds{\frac{d-2}{2}} d\vec{u}\|_{L^2(\R^d)}} \|\Ds{s}dd\vec{u}\|_{L^{(p,q)}(\R^d)}\\
    \aleq& \brac{1+\|\Ds{\frac{d-2}{2}} d\vec{u}\|_{L^2(\R^d)}} \brac{\|\Ds{s+1}d\vec{u}\|_{L^{(p,q)}(\R^d)}+\|\Ds{s}\partial_{tt}\vec{u}\|_{L^{(p,q)}(\R^d)}}\\
 \end{split}
\]
Similar to the proof of \Cref{la:dealingwithddu}, we observe
\[
 \partial_{tt} \vec{u} = \lap \vec{u} + \vec{u} |d\vec{u}|^2
\]
So,
\[
\begin{split}
&\|\Ds{s}\partial_{tt}\vec{u}\|_{L^{(p,q)}(\R^d)}\\
\aleq& \|\Ds{s+1}d\vec{u}\|_{L^{(p,q)}(\R^d)} + \|\Ds{s} (\vec{u} |d\vec{u}|^2)\|_{L^{(p,q)}(\R^d)}\\
\aleq& \|\Ds{s+1}d\vec{u}\|_{L^{(p,q)}(\R^d)} + \max_{t_1+t_2+t_3 =s,\ t_i \geq 0} \|\Ds{t_1}\vec{u}\|_{L^{\frac{d}{t_1}}(\R^d)} \|\Ds{t_2} d\vec{u} \|_{L^{\frac{d}{t_2+1}}(\R^d)}\, \| \Ds{t_3} d\vec{u}\|_{L^{(\frac{dp}{d-(t_1+t_2+1)p},q)}(\R^d)}\\
\aleq&(1+\|\Ds{\frac{d-2}{2}} d\vec{u}\|_{L^2(\R^d)}^2) \|\Ds{s+1}d\vec{u}\|_{L^{(p,q)}(\R^d)}.
\end{split}
\]
We conclude that
\[
 \|\Ds{s} d\Omega\|_{L^{(p,q)}(\R^d)} \aleq (1+\|\Ds{\frac{d-2}{2}} d\vec{u}\|_{L^2(\R^d)}^3)\, \|\Ds{s+1}d\vec{u}\|_{L^{(p,q)}(\R^d)},
\]
which establishes \eqref{eq:Omegacond:growthpq}.

The proof of \eqref{eq:Omegacond:growthpq2} is almost verbatim only without the $\partial_{tt}$ term (and essentially for $s-1$ instead of $s$, leading to the assumption $p < \frac{d}{s}$ in this case).
\end{proof}

\begin{lemma}
$\Omega$ satisfies the curl-conditions \eqref{eq:G1cond:1Lpq}, \eqref{eq:G1cond:1}, \eqref{eq:G1cond:3}, \eqref{eq:G1cond:3sigma}, \eqref{eq:G1cond:2},\eqref{eq:G1cond:GN1spatial} \eqref{eq:G1cond:GN1}.
\end{lemma}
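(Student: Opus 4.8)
The plan is to reduce each of the seven estimates to a bilinear product estimate for $d\vec u$ (or, after one differentiation of $G_1$, a trilinear one involving $dd\vec u$) and then to invoke the fractional Leibniz rule \Cref{la:basicLeibniz} together with Sobolev embedding and Gagliardo--Nirenberg interpolation, exactly as in the proof of the previous lemma on \eqref{eq:Omegacond:growthpq} and \eqref{eq:Omegacond:growthpq2}. The first step is to make the shape of $G_1$ explicit: a direct computation from the definition of $\Omega$ gives
\[
 \partial_a \Omega_{ij}^b-\partial_b \Omega_{ij}^a = 2\brac{\partial_a u^i\,\partial_b u^j-\partial_a u^j\,\partial_b u^i}
\]
for $a,b\in\{1,\dots,d\}$ and
\[
 \partial_a \Omega_{ij}^0+\partial_t \Omega_{ij}^a = 2\brac{\partial_t u^i\,\partial_a u^j-\partial_a u^i\,\partial_t u^j}
\]
for $a\in\{1,\dots,d\}$, because all the second-derivative terms cancel pairwise. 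Hence $G_{1;\alpha,\beta}(\vec u)$ is, up to a constant, a sum of products $\partial_\alpha u^k\,\partial_\beta u^\ell$; in particular $|G_1(\vec u)|\aeq|d\vec u|^2$ with no factor of $\vec u$ at all, which is the most favorable case anticipated in the remark preceding \Cref{def:admissibleomega}.

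For the estimates without an extra derivative on $G_1$ --- namely \eqref{eq:G1cond:1Lpq}, \eqref{eq:G1cond:2} and \eqref{eq:G1cond:GN1spatial} --- I would expand $\Ds{s-1}(d\vec u\cdot d\vec u)$ by the Leibniz rule into $\sum_{t_1+t_2=s-1}\Ds{t_1}d\vec u\cdot\Ds{t_2}d\vec u$ plus lower-order commutator terms, place one factor into an $L^2$-based Sobolev norm via $\|\Ds{t}d\vec u\|_{L^{d/(t+1)}}\aleq\|\Ds{\frac{d-2}{2}}d\vec u\|_{L^2}$, place the other factor into the target Lebesgue/Lorentz space, and close with H\"older, using for \eqref{eq:G1cond:GN1spatial} one Gagliardo--Nirenberg interpolation between $\|d\vec u\|_{L^{2d}}$ and $\|\Ds{\frac{d-2}{2}}d\vec u\|_{L^2}$ to hit the exponent $\tfrac{2d}{1+2s-\sigma}$. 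This is verbatim the computation already done for \eqref{eq:Omegacond:growthpq2}, and the powers $\|\Ds{\frac{d-2}{2}}d\vec u\|_{L^2}^\gamma$ are absorbed by Young's inequality as in \Cref{la:dealingwithddu}.

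For the estimates that differentiate $G_1$ --- \eqref{eq:G1cond:1}, \eqref{eq:G1cond:3}, \eqref{eq:G1cond:3sigma} and \eqref{eq:G1cond:GN1} --- one has $dG_{1;\alpha,\beta}(\vec u)\aeq dd\vec u\cdot d\vec u$, and the only new ingredient is that one factor now carries two derivatives. Here I would use the wave-map equation $\partial_{tt}\vec u=\lap\vec u+\vec u|d\vec u|^2$, which is precisely the content of \Cref{la:dealingwithddu}: its consequences \eqref{eq:dduest1}, \eqref{eq:dduest2} and \eqref{eq:Dstm1estddvu:orig} bound $dd\vec u$ in the required norms by first-order quantities of $\vec u$. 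After this substitution each estimate reduces to a product estimate of the same type as above --- \eqref{eq:G1cond:1} against \eqref{eq:dduest1}, \eqref{eq:G1cond:3} against the bound $\|dd\vec u\|_{L^{d/2}}\aleq 1$ coming from \eqref{eq:ap:smallnessdd} and Sobolev, \eqref{eq:G1cond:3sigma} against the interpolated form of \eqref{eq:dduest2}, and \eqref{eq:G1cond:GN1} against bounds for $\|\Ds{-1}d\vec u\|_{L^{2d}}$ and $\|\Ds{\frac{d-4}{2}}dd\vec u\|_{L^2}$.

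\textbf{Main obstacle.} The delicate part is the bookkeeping in low dimensions, above all $d=4$, where the homogeneous order $\tfrac{d-6}{2}$ on the left of \eqref{eq:G1cond:1} is negative (it equals $-1$) and the right-hand sides of \eqref{eq:G1cond:3sigma} and \eqref{eq:G1cond:GN1} are phrased in terms of negative-order norms such as $\|\Ds{-1}dd\vec u\|_{L^{2d}}$ and $\|\Ds{-1}d\vec u\|_{L^{2d}}$. A naive Leibniz splitting would then land a factor in a negative-order Sobolev space, so one must always arrange that $dd\vec u$ ends up in a space covered by \Cref{la:dealingwithddu} --- either $\Ds{-1}dd\vec u\in L^{2d}$ or $\Ds{\frac{d-4}{2}}dd\vec u\in L^2$ --- and then interpolate between these two norms to reach the precise exponent $\tfrac{2d}{1+2s-\sigma}$; verifying that the interpolation parameter $\sigma$ stays in the prescribed window $[\tfrac{2(s-1)}{d-2},1]$ is the step that requires the most care, but there is no analytic difficulty beyond what is already present in the proof of \Cref{la:dealingwithddu}.
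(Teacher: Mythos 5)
Your structural step is exactly the paper's: a direct computation shows that $G_{1;\alpha,\beta}(\vec u)$ is a sum of products $\partial_\alpha u^i\,\partial_\beta u^j$ (no second derivatives, no factors of $\vec u$), and your treatment of \eqref{eq:G1cond:1Lpq}, \eqref{eq:G1cond:2} and \eqref{eq:G1cond:GN1spatial} by the fractional Leibniz rule, H\"older in Lorentz spaces and Gagliardo--Nirenberg (\eqref{eq:GN:234}) coincides with the paper's argument. The problem lies in your plan for the differentiated estimates. The conditions \eqref{eq:G1cond:1}, \eqref{eq:G1cond:3}, \eqref{eq:G1cond:3sigma}, \eqref{eq:G1cond:GN1} deliberately keep second-derivative norms of $\vec u$ on their right-hand sides ($\|\Ds{\frac{d-4}{2}}dd\vec u\|_{L^2}$, $\|dd\vec u\|_{L^{\frac d2}}$, $\|\Ds{-1}dd\vec u\|_{L^{2d}}$), precisely so that their verification is a pure product estimate valid for arbitrary maps: after writing $dG_1\aeq dd\vec u\cdot d\vec u$ one only needs H\"older and, for \eqref{eq:G1cond:3sigma} and \eqref{eq:G1cond:GN1}, the interpolation \eqref{eq:GN:1} applied to $f=\Ds{-1}dd\vec u$, which gives $\|dd\vec u\|_{L^{\frac{2d}{4-\sigma}}}\aleq\|\Ds{-1}dd\vec u\|_{L^{2d}}^{\sigma}\,\|\Ds{\frac{d-4}{2}}dd\vec u\|_{L^2}^{1-\sigma}$ -- no equation, no smallness, no restriction to solutions. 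Your proposal instead invokes the equation via \Cref{la:dealingwithddu} to ``bound $dd\vec u$ by first-order quantities'', and that step does not prove the inequalities as stated: bounding $\|dd\vec u\|_{L^{\frac d2}}\aleq 1$ through \eqref{eq:ap:smallnessdd} yields only $\|dG_1\|_{L^{(\frac{2d}{5-\sigma},q)}}\aleq\|d\vec u\|_{L^{(2d,q)}}$, strictly weaker than \eqref{eq:G1cond:3} and destroying the bilinear smallness structure that the gauge estimates of \Cref{s:gauge} (e.g.\ \eqref{eq:411SSfullG1cond:3:OmegaP}, \Cref{la:omegaPLinfty}) rely on; substituting \eqref{eq:dduest2} produces Lorentz norms of $\Ds{s-1}d\vec u$ rather than the norms $\|\Ds{-1}dd\vec u\|_{L^{2d}}$ and $\|\Ds{\frac{d-4}{2}}dd\vec u\|_{L^2}$ demanded by \eqref{eq:G1cond:3sigma} and \eqref{eq:G1cond:GN1}. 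Moreover \Cref{la:dealingwithddu} presupposes the equation and \eqref{eq:411Ssmallv1}, whereas \Cref{def:admissibleomega} carries no such hypotheses, so at best your verification becomes conditional in a way the later sections do not assume.

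The repair is minimal and is in fact hinted at in your own closing paragraph: keep the $dd\vec u$ factors and interpolate between $\|\Ds{-1}dd\vec u\|_{L^{2d}}$ and $\|\Ds{\frac{d-4}{2}}dd\vec u\|_{L^2}$ via \eqref{eq:GN:1} to hit the exponent $\frac{2d}{1+2s-\sigma}$, instead of eliminating them through the equation. For \eqref{eq:G1cond:1} your detour is harmless (an estimate purely in first-order norms dominates is dominated by the stated right-hand side, so it would imply \eqref{eq:G1cond:1}), and for \eqref{eq:G1cond:1} one may alternatively just use Sobolev embedding $\|\Ds{\frac{d-6}{2}}\cdot\|_{L^2}\aleq\|\Ds{\frac{d-4}{2}}\cdot\|_{L^{\frac{2d}{d+2}}}$ and Leibniz, keeping both $\|\Ds{\frac{d-4}{2}}dd\vec u\|_{L^2}$ and $\|\Ds{\frac{d-2}{2}}d\vec u\|_{L^2}$, as the paper does; but for \eqref{eq:G1cond:3}, \eqref{eq:G1cond:3sigma} and \eqref{eq:G1cond:GN1} the equation-based substitution, as written, does not deliver the lemma.
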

\begin{proof}
We observe
\[
\begin{split}
 \partial_a \Omega^b - \partial_b \Omega^a =& \partial_a u^i\, \partial_b u^j -\partial_b u^i\, \partial_a u^j - \brac{\partial_a u^j\, \partial_b u^i -\partial_b u^j\, \partial_a u^i}\\
 =&2 \partial_a u^i\, \partial_b u^j -2\partial_b u^i\, \partial_a u^j,
 \end{split}
\]
and similarly
\[
\begin{split}
 \partial_a \Omega^0 + \partial_t \Omega^a =& -\partial_a (u^i \partial_t u^j - u^j \partial_t u^i) + \partial_t (u^i \partial_a u^j - u^j \partial_a u^i)\\
  =&-\partial_a u^i \partial_t u^j + \partial_a u^j \partial_t u^i + \partial_t u^i \partial_a u^j - \partial_t u^j \partial_a u^i\\
  =&-2\partial_a u^i \partial_t u^j + 2\partial_a u^j \partial_t u^i\\
 \end{split}
\]
So elements of $G_{1;\alpha,\beta}(\vec{u})$ are of the form $\partial_\alpha u^i\, \partial_\beta u^j$.

\underline{As for \eqref{eq:G1cond:1Lpq}} we observe for $1 \leq \tilde{s}\leq d$, $1<p < \frac{2d}{\tilde{s}+2}$
\[
 \begin{split}
&\|\Ds{\tilde{s}} \brac{\partial_\alpha u\, \partial_\beta v}\|_{L^{(p,q)}(\R^d)}\\
\aleq& \max_{t \in [0,\frac{\tilde{s}}{2}]} \|\Ds{t} \partial_\alpha u\|_{L^{\frac{d}{t+1}}(\R^d)}\, \|\Ds{\tilde{s}-t} \partial_\beta v\|_{L^{(\frac{dp}{d-(t+1)p},q)}(\R^d)}\\
&+\max_{t \in [\frac{\tilde{s}}{2},\tilde{s}]} \|\Ds{t} \partial_\alpha  u\|_{L^{(\frac{dp}{d-(1+\tilde{s}-t)p,q)}}(\R^d)}\, \|\Ds{\tilde{s}-t} \partial_\beta v\|_{L^{\frac{d}{1+\tilde{s}-t}}(\R^d)}\\
\aleq& \|\Ds{\frac{d-2}{2}} d u\|_{L^{2}(\R^d)}\, \|\Ds{\tilde{s}+1} dv\|_{L^{(p,q)}(\R^d)}\\
&+\|\Ds{\tilde{s}+1} du\|_{L^{(p,q)}(\R^d)}\, \|\Ds{\frac{d-2}{2}} dv\|_{L^{2}(\R^d)}\\
 \end{split}
\]
This readily establishes \eqref{eq:G1cond:1Lpq} for $s := \tilde{s}+1$,

\underline{As for \eqref{eq:G1cond:1},} since $d \geq 4$ we have from Sobolev embedding,
\[
\begin{split}
 &\|\Ds{\frac{d-6}{2}}d (\partial_\alpha u\ \partial_{\beta} v)\|_{L^{2}(\R^d)} \\
 \aleq&\|\Ds{\frac{d-4}{2}}d (\partial_\alpha u\ \partial_{\beta} v)\|_{L^{\frac{2d}{d+2}}(\R^d)}\\
 \aleq&\max_{t \in [0,\frac{d-4}{2}]}\|\Ds{t}ddu\|_{L^{\frac{2d}{4+2t}}(\R^d)} \|\Ds{\frac{d-4}{2}-t} dv\|_{L^{\frac{2d}{d-2-2t}}(\R^d)}\\
 &+\max_{t \in [0,\frac{d-4}{2}]}\|\Ds{t}du\|_{L^{\frac{2d}{2+2t}}(\R^d)} \|\Ds{\frac{d-4}{2}-t} ddv\|_{L^{\frac{2d}{d-2t}}(\R^d)}\\
 \aleq&\brac{\|\Ds{\frac{d-4}{2}} ddu\|_{L^2(\R^d)}+\|\Ds{\frac{d-4}{2}} ddv\|_{L^2(\R^d)}}\, \brac{\|\Ds{\frac{d-2}{2}} du\|_{L^2(\R^d)}+\|\Ds{\frac{d-2}{2}} dv\|_{L^2(\R^d)}}
\end{split}
\]

\underline{As for \eqref{eq:G1cond:3} and \eqref{eq:G1cond:3sigma},} by H\"older's inequality,
\[
\begin{split}
  &\|d\brac{\partial_\alpha u\ \partial_\beta v}\|_{L^{(\frac{2d}{5-\sigma},q)}(\R^d)}\\
  \aleq\|ddu\|_{L^{\frac{2d}{4-\sigma}}(\R^d)}\, \|dv\|_{L^{(2d,q)}(\R^d)}+\|ddv\|_{L^{\frac{2d}{4-\sigma}}(\R^d)}\, \|du\|_{L^{(2d,q)}(\R^d)}.
  \end{split}
\]
This readily proves \eqref{eq:G1cond:3} by taking $\sigma = 0$. For \eqref{eq:G1cond:3sigma}, apply \eqref{eq:GN:1}, whenever $(1-\sigma) \in [\frac{2}{d-2},1]$,
\begin{equation}\label{eq:wm:askldalskjd}
 \|dd\vec{u}\|_{L^{\frac{2d}{4-\sigma}}(\R^d)} = \|\Dso \Ds{-1}dd\vec{u}\|_{L^{\frac{2d}{4-\sigma}}(\R^d)}\aleq \|\Ds{-1}dd\vec{u}\|_{L^{2d}(\R^d)}^\sigma\ \|\Ds{\frac{d-4}{2}} dd\vec{u}\|_{L^2(\R^d)}^{1-\sigma}.
\end{equation}
Thus
\[
\begin{split}
  &\|dG_{1;\alpha,\beta}(\vec{u})\|_{L^{(\frac{2d}{5-\sigma},q)}(\R^d)}\\
  \aleq&\|\Ds{-1}dd\vec{u}\|_{L^{2d}(\R^d)}^\sigma\ \|\Ds{\frac{d-4}{2}} dd\vec{u}\|_{L^2(\R^d)}^{1-\sigma} \, \|d\vec{u}\|_{L^{(2d,q)}(\R^d)}
  \end{split}
\]

\underline{As for \eqref{eq:G1cond:2},} similar to the previous argument
\[
  \|\partial_\alpha u\ \partial_\beta v\|_{L^{(d,1)}(\R^d)}
  \aleq\|du\|_{L^{(2d,2)}(\R^d)}\, \|dv\|_{L^{(2d,2)}(\R^d)}
\]

\underline{As for \eqref{eq:G1cond:GN1spatial},} it is a direct consequence of \eqref{eq:GN:234}.

\underline{As for \eqref{eq:G1cond:GN1},} assume $(1-\sigma) \in [\frac{2(s-1)}{d-2},1]$ and $s \in [2, \frac{d-2}{2}]$.

If $s=2$ we observe
\[
\begin{split}
&\|\partial_\alpha du\ \partial_\beta v\|_{L^{\frac{2d}{5-\sigma}}(\R^d)}\\
\aleq&\|\partial_\alpha du\|_{L^{\frac{2d}{4-\sigma}}(\R^d)} \|\partial_\beta v\|_{L^{2d}(\R^d)}\\
\overset{\eqref{eq:wm:askldalskjd}}{\aleq}& \|\Ds{-1}dd\vec{u}\|_{L^{2d}(\R^d)}^\sigma\ \|\Ds{\frac{d-4}{2}} dd\vec{u}\|_{L^2(\R^d)}^{1-\sigma}\|dv\|_{L^{2d}(\R^d)}\\
\end{split}
\]
This proves the claim for $s = 2$ and $1-\sigma \in [\frac{2}{d-2},1]$.

If $s > 2$
\[
\begin{split}
&\|\Ds{s-2} \brac{\partial_\alpha du\ \partial_\beta v}\|_{L^{\frac{2d}{1+2s-\sigma}}(\R^d)}\\
\aleq&\max_{t \in [0,s-2]}\| \Ds{t+1}\Ds{-1}ddu\|_{L^{\frac{2d}{1+2(1+t)+\sigma_1}}(\R^d)} \|\Ds{s-2-t}d v\|_{L^{\frac{2d}{1+2(s-2-t)+\sigma_2 }}(\R^d)}\\
\overset{\eqref{eq:GN:1}}{\aleq}& \max_{t \in [0,s-2]}\|\Ds{-1}ddu\|_{L^{2d}(\R^d)}^{1-\sigma_1} \|\Ds{\frac{d-4}{2}} ddu\|_{L^2(\R^d)}^{\sigma_1}
\|dv\|_{L^{2d}(\R^d)}^{1-\sigma_2} \|\Ds{\frac{d-2}{2}} dv\|_{L^2(\R^d)}^{\sigma_2}
\end{split}
\]
for $\sigma_1 + \sigma_2 = 1-\sigma$, where we must ensure $\sigma_1 \in [\frac{2(t+1)}{d-2},1]$, $\sigma_2 \in [\frac{2(s-2-t)}{d-2},1]$ which is possible since $(1-\sigma) \in [\frac{2(s-1)}{d-2},1]$. We can conclude.
\end{proof}

Lastly, we observe
\[
\begin{split}
 &\brac{\Omega^a \partial_b \vec{u} - \Omega^b \partial_a \vec{u}}_i\\
 =&u^i \partial_a u^j \partial_b u^j
 - \underbrace{u^j \partial_a u^i\partial_b u^j}_{=0}
 - u^i \partial_b u^j \partial_a u^j \underbrace{+ u^j \partial_b u^i \partial_a u^j}_{=0}\\
 =&u^i \partial_a u^j \partial_b u^j - u^i \partial_b u^j \partial_a u^j\\
 =&0
 \end{split}
\]
and similarly
\[
\begin{split}
 &\brac{\Omega^a \partial_t \vec{u} + \Omega^0 \partial_a \vec{u}}_i\\
 =&u^i \partial_a u^j \partial_t u^j- \underbrace{u^j \partial_a u^i}_{=0}\, \partial_t u^j-u^i \partial_t u^j\, \partial_a u^j + \underbrace{u^j \partial_t u^i\, \partial_a u^j}_{=0}\\
 =&0
 \end{split}
\]
That is the cancellation condition \eqref{eq:omeacommdu}, \eqref{eq:G2cond:1} is trivially satisfied.

That is, \Cref{pr:wavemap:omegaadmissible} is established.

\subsection{Halfwave map equation}
The other application is the halfwave map equation for $\vec{u}: \R^d \times [-T,T] \to \S^2$
\[
 \partial_{t} \vec{u} - \vec{u} \wedge \laps{1} \vec{u} = 0.
\]

Krieger and Sire showed in \cite{KS18} that we can solve instead

\begin{equation}\label{eq:solks}
\begin{split}
 \partial_{tt} \vec{u} - \lap \vec{u} =& \vec{u} |\nabla \vec{u}|^2  - \vec{u} |\Dso{}\vec{u}|^2\\
 &+ \Dso{} \vec{u}\, \brac{\scpr{\vec{u}}{\Dso{} \vec{u}}}\\
 &+ \vec{u} \wedge \Dso{} \brac{ \vec{u} \wedge\Dso{} \vec{u}}- \vec{u} \wedge \brac{\vec{u} \wedge (-\lap) \vec{u}}.
 \end{split}
\end{equation}

We can write
\[
 \partial_{tt} \vec{u} - \lap \vec{u} = \vec{u} \langle\vec{u}, \partial_{tt} \vec{u} - \lap \vec{u}\rangle  + \Pi_{T_u \S^{2}} \brac{\partial_{tt} \vec{u} - \lap \vec{u}},
\]
As discussed above for the wave-map equation we can choose $\Omega$ admissible so that
\[
 \vec{u} \langle\vec{u}, \partial_{tt} \vec{u} - \lap \vec{u}\rangle = \Omega \cdot d\vec{u}.
\]
So we set $\Pi_{T_u \S^{2}} \vec{v} = \vec{v}-\vec{u} \langle \vec{u},\vec{v}\rangle$ and thus we set
\[
\begin{split}
 F(\vec{u}) :=& \Pi_{T_u \S^{2}} \brac{\partial_{tt} \vec{u} - \lap \vec{u}}\\
 =&\Dso{} \vec{u}\, \scpr{\vec{u}}{\Dso{} \vec{u}}-\vec{u}\, |\langle \vec{u}, \Dso \vec{u}\rangle|^2\\
 &+ \vec{u} \wedge \Dso{} \brac{ \vec{u} \wedge\Dso{} \vec{u}}- \vec{u} \wedge \brac{\vec{u} \wedge (-\lap) \vec{u}}
\end{split}
\]
where we got rid of time-derivatives $\partial_t \vec{u}$ by the half-wave map equation itself.

We need to prove $F$ is admissible in the sense of \Cref{def:admissibleF}, i.e. we need to prove the growth estimate \eqref{eq:Fcond:2} and the perturbative estimate \eqref{eq:Fcond:3}.  

We look at the terms of $F(\vec{u})$ one by one.
\begin{lemma}\label{la:term1Fest}
Let $\vec{u}: \R^d \to \S^2$,
then we have the corresponding growth estimate for \eqref{eq:Fcond:2}, for any $p < \frac{d}{t+1}$
\[
 \|\Ds{t}  \brac{\Dso \vec{u}\, \langle \vec{u}, \Dso \vec{u}\rangle }\|_{L^{p}(\R^d)} \aleq   \|\Ds{\frac{d}{2}} \vec{u}\|_{L^2(\R^d)}^2 \|\Ds{t+2}\vec{u}\|_{L^{p}(\R^d)},
\]
the corresponding estimate to \eqref{eq:Fcond:2v2}
\[
 \|\Dso \vec{u}\, \langle \vec{u}, \Dso \vec{u}\rangle \|_{L^{(\frac{2d}{3},2)}(\R^d)} \aleq   \|\Dso \vec{u}\|_{L^{2d}(\R^d)}\ \|\Ds{\frac{d}{2}} \vec{u}\|_{L^2(\R^d)}^2\\
\]

and the corresponding perturbative estimate for \eqref{eq:Fcond:3}
\[
 \|\Ds{\frac{d-2}{2}} \brac{\Dso \vec{u}\, \langle \vec{u}, \Dso \vec{u}\rangle }\|_{L^2} \aleq \|\Ds{\frac{d}{2}} \vec{u}\|_{L^2(\R^d)} \,  \|\Dso \vec{u}\|_{L^{(2d,2)}(\R^d)}^2,
\]
\end{lemma}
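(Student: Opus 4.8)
The plan is to use the sphere constraint $|\vec u|^2\equiv1$ to rewrite the scalar factor $\langle\vec u,\Dso\vec u\rangle$ as a genuinely bilinear ``null-form'' expression; granting that, all three bounds follow by combining the fractional Leibniz rule (\Cref{la:basicLeibniz}) and its commutator refinement with Hölder's inequality (in Lorentz spaces, via O'Neil, when needed) and the scale-invariant Sobolev embeddings already used throughout the paper.

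\emph{Step 1: the null-form identity.} Since $|\vec u|^2$ is constant we have $\Dso(|\vec u|^2)=\Dso(1)=0$, hence
\[
 \langle\vec u,\Dso\vec u\rangle=-\tfrac12\Big(\Dso(u^ju^j)-2u^j\,\Dso u^j\Big)=:-\tfrac12\,[\Dso](\vec u,\vec u),\qquad [\Dso](f,g):=\Dso(fg)-f\Dso g-g\Dso f .
\]
Unlike the naive pointwise bound $|\langle\vec u,\Dso\vec u\rangle|\le|\Dso\vec u|$, the commutator $[\Dso]$ distributes the derivative over both arguments: it satisfies a fractional-Leibniz estimate $\|\Ds{\sigma}[\Dso](f,g)\|_{L^{(r,q)}}\aleq\|\Ds{s_1}f\|_{L^{(p_1,q_1)}}\|\Ds{s_2}g\|_{L^{(p_2,q_2)}}$ for $s_1+s_2=\sigma+1$ with $0<s_1,s_2\le1$ and $\tfrac1r=\tfrac1{p_1}+\tfrac1{p_2}$, $\tfrac1q\le\tfrac1{q_1}+\tfrac1{q_2}$. (Moreover $[\Dso](\vec q,\cdot)=0$ for the constant $\vec q$, so only $\vec u-\vec q\in\dot H^{d/2}$ effectively enters, and every $\Ds{s_i}\vec u=\Ds{s_i}(\vec u-\vec q)$ below is a genuinely subcritical derivative.) Thus $\Dso\vec u\,\langle\vec u,\Dso\vec u\rangle=-\tfrac12\,\Dso\vec u\cdot[\Dso](\vec u,\vec u)$ is \emph{cubic}, and the constraint supplies precisely the extra derivative factor demanded by the right-hand sides in \Cref{def:admissibleF}.

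\emph{Step 2: the three estimates.} Each follows by splitting the outer derivative with the Leibniz rule, estimating $[\Dso](\vec u,\vec u)$ with the commutator estimate above (choosing the split so that both arguments land in subcritical spaces), and then using Hölder and Sobolev embeddings. For \eqref{eq:Fcond:2} put all of $\Ds{t}$ on $\Dso\vec u$ and take $s_1=s_2=\tfrac12$ in the commutator estimate; the two factors $\Ds{1/2}\vec u$ go into $L^{2d}$ with $\|\Ds{1/2}\vec u\|_{L^{2d}}\aleq\|\Ds{d/2}\vec u\|_{L^2}$, and $\Ds{1+t}\vec u$ goes into the remaining Lebesgue exponent $p_0$ ($\tfrac1{p_0}=\tfrac1p-\tfrac1d$), where the subcritical lift $\dot W^{t+2,p}\hookrightarrow\dot W^{1+t,p_0}$ replaces it by $\|\Ds{t+2}\vec u\|_{L^p}$ (this needs only $p<d$, which the hypothesis $p<\tfrac{d}{t+1}$ implies). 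For \eqref{eq:Fcond:2v2}, O'Neil's inequality gives $\|\Dso\vec u\,\langle\vec u,\Dso\vec u\rangle\|_{L^{(2d/3,2)}}\le\|\Dso\vec u\|_{L^{(2d,\infty)}}\|[\Dso](\vec u,\vec u)\|_{L^{(d,2)}}\aleq\|\Dso\vec u\|_{L^{2d}}\|\Ds{1/2}\vec u\|_{L^{(2d,2)}}^2\aleq\|\Dso\vec u\|_{L^{2d}}\|\Ds{d/2}\vec u\|_{L^2}^2$. For \eqref{eq:Fcond:3} split $\Ds{(d-2)/2}$ as $a$ on $\Dso\vec u$ and $b=\tfrac{d-2}{2}-a$ on the commutator, with $a\in(\tfrac{d-4}{2},\tfrac{d-2}{2})$ (so $b\le1$ and $1+a<\tfrac d2$); then $\|\Ds{1+a}\vec u\|_{L^{d/(1+a)}}\aleq\|\Ds{d/2}\vec u\|_{L^2}$, while for the two commutator factors $\Ds{s_i}\vec u$ ($s_1+s_2=b+1\in(1,2)$, $s_i\in(\tfrac12,1)$) the Sobolev embedding with loss $\dot W^{1,(2d,2)}\hookrightarrow\dot W^{s_i,\,d/(s_i-1/2)}$ gives $\|\Ds{s_i}\vec u\|_{L^{d/(s_i-1/2)}}\aleq\|\Dso\vec u\|_{L^{(2d,2)}}$; Hölder closes in $L^2$ since $\tfrac{1+a}{d}+\tfrac{b}{d}=\tfrac12$.

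\emph{Main obstacle.} The essential point is Step~1: without the null-form identity one gets only one power of $\|\Ds{d/2}\vec u\|_{L^2}$ (respectively of $\|\Dso\vec u\|_{L^{(2d,2)}}$), which fails to match the quadratic right-hand sides of \Cref{def:admissibleF}. The remaining difficulty is bookkeeping --- choosing the Leibniz split, the Hölder exponents, and the Lorentz second indices so that the admissibility range of the commutator estimate ($s_i\le1$), the Sobolev embeddings, and the exponent restrictions in \eqref{eq:Fcond:2}--\eqref{eq:Fcond:3} are simultaneously satisfied --- which is the same kind of exponent juggling already carried out in \Cref{la:dealingwithddu}--\Cref{la:estimatesv}. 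I expect the remaining terms of $F(\vec u)$ listed after this lemma (in particular $\vec u\wedge\Dso(\vec u\wedge\Dso\vec u)$ and $\vec u\wedge(\vec u\wedge\lap\vec u)$) to yield to the same scheme, with the analogous second-order bookkeeping.
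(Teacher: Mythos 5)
Your proposal is correct and takes essentially the same route as the paper: the null-form identity $\langle\vec u,\Dso\vec u\rangle=-\tfrac12 H_{\Dso}(\vec u,\vec u)$ coming from $|\vec u|\equiv 1$ is precisely the observation the paper uses (stated there as $\langle\vec u,\Dso\vec u\rangle=H_{\Dso}(\Ds{-1}\Dso\vec u\cdot,\Ds{-1}\Dso\vec u)$), after which the growth and Lorentz bounds are the same Leibniz--H\"older--Sobolev bookkeeping at critical scaling and the perturbative bound is exactly the paper's appendix estimate \eqref{eq:gagliardo5}. The only point to tidy is that the fractional Leibniz rule produces \emph{all} splittings, including terms where more than one derivative lands on the commutator factor (there your stated restriction $s_1,s_2\le 1$ does not apply directly and one first peels off derivatives, as is done in the proof of \eqref{eq:gagliardo5}), but this is the exponent bookkeeping you already flag and it goes through as in the paper.
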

\begin{proof}
For the growth estimate corresponding to \eqref{eq:Fcond:2} we observe
\[
\begin{split}
 &\|\Ds{t}  \brac{\Dso \vec{u}\, \langle \vec{u}, \Dso \vec{u}\rangle }\|_{L^{p}(\R^d)}\\
 \overset{p < \frac{d}{t+1}}\aleq&\max_{t_1+t_2+t_3=t} \|\Ds{1+t_1} \vec{u}\|_{L^{\frac{dp}{d-(t+1-t_1)p}}(\R^d)}\ \|\Ds{1/2+t_2}\vec{u}\|_{L^{\frac{2d}{1+2t_2}}(\R^d)} \|\Ds{1/2+t_3} \vec{u}\|_{L^{\frac{2d}{1+2t_3}}(\R^d)}\\
 \aleq&\|\Ds{\frac{d}{2}} \vec{u}\|_{L^2(\R^d)}^2\, \|\Ds{t+2} \vec{u}\|_{L^{p}(\R^d)}.
 \end{split}
\]
If $t=0$ the same argument implies
\[
\begin{split}
 &\|\Ds{t}  \brac{\Dso \vec{u}\, \langle \vec{u}, \Dso \vec{u}\rangle }\|_{L^{p}(\R^d)}\\
 \aleq&\|\Dso \vec{u}\|_{L^{\frac{dp}{d-p}}(\R^d)}\ \|\Ds{\frac{d}{2}} \vec{u}\|_{L^2(\R^d)}^2\\
 \end{split}
\]
which for $p=\frac{2}{3}d$ implies the second claim corresponding to \eqref{eq:Fcond:2v2}.

The second perturbative estimate follows from \eqref{eq:gagliardo5}, observing that
\[
 \langle \vec{u}, \Dso \vec{u}\rangle  = H_{\Dso} \brac{\Ds{-1}\Dso\vec{u}\cdot,\Ds{-1}\Dso\vec{u})}.
\]
\end{proof}

\begin{lemma}
Let $\vec{u}: \R^d \to \S^2$, then we have the corresponding growth estimate for \eqref{eq:Fcond:2}, for any $p < \frac{d}{t+1}$
\[
 \|\Ds{t}  \brac{\vec{u}\, |\langle \vec{u}, \Dso \vec{u}\rangle|^2}\|_{L^{p}(\R^d)} \aleq   \brac{\|\Ds{\frac{d}{2}} \vec{u}\|_{L^2(\R^d)}^{2}+\|\Ds{\frac{d}{2}} \vec{u}\|_{L^2(\R^d)}^{4}} \|\Ds{t+2}\vec{u}\|_{L^{p}(\R^d)},
\]
the corresponding estimate to \eqref{eq:Fcond:2v2}
\[
 \|\brac{\vec{u}\, |\langle \vec{u}, \Dso \vec{u}\rangle|^2}\|_{L^{(\frac{2d}{3},2)}(\R^d)} \aleq   \|\Dso \vec{u}\|_{L^{2d}(\R^d)}\ \|\Ds{\frac{d}{2}} \vec{u}\|_{L^2(\R^d)}^2\\
\]
and the corresponding perturbative estimate for \eqref{eq:Fcond:3}
 \[
   \|\Ds{\frac{d-2}{2}} \brac{\vec{u}\, |\langle \vec{u}, \Dso \vec{u}\rangle|^2}\|_{L^2} \aleq \|\Ds{\frac{d}{2}} \vec{u}\|_{L^2(\R^d)}^{4} \,  \|\Dso \vec{u}\|_{L^{(2d,2)}(\R^d)}^2
\]
\end{lemma}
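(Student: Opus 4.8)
The proof is a copy of that of \Cref{la:term1Fest}: the term $\vec u\,|\langle\vec u,\Dso\vec u\rangle|^2$ differs from $\Dso\vec u\,\langle\vec u,\Dso\vec u\rangle$ only by carrying one extra copy of the scalar $g:=\langle\vec u,\Dso\vec u\rangle$, and it is exactly this extra copy that upgrades the single power of $\|\Ds{\frac d2}\vec u\|_{L^2}$ in \Cref{la:term1Fest} to $\|\Ds{\frac d2}\vec u\|_{L^2}^2+\|\Ds{\frac d2}\vec u\|_{L^2}^4$ here. I keep the decomposition $g=H_{\Dso}(\Ds{-1}\Dso\vec u,\Ds{-1}\Dso\vec u)$ recalled in \Cref{la:term1Fest}, so that $g$ obeys the genuinely bilinear estimates \eqref{eq:gagliardo5} together with their $L^{(p,q)}$- and Gagliardo--Nirenberg variants --- estimates in which \emph{both} arguments carry regularity. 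In particular I shall use $\|g\|_{L^{(d,2)}(\R^d)}\aleq\|\Ds{\frac d2}\vec u\|_{L^2(\R^d)}^2$, the splitting of $g$ as $\Ds{1/2}\vec u\cdot\Ds{1/2}\vec u$ when one distributes derivatives across it, and the trivial pointwise estimate $|g|\le|\vec u|\,|\Dso\vec u|=|\Dso\vec u|$, valid since $|\vec u|\equiv1$.

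The estimate matching \eqref{eq:Fcond:2v2} is immediate: no derivative has to be moved, so by $\|\vec u\|_{L^\infty}=1$ and Hölder's inequality in Lorentz spaces,
\[
\|\vec u\,|g|^2\|_{L^{(\frac{2d}{3},2)}(\R^d)}\ \aleq\ \|g\|_{L^{2d}(\R^d)}\,\|g\|_{L^{(d,2)}(\R^d)}\ \aleq\ \|\Dso\vec u\|_{L^{2d}(\R^d)}\,\|\Ds{\frac d2}\vec u\|_{L^2(\R^d)}^2.
\]
For the growth estimate matching \eqref{eq:Fcond:2} one applies the fractional Leibniz rule (\Cref{la:basicLeibniz}) to spread $\Ds{t}$ across $\vec u\cdot g\cdot g$, expanding each $g$ by its product rule; one is left with a finite sum of terms of total order $t+2$ built from the three ``plain'' $\vec u$-factors (components of the $\S^2$-valued map, hence $L^\infty$-bounded when they carry no derivative) and four half-derivative inputs $\Ds{1/2+\ast}\vec u$ coming from the two copies of $g$. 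Placing all but the highest-order factor into the critical spaces $L^{d/s}$, controlled by $\|\Ds{\frac d2}\vec u\|_{L^2}$ via $\dot H^{\frac d2}(\R^d)\hookrightarrow\dot W^{s,d/s}(\R^d)$, and recognising the remaining factor as $\Ds{t+2}\vec u$ in $L^p$ after the same Sobolev redistribution as in \Cref{la:term1Fest} (the hypothesis $p<\frac d{t+1}$ making every intermediate exponent admissible), one produces between two and four powers of $\|\Ds{\frac d2}\vec u\|_{L^2}$; since $a^3\aleq a^2+a^4$ for all $a\ge0$, this collapses to the stated bound. The perturbative estimate matching \eqref{eq:Fcond:3} is the same computation with $\Ds{\frac{d-2}2}$ in place of $\Ds t$, where one now wants each of the two copies of $g$ to release exactly one factor $\|\Dso\vec u\|_{L^{(2d,2)}}$, its remaining input together with the surplus derivatives and the outer $\vec u$ being absorbed into $\dot H^{\frac d2}$ by Sobolev embedding and Gagliardo--Nirenberg (\eqref{eq:GN:1}, \eqref{eq:GN:234}).

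The only point that is genuinely more than bookkeeping is this last one: one has to decide, for each of the finitely many Leibniz terms, which bilinear estimate for $H_{\Dso}$ to apply to each copy of $g$ --- the ``both critical'' estimate \eqref{eq:gagliardo5} or the ``one Strichartz, one critical'' variant already used in \Cref{la:term1Fest} --- so that precisely two factors $\|\Dso\vec u\|_{L^{(2d,2)}}$ survive while the remaining exponents all close up to $\|\Ds{\frac d2}\vec u\|_{L^2}$, yielding $\|\Ds{\frac d2}\vec u\|_{L^2}^4\,\|\Dso\vec u\|_{L^{(2d,2)}}^2$. The mechanism that makes this work is that $H_{\Dso}$ never lets an additional derivative land on a factor already parked in the Strichartz space, which is what forces two critical inputs per copy of $g$; everything else is the same arithmetic of exponents already carried out for $\Dso\vec u\,\langle\vec u,\Dso\vec u\rangle$.
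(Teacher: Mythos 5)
Your treatment of the first two estimates is correct and essentially the paper's own argument: the growth bound is the same five-factor Leibniz expansion (two plain $\vec u$'s, two half-derivative factors from the two scalar products, one factor carrying $\Ds{t+2}$), and for the $L^{(\frac{2d}{3},2)}$-bound the paper simply uses the pointwise inequality $|\langle\vec u,\Dso\vec u\rangle|\le|\Dso\vec u|$ to reduce to \Cref{la:term1Fest}, which amounts to the same H\"older count you perform. The problem is the perturbative estimate, which is where the real content of the lemma lies.

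There your allocation rule --- ``each of the two copies of $g$ releases exactly one factor $\|\Dso\vec u\|_{L^{(2d,2)}}$'' --- breaks down on the extreme Leibniz term in which all of $\Ds{\frac{d-2}{2}}$ falls on a single copy of $g=\langle\vec u,\Dso\vec u\rangle$. Expanding that copy naively produces $u^i\,\Ds{\frac{d}{2}}u^i$, which you can only place in $L^2$ and which releases no Strichartz factor; the untouched copy would then have to be put in $L^\infty$ (the exponents $L^2\cdot L^{(2d,2)}$ only reach $L^{\frac{2d}{d+1}}$, not $L^2$), and the pointwise bound $|g|\le|\Dso\vec u|$ cannot give $L^\infty$ --- only the commutator bound $\|H_{\Dso}(\Ds{-1}g,\Ds{-1}h)\|_{L^\infty}\aleq\|g\|_{L^{(2d,2)}}\|h\|_{L^{(2d,2)}}$ of \cite{ERFS22} does, and then \emph{both} Strichartz factors come from the same (underivated) copy, contrary to your rule. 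The alternative, keeping the derivated copy in commutator form and proving something like $\|\Ds{\frac{d-2}{2}}H_{\Dso}(\vec u,\vec u)\|_{L^{\frac{2d}{d-1}}}\aleq\|\Dso\vec u\|_{L^{(2d,2)}}\|\Ds{\frac d2}\vec u\|_{L^2}$, requires a commutator--Leibniz estimate with outer order $\frac{d-2}{2}\ge 1$, which \Cref{la:basicLeibniz} (restricted to $\alpha\in(0,1)$) does not supply; so ``bookkeeping'' understates the work. You also misread \eqref{eq:gagliardo5}: it is not a bilinear ``both critical'' bound for a single $H_{\Dso}$, but the full estimate for $\Ds{\frac{d-2}{2}}\brac{Q\,f\,H_{\Dso}(\Ds{-1}g,\Ds{-1}h)}$ with the $L^\infty$-commutator term built into its proof. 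The paper's actual argument is precisely the rewriting $\vec u\,|\langle\vec u,\Dso\vec u\rangle|^2=Q\,f\,H_{\Dso}(\Ds{-1}\Dso\vec u,\Ds{-1}\Dso\vec u)$ with $Q=\vec u\,u^i$, $f=\Dso u^i$ (one copy written as $u^i\Dso u^i$ and absorbed, the other kept as a commutator), followed by the two elementary bounds for $\Dso Q$ in $L^{(2d,2)}$ and $\dot H^{\frac{d-2}{2}}$ and a single application of \eqref{eq:gagliardo5}; this asymmetric split is the idea missing from your write-up.
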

\begin{proof}
For the growth estimate, by writing
\[
\vec{u}\, |\langle \vec{u}, \Dso \vec{u}\rangle|^2 = \vec{u}\, \langle \vec{u}, \Dso \vec{u}\rangle  H_{\Dso} (\vec{u} \cdot,\vec{u}),
\]
we observe
\[
\begin{split}
  &\|\Ds{t}  \brac{\vec{u}\, |\langle \vec{u}, \Dso \vec{u}\rangle|^2}\|_{L^{p}(\R^d)}\\
  \aleq &\max_{\sum_{i=1}^5 t_i = t} \|\Ds{t_1} \vec{u}\|_{L^{\frac{d}{t_1}}} \|\Ds{t_2} \vec{u}\|_{L^{\frac{d}{t_2}}(\R^d)}  \|\Ds{1+t_3} \vec{u}\|_{L^{\frac{dp}{d-(t+1-t_3)p}}(\R^d)} \|\Ds{\frac{1}{2}+t_4} \vec{u}\|_{L^{\frac{2d}{1+t_4}}(\R^d)}\, \|\Ds{\frac{1}{2}+t_5} \vec{u}\|_{L^{\frac{2d}{1+t_5}}(\R^d)}\\
  \aleq& \brac{\|\vec{u}\|_{L^\infty}^2 \|\Ds{\frac{d}{2}} \vec{u}\|_{L^2(\R^d)}^2 + \|\vec{u}\|_{L^\infty} \|\Ds{\frac{d}{2}} \vec{u}\|_{L^2(\R^d)}^3 + \|\Ds{\frac{d}{2}} \vec{u}\|_{L^2(\R^d)}^4}\, \|\Ds{t+2} \vec{u}\|_{L^{p}(\R^d)}.
\end{split}
\]
The corresponding estimate to \eqref{eq:Fcond:2v2} follows as in the proof of \Cref{la:term1Fest} since 
\[\abs{\vec{u}\, |\langle \vec{u}, \Dso \vec{u}\rangle|^2} \aleq \abs{\Dso \vec{u}}\, \abs{\langle \vec{u}, \Dso \vec{u}\rangle}.\]
For the perturbative estimate we write
\[
 \vec{u}\, |\langle \vec{u}, \Dso \vec{u}\rangle|^2 = \vec{u}\, u^i \Dso u^i\, H_{\Dso} \brac{\Ds{-1}\Dso\vec{u}\cdot,\Ds{-1}\Dso\vec{u})}
\]
Set $Q=\vec{u}\vec{u}^i$. We observe that
\[
\begin{split}
 \|\Dso Q\|_{L^{(2d,2)}(\R^d)} \aleq& \|\vec{u}\|_{L^\infty}\, \|\Dso \vec{u}\|_{L^{(2d,2)}(\R^d)} + \|\Ds{\frac{1}{4}} \vec{u}\|_{L^{4d}(\R^d)}\, \|\Ds{\frac{3}{4}} \vec{u}\|_{L^{(4d,2)}(\R^d)}\\
 \aleq & \|\Dso \vec{u}\|_{L^{(2d,2)}(\R^d)} + \|\Ds{\frac{d}{2}} \vec{u}\|_{L^2(\R^d)}\, \|\Dso\vec{u}\|_{L^{(2d,2)}(\R^d)}.
 \end{split}
\]
and
\[
 \|\Ds{\frac{d-2}{2}} \Dso Q\|_{L^2(\R^d)} \aleq \|\vec{u}\|_{L^\infty} \|\Ds{\frac{d}{2}} \vec{u}\|_{L^2(\R^d)} + \|\|\vec{u}\|_{L^\infty} \|\Ds{\frac{d}{2}} \vec{u}\|_{L^2(\R^d)}^2.
\]
Consequently, from \eqref{eq:gagliardo5}
\[
\begin{split}
&\|\Ds{\frac{d-2}{2}} \brac{\vec{u}\, |\langle \vec{u}, \Dso \vec{u}\rangle|^2}\|_{L^2} \\
 \aleq &
 \brac{1+\|\Ds{\frac{d}{2}} \vec{u}\|_{L^2(\R^d)}^4}\, \|\Ds{\frac{d}{2}} \vec{u}\|_{L^2(\R^d)}\, \|\Dso \vec{u}\|_{L^{(2d,2)}(\R^d)}^{2}
 \end{split}
\]

\end{proof}

\begin{lemma}
Let $\vec{u}: \R^d \to \S^2$, then we have the corresponding growth estimate for \eqref{eq:Fcond:2}: for $t \in [1,\frac{2d-1}{4}]$, $p \in (1,\frac{2d}{2t+\frac{1}{2}})$,
\[
 \|\Ds{t}  \brac{\vec{u} \wedge \Dso{} \brac{ \vec{u} \wedge\Dso{} \vec{u}}- \vec{u} \wedge \brac{\vec{u} \wedge (-\lap) \vec{u}}}\|_{L^{p}(\R^d)} \aleq   \brac{\|\Ds{\frac{d}{2}} \vec{u}\|_{L^2(\R^d)}^{2}+\|\Ds{\frac{d}{2}} \vec{u}\|_{L^2(\R^d)}^{4}} \|\Ds{t+2}\vec{u}\|_{L^{p}(\R^d)},
\]
the corresponding estimate to \eqref{eq:Fcond:2v2} 
\[
 \| \brac{\vec{u} \wedge \Dso{} \brac{ \vec{u} \wedge\Dso{} \vec{u}}- \vec{u} \wedge \brac{\vec{u} \wedge (-\lap) \vec{u}}}\|_{L^{(\frac{2d}{3},2)}(\R^d)} \aleq \|\Dso \vec{u}\|_{L^{2d}(\R^d)}\ \|\Ds{\frac{d}{2}} \vec{u}\|_{L^{2}(\R^d)}\\
\]
and the corresponding perturbative estimate for \eqref{eq:Fcond:3}
 \[
   \|\Ds{\frac{d-2}{2}} \brac{\vec{u} \wedge \Dso{} \brac{ \vec{u} \wedge\Dso{} \vec{u}}- \vec{u} \wedge \brac{\vec{u} \wedge (-\lap) \vec{u}}} \|_{L^2} \aleq \brac{1+\|\Ds{\frac{d}{2}} \vec{u}\|_{L^2(\R^d)}^{2}} \,  \|\Dso \vec{u}\|_{L^{(2d,2)}(\R^d)}^2
\]
\end{lemma}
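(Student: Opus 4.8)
The key structural observation is that the ``bad'' term $\vec{u}\wedge\Dso(\vec{u}\wedge\Dso\vec{u}) - \vec{u}\wedge(\vec{u}\wedge(-\lap)\vec{u})$ must be rewritten so that the worst singular object never appears with a full extra derivative acting on it. Concretely, I would first expand both double wedge products pointwise and use $|\vec{u}|=1$ together with the algebraic identity $\vec{a}\wedge(\vec{b}\wedge\vec{c}) = \vec{b}\,\langle\vec{a},\vec{c}\rangle - \vec{c}\,\langle\vec{a},\vec{b}\rangle$ to collect cancellations. The point (this is exactly the Krieger--Sire reduction philosophy) is that $\Dso(\vec{u}\wedge\Dso\vec{u})$ and $\vec{u}\wedge(-\lap)\vec{u} = \vec{u}\wedge(-\Dso)(\Dso\vec{u})$ have matching leading symbols, so their difference is a \emph{commutator} $[\Dso,\vec{u}\wedge\cdot](\Dso\vec{u})$ type quantity that, up to the usual $\Dso$-Leibniz/commutator estimates, behaves like $|\Dso\vec{u}|\,|\Dso\vec{u}|$ rather than $|\Dso\vec{u}|\,|\lap\vec{u}|$. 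Once the term is in this commutator form, each of the three claimed bounds becomes a routine product estimate with the commutator playing the role of a quadratic expression in $\Dso\vec{u}$ and $\vec{u}$, handled by the fractional Leibniz rule \Cref{la:basicLeibniz}, the commutator estimates, and Lorentz-refined Sobolev/Gagliardo--Nirenberg inequalities (as in the previous two lemmas).

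For the growth estimate corresponding to \eqref{eq:Fcond:2}: after the rewriting, $\Ds{t}$ of the commutator expression is distributed over the factors $\vec{u}$ (placed in $L^{d/t_i}$ and absorbed since $|\vec{u}|=1$, up to $\|\Ds{d/2}\vec{u}\|_{L^2}$-terms) and two factors that are morally $\Dso\vec{u}$, one of which must carry the remaining $t+1$-ish derivatives to land in $L^p$ with $p<\frac{2d}{2t+\frac12}$; the homogeneities add up to $\Ds{t+2}\vec{u}$ in $L^p$ times $\|\Ds{d/2}\vec{u}\|_{L^2}^2$, exactly as in the previous two lemmas. For \eqref{eq:Fcond:2v2} one takes $t=0$, puts one derivative-factor in $L^{2d}$ and the rest in critical $L^{d/t_i}$, giving $\|\Dso\vec{u}\|_{L^{2d}}\|\Ds{d/2}\vec{u}\|_{L^2}$. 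For the perturbative estimate \eqref{eq:Fcond:3} one uses the paraproduct/Hardy-type bilinear bounds (the $H_{\Dso}$ and $\eqref{eq:gagliardo5}$ machinery used in the prior lemmas): writing the commutator as $H_{\Dso}(\vec{u}\wedge\cdot, \Ds{-1}\Dso\vec{u})$ applied to $\Ds{-1}\Dso\vec{u}$, the operator $H_{\Dso}$ gains a derivative, so $\Ds{(d-2)/2}$ of the whole product is estimated by $(1+\|\Ds{d/2}\vec{u}\|_{L^2}^2)\|\Dso\vec{u}\|_{L^{(2d,2)}}^2$, with the $\vec{u}$-prefactors absorbed via $\|\Ds{d/2}\vec{u}\|_{L^2}$ on the whole.

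I expect the main obstacle to be bookkeeping the commutator structure cleanly enough that one genuinely never needs $\|\lap\vec{u}\|$ in a subcritical norm — i.e., making precise that the difference $\vec{u}\wedge\Dso(\vec{u}\wedge\Dso\vec{u}) - \vec{u}\wedge(\vec{u}\wedge(-\lap)\vec{u})$ telescopes to terms each of which is controlled by a \emph{bilinear} commutator estimate for $\Dso$ (the relevant fractional Leibniz/Coifman--McIntyre--Meyer type bound) rather than by brute-force derivative counting, and then tracking the Lorentz exponents so that the constraint $t\in[1,\frac{2d-1}{4}]$, $p<\frac{2d}{2t+\frac12}$ drops out naturally from the homogeneity count. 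Once that reduction is in place, all three estimates are parallel to the two preceding lemmas and require only the same toolbox (Leibniz rule, Sobolev embeddings into Lorentz spaces, \eqref{eq:GN:234}, \eqref{eq:gagliardo5}, and $|\vec{u}|\equiv 1$).
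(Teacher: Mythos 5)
Your rewriting of the difference as a commutator, $\vec{u} \wedge \Dso\brac{\vec{u}\wedge\Dso\vec{u}} - \vec{u}\wedge\brac{\vec{u}\wedge(-\lap)\vec{u}} = \vec{u}\wedge H_{\Dso}(\vec{u}\wedge,\Dso\vec{u})$, is exactly the paper's starting point, and your sketches of the growth estimate (distribute $\Ds{t}$ by the fractional Leibniz rule, with the constraints $t\leq\frac{2d-1}{4}$, $p<\frac{2d}{2t+\frac12}$ coming from the homogeneity count) and of the $L^{(\frac{2d}{3},2)}$ estimate (split as $\Ds{1-\alpha}\vec{u}\cdot\Ds{1+\alpha}\vec{u}$ for small $\alpha$) are in line with what the paper does. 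So the first two estimates are fine.

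The gap is in the perturbative estimate. You assert that, once the commutator form is reached, ``$H_{\Dso}$ gains a derivative'' and the bound follows from the Leibniz rule together with the \eqref{eq:gagliardo5}-type machinery. That works only when the $\frac{d-2}{2}$ outer derivatives can be spread so that every factor stays strictly below the critical level; it does \emph{not} cover the endpoint distribution in which essentially all derivatives land on the second slot of $H_{\Dso}$, producing terms of the shape $\sum_j u^j\, H_{\Dso}(u^i,\tilde{D}^{\frac d2}u^j)$ and $\sum_j u^j\, H_{\Dso}(u^j,\tilde{D}^{\frac d2}u^i)$, where the remaining factors are undifferentiated $u^j\in L^\infty$ and carry no smallness, while the claim requires \emph{two} factors of $\|\Dso\vec{u}\|_{L^{(2d,2)}}$. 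Note also that \eqref{eq:gagliardo5} applies to $H_{\Dso}(\Ds{-1}g,\Ds{-1}h)$, i.e.\ both arguments one derivative below the controlled quantities, whereas here one argument is $\Dso\vec{u}$ itself, so it cannot be invoked directly. The paper closes these worst terms only by exploiting the target constraint $|\vec{u}|\equiv 1$ (so that $\sum_j u^j\brac{u^j(x)-u^j(y)}=\tfrac12|\vec{u}(x)-\vec{u}(y)|^2$, converting the undifferentiated $u^j$ into a genuine difference quotient), together with the dedicated trilinear estimates \Cref{la:reallyworstterm} and \Cref{la:reallyworsttermv2}, the decomposition \eqref{eq:hHdsofgdecomp} into $H_{\Dso}$ and the triple commutator $H_{\Dso,3}$, and the kernel bounds from \cite{ERFS22}. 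Your proposal flags ``bookkeeping'' as the main obstacle but does not identify this endpoint case or supply the mechanism (the sphere identity plus the triple-commutator estimates) needed to resolve it, so as written the argument for the third estimate does not close.
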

\begin{proof}
We have
\begin{equation}\label{eq:wurst:1243}
\begin{split}
 &\vec{u} \wedge \Dso{} \brac{ \vec{u} \wedge\Dso{} \vec{u}}- \vec{u} \wedge \brac{\vec{u} \wedge (-\lap) \vec{u}}\\
 =&\vec{u} \wedge H_{\Dso}(\vec{u}\wedge,\Dso \vec{u})\\
 \end{split}
\end{equation}
This is enough to obtain the growth estimate corresponding to \eqref{eq:Fcond:2} via the Leibniz rule:
\[
\begin{split}
 &\|\Ds{t}  \brac{\vec{u} \wedge \Dso{} \brac{ \vec{u} \wedge\Dso{} \vec{u}}- \vec{u} \wedge \brac{\vec{u} \wedge (-\lap) \vec{u}}}\|_{L^{p}(\R^d)} \\
 \aleq&\max_{t_1+t_2+t_3= t} \|\Ds{t_1} \vec{u}\|_{L^{\frac{d}{t_1}}(\R^d)}\, \|\Ds{\frac{1}{4}+t_2} \vec{u}\|_{L^{\frac{4d}{1+4t_2}}(\R^d)}\, \|\Ds{\frac{7}{4}+t_3} \vec{u}\|_{L^{\frac{dp}{d-(t+\frac{1}{4}-t_3)p}}(\R^d)}\\
\aleq&\brac{\|\vec{u}\|_{L^\infty(\R^d)} + \|\Ds{\frac{d}{2}} \vec{u}\|_{L^2(\R^d)}}\, \|\Ds{\frac{d}{2}} \vec{u}\|_{L^2(\R^d)}\,  \|\Ds{t+2} \vec{u}\|_{L^p(\R^d)}.
 \end{split}
\]
This makes sense whenever $d-(t+\frac{1}{4})p > 0$, i.e. $p<\frac{2d}{2t+\frac{1}{2}}$ and $\frac{1}{4} + t \leq \frac{d}{2}$.
Thus the first growth estimate, corresponding to \eqref{eq:Fcond:2}, is established.

For the second growth estimate, corresponding to \eqref{eq:Fcond:2v2} we observe that for very small $\alpha>0$
\[
\begin{split}
&\|\vec{u} \wedge H_{\Dso}(\vec{u}\wedge,\Dso \vec{u})\|_{L^{\frac{2d}{3}}(\R^d)}\\
\aleq& \|\Ds{1-\alpha} \vec{u}\|_{L^{\frac{2d}{1-2\alpha}}(\R^d)}\ \|\Ds{1+\alpha} \vec{u}\|_{L^{\frac{2d}{2+2\alpha}}(\R^d)}\\
\aleq& \|\Dso \vec{u}\|_{L^{2d}(\R^d)}\ \|\Ds{\frac{d}{2}} \vec{u}\|_{L^{2}(\R^d)}\\
 \end{split}
\]

We still have to establish the perturbative estimate corresponding to \eqref{eq:Fcond:3}. Since
\[
 \vec{a} \wedge (\vec{b} \wedge \vec{c}) = \vec{b} (\vec{c}\cdot \vec{a}) - \vec{c} (\vec{a} \cdot \vec{b}),
\]
we have
\[
\begin{split}
 &\vec{u} \wedge \Dso{} \brac{ \vec{u} \wedge\Dso{} \vec{u}}- \vec{u} \wedge \brac{\vec{u} \wedge (-\lap) \vec{u}}\\
 =&\brac{\vec{u} \wedge H_{\Dso}(\vec{u}\wedge,\Dso \vec{u})}^i\\
 =&\sum_{j=1}^3 u^j  H_{\Dso}(u^i,\Dso u^j) - \sum_{j=1}^3 u^j H_{\Dso}(u^j,\Dso u^i)\\
 \end{split}
\]
Now observe that since $d$ is an integer and $d\geq 4$ we have
\[
 \|\Ds{\frac{d-2}{2}} F\|_{L^2(\R^d)} \aeq \|\Ds{t} \nabla^k F\|_{L^2(\R^d)}
\]
where $k = \lfloor \frac{d-2}{2}\rfloor$ and $t= \frac{d-2}{2}-\lfloor \frac{d-2}{2}\rfloor \in \{0,\frac{1}{2}\}$.

We then have
\[
\begin{split}
 &\|\Ds{\frac{d-2}{2}} \brac{u^i  H_{\Dso}(u^j,\Dso u^k)}\|_{L^2(\R^d)}\\
 \aleq&\max_{s_1+s_2+s_3 = k}\|\Ds{t} \brac{D^{s_1}u^i  H_{\Dso}(D^{s_2}u^j,D^{s_3}\Dso u^k)}\|_{L^2(\R^d)}
 \end{split}
\]
here $s_1,s_2,s_3$ are integers.

We first observe that if $s_1 \geq 1$ then we can apply \eqref{eq:GN:1}, and for for $\sigma_1+\sigma_2+\sigma_3 = 1$,
\[
\begin{split}
 &\max_{s_1+s_2+s_3 = s, s_1 \geq 1}\|\Ds{t} \brac{D^{s_1}u^i  H_{\Dso}(D^{s_2}u^j,D^{s_3}\Dso u^k)}\|_{L^2(\R^d)}\\
 \aleq&\max_{t_1+t_2+t_3 = \frac{d-2}{2},\, t_1 \geq 3/4} \|\Ds{t_1}u^i\|_{L^{\frac{2d}{1+\sigma_1 + 2(t_1-1)}}(\R^d)}  \|\Ds{\frac{1}{2}+t_2} u^j\|_{L^{\frac{2d}{1+\sigma_2+2(t_2-1/2)}}(\R^d)} \|\Ds{\frac{3}{2}+t_3} u^k\|_{L^{\frac{2d}{1+\sigma_3+2(1/2+t_3)}}(\R^d)}\\
 \aleq&\|\Dso u^i\|_{L^{2d}(\R^d)}^{1-\sigma_1}\, \|\Ds{\frac{d}{2}} u^i\|_{L^2(\R^d)}^{\sigma_1}\, \|\Dso u^j\|_{L^{2d}(\R^d)}^{1-\sigma_2}\, \|\Ds{\frac{d}{2}} u^j\|_{L^2(\R^d)}^{\sigma_2}\, \|\Dso u^k\|_{L^{2d}(\R^d)}^{1-\sigma_3}\, \|\Ds{\frac{d}{2}} u^k\|_{L^2(\R^d)}^{\sigma_3}\\
 \aleq&\|\Ds{\frac{d}{2}} \vec{u}\|_{L^2(\R^d)}\, \|\Dso \vec{u}\|_{L^{2d}}^2.
 \end{split}
\]
For this to work we need $\sigma_1 \geq \frac{\min\{0,2(t_1-1)\}}{d-2}$, $\sigma_2 \geq \frac{\min\{0,2(1/2+t_2-1)\}}{d-2}$, $\sigma_3 \geq \frac{2(1/2+t_3)}{d-2}$ -- observe that $t_3 \leq \frac{d-2}{2}-\frac{3}{4}$, and $1 = \sigma_1+\sigma_2+\sigma_3 \overset{!}{\geq} \frac{d-4}{d-2}$, so this is possible.

Of course this also holds if we replace $D^{s_1}$ with $\Ds{s_1}$ etc. -- we are going to write $\tilde{D}^{s}$ if it does not matter if it is $\Ds{s}$ or $D^{\lfloor s\rfloor} \Ds{s-\lfloor s\rfloor}$.

We record the obtained above estimate
\begin{equation}\label{eq:aargh:v2}
\begin{split}
 &\max_{s_1+s_2+s_3 = s, s_1 \geq 1}\|\tilde{D}^{t} \brac{\tilde{D}^{s_1}u^i  H_{\Dso}(\tilde{D}^{s_2}u^j,\tilde{D}^{1+s_3}u^k)}\|_{L^2(\R^d)}\\
 \aleq&\|\Ds{\frac{d}{2}} \vec{u}\|_{L^2(\R^d)}\, \|\Dso \vec{u}\|_{L^{2d}}^2.
 \end{split}
\end{equation}

Thus, for $t = \frac{1}{2}$ if $d\geq 4$ and odd, and $t=0$ if $d \geq 4$ and even, we arrive at
\[
\begin{split}
 &\|\Ds{\frac{d-2}{2}} \sum_{j}\brac{u^j  H_{\Dso}(u^i,\Dso u^j)}\|_{L^2(\R^d)} +\|\Ds{\frac{d-2}{2}} \sum_{j}\brac{u^j H_{\Dso}(u^j,\Dso u^i)}\|_{L^2(\R^d)}\\
 \aleq&\|\Ds{\frac{d}{2}} \vec{u}\|_{L^2(\R^d)}\, \|\Dso \vec{u}\|_{L^{2d}}^2\\
 &+\max_{s_2+s_3=\lfloor \frac{d-2}{2}\rfloor, s_i \in \Z_+}\|\sum_{j}\Ds{t} \brac{u^j  H_{\Dso}(D^{s_2} u^i,\Dso D^{s_3}u^j)}\|_{L^2(\R^d)} \\
 &+\max_{s_2+s_3=\lfloor \frac{d-2}{2}\rfloor, s_i \in \Z_+}\|\sum_{j}\Ds{t} \brac{u^j H_{\Dso}(D^{s_2}u^j,\Dso D^{s_3}u^i)}\|_{L^2(\R^d)}\\
 \end{split}
\]
We first treat some special cases.
We observe
\begin{equation}\label{eq:hHdsofgdecomp}
\begin{split}
&h(x)\, H_{\Dso}(f,g)(x)\\
=&c\int_{\R^d} \frac{h(x)\, \brac{f(x)-f(y)}\, \brac{g(x)-g(y)}}{|x-y|^{d+1}}\, dy\\
=&c\int_{\R^d} \frac{\, \brac{h(x)f(x)-h(y)f(y)}\, \brac{g(x)-g(y)}}{|x-y|^{d+1}}\, dy\\
&+c\int_{\R^d} \frac{\brac{h(x)-h(y)}\, \brac{f(x)-f(y)}\, \brac{g(x)-g(y)}}{|x-y|^{d+1}}\, dy\\
&-cf(x)\int_{\R^d} \frac{\brac{h(x)-h(y) }\, \brac{g(x)-g(y)}}{|x-y|^{d+1}}\, dy\\
=&H_{\Dso}(fh,g)(x) - f(x) H_{\Dso}(h,g)(x) + H_{\Dso,3}(f,g,h)(x)\\
\end{split}
\end{equation}
where we denote the triple commutator
\[
H_{\Dso,3}(f,g,h)(x) := c\int_{\R^d} \frac{ \brac{f(x)-f(y)}\, (g(x)-g(y))\, \brac{h(x)-h(y)}}{|x-y|^{d+1}}\, dy
\]
We have with \eqref{eq:GN:1}, observing that $\langle \vec{u},\tilde{D}^{\frac{d}{2}-s_2}\vec{u}\rangle =H_{\tilde{D}^{\frac{d}{2}-s_2}}(\vec{u}\cdot,\vec{u})$
\begin{equation}\label{eq:asklvxclkklxcjvv1v2}
 \max_{s_2 \in [0,\frac{d-1}{2}]}\|\Ds{t}H_{\Dso}(\tilde{D}^{s_2} u^i,\langle \vec{u},\tilde{D}^{\frac{d}{2}-s_2}\vec{u}\rangle )\|_{L^2(\R^d)} \aleq \|\Dso \vec{u}\|_{L^{2d}(\R^d)}^2\, \|\Ds{\frac{d}{2}} u^i\|_{L^2(\R^d)}.
\end{equation}

Thus, with the help of \eqref{eq:hHdsofgdecomp}, \Cref{la:reallyworsttermv2} and \eqref{eq:aargh:v2} we can treat all expressions where more than $1$ derivative hits one of the $u^j$-terms, at the expense of scalar product terms $\langle \vec{u},\Ds{s} \vec{u}\rangle$  which are treated in \eqref{eq:asklvxclkklxcjvv1v2} -- as long as $s_3 \neq \frac{d-2}{2}$, namely we have:
\begin{equation}\label{eq:cvioxvcposfdpjio3}
\begin{split}
 &\|\Ds{\frac{d-2}{2}} \sum_{j}\brac{u^j  H_{\Dso}(u^i,\Dso u^j)}\|_{L^2(\R^d)} +\|\Ds{\frac{d-2}{2}} \sum_{j}\brac{u^j H_{\Dso}(u^j,\Dso u^i)}\|_{L^2(\R^d)}\\
 \aleq&\|\Ds{\frac{d}{2}} \vec{u}\|_{L^2(\R^d)}\, \|\Dso \vec{u}\|_{L^{2d}}^2\\
%
 &+\max_{s_2+s_3=\lfloor \frac{d-2}{2}\rfloor, s_i \in \Z_+, s_3=\frac{d-2}{2}}\|\sum_{j}\Ds{t} \brac{u^j  H_{\Dso}(D^{s_2} u^i,\Dso D^{s_3}u^j)}\|_{L^2(\R^d)} \\
 &+\max_{s_2+s_3=\lfloor \frac{d-2}{2}\rfloor, s_i \in \Z_+; s_2=0}\|\sum_{j}\Ds{t} \brac{u^j H_{\Dso}(D^{s_2}u^j,\Dso D^{s_3}u^i)}\|_{L^2(\R^d)}\\
 \end{split}
\end{equation}
That is we still need to estimate (the second term on the right-hand side \eqref{eq:cvioxvcposfdpjio3} is only nonzero if $d$ is even, thus $t=0$)

\begin{equation}\label{eq:cvioxvcposfdpjio3:guy1}
 \|\sum_{j} \brac{u^j  H_{\Dso}(u^i,\tilde{D}^{\frac{d}{2}} u^j)}\|_{L^2(\R^d)}\aleq \|\Dso \vec{u}\|_{L^{2d}(\R^d)}^2\, \|\Ds{\frac{d}{2}} u^i\|_{L^2(\R^d)},
\end{equation}
and
\begin{equation}\label{eq:cvioxvcposfdpjio3:guy2}
 \|\sum_{j}\brac{u^j H_{\Dso}(u^j,\tilde{D}^{\frac{d}{2}}u^i)}\|_{L^2(\R^d)} \aleq \|\Dso \vec{u}\|_{L^{2d}(\R^d)}^2\, \|\Ds{\frac{d}{2}} u^i\|_{L^2(\R^d)},
\end{equation}
and
\begin{equation}\label{eq:cvioxvcposfdpjio3:guy3}
 \|\sum_{j}\Ds{\frac{1}{2}} \brac{u^j H_{\Dso}(u^j,\tilde{D}^{\frac{d-1}{2}}u^i)}\|_{L^2(\R^d)} \aleq \|\Dso \vec{u}\|_{L^{2d}(\R^d)}^2\, \|\Ds{\frac{d}{2}} u^i\|_{L^2(\R^d)}.
\end{equation}

\underline{Estimate of \eqref{eq:cvioxvcposfdpjio3:guy1}}:
We observe similar to \eqref{eq:hHdsofgdecomp}
\[
\begin{split}
&\abs{\sum_{j} u^j  H_{\Dso}( u^i,\tilde{D}^{\frac{d}{2}}u^j)(x)}\\
\aleq&\abs{H_{\Dso}( u^i,\langle \vec{u},\tilde{D}^{\frac{d}{2}}\vec{u}\rangle )(x)}\\
&+   \abs{\int_{\R^d} \frac{\abs{u^j(x)-u^j(y)}\, \abs{ u^i(x)- u^i(y)}\, \abs{\tilde{D}^{\frac{d}{2}}u^j(y)}}{|x-y|^{d+1}}\, dy}
\end{split}
\]
We have with \eqref{eq:GN:1}
\begin{equation}\label{eq:asklvxclkklxcjvv1}
 \max_{s_2 \in [0,\frac{d-1}{2}]}\|H_{\Dso}( u^i,\langle \vec{u},\tilde{D}^{\frac{d}{2}}\vec{u}\rangle )\|_{L^2(\R^d)} \aleq \|\Dso \vec{u}\|_{L^{2d}(\R^d)}^2\, \|\Ds{\frac{d}{2}} u^i\|_{L^2(\R^d)}.
\end{equation}

For the other term, pick in \cite[Lemma 2.11]{ERFS22}, $\alpha_1, \alpha_2 \in (0,1)$ with $\alpha_1 + \alpha_2 > 1$, $p_1 = \frac{2d}{2\alpha_1-1}$, $p_2 = \frac{2d}{1+2(\alpha_2-1)}$, and $
p_3 := \frac{2d}{d-2(\alpha_1+\alpha_2-1)}$ such that
\[
\frac{dp_3}{d+(\alpha_1+\alpha_2-1)p_3)} =2
\]
Observe that the last two assumptions require that $\alpha_1 + \alpha_2 \aeq 1$.

Then from \cite[Lemma A.7]{ERFS22} we find
\[
\begin{split}
 &\brac{\int_{\R^d} \abs{\int_{\R^d} \frac{\abs{u^j(x)-u^j(y)}\, \abs{ u^i(x)- u^i(y)}\, \abs{\tilde{D}^{\frac{d}{2}}u^j(y)}}{|x-y|^{d+1}}\, dy}^2 dx}^{\frac{1}{2}} \\
 \aleq&\|\Ds{\alpha_1} u^j\|_{L^{\frac{2d}{2\alpha_1-1}}(\R^d)}\, \|\Ds{\alpha_2} u^i \|_{L^{\frac{2d}{1+2(\alpha_2-1)}}(\R^d)}\, \|\Ds{\frac{d}{2}} u^j\|_{L^{2}(\R^d)}\\
 \aleq&\|\Dso \vec{u}\|_{L^{2d}(\R^d)}^2\, \|\Ds{\frac{d}{2}} \vec{u}\|_{L^2(\R^d)}.
 \end{split}
\]
Where the last inequality is Sobolev's embedding (observe that $\alpha_1,\alpha_2 < 1$).

This implies \eqref{eq:cvioxvcposfdpjio3:guy1}.

\underline{Estimate \eqref{eq:cvioxvcposfdpjio3:guy2} is proven} in \Cref{la:reallyworstterm}.

Lastly we need the \underline{estimate of \eqref{eq:cvioxvcposfdpjio3:guy3}}
\[
 \|\sum_{j}\Ds{\frac{1}{2}} \brac{u^j H_{\Dso}(u^j,\tilde{D}^{\frac{d-1}{2}}u^i)}\|_{L^2(\R^d)} \aleq \|\Dso \vec{u}\|_{L^{2d}(\R^d)}^2\, \|\Ds{\frac{d}{2}} u^i\|_{L^2(\R^d)}.
\]
This is essentially the same estimate as above, once we observe (using that $|\vec{u}|=1$)
\[
\sum_j u^j H_{\Dso}(u^j,\tilde{D}^{\frac{d-1}{2}}u^i)(x) = c\int_{\R^d} \frac{|\vec{u}(x)-\vec{u}(y)|^2 \brac{\tilde{D}^{\frac{d-1}{2}}u^i(x)-\tilde{D}^{\frac{d-1}{2}}u^i(y)}}{|x-y|^{d+1}}\, dy
\]
This finishes the proof of the corresponding perturbative estimate for \eqref{eq:Fcond:3}, and we can conclude.
\end{proof}

\section{Wellposedness: Proof of Corollary~\ref{co:wellposedness}}\label{s:wellposed}

\subsection{Short-time existence of solutions for smooth initial data}

We begin with the following existence result, which can be proven by relatively standard methods
\begin{theorem}[Existence half-wave equation]\label{th:existsmoothhalfwave}
Let $d \geq 3$. For any $K$ suitably large, and any $\Gamma>0$ there exists $T$ such that the following holds:

Assume that $\vec{u_0} \in C^2(\R^d,\S^{2})$ such that
\[
 \|\nabla \vec{u_0}\|_{H^{K-1}(\R^d)} \leq \Gamma.
\]
Then there exists a solution $\vec{u} \in C^2(\R^d \times (-T,T),\S^2)$ such that
\[
 \begin{cases}
 \partial_t \vec{u} = \vec{u} \wedge \laps{1} \vec{u} \quad &\text{in $\R^d \times (-T,T)$}\\
 \vec{u}(0) = \vec{u_0} \quad \text{in $\R^d$}\\
 \end{cases}
\]
Moreover we have
\begin{equation}\label{eq:KSPDEbdsHk}
 \|d\vec{u}\|_{L^\infty((-T,T),H^{K-1}(\R^d))}  < \infty.
\end{equation}
and
\begin{equation}\label{eq:KSPDEbds}
 \|\partial_t \vec{u}\|_{L^\infty(\R^d \times (-T,T))} +\|\Dso \vec{u}\|_{L^\infty(\R^d \times (-T,T))}+\|\nabla \vec{u}\|_{L^\infty(\R^d \times (-T,T))} < \infty.
\end{equation}
and for any $s > 1$ suitably close to $1$ we have
\begin{equation}\label{eq:existhalfwave:finitenorm}
\begin{split}
\|\Ds{\frac{d-2}{2}} d\vec{u}\|_{C_t^0 L^2(\R^d\times (-T,T))} + \|\Ds{s-1}d \vec{u}\|_{L^2_t L^{(\frac{2d}{2s-1},2)}_x(\R^d \times (-T,T))}<\infty
\end{split}
\end{equation}

\end{theorem}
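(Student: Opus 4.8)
The plan is to construct the solution by a regularization/compactness argument for the \emph{first-order} flow $\partial_t\vec u=\vec u\wedge\laps1\vec u$, to close energy estimates for $d\vec u$ in $H^{K-1}$ that are uniform in the regularization parameter, and then to pass to the limit; the reduction to the second-order form \eqref{eq:solks} and the finiteness of the Strichartz-type quantity in \eqref{eq:existhalfwave:finitenorm} will follow from the persistence of high Sobolev regularity on the bounded time interval. Concretely I would first introduce a regularization that keeps the constraint $|\vec u|\equiv 1$ and is parabolic, e.g.
\[
 \partial_t\vec u_\eps = \vec u_\eps\wedge\laps1\vec u_\eps \ -\ \eps\,\vec u_\eps\wedge\brac{\vec u_\eps\wedge\lap\vec u_\eps}, \qquad \vec u_\eps(0)=\vec u_0,
\]
whose last term equals $\eps\brac{\lap\vec u_\eps+\vec u_\eps|\nabla\vec u_\eps|^2}$ on the sphere (the harmonic-map heat-flow term), so that $\scpr{\partial_t\vec u_\eps}{\vec u_\eps}=0$, hence $|\vec u_\eps|\equiv1$, and the equation is a semilinear heat equation in $\vec u_0+H^{K}$. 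For each fixed $\eps>0$ standard parabolic theory then gives a smooth solution on a maximal time interval, together with instantaneous smoothing.

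The core step is to prove \emph{$\eps$-uniform} bounds in $H^{K-1}$ for $\nabla\vec u_\eps$ (and, using $\vec u_0\in\dot H^1$, in the same space for $\laps1\vec u_\eps$). Differentiating by $\nabla^\alpha$, $|\alpha|\le K-1$, and testing against $\nabla^\alpha\vec u_\eps$, the parabolic term contributes a dissipative sign plus lower-order commutators, while the only dangerous contribution from the half-wave term is $\scpr{\vec u_\eps\wedge\laps1\nabla^\alpha\vec u_\eps}{\nabla^\alpha\vec u_\eps}_{L^2}$. Writing $B=\vec u_\eps\wedge\cdot$ (pointwise skew), the self-adjointness of $\laps1$ and the skewness of $B$ give the identity $\scpr{B\laps1 w}{w}_{L^2}=-\tfrac12\scpr{w}{[\laps1,B]w}_{L^2}$, and $[\laps1,\vec u_\eps\wedge\cdot]$ is of order zero with $\|[\laps1,\vec u_\eps\wedge\cdot]\|_{L^2\to L^2}\aleq\|\nabla\vec u_\eps\|_{L^\infty}$ by a Coifman--Meyer / fractional-Leibniz estimate; the remaining terms produced by the Leibniz rule are similarly harmless. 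This yields, uniformly in $\eps$,
\[
 \tfrac{d}{dt}\|\nabla\vec u_\eps(t)\|_{H^{K-1}(\R^d)}^2\ \aleq\ \brac{1+\|\nabla\vec u_\eps(t)\|_{H^{K-1}(\R^d)}}\,\|\nabla\vec u_\eps(t)\|_{H^{K-1}(\R^d)}^2
\]
(with $K$ large enough that $H^{K-1}\hookrightarrow C^2$), so a Gr\"onwall/continuation argument produces $T=T(\Gamma)>0$ and a bound $\|d\vec u_\eps\|_{L^\infty((-T,T),H^{K-1})}\le C(\Gamma)$ independent of $\eps$. Extracting a weak-$*$ limit and upgrading to strong convergence (Aubin--Lions and a Bona--Smith type argument) gives a solution $\vec u$ of $\partial_t\vec u=\vec u\wedge\laps1\vec u$ on $(-T,T)$ with values in $\S^2$ and satisfying \eqref{eq:KSPDEbdsHk}; Sobolev embedding then gives $\vec u\in C^2(\R^d\times(-T,T),\S^2)$ and \eqref{eq:KSPDEbds}.

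It remains to produce \eqref{eq:solks} and \eqref{eq:existhalfwave:finitenorm}. At the regularity just obtained one may differentiate $\partial_t\vec u=\vec u\wedge\laps1\vec u$ in $t$ and re-substitute the equation, which is precisely the Krieger--Sire computation \cite{KS18}, so $\vec u$ solves \eqref{eq:solks}, i.e. \eqref{eq:generalwave} with the admissible $\Omega,F$ of \Cref{s:halfwavewave}. For \eqref{eq:existhalfwave:finitenorm}: for $s>1$ close to $1$ the Lorentz space $L^{(\frac{2d}{2s-1},2)}$ lies just below $L^{2d}$, so a Sobolev/interpolation embedding bounds $\|\Ds{s-1}d\vec u(t)\|_{L^{(\frac{2d}{2s-1},2)}(\R^d)}$ and $\|\Ds{\frac{d-2}{2}}d\vec u(t)\|_{L^2(\R^d)}$ by $\|d\vec u(t)\|_{H^{K-1}(\R^d)}$ (taking $K-1\ge\tfrac{d-1}{2}$); since the latter is bounded uniformly on the \emph{bounded} interval $(-T,T)$ by \eqref{eq:KSPDEbdsHk}, the $C^0_tL^2$ part is immediate and the $L^2_t$ part follows by integrating in time over $(-T,T)$.

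I expect the main obstacle to be the $\eps$-uniform high-order energy estimate: because $\laps1$ is nonlocal one cannot integrate by parts naively, and the pointwise cancellation $\vec a\wedge\vec b\perp\vec b$ only removes the worst term \emph{after} commuting the cross-product through $\laps1$, so everything hinges on sharp order-zero commutator bounds for $[\laps1,\vec u\wedge\cdot]$ together with careful bookkeeping of the numerous lower-order terms that appear once $\laps1\vec u$ is also tracked in $H^{K-1}$ — and on choosing a regularization (the parabolic one above, or a Littlewood--Paley truncation) for which these bounds survive uniformly. Once the uniform estimate is in hand, the passage to the limit and the verification of \eqref{eq:KSPDEbds}--\eqref{eq:existhalfwave:finitenorm} are routine.
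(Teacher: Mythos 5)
Your plan is correct in substance, but it is a genuinely different route from the paper's. The paper never constructs the first-order flow directly: it solves the second-order Krieger--Sire system \eqref{eq:KSPDE} \emph{without} the sphere constraint by a contraction/Duhamel argument for the wave propagator in a high-regularity energy space (\Cref{pr:existencenotarget}, with the tame bound \eqref{eq:Hestasd} whose only delicate piece is the commutator $[\Dso,\vec u\wedge](\Dso\nabla^{k-1}\vec u)$ handled by Coifman--Meyer), then recovers $|\vec u|\equiv1$ a posteriori from the wave equation for $w=|\vec u|^2-1$ plus a Gronwall uniqueness lemma (\Cref{pr:vecuintosphere}, \Cref{la:uniqueness}), then recovers the half-wave equation itself by a second Gronwall argument for $X=\partial_t\vec u-\vec u\wedge\laps{1}\vec u$, and finally gets \eqref{eq:existhalfwave:finitenorm} from Strichartz on the bounded interval with a crude $C(T)$-bound on the $L^1_t L^2_x$ norm of the nonlinearity. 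You instead regularize the first-order flow in a constraint-preserving parabolic way (in the spirit of the parabolic approximation used in \cite{Liu2023} for $d=1$), close $\eps$-uniform $H^{K-1}$ energy estimates via the skew-symmetry identity and the Calder\'on-type bound $\|[\laps{1},\vec u\wedge\cdot]\|_{L^2\to L^2}\aleq\|\nabla\vec u\|_{L^\infty}$ (the very same commutator tool the paper uses), pass to the limit, and only afterwards derive \eqref{eq:solks} by differentiating in $t$ (the easy direction) and \eqref{eq:existhalfwave:finitenorm} by Sobolev embedding on the bounded interval --- both of which are legitimate. What each approach buys: the contraction argument avoids regularization and compactness entirely (and yields uniqueness of the fixed point for the wave system), at the price of the two a posteriori Gronwall arguments; your route makes the constraint and the first-order equation automatic, at the price of the $\eps$-limit. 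Two details you should still supply: (i) the parabolic regularization only evolves forward in time, so to cover $(-T,T)$ you must use the time-reversal symmetry $\vec u(x,t)\mapsto-\vec u(x,-t)$ (or flip the sign of the regularizing term for negative times); (ii) Aubin--Lions on $\R^d$ gives only locally-in-space strong convergence, so the global bounds \eqref{eq:KSPDEbdsHk}, \eqref{eq:KSPDEbds} for the limit should be recovered by weak lower semicontinuity, and the product $\vec u_\eps\wedge\laps{1}\vec u_\eps$ passes to the limit by pairing local strong convergence of $\vec u_\eps$ with weak $L^2$ convergence of $\laps{1}\vec u_\eps$; also note that when estimating the $\eps$-term you should use its heat-flow form $\eps(\lap\vec u_\eps+\vec u_\eps|\nabla\vec u_\eps|^2)$, exactly as you indicate, since distributing derivatives on $\eps\,\vec u_\eps\wedge(\vec u_\eps\wedge\lap\vec u_\eps)$ directly produces a top-order term that is harder to absorb. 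Neither point is an obstruction.
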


First we solve a related equation, cf. \cite[(2-1)]{KS18}, without the assumption of $\vec{u}$ mapping into $\S^2$.
\begin{proposition}\label{pr:existencenotarget}
For any $K \in \N$ and any $\Gamma > 0$ there exists an $T = T(\Gamma,K) > 0$ such that the following holds:

Assume that $\vec{u_0} \in C^2(\R^d,\S^{2})$ such that
\[
 \|d\vec{u_0}\|_{H^{K-1}(\R^d)} \leq \Gamma.
\]
Then there exists a map $\vec{u} \in C^K(\R^d\times (-T,T),\col{\R^3})$, solving
\begin{equation}\label{eq:KSPDE}
\begin{split}
(\partial_{tt} - \lap) \vec{u} =& \vec{u} \brac{|\nabla \vec{u}|^2-|\partial_t \vec{u}|^2} + \Pi_{\vec{u}_\perp} \Dso \vec{u}\, \langle \vec{u} \cdot \Dso \vec{u} \rangle \\
&+\vec{u} \wedge \Dso{} \brac{ \vec{u} \wedge\Dso{} \vec{u}}- \vec{u} \wedge \brac{\vec{u} \wedge (-\lap) \vec{u}}.
\end{split}
\end{equation}
with initial boundary data
\begin{equation}\label{KSPDEinit}
\begin{cases}
 \vec{u}(0) = \vec{u_0} \quad &\text{in $\R^d$}\\
 \partial_t \vec{u}(0) = \vec{u_0} \wedge \laps{1} \vec{u_0} \quad &\text{in $\R^d$}\\
 \end{cases}
\end{equation}
and we have $|\vec{u}| \in [\frac{1}{2},\frac{3}{2}]$ in $\R^d \times (-T,T)$,

Here we denote
\[
 \Pi_{u_\perp}(\vec{v}) = \vec{v} - \frac{\vec{u}}{|\vec{u}|}\langle \vec{v}, \frac{\vec{u}}{|\vec{u}|}\rangle
\]

Moreover $\vec{u}$ satisfies \eqref{eq:KSPDEbds}, \eqref{eq:KSPDEbdsHk}, and \eqref{eq:existhalfwave:finitenorm}.
\end{proposition}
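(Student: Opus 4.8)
The plan is to solve \eqref{eq:KSPDE}--\eqref{KSPDEinit} by a Picard iteration in a high-order Sobolev space, exploiting the algebraic cancellation that makes the right-hand side effectively of first order in the derivatives of $\vec u$. First I would record the structure of the nonlinearity $\n(\vec u)$ on the right of \eqref{eq:KSPDE}. By the identity \eqref{eq:wurst:1243} the apparently second-order terms combine into the commutator $\vec u\wedge H_{\Dso}(\vec u\wedge,\Dso\vec u)$; together with the observations that $\langle\vec u,\Dso\vec u\rangle$ and $\Dso(|\vec u|^2)$ have, at every Sobolev level, the regularity of a product of two first-order derivatives of $\vec u$ (using $\||\nabla|f\|_{\dot H^\sigma}=\|\nabla f\|_{\dot H^\sigma}$ and the commutator estimates of \Cref{s:halfwavewave}), and that $\Pi_{u_\perp}$ is a smooth function of $\vec u$ on the region $|\vec u|\in[1/2,3/2]$, one concludes that $\n(\vec u)$ is a finite sum of terms of the schematic form $g(\vec u)\,\mathcal B(d\vec u,\Dso\vec u)$, with $g$ smooth near $\{1/2\le|\vec u|\le 3/2\}$ and $\mathcal B$ bilinear in the first-order quantities $d\vec u,\Dso\vec u$ --- possibly through $H_{\Dso}$, but always at the regularity of a product of two first derivatives. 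This is precisely the structure quantified in \eqref{eq:Fcond:2}--\eqref{eq:Fcond:3}; the only new point is that here we need inhomogeneous, integer-order bounds.

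Next I would fix $K$ large enough that $H^{K-1}(\R^d)$ is a Banach algebra embedding into $C^{2}$, and that the Moser composition estimate $\|g(\vec u)-g(\vec q)\|_{H^{K-1}}\aleq C(\|\vec u\|_{L^\infty})\,\|\nabla\vec u\|_{H^{K-1}}$ and the Kato--Ponce-type commutator bound $\|H_{\Dso}(f,h)\|_{H^{K-1}}\aleq \|\nabla f\|_{H^{K-1}\cap L^\infty}\,\|h\|_{H^{K-1}\cap L^\infty}$ are available (note $d\vec u\in H^{K-1}$ forces $\Dso\vec u\in H^{K-1}$, since $\||\nabla|^{j}\Dso\vec u\|_{L^2}=\||\nabla|^{j}\nabla\vec u\|_{L^2}$ for $0\le j\le K-1$). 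With $\n$ written in the form above these estimates give $\n(\vec u)\in H^{K-1}$ with a bound depending only on $\|d\vec u\|_{H^{K-1}}$ on the set $\{1/2\le|\vec u|\le 3/2\}$. Now set
\[
 Y_T:=\{\vec u:\ d\vec u\in C^{0}([-T,T],H^{K-1}(\R^d)),\ |\vec u|\in[1/2,3/2]\ \text{on}\ \R^d\times[-T,T]\},
\]
with $\|\vec u\|_{Y_T}:=\sup_{|t|\le T}\|d\vec u(t)\|_{H^{K-1}}$, let $\vec u^{(0)}$ be the free wave evolution of the data \eqref{KSPDEinit}, and let $\vec u^{(n+1)}$ solve $(\partial_{tt}-\lap)\vec u^{(n+1)}=\n(\vec u^{(n)})$ with the same data. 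Using the Duhamel representation and the bound $\|d\big((\partial_{tt}-\lap)^{-1}f\big)(t)\|_{H^{K-1}}\aleq\int_0^{|t|}\|f(s)\|_{H^{K-1}}\,ds$, one checks that for $T=T(\Gamma,K)$ small the map $\vec u^{(n)}\mapsto\vec u^{(n+1)}$ sends the ball of radius $\aleq\Gamma$ of $Y_T$ into itself; the constraint $|\vec u^{(n)}|\in[1/2,3/2]$ propagates because $\big||\vec u^{(n)}(t,x)|-1\big|\le|t|\,\|\partial_t\vec u^{(n)}\|_{L^\infty_{t,x}}\aleq T\,C(\Gamma)$ by the Sobolev embedding $H^{K-1}\hookrightarrow L^\infty$. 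One then gets a contraction in the weaker norm $\sup_{|t|\le T}\|d(\vec u^{(n+1)}-\vec u^{(n)})(t)\|_{H^{K-2}}$, on which $\n$ is Lipschitz on bounded subsets of $Y_T$; applying the contraction mapping principle on the ball of $C^{0}_tH^{K-1}$ equipped with the (complete) $C^{0}_tH^{K-2}$-metric produces a fixed point $\vec u\in Y_T$ solving \eqref{eq:KSPDE}--\eqref{KSPDEinit} with $|\vec u|\in[1/2,3/2]$.

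It remains to read off the remaining conclusions. Estimate \eqref{eq:KSPDEbdsHk} is $\|d\vec u\|_{L^\infty_tH^{K-1}}<\infty$, which is built into $Y_T$; differentiating $\partial_{tt}\vec u=\lap\vec u+\n(\vec u)$ repeatedly in $t$ and using $H^{K-1-j}\hookrightarrow C^{m}$ (legitimate for $K$ large) gives $\vec u\in C^{K}(\R^d\times(-T,T))$ and, in particular, the pointwise bounds \eqref{eq:KSPDEbds} on $\partial_t\vec u$, $\nabla\vec u$ and $\Dso\vec u$ (for the last, $\Dso\vec u\in C^{0}_tH^{K-1}\hookrightarrow C^{0}_t L^\infty$). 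For \eqref{eq:existhalfwave:finitenorm} the first summand is bounded by $\sup_{|t|\le T}\|d\vec u(t)\|_{\dot H^{(d-2)/2}}\aleq\sup_{|t|\le T}\|d\vec u(t)\|_{L^2}^{\theta}\|d\vec u(t)\|_{\dot H^{K-1}}^{1-\theta}$ (interpolation, using $\tfrac{d-2}{2}\in[0,K-1]$ and $d\vec u\in L^2$); for the second summand the Lorentz-refined Sobolev embedding gives, for $s>1$ close to $1$, $\|\Ds{s-1}d\vec u(t)\|_{L^{(\frac{2d}{2s-1},2)}}\aleq\|\Ds{(d-1)/2}d\vec u(t)\|_{L^2}$, finite uniformly in $|t|\le T$ once $\tfrac{d-1}{2}\le K-1$, and integration over the compact interval $(-T,T)$ keeps it finite. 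I expect the only genuine technical point to be the nonlocal bookkeeping: verifying, at the level of inhomogeneous $H^{K-1}$, that the cancellation in \eqref{eq:wurst:1243} (and in $\langle\vec u,\Dso\vec u\rangle$) really reduces $H_{\Dso}(\,\cdot\,,\Dso\vec u)$ to the regularity of a quadratic gradient expression, i.e. setting up the Kato--Ponce commutator estimates for $H_{\Dso}$ used above; the rest is the standard semilinear-wave iteration, which is why the statement is advertised as provable by routine methods.
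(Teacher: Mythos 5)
Your proposal is correct and follows essentially the same route as the paper's proof: a fixed-point/iteration argument for $d\vec u$ in $C^0_t H^{K-1}$, with the nonlinearity controlled through the commutator structure $[\Dso,\vec u\wedge](\Dso \vec u)$ via Coifman--Meyer/Kato--Ponce estimates, propagation of $|\vec u|\in[\frac12,\frac32]$ by time-integration of $\partial_t\vec u$ in $L^\infty$, and higher regularity by bootstrapping the equation. The only cosmetic deviations are that you contract in the weaker $C^0_tH^{K-2}$ metric while the paper contracts directly in the $X$-norm, and that you obtain \eqref{eq:existhalfwave:finitenorm} by Lorentz--Sobolev embedding and interpolation on the bounded time interval rather than by the Strichartz estimate of \Cref{la:strichartz}; both variants are valid.
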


\begin{proof}
This is a consequence of a typical fixed point argument, see, e.g., \cite[p.57]{SS1998}:

Set $\vec{u_1} := \vec{u_0} \wedge \laps{1} \vec{u_0}$. Clearly, assuming $K$ is large enough, we have
\[
 \|\vec{u_1}\|_{H^{K-1}(\R^d)} \aleq \|\vec{u_1}\|_{H^{K-1}(\R^d)},
\]
and $\vec{u_1} \cdot \vec{u_0} \equiv 0$.

We want find some $T >0$ so that there exists a solution $\vec{u} \in \vec{q} + L^2(\R^d \times (-T,T),\R^3)$ to
\[
 \begin{cases}
  (\partial_{tt}-\lap) \vec{u} = H(\vec{u}) \quad &\text{in $\R^d \times (-T,T)$}\\
  \vec{u}(0) = \vec{u}_0\\
  \partial_t \vec{u}(0) = \vec{u}_1.
 \end{cases}
\]
where
\begin{equation}\label{eq:Hvecuest}
\begin{split}
H(\vec{u}) :=& \vec{u} \brac{|\nabla \vec{u}|^2-|\partial_t \vec{u}|^2} + \Pi_{\vec{u}_\perp} \Dso \vec{u}\, \langle \vec{u} \cdot \Dso \vec{u} \rangle \\
&+\vec{u} \wedge \Dso{} \brac{ \vec{u} \wedge\Dso{} \vec{u}}- \vec{u} \wedge \brac{\vec{u} \wedge (-\lap) \vec{u}}.
\end{split}
\end{equation}

For $k$ suitably large and $T$ to be fixed later, define for $\vec{u} \in L^2((-T,T)\times \R^d,\R^3)$ the norm
\[
 \|\vec{u}\|_{X} := \sup_{t \in (-T,T)} E_k(\vec{u},t)
\]
where
\[
 E_k(\vec{u},t) := \|d \vec{u}(t)\|_{H^{k-1}(\R^d)}.
\]

Observe that since $d\geq 3$, $\|\vec{u}\|_{X} < \infty$ implies $\vec{u}-\vec{q}(t) \in L^{\frac{2d}{d-2}}(\R^d)$ for some constant-in-space $\vec{q}(t)$, $t \in (-T,T)$.
Moreover
\[
\begin{split}
 & \|\vec{u}(t_2)-\vec{u}(t_1)\|_{H^{k-1}(\R^d)}\\
 =& \|\int_{t_1}^{t_2}\partial_t \vec{u}(t)\|_{H^{k-1}(\R^d)}\\
 \aleq& |t_2-t_1|^{\frac{1}{2}} \|\partial_t \vec{u}(t)\|_{L^2_t H^{k-1}(\R^d \times (-T,T))}\\
 \aleq&|t_2-t_1| \|\vec{u}\|_{X}.
 \end{split}
\]
For $k \gg 1$ we conclude
\begin{equation}\label{eq:uclosetosphere}
 \|\vec{u}(t)-\vec{u_0}\|_{L^\infty(\R^d)} \aleq \|\vec{u}(t)-\vec{u_0}\|_{H^{k-1}(\R^d)} \leq T \|\vec{u}\|_{X}.
\end{equation}
Also, by Sobolev embedding, if $u \in X$ then $u \in C^2(\R^d \times (-T,T),\R^3)$.

We then consider the Banach space $(X,\|\cdot\|_{X})$
\[
 X := \{\vec{u} \in L^2_t L^{\frac{2d}{d-2}}(\R^d,\R^3): \quad \|\vec{u}\|_{X} < \infty \}.
\]
And for some $\delta > 0$ the subset
\[
 X_{\delta} := \{\vec{u} \in X: \quad \vec{u}(0) = \vec{u_0},\ \partial_t \vec{u}(0) = \vec{u_1}, \quad \|\vec{u}\|_{X} \leq \delta\}.
\]
which is a complete metric space.

By \eqref{eq:uclosetosphere}: for any $\delta$ there exists $T_0$ such that for $T < T_0$ we have for any $\vec{u} \in X_{\delta}$ that $|\vec{u}| \in [\frac{1}{2},\frac{3}{2}]$ in $\R^d \times (-T,T)$.

We now define the linear map. Given $\vec{v} \in X_\delta$, set $Tv \in X$ the solution
\[
\begin{cases}
 (\partial_{tt} - \lap) S\vec{v} = H(\vec{v}) \quad &\text{in $\R^d \times (-T,T)$}\\
 S\vec{v}(0) = \vec{u_0} \quad &\text{in $\R^d$}\\
 \partial_t S\vec{v}(0) = \vec{u_1}\quad &\text{in $\R^d$}.
 \end{cases}
\]
Observe that there is an explicit formula for $Sv$, namely we have

\[
 S\vec{v}(x,t) = \vec{w}(x,t) + \int_0^t \vec{H}(x,s,t)\, ds
\]
where
\[
 \vec{w}(x,t) = \frac{e^{\i t \sqrt{-\lap}} + e^{-\i t \sqrt{-\lap}}}{2} \vec{u_0}+\frac{e^{\i t \sqrt{-\lap}} - e^{-\i t \sqrt{-\lap}}}{2\i} \sqrt{-\lap}^{-1}\vec{u_1}
\]
and thus
\[
  \sqrt{-\lap}^{-1} \partial_t \vec{w}(x,t) = -\frac{e^{\i t \sqrt{-\lap}} \col{-}  e^{-\i t \sqrt{-\lap}}}{2\col{\i}}  \vec{u_0}(x)+\frac{e^{\i t \sqrt{-\lap}} \col{+}  e^{-\i t \sqrt{-\lap}}}{\col{2}} \sqrt{-\lap}^{-1} \vec{u_1}(x)
\]
and thus
\[
 \|\vec{w}\|_{X} \leq C\, E(0),
\]
with a constant independent of $T$.

By Duhamel principle
by Minkowski (observe that the initial data for $\vec{H}$ appears as the $\partial_t$-term, hence the improvement in derivatives)
\[
\begin{split}
 \|\int_0^t \vec{H}(x,s,t)\, ds\|_{H^k(\R^d)} \aleq& \int_0^t \|\vec{H}(x,s,t)\|_{H^k(\R^d)}\, ds\\
 \aleq& \int_0^t \|\vec{H}(\vec{v})\|_{H^{\col{k-1}}(\R^d)}\, ds\\
 \end{split}
\]

Now for $k$ suitably large we need to check that for $H$ as in \eqref{eq:Hvecuest}, for $\vec{v} \in X_\delta$ (assuming that $T<T_0(\delta)$ is small enough so that $|\vec{v}| \aeq 1$ so that $\Pi_{\vec{v}^\perp}$ is well-defined and nonsingular)
\begin{equation}\label{eq:Hestasd}
 \|H\vec{v}(t)\|_{H^{\col{k-1}}(\R^d)} \aleq (1+E_k(\vec{v},t)^\sigma)\, E_k(\vec{v},t)
\end{equation}
The only term where this is not immediate from the algebra property of $H^k$ for $k \gg 1$ is
\[
\begin{split}
 &\Dso{} \brac{ \vec{u} \wedge\Dso{} \vec{u}}- \brac{\vec{u} \wedge (-\lap) \vec{u}}\\
 =&[\Dso,\vec{u}\wedge](\Dso \vec{u}).
 \end{split}
\]
The worrisome guy would be
\[
 [\Dso,\vec{u}\wedge](\Dso \nabla^{k-1}\vec{u}).
\]
but as a consequence of Coifman-Meyer/Kato-Poince estimates (cf. \cite[arxiv version (6.1)]{LScomm})
\[
 \|[\Dso,\vec{u}\wedge](\Dso \nabla^{k-1}\vec{u})\|_{L^2(\R^d)} \aleq [\vec{u}]_{\lip} \|d\vec{u}\|_{H^{k-1}(\R^d)} \aleq \|d\vec{u}\|_{H^{k-1}(\R^d)}^2.
\]
This establishes \eqref{eq:Hestasd}. In particular, we find
\[
 \|S\vec{v}\| \aleq C_0 E_k(\vec{v},0) + T \|\vec{v}\|_{X}
\]
So if we choose $\delta := 2C_0 E_k(\vec{v},0)$, and $T$ suitably small so that $|\vec{v}| \aeq 1$ we have by taking $T_0(\delta)$ possibly smaller
\[
 \|S\vec{v}\| \leq \delta \quad \forall \vec{v} \in X_\delta.
\]
That is $S: X_\delta \to X_\delta$.

We now need to show that $S$ is a contraction on $X_\delta$. For $\vec{w} := S\vec{v_1}-S\vec{v_2}$ we have
\[
\begin{cases}
 (\partial_{tt} - \lap) \brac{S\vec{v_1}-S\vec{v_2}} = H(\vec{v_1})-H(\vec{v_2}) \quad &\text{in $\R^d \times (-T,T)$}\\
 \brac{S\vec{v_1}-S\vec{v_2}}(0) = 0 \quad &\text{in $\R^d$}\\
 \partial_t \brac{S\vec{v_1}-S\vec{v_2}}(0) = 0\quad &\text{in $\R^d$}.
 \end{cases}
\]
Now similar to the argument above we have
\[
 \|\brac{S\vec{v_1}-S\vec{v_2}}\|_{X} \aleq T (1+\delta^\sigma)\, \|\vec{v_1}-\vec{v_2}\|_{X},
\]
which implies that for $T$ sufficiently small $S: X_\delta \to X_\delta$ is a contraction, and thus has a unique fixed point in $X_\delta$.

As for the regularity of $\vec{u}$, we first observe (taking $k \gg K$, that $\partial_{tt} u \in L^\infty$ and from this we bootstrap to the full regularity.

It remains to obtain \eqref{eq:KSPDEbds}, but observe that by definition of $X_\delta$ we have (for $k \gg 1$)
\[
\begin{split}
&\sup_{t \in (-T,T)}\|\nabla \vec{u}\|_{L^\infty(\R^d)}+\sup_{t \in (-T,T)}\|\partial_t \vec{u}\|_{L^\infty(\R^d)}\\
 \aleq&
 \sup_{t \in (-T,T)}\|d \vec{u}\|_{H^{k-1}(\R^d)}\\
 =&\|\vec{u}\|_{X} < \delta < \infty
 \end{split}
\]
which readily implies \eqref{eq:KSPDEbds}.

In order to see that $\vec{u}$ satisfies \eqref{eq:existhalfwave:finitenorm} we apply \Cref{la:strichartz}, and we have
\[
\begin{split}
 &\|\Ds{\frac{d-4}{2}}d\vec{u}(t)\|_{C^0_t L^{2}_x(\R^d) \times (-T,T)}  + \|\Ds{s -1}\vec{u}(t)\|_{L^2_t L^{(\frac{2d}{2s-1},2)}_x(\R^d \times (-T,T)} \\
 \aleq& \|\Ds{\frac{d-2}{2}} \vec{u_0}\|_{L^2(\R^d)} + \|\Ds{\frac{d-4}{2}} \vec{u_1}\|_{L^2(\R^d)}\\
 &+\|\Ds{\frac{d-4}{2}}H(\vec{u})\|_{L^1_t L^2_x (\R^d \times (-T,T))}\\
 \end{split}
\]
Observe that the norms on the right-hand side are finite by assumption, in particular
\[
 \|\Ds{\frac{d-4}{2}}H(\vec{u})\|_{L^1_t L^2_x (\R^d \times (-T,T))} \aleq C(T) \|\Ds{\frac{d-4}{2}}H(\vec{u})\|_{L^\infty_t L^2_x (\R^d \times (-T,T))} \overset{\eqref{eq:KSPDEbdsHk}}{<} \infty
\]
\end{proof}

\begin{proposition}\label{pr:vecuintosphere}
Take $\vec{u}$ the solution of \Cref{pr:existencenotarget}, and set
\[
 w := \brac{|\vec{u}|^2-1}.
\]
Then, assuming that $K$ was chosen sufficiently large, we have
\begin{equation}\label{KSPDEbds2Hk}
 \|d \vec{w}\|_{L^\infty((-T,T),H^{K-1}(\R^d))} < \infty.
\end{equation}
and
\begin{equation}\label{KSPDEbds2}
 \|\vec{w}\|_{L^\infty((-T,T),\R^d)} + \|\partial_t \vec{w}\|_{L^\infty(\R^d \times (-T,T))} +\|\nabla \vec{w}\|_{L^\infty(\R^d \times (-T,T))} < \infty
\end{equation}
and
\[\begin{cases}
 (\partial_{tt}-\lap) w =2\, w \brac{|\nabla \vec{u}|^2-|\partial_t \vec{u}|^2} \quad &\text{in $\R^d \times [-T,T]$}\\
 w(0) = 0 \quad &\text{in $\R^d$}\\
 \partial_t w(0) = 0 \quad &\text{in $\R^d$}.
 \end{cases}
\]
\end{proposition}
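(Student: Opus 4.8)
The plan is to compute $(\partial_{tt}-\lap)|\vec{u}|^2$ directly from \eqref{eq:KSPDE} and read off the equation for $w=|\vec{u}|^2-1$, using only elementary pointwise algebra. Since $\partial_{tt}|\vec{u}|^2 = 2|\partial_t\vec{u}|^2 + 2\langle \vec{u},\partial_{tt}\vec{u}\rangle$ and $\lap|\vec{u}|^2 = 2|\nabla\vec{u}|^2 + 2\langle\vec{u},\lap\vec{u}\rangle$, we obtain
\[
 (\partial_{tt}-\lap)|\vec{u}|^2 = 2|\partial_t\vec{u}|^2 - 2|\nabla\vec{u}|^2 + 2\langle\vec{u},(\partial_{tt}-\lap)\vec{u}\rangle.
\]
The key observation is that three of the four terms on the right-hand side of \eqref{eq:KSPDE} are orthogonal to $\vec{u}$: the term $\Pi_{\vec{u}_\perp}\Dso\vec{u}\,\langle\vec{u},\Dso\vec{u}\rangle$ pairs to $0$ with $\vec{u}$ because, from the formula for $\Pi_{\vec{u}_\perp}$, one has $\langle\Pi_{\vec{u}_\perp}\vec{v},\vec{u}\rangle = \langle\vec{v},\vec{u}\rangle - \frac{\langle\vec{v},\vec{u}\rangle}{|\vec{u}|^2}\langle\vec{u},\vec{u}\rangle = 0$ for every $\vec{v}$ (valid since $|\vec{u}|\in[\tfrac12,\tfrac32]$ is nonzero by \Cref{pr:existencenotarget}); and both $\vec{u}\wedge\Dso(\vec{u}\wedge\Dso\vec{u})$ and $\vec{u}\wedge(\vec{u}\wedge(-\lap)\vec{u})$ are of the form $\vec{u}\wedge(\,\cdot\,)$, hence orthogonal to $\vec{u}$. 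Only the first term contributes, giving $\langle\vec{u},(\partial_{tt}-\lap)\vec{u}\rangle = |\vec{u}|^2\brac{|\nabla\vec{u}|^2-|\partial_t\vec{u}|^2}$. Substituting back,
\[
 (\partial_{tt}-\lap)|\vec{u}|^2 = 2|\vec{u}|^2\brac{|\nabla\vec{u}|^2-|\partial_t\vec{u}|^2} + 2|\partial_t\vec{u}|^2 - 2|\nabla\vec{u}|^2 = 2(|\vec{u}|^2-1)\brac{|\nabla\vec{u}|^2-|\partial_t\vec{u}|^2},
\]
which is precisely $2w\brac{|\nabla\vec{u}|^2-|\partial_t\vec{u}|^2}$ since $(\partial_{tt}-\lap)w = (\partial_{tt}-\lap)|\vec{u}|^2$.

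Next I would record the initial data. From \eqref{KSPDEinit} and $\vec{u_0}\colon\R^d\to\S^2$ we get $w(0) = |\vec{u_0}|^2-1 = 0$, while $\partial_t w(0) = 2\langle\vec{u_0},\partial_t\vec{u}(0)\rangle = 2\langle\vec{u_0},\vec{u_0}\wedge\laps{1}\vec{u_0}\rangle = 0$, again because the cross product is orthogonal to its arguments. This establishes the displayed system for $w$.

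Finally, for the regularity claims \eqref{KSPDEbds2Hk} and \eqref{KSPDEbds2} I would simply propagate the corresponding bounds for $\vec{u}$ from \Cref{pr:existencenotarget}, namely \eqref{eq:KSPDEbds} and \eqref{eq:KSPDEbdsHk}, together with $|\vec{u}|\in[\tfrac12,\tfrac32]$. Indeed $\|w\|_{L^\infty}\leq \| |\vec{u}|^2-1\|_{L^\infty}\aleq 1$, and writing $\partial_\alpha w = 2\sum_i u^i\,\partial_\alpha u^i$ gives $\|\partial_t w\|_{L^\infty} + \|\nabla w\|_{L^\infty}\aleq \|\vec{u}\|_{L^\infty}\brac{\|\partial_t\vec{u}\|_{L^\infty} + \|\nabla\vec{u}\|_{L^\infty}}<\infty$; for the $H^{K-1}$-bound on $dw$ one expands the derivatives of $u^i\,\partial_\alpha u^i$ by the Leibniz rule and uses the algebra/product estimates for $H^{K-1}$ (valid for $K\gg 1$), bounding the terms in which no derivative falls on the undifferentiated factor by $\|\vec{u}\|_{L^\infty}\|d\vec{u}\|_{H^{K-1}}$ and the remaining terms by products of $H^{K-1}$-norms of $d\vec{u}$, all of which are finite by \eqref{eq:KSPDEbdsHk}.

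I do not expect a genuine obstacle: the only points requiring care are verifying that $\langle\Pi_{\vec{u}_\perp}\vec{v},\vec{u}\rangle$ vanishes identically (immediate from the defining formula, but it is what makes the whole computation work) and, in the $H^{K-1}$-estimate of $dw$, keeping track of the undifferentiated factor $\vec{u}$, which lies only in $L^\infty$ rather than in $H^{K-1}$.
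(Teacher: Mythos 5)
Your proposal is correct and follows essentially the same route as the paper: the same pointwise computation of $(\partial_{tt}-\lap)|\vec{u}|^2$ using the orthogonality of the projection and wedge terms to $\vec{u}$ (which the paper handles tersely via the citation to Krieger--Sire), the same verification of the zero initial data from \eqref{KSPDEinit}, and the same propagation of \eqref{eq:KSPDEbds}, \eqref{eq:KSPDEbdsHk} through the Leibniz rule and product estimates to get \eqref{KSPDEbds2Hk}, \eqref{KSPDEbds2}. No gaps.
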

\begin{proof}

Observe that $w \in C^2(\R^d \times (-T,T)$ and
\[
 \partial_t w = 2\partial_t \vec{u}\, \cdot \vec{u}=2\partial_t \vec{u}\, \cdot \brac{\vec{u}-\vec{q}}+ 2\partial_t \vec{u} \cdot \vec{q}.
\]
That is
\[
 |\partial_t w| + |\nabla w| \aleq |\partial_t \vec{u}|
\]
Similar we can argue for space derivatives, and obtain for all $k = 1,\ldots$
\[
 |\nabla^{k-1} \partial_t w| + |\nabla^{k} w| \aleq \brac{|\nabla^{k-1} \partial_t \vec{u}| + |\nabla^{k} (\vec{u}-\vec{q}|}+ \text{lower order products}
\]
Then \eqref{KSPDEbds2Hk} and \eqref{KSPDEbds2} follow from \eqref{eq:KSPDEbdsHk} and \eqref{eq:KSPDEbds} and Sobolev embedding for $K$ suitably large.

As for the PDE, we have as in \cite[p.678]{KS18},
\[
\begin{split}
 &(\partial_{tt}-\lap) \brac{\frac{1}{2}\brac{|\vec{u}|^2-1}}\\
 =&\langle \vec{u}, \partial_{tt}-\lap \vec{u}\rangle + |\partial_{t} \vec{u}|^2 - |\nabla \vec{u}|^2 \\
 \overset{\eqref{eq:KSPDE}}{=}&\brac{|\vec{u}|^2 -1}\brac{|\nabla \vec{u}|^2-|\partial_{t} \vec{u}|^2}
 \end{split}
\]
This establishes the desired equation for $w := \brac{|\vec{u}|^2-1}$.

Since $|\vec{u}(0)|=1$ we conclude also $w(0) = 0$. Moreover,
\[
 \partial_t w(0) = 2\langle \vec{u}(0), \partial_t \vec{u}(0)\rangle = 0,
\]
by the initial values for \eqref{KSPDEinit}.
\end{proof}

\begin{lemma}[Uniqueness]\label{la:uniqueness}
Let $d \geq 3$. Assume that $w \in C^2(\R^d \times (-T,T))$ satisfies
\begin{equation}\label{eq:KSPDEbds3}
 \|\partial_t \vec{w}\|_{L^\infty((-T,T),L^2(\R^d))} +\|\nabla \vec{w}\|_{L^\infty((-T,T),L^2(\R^d)))} < \infty.
\end{equation}
and solves
\[
 \begin{cases}
 (\partial_{tt} -\lap)w = w B \quad &\text{in $\R^d \times (-T,T)$}\\
 w(0) = 0\quad &\text{in $\R^d$}\\
 \partial_t w(0) = 0 \quad &\text{in $\R^d$}.
 \end{cases}
\]
where
\[
 \sup_{t \in (-T,T)} \|B\|_{L^d(\R^d)} < \infty.
\]
Then $w \equiv 0$.
\end{lemma}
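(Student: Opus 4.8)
The plan is to run the classical energy--Gr\"onwall argument for the linear wave equation $(\partial_{tt}-\lap)w = wB$ with vanishing Cauchy data; the only point requiring genuine care is the justification of the energy identity from the weak hypotheses on $w$. First I would record that $w(t)$ is spatially integrable at the right exponent: since $w\in C^2$ with $w(0)\equiv 0$ and $\partial_t w\in L^\infty_tL^2_x$, the fundamental theorem of calculus and Minkowski's inequality give $\|w(t)\|_{L^2(\R^d)}\le |t|\,\sup_s\|\partial_t w(s)\|_{L^2(\R^d)}<\infty$, so $w(t)\in H^1(\R^d)$ and in particular $w(t)\in L^{\frac{2d}{d-2}}(\R^d)$ for $d\ge 3$. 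Once $w(t)$ is known to lie in $L^{\frac{2d}{d-2}}(\R^d)$ with $\nabla w(t)\in L^2(\R^d)$, the scale-invariant Sobolev inequality applies and yields $\|w(t)\|_{L^{2d/(d-2)}(\R^d)}\le C_d\|\nabla w(t)\|_{L^2(\R^d)}$, with no lower-order term.

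Next I would set up the energy with a spatial cutoff $\chi_R\in C_c^\infty(\R^d)$, $\chi_R\equiv 1$ on $B_R$, $\supp\chi_R\subset B_{2R}$, $\|\nabla\chi_R\|_{L^\infty}\aleq R^{-1}$, and consider $E_R(t):=\tfrac12\int_{\R^d}\chi_R^2\big(|\partial_t w|^2+|\nabla w|^2\big)$, which is $C^1$ in $t$ since $w\in C^2$ and the integrand is compactly supported. Differentiating, integrating by parts in $x$ (legitimate, no boundary terms), and using the equation yields
\[
 \tfrac{d}{dt}E_R(t)=\int_{\R^d}\chi_R^2\,\partial_t w\,wB - 2\int_{\R^d}\chi_R\,(\nabla\chi_R\cdot\nabla w)\,\partial_t w,
\]
and the second term is $O\big(R^{-1}\|\partial_t w(t)\|_{L^2}\|\nabla w(t)\|_{L^2}\big)$. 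Integrating from $0$ and letting $R\to\infty$ (monotone convergence for $E_R(t)\to E(t):=\tfrac12\int|\partial_t w|^2+|\nabla w|^2$ and for $E_R(0)\to E(0)=0$, dominated convergence for the forcing term, whose integrand lies in $L^1(\R^d\times[0,t])$ by the bound below, and the commutator error vanishing), I obtain $E(t)\le \int_0^t\int_{\R^d}\partial_t w\,wB$ for $0\le t<T$; here $E(0)=0$ uses $w(0)\equiv 0$ and $\partial_t w(0)\equiv 0$.

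Then I would close with Gr\"onwall: by H\"older with exponents $2,\tfrac{2d}{d-2},d$, the Sobolev inequality from the first step, and $2ab\le a^2+b^2$,
\[
 \Big|\int_{\R^d}\partial_t w\,wB\Big|\le \|\partial_t w\|_{L^2}\,\|w\|_{L^{2d/(d-2)}}\,\|B\|_{L^d}\aleq \|B\|_{L^d}\,E(t),
\]
so $E(t)\aleq \big(\sup_{s}\|B(s)\|_{L^d(\R^d)}\big)\int_0^t E(s)\,ds$, and the integral form of Gr\"onwall's lemma (with $E$ bounded and measurable) forces $E\equiv 0$ on $[0,T)$. Hence $\nabla w\equiv 0$ and $\partial_t w\equiv 0$ on $\R^d\times[0,T)$, so $w$ is constant there, and $w(0)\equiv 0$ gives $w\equiv 0$; running time backwards gives $w\equiv 0$ on $(-T,0]$ as well.

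The main obstacle is precisely the passage in the second step from a pointwise $C^2$ solution with only $\partial_t w,\nabla w\in L^\infty_tL^2_x$ (no control of $\nabla^2 w$) to a usable global energy inequality; that is what the cutoff $\chi_R$ and the $O(R^{-1})$ bound on the commutator term are for. An equivalent route, which I might use if one prefers to avoid cutoffs, is to localize to backward light cones $\{|x-x_0|\le t_0-t\}$ and invoke finite speed of propagation, reducing matters to the classical (compactly supported) situation, where the Sobolev step uses that $w$ vanishes on the base of the cone.
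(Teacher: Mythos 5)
Your proposal is correct and follows essentially the same route as the paper: the energy $E(t)=\tfrac12\int_{\R^d}\bigl(|\partial_t w|^2+|\nabla w|^2\bigr)$, the identity $\tfrac{d}{dt}E=\int w\,\partial_t w\,B$, H\"older with exponents $\tfrac{2d}{d-2},2,d$ plus Sobolev, and Gr\"onwall with $E(0)=0$. The only difference is that you justify the integration by parts and the applicability of Sobolev (via the cutoff $\chi_R$ and the bound $\|w(t)\|_{L^2}\le |t|\sup_s\|\partial_t w(s)\|_{L^2}$), details the paper's proof performs formally.
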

\begin{proof}
Set
\[
E(t) := \frac{1}{2}\brac{\int_{\R^d} |\nabla w|^2 + |\partial_t w|^2}
\]
$E(t)$ is finite by \eqref{eq:KSPDEbds3}.

Then we have
\[
\begin{split}
 \frac{d}{dt} E(t) =& \int_{\R^d} (\partial_{tt}-\lap)w\, \partial_t w \\
 =&\int_{\R^d} w \partial_t w\ B\\
 \leq& \|w\|_{L^{\frac{2d}{d-2}}(\R^d)}\, \|\partial_t w\|_{L^2(\R^d)}\, \|B\|_{L^d(\R^d)}\\
 \overset{d \geq 3}{\aleq}&\|dw\|_{L^2(\R^d)}^2\, \|B\|_{L^d(\R^d)}\\
 \leq&E(t)\, \|B\|_{L^d(\R^d)}.
 \end{split}
\]
Since $E(0) = 0$ and $E(t) \geq 0$ for all $t$ we get from Gronwall's lemma that actually $E(t) \equiv 0$. This readily implies $w \equiv 0$.
\end{proof}

As a corollary of \Cref{la:uniqueness} and \Cref{pr:vecuintosphere} we obtain
\begin{corollary}
The solution of \Cref{pr:existencenotarget} actually maps into the sphere.
\end{corollary}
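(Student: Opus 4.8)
The plan is to deduce this from the uniqueness result \Cref{la:uniqueness} applied to the function $w := |\vec{u}|^2 - 1$ attached to the solution $\vec{u}$ of \Cref{pr:existencenotarget}. By \Cref{pr:vecuintosphere}, this $w$ belongs to $C^2(\R^d \times (-T,T))$ and solves
\[
 (\partial_{tt} - \lap) w = w\, B, \qquad w(0) = 0, \quad \partial_t w(0) = 0,
\]
with $B := 2\brac{|\nabla \vec{u}|^2 - |\partial_t \vec{u}|^2}$. Thus the whole task reduces to checking the two hypotheses of \Cref{la:uniqueness}; once that is done we obtain $w \equiv 0$, i.e. $|\vec{u}| \equiv 1$ (recall $|\vec{u}(0)| = 1$ since $\vec{u_0}$ maps into $\S^2$), which is precisely the assertion.

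First I would verify the finite-energy bound \eqref{eq:KSPDEbds3} for $w$. This is immediate from \eqref{KSPDEbds2Hk} (taking $K$ large enough in \Cref{pr:existencenotarget}), which in particular gives $\nabla w, \partial_t w \in L^\infty_t L^2_x(\R^d \times (-T,T))$. Second, I would check the coefficient bound $\sup_{t \in (-T,T)} \|B(t)\|_{L^d(\R^d)} < \infty$. For this I would use that, by \eqref{eq:KSPDEbdsHk} together with Sobolev embedding for $K$ large, $d\vec{u}(t) \in L^2(\R^d) \cap L^\infty(\R^d)$ with norms bounded uniformly in $t \in (-T,T)$; hence $|d\vec{u}(t)|^2 \in L^1(\R^d) \cap L^\infty(\R^d) \subset L^d(\R^d)$, again uniformly in $t$, and since $|B| \aleq |d\vec{u}|^2$ the required bound follows.

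With these two points in place, \Cref{la:uniqueness} yields $w \equiv 0$ on $\R^d \times (-T,T)$, hence $|\vec{u}| \equiv 1$ and $\vec{u}$ takes values in $\S^2$. I do not expect a genuine obstacle here: the argument is essentially just the combination of \Cref{pr:vecuintosphere} with \Cref{la:uniqueness}, and the only minor point that needs attention is the uniform-in-time $L^d$-control of the coefficient $B$, which is a routine consequence of the uniform $H^{K-1}$-bound on $d\vec{u}$ already recorded in \eqref{eq:KSPDEbdsHk} and \eqref{eq:KSPDEbds}.
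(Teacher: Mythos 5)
Your proposal is correct and follows essentially the same route as the paper: combine \Cref{pr:vecuintosphere} with \Cref{la:uniqueness}, checking the energy bound for $w$ from \eqref{KSPDEbds2Hk} and the uniform-in-time $L^d$-bound on $B$ by interpolating the $L^2$(or $L^1$)–$L^\infty$ control of $|d\vec{u}|^2$ coming from \eqref{eq:KSPDEbdsHk} and \eqref{eq:KSPDEbds}. The paper phrases the last step as interpolation between $\sup_t\|B\|_{L^2}$ and $\|B\|_{L^\infty}$, which is the same observation you make.
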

\begin{proof}
We observe that
\[
  B := 2\, \brac{|\nabla \vec{u}|^2-|\partial_t \vec{u}|^2}
\]
satisfies by \eqref{eq:KSPDEbdsHk} and \eqref{eq:KSPDEbds}
\[
 \sup_{t} \|B\|_{L^2(\R^d)}+\|B\|_{L^\infty(\R^d)} < \infty
\]
By standard interpolation we conclude
\[
 \sup_{t} \|B\|_{L^d(\R^d)} < \infty.
\]
So in view of \Cref{pr:vecuintosphere} $w := \abs{\vec{u}}^2-1$ satisfies the assumptions of \Cref{la:uniqueness}, and thus $w \equiv 0$. This implies $|\vec{u}|^2 \equiv 1$.
\end{proof}

By now we have a solution $\vec{u}$ that maps into the sphere of \eqref{eq:KSPDE}. All we need in order to conclude \Cref{th:existsmoothhalfwave} is the following
\begin{proposition}
The solution of \Cref{pr:existencenotarget} actually solves the half-wave map equation
\[
 \partial_t \vec{u} = \vec{u} \wedge \laps{1} \vec{u} \quad \text{in $\R^d \times (-T,T)$}.
\]
\end{proposition}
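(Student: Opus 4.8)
The plan is to introduce the \emph{error} $\vec{e} := \partial_t \vec{u} - \vec{u}\wedge\laps{1}\vec{u}$ and to prove $\vec{e}\equiv 0$ on $\R^d\times(-T,T)$ by an $L^2$-energy inequality and Gronwall's lemma, crucially using that $|\vec{u}|\equiv 1$ (the corollary just established). First I would record the book-keeping: by the initial condition \eqref{KSPDEinit} we have $\vec{e}(0) = \partial_t\vec{u}(0) - \vec{u_0}\wedge\laps{1}\vec{u_0} = 0$; by \eqref{eq:KSPDEbdsHk} and \eqref{eq:KSPDEbds}, and since $\laps{1}\vec{u}(t)$ is a Riesz transform of $\nabla\vec{u}(t)\in L^2_x$, the map $t\mapsto\vec{e}(t)$ is continuous with values in $L^2_x$ and lies in $L^\infty$, so all computations below are rigorous (indeed $\partial_t\vec{e}\in C^0_tL^2_x$ once \eqref{eq:KSPDE} is inserted for $\partial_{tt}\vec{u}$). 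Because $|\vec{u}|\equiv 1$ we get $\langle\vec{u},\partial_t\vec{u}\rangle = \tfrac12\partial_t|\vec{u}|^2 = 0$ and $\langle\vec{u},\vec{u}\wedge\laps{1}\vec{u}\rangle = 0$, hence the pointwise orthogonality $\langle\vec{u},\vec{e}\rangle\equiv 0$.

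Next I would derive the evolution equation for $\vec{e}$. Differentiating in $t$ and substituting $\partial_{tt}\vec{u}$ from \eqref{eq:KSPDE} together with $\partial_t\vec{u} = \vec{e} + \vec{u}\wedge\laps{1}\vec{u}$, the two copies of $\vec{u}\wedge\laps{1}(\vec{u}\wedge\laps{1}\vec{u})$ cancel; simplifying the remainder with the double cross-product identity $\vec{a}\wedge(\vec{b}\wedge\vec{c}) = \vec{b}\langle\vec{c},\vec{a}\rangle - \vec{c}\langle\vec{a},\vec{b}\rangle$, the commutator rewriting \eqref{eq:wurst:1243}, and the relation $\langle(-\lap)\vec{u},\vec{u}\rangle = |\nabla\vec{u}|^2$ (again from $|\vec{u}|\equiv 1$) --- i.e.\ running the Krieger--Sire reduction of \cite{KS18} in reverse, which is precisely the algebra already encoded in \eqref{eq:wurst:1243} and the discussion preceding \eqref{eq:solks} --- I expect to arrive at
\[
 \partial_t\vec{e} \;=\; -\,\vec{u}\wedge\laps{1}\vec{e}\;-\;\vec{e}\wedge\laps{1}\vec{u}\;-\;\vec{u}\,\bigl(|\vec{e}|^2 + 2\langle\vec{e},\vec{u}\wedge\laps{1}\vec{u}\rangle\bigr),
\]
the last term being exactly what survives because $|\partial_t\vec{u}|^2$ in \eqref{eq:KSPDE} differs from its on-shell value $|\laps{1}\vec{u}|^2 - \langle\vec{u},\laps{1}\vec{u}\rangle^2$ by $|\vec{e}|^2 + 2\langle\vec{e},\vec{u}\wedge\laps{1}\vec{u}\rangle$.

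Finally I would run the energy estimate. Pairing the displayed equation with $\vec{e}$ in $L^2_x$: the term $\langle\vec{e}\wedge\laps{1}\vec{u},\vec{e}\rangle$ vanishes pointwise, and $\langle\vec{u}\,(\cdots),\vec{e}\rangle$ vanishes since $\langle\vec{u},\vec{e}\rangle\equiv 0$. For the remaining term I would set $\mathcal{K}\vec{f} := \laps{1}(\vec{f}\wedge\vec{u}) - (\laps{1}\vec{f})\wedge\vec{u}$ and use self-adjointness of $\laps{1}$ together with the antisymmetry of the scalar triple product to get $\int\langle\vec{u}\wedge\laps{1}\vec{e},\vec{e}\rangle = \tfrac12\int\langle\vec{e},\mathcal{K}\vec{e}\rangle$; by the Coifman--Meyer/Kato--Ponce commutator estimate already invoked around \eqref{eq:Hestasd} (cf.\ \cite{LScomm}), $\mathcal{K}$ is bounded on $L^2$ with $\|\mathcal{K}\|_{L^2\to L^2}\aleq [\vec{u}]_{\lip} = \|\nabla\vec{u}\|_{L^\infty}$. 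This gives $\bigl|\tfrac{d}{dt}\|\vec{e}(t)\|_{L^2}^2\bigr| \aleq \|\nabla\vec{u}(t)\|_{L^\infty}\,\|\vec{e}(t)\|_{L^2}^2$, and $\sup_{t}\|\nabla\vec{u}(t)\|_{L^\infty} < \infty$ by \eqref{eq:KSPDEbds}; since $\vec{e}(0) = 0$, Gronwall's lemma forces $\vec{e}\equiv 0$, i.e.\ $\partial_t\vec{u} = \vec{u}\wedge\laps{1}\vec{u}$ on $\R^d\times(-T,T)$. The main obstacle will be the second step: verifying that \eqref{eq:KSPDE} combined with $|\vec{u}|\equiv 1$ collapses to exactly the displayed first-order equation for $\vec{e}$, with no stray terms beyond the harmless quadratic one; this is the heart of the Krieger--Sire reduction. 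The commutator bound and the Gronwall argument are routine.
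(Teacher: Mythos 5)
Your proposal is correct and follows essentially the same route as the paper: introduce $\vec{X}=\partial_t\vec{u}-\vec{u}\wedge\laps{1}\vec{u}$, use the evolution equation $\partial_t\vec{X}=-\vec{X}\wedge\Dso\vec{u}-\vec{u}\wedge\Dso\vec{X}-\vec{u}\langle\vec{X},\vec{u}\wedge\Dso\vec{u}+\partial_t\vec{u}\rangle$ (your displayed equation is exactly this, rewritten via $\partial_t\vec{u}=\vec{e}+\vec{u}\wedge\Dso\vec{u}$; the paper cites \cite{KS18} for it, you sketch the derivation, and your sketch is sound), and close with the symmetrization/commutator bound $\aleq\|\nabla\vec{u}\|_{L^\infty}\|\vec{X}\|_{L^2}^2$ plus Gronwall from $\vec{X}(0)=0$. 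The only cosmetic difference is that you kill the cubic term using $\langle\vec{u},\vec{e}\rangle\equiv0$, while the paper simply bounds it by $\Gamma\|\vec{X}\|_{L^2}^2$; both work.
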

\begin{proof}
We argue as in \cite[Section 6]{KS18}, although our situation is simpler since we have nice initial data.

Set
\[
 X:= \partial_t \vec{u} - \vec{u} \wedge \laps{1} \vec{u} \in L^\infty((-T,T),H^{K}(\R^d))
\]
and consider ($K$ being large enough)
\[
 \tilde{E}(t) := \frac{1}{2} \int_{\R^d} |\vec{X}(t)|^2 < \infty.
\]
Observe that by the initial data for $\vec{u}$ we have $\tilde{E}(0) = 0$.

Set
\[
\Gamma := \sup_{t \in (-T,T)} \|\vec{u}(t)\|_{L^\infty} + \|\Dso \vec{u}(t)\|_{L^\infty} + \|\nabla \vec{u}(t)\|_{L^\infty} + \|\partial_t \vec{u}(t)\|_{L^\infty(\R^d)} \overset{\eqref{eq:KSPDEbds}}{<} \infty
\]

We have \cite[p.679]{KS18}
\[
 \partial_t \vec{X} = -\vec{X} \wedge \Dso \vec{u} - \vec{u} \wedge \Dso \vec{X} - \vec{u}\ \langle \vec{X},\vec{u}\wedge \Dso \vec{u}+\partial_t \vec{u}\rangle
\]
Thus
\[
\begin{split}
 \frac{1}{2} \frac{d}{dt} \tilde{E}(t) =& \int_{\R^d} \vec{X} \cdot \partial_t \vec{X}\\
 =&-\int_{\R^d} \vec{X} \cdot \brac{\vec{u} \wedge \Dso \vec{X}} - \int_{\R^d} \langle \vec{X}, \vec{u}\rangle \langle \vec{X}, \vec{u}\wedge \Dso \vec{u}+\partial_t \vec{u}\rangle
 \end{split}
\]
Clearly,
\[
 \abs{\int_{\R^d} \langle \vec{X}, \vec{u}\rangle \langle \vec{X}, \vec{u}\wedge \Dso \vec{u}+\partial_t \vec{u}\rangle } \aleq \Gamma\, \|\vec{X}\|_{L^2(\R^d)}^2 = \Gamma\, \tilde{E}(t)
\]
Also, we use the determinant structure (where $\epsilon_{ijk} \in  \{-1, 0, 1\}$ is the Levi-Civita tensor)
\[
\begin{split}
=&\epsilon_{ijk} \int_{\R^d} u^i\, X^j\, \Dso X^k\\
=&\epsilon_{ijk} \int_{\R^d} u^i\, X^j\, \Dso X^k - u^i\, X^k\, \Dso X^j\\
=&\epsilon_{ijk} \int_{\R^d} \brac{\Dso \brac{u^i\, X^j} - u^i\, \Dso X^j}\, X^k\\
\end{split}
\]
and thus with Coifman-Meyer/Kato-Poince estimates (cf. \cite[arxiv version (6.1)]{LScomm})
\[
\begin{split}
 \abs{\int_{\R^d} \vec{X} \cdot \brac{\vec{u} \wedge \Dso \vec{X}}} \aleq& \|\vec{X}\|_{L^2(\R^d)}\, \|[\Dso, \vec{u}](\vec{X})\|_{L^2(\R^d)}\\
 \aleq& \|\nabla \vec{u}\|_{L^\infty(\R^d)}\, \|X\|_{L^2(\R^d)}^2\\
 \aleq& \Gamma \|X\|_{L^2(\R^d)}^2.
 \end{split}
\]
That is, we have shown
\[
\frac{d}{dt} \tilde{E}(t) \aleq \Gamma E(t),
\]
and since $E(0) = 0$ we conclude from Gronwall's lemma $E(t) = 0$ for all $t \in (-T,T)$. Consequently, $X \equiv 0$, which implies that $\vec{u}$ solves the halfwave map.
\end{proof}

\subsection{Global existence of solutions for smooth initial data with small \texorpdfstring{$H^{\frac{d}{2}} \times H^{\frac{d-2}{2}}$}{Hd2}-norm}
With the the solution $\vec{u}$ of \Cref{pr:existencenotarget} mapping into the sphere we can find an estimate somewhat improving \eqref{eq:KSPDEbdsHk}
\begin{proposition}\label{pr:dvecucontrol}
Assume that $\vec{u_0} \in C^2(\R^d,\S^{2})$ and $\vec{u} \in C^2(\R^d\times (T_1,T_2),\S^2)$, solves
\[
\begin{split}
\partial_t \vec{u} = \vec{u} \wedge \laps{1} \vec{u} \quad \text{in $\R^d \times [T_1,T]$}\\
\vec{u}(T_1)= \vec{u_0} \quad \text{in $\R^d$}
\end{split}
\]
such that $\vec{u}$ satisfies \eqref{eq:KSPDEbds}, \eqref{eq:KSPDEbdsHk}, and \eqref{eq:existhalfwave:finitenorm} hold.

Then, whenever the right-hand side is finite we have (for suitably large $K$)
\begin{equation}\label{eq:dvecucontrol}
 \sup_{t \in [-T,T]}\|d\vec{u}(t)\|_{H^{K-1}(\R^d)}^2 \aleq \|d\vec{u_0}\|_{H^{K-1}(\R^d)}^2+ \|d\vec{u}\|_{L^2_t L_x^{2d}(\R^d \times (-T,T))}^2\, \sup_{t \in [-T,T]}\|d\vec{u}\|_{H^{K-1}(\R^d)}^2
\end{equation}

In particular for any $K$ there exist some $\eps > 0$ and $C > 0$ (independent of $T_1,T_2$) such that if
\[
 \|d\vec{u}\|_{L^2_t L_x^{2d}(\R^d \times (-T,T))} < \eps
\]
then
\[
 \sup_{t \in [-T,T]}\|d\vec{u}\|_{H^{K-1}(\R^d)}^2 \leq C \|\vec{u_0}\|_{H^{K}(\R^d)}^2.
\]
\end{proposition}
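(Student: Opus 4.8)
The plan is to run a higher-order energy estimate on the Krieger--Sire wave equation \eqref{eq:KSPDE}, which $\vec{u}$ solves by the results of \Cref{s:halfwavewave} (and for which $|\vec{u}| \equiv 1$), and to close it by a high-regularity analogue of the nonlinear estimates for $F$ already established in \Cref{s:halfwavewave}. Throughout I write $(-T,T)$ for the time interval on which $\vec{u}$ is defined, so $\vec{u_0} = \vec{u}(-T)$.

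\emph{Setup of the energy.} For each spatial multi-index $\beta$ with $|\beta| \le K-1$ I differentiate \eqref{eq:KSPDE} to get $(\partial_{tt}-\lap)\partial^\beta\vec{u} = \partial^\beta F(\vec{u})$, pair with $\partial_t\partial^\beta\vec{u}$ in $L^2(\R^d)$, integrate by parts in the $\lap$-term, and sum over $|\beta| \le K-1$. With
\[
 E_K(t) := \tfrac{1}{2}\sum_{|\beta|\le K-1}\int_{\R^d}\Bigl(|\partial_t\partial^\beta\vec{u}|^2 + |\nabla\partial^\beta\vec{u}|^2\Bigr)
\]
this gives the identity $\tfrac{d}{dt}E_K(t) = \sum_{|\beta|\le K-1}\int_{\R^d}\partial^\beta F(\vec{u})\cdot\partial_t\partial^\beta\vec{u}$, and since $d\vec{u} = (\partial_t\vec{u},\nabla\vec{u})$ and $\partial^\beta$ commutes with $d$ we have $E_K(t) \aeq \|d\vec{u}(t)\|_{H^{K-1}(\R^d)}^2$. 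All of this is legitimate because $\vec{u}$ satisfies \eqref{eq:KSPDEbds} and \eqref{eq:KSPDEbdsHk}, which for $K$ large make $\vec{u}$ as regular and integrable as needed (in particular $E_K(t)<\infty$).

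\emph{The nonlinear estimate.} The crux is to prove that for $K$ suitably large and every $|\beta| \le K-1$,
\[
 \|\partial^\beta F(\vec{u})\|_{L^2(\R^d)} \aleq \|d\vec{u}\|_{L^{2d}(\R^d)}^2\,\|d\vec{u}\|_{H^{K-1}(\R^d)}.
\]
Schematically $F(\vec{u}) = G(\vec{u})\,Q(D\vec{u},D\vec{u})$, where $G$ is a smooth function of $\vec{u}$ (so $G$ and all its derivatives are bounded, using $|\vec{u}|\equiv 1$), $D\vec{u}$ stands for either $\nabla\vec{u}$ or $\Dso\vec{u}$, and $Q$ is a finite sum of pointwise products and of order-zero bilinear forms $H_{\Dso}(\cdot,\cdot)$. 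Distributing $\partial^\beta$ through $F$ via the fractional Leibniz rule and the Coifman--Meyer/Kato--Ponce commutator estimates (cf.\ \cite[arxiv version (6.1)]{LScomm}), which handle the $\Dso$-commutators with no loss of derivatives, one reduces to bounding products $\partial^{\beta_0}(G(\vec{u}))\cdot\partial^{\beta_1}(D\vec{u})\cdot\partial^{\beta_2}(D\vec{u})$ with $|\beta_0|+|\beta_1|+|\beta_2|\le K-1$. Each is estimated by H\"older together with Gagliardo--Nirenberg interpolation (as in \eqref{eq:GN:1}, \eqref{eq:GN:234}): the factor carrying most of the derivatives is placed in $L^2(\R^d)$, where it is controlled by $\|d\vec{u}\|_{H^{K-1}}$, and the two remaining $D\vec{u}$-factors are placed in $L^{2d}$-type spaces, where they are controlled by $\|d\vec{u}\|_{L^{2d}}$ up to strictly lower-order factors that are reabsorbed using $\|d\vec{u}\|_{L^\infty}\aleq\|d\vec{u}\|_{H^{K-1}}$. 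That the exponents can indeed be balanced is consistent with scaling, both sides of the displayed inequality scaling like $\lambda^{K+1-\frac d2}$ under $\vec{u}(x)\mapsto\vec{u}(\lambda x)$. This is exactly where ``suitably large $K$'' enters, both for the interpolation room and so that $H^{K-1}(\R^d)$ is a Banach algebra embedding into $L^\infty(\R^d)$. (One could equally work with the first-order half-wave map equation and the commutator cancellation of \Cref{la:uniqueness}, but the structure of $F$ has already been analysed for \eqref{eq:KSPDE}.)

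\emph{Gronwall and conclusion.} Combining the two steps with Cauchy--Schwarz and $\|\partial_t\partial^\beta\vec{u}\|_{L^2}\le E_K^{1/2}$ gives $\tfrac{d}{dt}E_K(t)\aleq\|d\vec{u}(t)\|_{L^{2d}(\R^d)}^2\,E_K(t)$; integrating from $-T$ to $t$ and using $E_K\aeq\|d\vec{u}\|_{H^{K-1}}^2$,
\[
 \|d\vec{u}(t)\|_{H^{K-1}}^2 \aleq \|d\vec{u_0}\|_{H^{K-1}}^2 + \int_{-T}^{t}\|d\vec{u}(s)\|_{L^{2d}}^2\,\|d\vec{u}(s)\|_{H^{K-1}}^2\,ds,
\]
and bounding the integral by $\|d\vec{u}\|_{L^2_tL^{2d}_x(\R^d\times(-T,T))}^2\,\sup_{[-T,T]}\|d\vec{u}\|_{H^{K-1}}^2$, then taking the supremum over $t\in[-T,T]$, yields \eqref{eq:dvecucontrol}. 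For the last assertion, let $C$ denote the implied constant in \eqref{eq:dvecucontrol} and choose $\eps>0$ with $C\eps^2\le\tfrac12$; if $\|d\vec{u}\|_{L^2_tL^{2d}_x}<\eps$ the last term of \eqref{eq:dvecucontrol} is absorbed into the left-hand side, leaving $\sup_{[-T,T]}\|d\vec{u}\|_{H^{K-1}}^2\le 2C\|d\vec{u_0}\|_{H^{K-1}}^2\aleq\|\vec{u_0}\|_{H^{K}}^2$, with a constant depending only on $d$ and $K$, hence independent of the time interval. The main obstacle is the nonlinear estimate in the second step: tracking the derivative count through the (at most quadratic in $D\vec{u}$, but partly nonlocal) nonlinearity so that precisely two low-order $L^{2d}$-factors and a single top-order $H^{K-1}$-factor appear, and controlling the $\Dso$-commutator contributions without loss of derivatives.
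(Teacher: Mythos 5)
Your overall architecture (differentiate the Krieger--Sire equation \eqref{eq:KSPDE}, pair with $\partial_t$ of the differentiated solution, integrate in time, absorb the small factor) is the same as the paper's, but the crux trilinear estimate you rely on is false, and it fails at exactly the point where the paper's proof uses the geometry of the target. You claim $\|\partial^\beta F(\vec{u})\|_{L^2}\aleq\|d\vec{u}\|_{L^{2d}}^2\,\|d\vec{u}\|_{H^{K-1}}$ for all $|\beta|\le K-1$ and then apply Cauchy--Schwarz. The right-hand side of \eqref{eq:KSPDE} is, at top order, the unit-length coefficient $\vec{u}$ (or $\vec{u}\wedge$) times an expression quadratic in $d\vec{u}$; hence $\partial^\beta F(\vec{u})$ contains terms of the form $\vec{u}\,\bigl(d\vec{u}\cdot\partial^\beta d\vec{u}\bigr)$ with the coefficient undifferentiated, which are only \emph{quadratic} in $d\vec{u}$, while your bound is \emph{cubic}. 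Testing with slowly varying sphere-valued maps, e.g. $\vec{u}_\eps=(\cos(\eps\phi),\sin(\eps\phi),0)$ with $\phi\in C^\infty_c$ fixed, the left-hand side is of order $\eps^2$ while your right-hand side is of order $\eps^3$, so the inequality fails as $\eps\to0$ (scaling consistency does not help: the obstruction is amplitude of $d\vec{u}$, not scale, and $|\vec{u}|\equiv1$ does not prevent uniformly small gradients). Equivalently, in your interpolation step the factor $\partial^\beta d\vec{u}$ would have to be placed in $L^{\frac{2d}{d-1}}$ and bounded by $\|d\vec{u}\|_{L^{2d}}\|d\vec{u}\|_{H^{K-1}}$, which by Gagliardo--Nirenberg requires strictly more than $K-1$ derivatives of $d\vec{u}$ in $L^2$. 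Retreating to the true bound $\|d\vec{u}\|_{L^\infty}\|d\vec{u}\|_{H^{K-1}}$ for this term destroys the statement: Gronwall then needs $\|d\vec{u}\|_{L^1_tL^\infty_x}$ small rather than $\|d\vec{u}\|_{L^2_tL^{2d}_x}$ small, and the constant is no longer uniform in the length of the time interval, which is precisely what the globalization in \Cref{co:wellposedness} requires.

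The missing idea is to exploit $\langle\vec{u},\partial_t\vec{u}\rangle\equiv0$ (and $|\vec{u}|\equiv1$) \emph{inside the energy pairing}, rather than estimating $\partial^\beta F$ wholesale. In the paper's proof, the dangerous contribution in which all derivatives fall on the quadratic part and the coefficient $\vec{u}$ is untouched is paired so as to produce $\int\langle\Ds{s}\partial_t\vec{u},\vec{u}\rangle\,\Ds{s}|d\vec{u}|^2$, and since $\langle\partial_t\vec{u},\vec{u}\rangle=0$ the bracket equals $-\langle\partial_t\vec{u},\Ds{s}\vec{u}\rangle-H_{\Ds{s}}(\partial_t\vec{u},\vec{u})$, i.e. it gains an extra factor of $d\vec{u}$; the remaining Leibniz terms already carry a differentiated coefficient $\Ds{s_1}\vec{u}$, $s_1>0$. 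Only after this (and the analogous use of $|\vec{u}|\equiv1$ for the fractional terms, whose commutator structure $\vec{u}\wedge H_{\Dso}(\vec{u}\wedge,\Dso\vec{u})$ and \Cref{la:reallyworstterm} play the same role) is every term at least quartic in $d\vec{u}$, so that \eqref{eq:GN:1v2} with interpolation exponents summing to one yields $\frac{d}{dt}E_s(t)\aleq\|d\vec{u}\|_{L^{2d}}^2\,E_s(t)$ with exactly two low $L^{2d}$ factors. Your integration/absorption endgame is fine once that differential inequality is in hand, but as written your proposal does not reach it.
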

\begin{proof}
For $s \geq 0$
\[
 E_s(t) := \int_{\R^d} |\partial_t \Ds{s} \vec{u}|^2 + |\nabla \Ds{s}\vec{u}|^2.
\]
We have
\[
 \frac{d}{dt} E_s(t) := \int_{\R^d} \langle\Ds{s} (\partial_{tt} - \lap) \vec{u}, \Ds{s} \partial_t \vec{u}\rangle
\]

Let
\[
\begin{split}
H(\vec{u}) :=& \vec{u} \brac{|\nabla \vec{u}|^2-|\partial_t \vec{u}|^2} + \Pi_{\vec{u}_\perp} \Dso \vec{u}\, \langle \vec{u} \cdot \Dso \vec{u} \rangle \\
&+\vec{u} \wedge \Dso{} \brac{ \vec{u} \wedge\Dso{} \vec{u}}- \vec{u} \wedge \brac{\vec{u} \wedge (-\lap) \vec{u}}.
\end{split}
\]
Then we know from the Krieger-Sire result that
\[
 \begin{cases}
    (\partial_{tt} -\lap)\vec{u} = H(\vec{u}) \quad \text{in $\R^d \times(T_1,T_2)$}\\
    \vec{u}(0) = \vec{u_0}\quad& \text{in $\R^d$}\\
    \partial_t \vec{u}(0) = \vec{u_0} \wedge \Dso{} \vec{u_0} \quad& \text{in $\R^d$}
 \end{cases}
\]

If $s \geq 0$ (observe that $\partial_t \vec{u} \cdot \vec{u} = 0$),
\[
\begin{split}
&\abs{\int_{\R^d} \partial_t \Ds{s} \vec{u} \cdot \Ds{s} (\vec{u} |d \vec{u}|^2)} \\
\aleq&\abs{\int_{\R^d} \langle\Ds{s} \partial_t \vec{u},\vec{u}\rangle \Ds{s} |d \vec{u}|^2}+\|\Ds{s} d\vec{u}\|_{L^2(\R^d)}\, \cdot\\
&\quad \cdot \max_{s_1+s_2+s_3 = s, s_1 > 3/4} \|\Ds{s_1} \vec{u}\|_{L^{\frac{2d}{d-2(s-s_1+1)-(1-\sigma_1)(d-1-2s)}}(\R^d)}\, \|\Ds{s_2} d\vec{u}\|_{{L^{\frac{2d}{d-2(s-s_2)-(1-\sigma_2)(d-1-2s)}}(\R^d)}}\\
&\quad \cdot \|\Ds{s_3} d\vec{u}\|_{L^{\frac{2d}{d-2(s-s_3)-(1-\sigma_3)(d-1-2s)}}(\R^d)}\\
\overset{\eqref{eq:GN:1v2}}{\aleq}&\abs{\int_{\R^d} \langle\Ds{s} \partial_t \vec{u},\vec{u}\rangle \Ds{s} |d \vec{u}|^2}\\
&+\|\Ds{s} d\vec{u}\|_{L^2(\R^d)}\, \|\Dso \vec{u}\|_{L^{2d}(\R^d)}^{2}\, \|\Ds{s}d \vec{u}\|_{L^{2d}(\R^d)}
\end{split}
\]
where $\sigma_1+\sigma_2+\sigma_3 = 1$, and we needed to ensure that
\[
 \sigma_1 \in [\frac{(s_1-1)_+}{s},1], \quad \sigma_2 \in [\frac{s_2}{s},1], \quad \sigma_3 \in [\frac{s_3}{s},1],
\]
which is possible since
\[
 \frac{(s_1-1)_+}{s} + \frac{s_2}{s}+\frac{s_3}{s} = \frac{s+(s_1-1)_-- s_1}{s}<1.
\]

For the other term, by the same argument, using that $|\vec{u}| \equiv 1$ we also get
\[
 \abs{\int_{\R^d} \langle\Ds{s} \partial_t \vec{u},\vec{u}\rangle \Ds{s} |d \vec{u}|^2} \aleq \|\Ds{s} d\vec{u}\|_{L^2(\R^d)}^2\, \|\Dso \vec{u}\|_{L^{2d}(\R^d)}^{2}.
\]
So we have
\[
 \frac{d}{dt} E_s(t) \aleq \|d\vec{u}\|_{L^{(2d,2)}(\R^d)}^2\, E_s(t).
\]
Integrating this implies for any $\tilde{T} \in [0,T]$
\[
 E_s(\tilde{T}) \leq E_s(0) \aleq \|d\vec{u}\|_{L^2_t L^{(2d,2)}(\R^d)}^2\, \max_{t \in [0,\tilde{T}]} E_s(t)
\]
taking the supremum of $\tilde{T}$ on the left-hand side, we conclude
\[
 \sup_{t \in [-T,T]} E_s(t) \aleq \|d\vec{u}\|_{L^2_t L^{(2d,2)}(\R^d)}^2\, \sup_{t \in [-T,T]} E_s(t).
\]
Recall that this holds for any $s \geq 0$.
We have established
\eqref{eq:dvecucontrol} and can conclude.

\end{proof}

As a consequence of \Cref{th:existsmoothhalfwave}, \Cref{pr:dvecucontrol} and the uniqueness result of \cite{ERFS22}, and the a priori estimates \Cref{th:aprioriest} we obtain global existence.

\begin{proof}[Proof of \Cref{co:wellposedness}]
Take $K$ sufficiently large, $\eps$ from \Cref{pr:dvecucontrol}.
We first assume that additionally
\begin{equation}\label{eq:largeinitialcontrol}
 \|\nabla u_0\|_{H^{K-1}} < \infty
\end{equation}
Take $T$ from \Cref{th:existsmoothhalfwave} for $\Gamma := C \|\nabla u_0\|_{H^{K-1}}$, where $C>1$ is from \Cref{pr:dvecucontrol}.

Then there exists a uniform $T_1 := T$ such that we can solve
\[
 \begin{cases} \partial_t \vec{u_1} = \vec{u_1} \wedge \laps{1} \vec{u_1} \quad &\text{in $\R^d \times (0,T)$}\\
  \vec{u_1}(0) = \vec{u_0}\quad &\text{in $\R^d$}
 \end{cases}
\]
For $k \geq 2$ set $T_k := (k-\frac{3}{2})T$ and $u_k := u_{k-1} \Big |_{T_k}$, which solves
\[
 \begin{cases} \partial_t \vec{u_k} = \vec{u_k} \wedge \laps{1} \vec{u_k} \quad &\text{in $\R^d \times (T_k-T,T_k+T)$}\\
  \vec{u_{k}}(T_k) = \vec{u_{k-1}}\quad &\text{in $\R^d$}
 \end{cases}
\]
Since by \Cref{pr:dvecucontrol} we know $\vec{u_1}(T_1-T)$ satisfies uniform bounds, namely
\[
 \|\nabla u_1(T_2)\|_{H^{K-1}} \leq C \|\nabla u_0\|_{H^{K-1}} \leq \Gamma
\]
we $u_2$ exist for the same $T$ as before, by \Cref{th:existsmoothhalfwave}. By uniqueness proven in \cite{ERFS22} (observe that we have \eqref{eq:KSPDEbdsHk}) we conclude that $u_2 \equiv u_1$ where the domain of definition overlap. In particular we have from \Cref{pr:dvecucontrol}
\[
 \|\nabla u_2(T_3)\|_{H^{K-1}} \leq C \|\nabla u_0\|_{H^{K-1}} \leq \Gamma
\]
We repeat this argument inductively to obtain $u_k$, and we have that $u_k = u_{k-1}$ where their domains of definitions overlap.

Thus we may set $u(x,t) := u_k(x,t)$ whenever $t \in (T_k-T,T_k+T)$ we have found a solution in $[0,\infty)$ (and similarly we argue to obtain a solution in $(-\infty,0]$.

So we have established global existence under the additional assumption \eqref{eq:largeinitialcontrol} (and from the a priori estimates \Cref{th:aprioriest} we obtain
\begin{equation}\label{eq:estpde2}
\begin{split}
\|\Ds{\frac{d-2}{2}} d\vec{u}\|_{C_t^0 L^2(\R^d\times (-\infty,\infty))} + \|\Ds{s-1}d \vec{u}\|_{L^2_t L^{(\frac{2d}{2s-1},2)}_x(\R^d \times (-\infty,\infty))} \aleq  \|\Ds{\frac{d}{2}} \vec{u_0}\|_{L^2(\R^d)}
\end{split}
\end{equation}
which in particular implies \eqref{eq:Wellposedest})

In case that $\vec{u_0}$ does not satisfy \eqref{eq:largeinitialcontrol}. Since $\|\Ds{\frac{d}{2}}\vec{u}_0\|_{L^2(\R^d)} < \eps$
by density of smooth maps in $\dot{H}^{\frac{d}{2}}(\R^d,\S^2)$, we find $\vec{u_0}$ by $\vec{u_{0;\ell}} \in C^\infty(\R^d,\S^2)$ which still satisfies
\[
 \|\Ds{\frac{d}{2}}\vec{u_{0;\ell}}\|_{L^2(\R^d)} < 2\eps
\]
Since we have the support condition that $\supp \vec{u_0}-\vec{q}$ is compact we can also obtain that $\vec{u_{0;\ell}} \in H^{K}$ for any $K$.

We then find a global solution $\vec{u}_{\ell}$ a global solution with initial data $\vec{u_{0;\ell}}$ with the estimates \eqref{eq:estpde2}. We pass to the limit and by lower semicontinuity of the involved norms we obtain
\begin{equation}\label{eq:estpde3}
\begin{split}
\|\Ds{\frac{d-2}{2}} d\vec{u}\|_{L^\infty_t L^2(\R^d\times (-\infty,\infty))} + \|\Ds{s-1}d \vec{u}\|_{L^2_t L^{(\frac{2d}{2s-1},2)}_x(\R^d \times (-\infty,\infty))} \aleq  \|\Ds{\frac{d}{2}} \vec{u_0}\|_{L^2(\R^d)}
\end{split}
\end{equation}

Again this implies \eqref{eq:Wellposedest}. As mentioned before the uniqueness result was proven in \cite{ERFS22}.
\end{proof}

\appendix

\section{Leibniz Rule and Sobolev-Gagliardo-Nirenberg embeddings}
In this section we formulate several estimates for the Leibnizrule for operators like $\Ds{\alpha}$ as well as Gagliardo-Nirenberg-type inequalities. These estimates are known at least to experts and we make no claim of originality.

\subsection{Leibniz rule}
We begin by the basic Leibniz rule, which can be proven, e.g. \cite[Theorem 7.1.]{LScomm}, see also the presentation in \cite[Theorem 3.4.1]{Ingmanns20}.

Here, and henceforth the Leibniz-rule operator is denoted by
\[
 H_{\Ds{\alpha}}(f,g) = \Ds{\alpha}(fg) - (\Ds{\alpha} f)\, g-f\, \Ds{\alpha}g
\]
A simple, yet extremely useful observation by many people is that for $\alpha \in (0,2)$
\begin{equation}\label{eq:HDsalpharep}
 H_{\Ds{\alpha}}(f,g)(x) = c\int_{\R^d} \frac{\brac{f(x)-f(y)}\, \brac{g(x)-g(y)}}{|x-y|^{d+\alpha}}\, dy.
\end{equation}

\begin{lemma}[Leibniz rule estimate]\label{la:basicLeibniz}
Let $\alpha \in (0,1)$ and $\beta \in (0,2)$. Pick any $\gamma \in (0,1)$ such that $\alpha+\beta-\gamma \in (0,1)$, and $p,p_1,p_2 \in (1,\infty)$ such that
 \[
  \frac{1}{p} = \frac{1}{p_1} + \frac{1}{p_2}.
 \]
Then
  \begin{equation}\label{eq:comm:123}
  \|\Ds{\alpha} H_{\Ds{\beta}} (f,g)\|_{L^{p}(\R^d)} \aleq \|\Ds{\gamma} f\|_{L^{p_1}}\, \|\Ds{\alpha +\beta-\gamma} g\|_{L^{p_2}}.
 \end{equation}
\end{lemma}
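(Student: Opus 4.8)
The plan is to recast \eqref{eq:comm:123} as a bilinear Fourier-multiplier estimate of Coifman--Meyer type and run a Littlewood--Paley paraproduct argument. Passing to the Fourier side and inserting $\widehat{f}(\xi)=|\xi|^{-\gamma}\widehat{\Ds{\gamma}f}(\xi)$, $\widehat{g}(\eta)=|\eta|^{-(\alpha+\beta-\gamma)}\widehat{\Ds{\alpha+\beta-\gamma}g}(\eta)$, one has
\[
 \Ds{\alpha} H_{\Ds{\beta}}(f,g) = T_{\sigma}\brac{\Ds{\gamma} f,\ \Ds{\alpha+\beta-\gamma} g},
\]
where $T_\sigma$ is the bilinear multiplier with symbol
\[
 \sigma(\xi,\eta) = |\xi+\eta|^{\alpha}\, \frac{|\xi+\eta|^{\beta} - |\xi|^{\beta} - |\eta|^{\beta}}{|\xi|^{\gamma}\, |\eta|^{\alpha+\beta-\gamma}},
\]
so it suffices to show $T_\sigma\colon L^{p_1}\times L^{p_2}\to L^{p}$ whenever $\frac{1}{p}=\frac{1}{p_1}+\frac{1}{p_2}$. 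First I would decompose $f=\sum_j f_j$, $g=\sum_k g_k$ into Littlewood--Paley pieces and organise the double sum $\sum_{j,k}$ along the three regions $|\xi|\ll|\eta|$, $|\xi|\sim|\eta|$, $|\xi|\gg|\eta|$; if one prefers to argue from the pointwise kernel \eqref{eq:HDsalpharep} instead, the restriction $\beta\in(0,2)$ is precisely what makes that single-difference representation available.

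In the off-diagonal region $|\xi|\ll|\eta|$ (and symmetrically $|\xi|\gg|\eta|$) the subtraction of $\Ds{\beta}f\cdot g$ and $f\,\Ds{\beta}g$ in the definition of $H_{\Ds{\beta}}$ produces the \emph{decisive cancellation}: a Taylor expansion of $|\xi+\eta|^{\beta}$ about $\eta$ gives $\big||\xi+\eta|^{\beta}-|\xi|^{\beta}-|\eta|^{\beta}\big|\aleq |\xi|\,|\eta|^{\beta-1}+|\xi|^{\beta}$, so that on the pieces $f_j,g_k$ with $2^{j}\ll 2^{k}$, after multiplying by $|\xi+\eta|^{\alpha}\sim 2^{\alpha k}$ and dividing by $|\xi|^{\gamma}|\eta|^{\alpha+\beta-\gamma}$, the symbol contributes a factor that decays geometrically in $2^{k-j}$ thanks to $\gamma<1$. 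Summing this geometric series and then invoking Bernstein's inequality, H\"older's inequality in the form $\frac1p=\frac1{p_1}+\frac1{p_2}$, the Littlewood--Paley square function estimate and the Fefferman--Stein maximal inequality yields the bound $\|\Ds{\gamma}f\|_{L^{p_1}}\|\Ds{\alpha+\beta-\gamma}g\|_{L^{p_2}}$ for this part. On the diagonal $|\xi|\sim|\eta|$ I would estimate the three terms of $H_{\Ds{\beta}}$ separately, using $|\xi+\eta|^{\alpha}\aleq |\xi|^{\alpha}+|\eta|^{\alpha}$ (here $\alpha\le 1$) and placing $\gamma$ derivatives on $f_j$ and $\alpha+\beta-\gamma$ on $g_k$; the residual powers of $2^{\min(j,k)}$ are summable because $\alpha>0$, $\gamma>0$ and $\alpha+\beta-\gamma>0$, and the same Bernstein/H\"older/square-function steps convert the result into the claimed product of homogeneous Sobolev norms.

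The hard part will be the exponent bookkeeping in the two off-diagonal regions: one must check that, after accounting for the outer operator $\Ds{\alpha}$ and for the redistribution of derivatives recorded by $\Ds{\gamma}f$ and $\Ds{\alpha+\beta-\gamma}g$, the cancellation in $H_{\Ds{\beta}}$ still leaves a strictly positive power of the dyadic frequency ratio, so that every geometric series that appears converges with no logarithmic loss. This is exactly where the quantitative hypotheses $\alpha\in(0,1)$, $\gamma\in(0,1)$, $\alpha+\beta-\gamma\in(0,1)$ and $\beta\in(0,2)$ are used. Once those thresholds are verified, the remainder is the standard Coifman--Meyer/Kato--Ponce commutator machinery, whose details are carried out in \cite[Theorem~7.1]{LScomm} and \cite[Theorem~3.4.1]{Ingmanns20}.
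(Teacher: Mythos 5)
Two remarks. First, on the comparison itself: the paper does not prove this lemma at all --- it simply invokes \cite[Theorem~7.1]{LScomm} and the presentation in \cite[Theorem~3.4.1]{Ingmanns20}. Your proposal reduces the claim to a bilinear multiplier bound and sketches a Littlewood--Paley/Coifman--Meyer argument, but it also ends by deferring "the remainder" to exactly those two references, so as written it is an expanded citation rather than an independent proof; if you want it to count as a proof you must either carry out the paraproduct argument in full or quote the cited theorems with their precise hypotheses and check that the present parameters fall inside them.

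Second, and more importantly, the step you label "the hard part" is a genuine gap, and your stated reason for the off-diagonal decay is not sufficient. In the region $|\xi|\ll|\eta|$ ($2^j\ll 2^k$) your Taylor bound has two pieces. The piece $|\xi|\,|\eta|^{\beta-1}$ (the true commutator cancellation between $\Ds{\beta}(fg)$ and $f\,\Ds{\beta}g$) indeed contributes the factor $2^{(1-\gamma)(j-k)}$, which decays because $\gamma<1$. But the piece $|\xi|^{\beta}$ corresponds to the standalone term $(\Ds{\beta}f)\,g$, which receives no cancellation in this region; after renormalizing by $|\xi|^{\gamma}|\eta|^{\alpha+\beta-\gamma}$ and the outer $|\xi+\eta|^{\alpha}\sim 2^{\alpha k}$ it contributes $2^{(\beta-\gamma)(j-k)}$, which decays only if $\gamma<\beta$. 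Symmetrically, in the region $|\xi|\gg|\eta|$ the standalone term $f\,\Ds{\beta}g$ contributes $2^{(\alpha-\gamma)(j-k)}$, which decays only if $\gamma>\alpha$. Neither inequality follows from the hypotheses $\gamma\in(0,1)$, $\alpha+\beta-\gamma\in(0,1)$ (e.g.\ $\beta$ small and $\gamma$ close to $1$ is allowed), so your geometric series does not converge on the full stated parameter range. To treat, say, $\gamma\ge\beta$ one must trade the derivative deficit on the low-frequency factor against integrability via Bernstein/Sobolev, which uses the specific exponents and only works when $\gamma-\beta<d/p_1$ (resp.\ $\alpha-\gamma<d/p_2$ in the other region); indeed, testing the term $(\Ds{\beta}f)\,g$ with $f=\phi(2^{-M}\cdot)$ at frequency $2^{-M}$ and $g$ fixed at unit frequency shows the left-hand side is of size $2^{-\beta M}$ while the right-hand side is of size $2^{M(d/p_1-\gamma)}$, so the asserted bound degenerates as $M\to\infty$ unless $\gamma-\beta\le d/p_1$. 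In short, the "exponent bookkeeping" you postpone is precisely where the argument as described fails to close: you need additional relations between $\gamma$, $\alpha$, $\beta$ (or the room provided by $d/p_1$, $d/p_2$), which must either be added to the argument or imported from the exact hypotheses of \cite[Theorem~7.1]{LScomm}.
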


As a consequence of repeated Leibniz-type arguments we also have
\begin{lemma}\label{la:reallyworstterm}
Assume that $\vec{u}: \R^d \to \R^N$ such that $|\vec{u}|_{\R^N}^2 \equiv 1$ a.e. in $\R^d$. Then,
\[
 \norm{\sum_{j} u^j H_{\Dso}(u^j,F)}_{L^2(\R^d)} \aleq \|\Dso \vec{u}\|_{L^{2d}(\R^d)}^2\, \|F\|_{L^2(\R^d)}
\]
\end{lemma}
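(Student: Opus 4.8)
The plan is to exploit the pointwise representation \eqref{eq:HDsalpharep} for $H_{\Dso}$ together with the constraint $|\vec u|^2\equiv 1$, which lets us rewrite the sum $\sum_j u^j(x)H_{\Dso}(u^j,F)(x)$ in a form where no single factor carries $F$ undifferentiated. Concretely, using
\[
 \sum_j u^j(x)\brac{u^j(x)-u^j(y)} = \tfrac12\abs{\vec u(x)-\vec u(y)}^2,
\]
one gets
\[
 \sum_j u^j(x)\, H_{\Dso}(u^j,F)(x) = \tfrac{c}{2}\int_{\R^d}\frac{\abs{\vec u(x)-\vec u(y)}^2\brac{F(x)-F(y)}}{|x-y|^{d+1}}\,dy.
\]
So the quantity to estimate is a ``double-difference in $\vec u$'' times a single difference in $F$, integrated against the Riesz-type kernel $|x-y|^{-(d+1)}$.

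First I would recognize the right-hand side above as (up to constants) a triple-commutator type object $H_{\Dso,3}$-flavoured expression, or alternatively decompose it via the Leibniz identity for $H_{\Dso}$: writing $\abs{\vec u(x)-\vec u(y)}^2 = \sum_j (u^j(x)-u^j(y))^2$, each summand gives $\int \frac{(u^j(x)-u^j(y))^2(F(x)-F(y))}{|x-y|^{d+1}}dy$, and I would peel off one factor $(u^j(x)-u^j(y))$ by the telescoping trick used in \eqref{eq:hHdsofgdecomp}, reducing to terms of the form $u^j(x)H_{\Dso}(u^j, F)$ again but now genuinely lower order, plus commutator remainders. The cleanest route, though, is probably to apply the trilinear kernel estimate from \cite[Lemma A.7]{ERFS22} directly to
\[
 \brac{\int_{\R^d}\abs{\int_{\R^d}\frac{\abs{u^j(x)-u^j(y)}\,\abs{u^k(x)-u^k(y)}\,\abs{F(x)-F(y)}}{|x-y|^{d+1}}\,dy}^2 dx}^{1/2},
\]
which by that lemma is controlled by $\|\Ds{\alpha_1}u^j\|_{L^{p_1}}\|\Ds{\alpha_2}u^k\|_{L^{p_2}}\|\Ds{1}F\|_{L^{2}}$... but wait, here $F$ appears with a full difference, not a $\Dso$-difference, so I would instead choose the exponents so that the $F$-factor is taken in $L^2$ with \emph{no} derivative: pick $\alpha_1,\alpha_2\in(0,1)$ with $\alpha_1+\alpha_2=1$, $p_1 = \frac{2d}{2\alpha_1-1}$, $p_2=\frac{2d}{1+2(\alpha_2-1)}$ — exactly the bookkeeping already carried out in the estimate of \eqref{eq:cvioxvcposfdpjio3:guy1} — so that the trilinear kernel bound yields $\|\Ds{\alpha_1}\vec u\|_{L^{p_1}}\|\Ds{\alpha_2}\vec u\|_{L^{p_2}}\|F\|_{L^2}$, and then Sobolev embedding (since $\alpha_1,\alpha_2<1$) gives $\|\Ds{\alpha_i}\vec u\|_{L^{p_i}}\aleq\|\Dso\vec u\|_{L^{2d}}$, producing $\|\Dso\vec u\|_{L^{2d}}^2\|F\|_{L^2}$ as required.

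The main obstacle is the $F$-endpoint: I must ensure the trilinear Riesz-kernel inequality from \cite{ERFS22} still applies when the third factor $F$ enters with a \emph{plain} difference $F(x)-F(y)$ rather than a fractional-Laplacian difference, i.e. effectively with ``derivative order $1$'' absorbed by one power of the kernel. This is fine because the kernel $|x-y|^{-(d+1)}$ carries exactly one extra power beyond $|x-y|^{-d}$, which pairs with the $F$-difference to make an $L^2$-bounded (Calderón–Zygmund / square-function) object; but verifying the precise hypotheses of \cite[Lemma A.7]{ERFS22} — that the sum of the differentiability parameters matches $d+1$ and that the exponent triple is admissible — requires care, and the borderline nature ($\alpha_1+\alpha_2$ forced to be $\aeq 1$) means there is essentially no slack. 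Once the exponents are pinned down as above, the rest is Sobolev embedding and summation over $j$, which I would not spell out in detail.
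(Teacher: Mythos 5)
Your first step coincides with the paper's: since $|\vec u|\equiv 1$ one has $\sum_j u^j(x)\brac{u^j(x)-u^j(y)}=\tfrac12\abs{\vec u(x)-\vec u(y)}^2$, so the quantity to bound is $c\int_{\R^d}\abs{\vec u(x)-\vec u(y)}^2\brac{F(x)-F(y)}|x-y|^{-d-1}\,dy$. From there, however, the proposal has a genuine gap. The trilinear kernel estimate of \cite[Lemma A.7]{ERFS22}, in the form the paper uses it for \eqref{eq:cvioxvcposfdpjio3:guy1}, applies to an integrand whose third factor is evaluated at $y$ only; it does not accept the difference $F(x)-F(y)$ while keeping $F$ underivatived. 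So you must split $F(x)-F(y)$ into its $F(x)$- and $F(y)$-contributions. The $F(y)$-part is then exactly the \eqref{eq:cvioxvcposfdpjio3:guy1}-type term and your bookkeeping works, except that you need $\alpha_1+\alpha_2>1$ strictly (just above $1$), not $\alpha_1+\alpha_2=1$: at the endpoint the leftover kernel after the two $\vec u$-differences is exactly $|x-y|^{-d}$, a positive kernel with no cancellation acting on $\abs{F(y)}$, which is not bounded on $L^2$. More importantly, the $F(x)$-part, $\abs{F(x)}\int\abs{\vec u(x)-\vec u(y)}^2|x-y|^{-d-1}dy$, is not reachable by that lemma nor by the heuristic that the extra kernel power ``pairs with the $F$-difference'': once no derivative falls on $F$, the factors at $x$ pull out of the $y$-integral and what is needed is the uniform bound $\sup_x\int\abs{\vec u(x)-\vec u(y)}^2|x-y|^{-d-1}dy\aleq\|\Dso\vec u\|_{L^{(2d,2)}(\R^d)}^2$, i.e.\ the $L^\infty$-estimate for $H_{\Dso}(\vec u\cdot,\vec u)$ from \cite[(A.7)]{ERFS22}; a crude difference-quotient/maximal-function bound with $\alpha_1+\alpha_2>1$ diverges at infinity, so this ingredient cannot be skipped, and your proposal never supplies it.

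For contrast, the paper's own proof argues by duality against $G\in L^2$: the $F(x)G(x)$-piece is handled precisely by that $L^\infty$ commutator bound, while the $F(y)$-piece is estimated via $\frac{|\vec u(x)-\vec u(y)|}{|x-y|^{\alpha}}\aleq \mathcal{M}|\Ds{\alpha}\vec u|(x)+\mathcal{M}|\Ds{\alpha}\vec u|(y)$ with $\alpha\in(\tfrac12,1)$, followed by Riesz-potential and maximal-function estimates. Your trilinear-lemma route for the $F(y)$-piece is a legitimate alternative (it is the computation the paper performs for \eqref{eq:cvioxvcposfdpjio3:guy1}), so the argument is salvageable; but as written it is incomplete without the split of $F(x)-F(y)$, the $L^\infty$-estimate for the $F(x)$-part, and moving $\alpha_1+\alpha_2$ strictly above $1$.
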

\begin{proof}
We are going to use a well-known facts, such as
\[
\frac{ |f(x)-f(y)|}{|x-y|^{\alpha}} \aleq \mathcal{M}|\Ds{\alpha} f|(x)+\mathcal{M}|\Ds{\alpha} f|(y) \quad \text{a.e. $x,y \in \R^n$}
\]
which can be found e.g. in \cite[Proposition 6.6.]{Schikorra18}; Here $\mathcal{M}$ is the Hardy-Littlewood maximal function.
Also, we observe that since $|\vec{u}(x)|^2=|\vec{u}(y)|^2=1$
\[
 \vec{u}(x) \cdot (\vec{u}(x)-\vec{u}(x))= \frac{1}{2} \abs{\vec{u}(x)-\vec{u}(x)}^2.
\]
Then, by duality, for some $\|G\|_{L^2(\R^d)} \aleq 1$,  and for any choice of $\alpha \in (1/2,1)$ such that $d-(2\alpha-1)2 > 0$,
\[
\begin{split}
 &\|\sum_{j} u^j H_{\Dso}(u^j,F)\|_{L^2(\R^d)}\\
 \aleq &\int_{\R^d} \sum_{j} u^j H_{\Dso}(u^j,F)\, G\\
 \overset{\eqref{eq:HDsalpharep}}{=}&\frac{c}{2}\int_{\R^d}\int_{\R^d} \frac{|\vec{u}(x)-\vec{u}(y)|^2\, \brac{F(x)-F(y)} G(x)}{|x-y|^{d+1}}\, dy dx\\
 \aleq & \abs{\int_{\R^d} F(x)G(x)\int_{\R^d} \frac{|\vec{u}(x)-\vec{u}(y)|^2}{|x-y|^{d+1}}\, dy dx}\\
&+\abs{\int_{\R^d} G(x)\int_{\R^d} \frac{|\vec{u}(x)-\vec{u}(y)|^2 F(y)}{|x-y|^{d+1}}\, dy dx}\\
 = & \abs{\int_{\R^d} F(x)G(x) H_{\Dso}(\vec{u}\cdot,\vec{u})}\\
&+\abs{\int_{\R^d} G(x)\int_{\R^d} \frac{|\vec{u}(x)-\vec{u}(y)|^2 F(y)}{|x-y|^{d+1}}\, dy dx}\\
\aleq& \|F\|_{L^2(\R^d)}\, \|G\|_{L^2(\R^d)}\, \|H_{\Dso}(\vec{u}\cdot,\vec{u})\|_{L^\infty}\\
&+\int_{\R^d} \abs{G(x)}\int_{\R^d} \frac{\brac{ \brac{\mathcal{M}\Ds{\alpha} \vec{u}(x)}^2 + \brac{\mathcal{M}\Ds{\alpha} \vec{u}(y)}^2}\abs{F(y)}}{|x-y|^{d+1-2\alpha}}\, dy dx\\
\overset{\text{\cite[(A.7)]{ERFS22}}}{\aleq}& \|F\|_{L^2(\R^d)}\, \, \|\Dso \vec{u}\|_{L^{(2d,2)}(\R^d)}^2\\
&+\int_{\R^d} \abs{G(x)}\, \brac{\mathcal{M}\Ds{\alpha} \vec{u}(x)}^2  \int_{\R^d} |x-y|^{2\alpha-1-d} \abs{F(y)}\, dy dx\\
&+\int_{\R^d} \brac{\mathcal{M}\Ds{\alpha} \vec{u}(y)}^2\, \abs{F(y)}\int_{\R^d} |x-y|^{2\alpha-1-d} \abs{G(x)} dx dy\\
=& \|F\|_{L^2(\R^d)}\, \, \|\Dso \vec{u}\|_{L^{(2d,2)}(\R^d)}^2\\
&+\int_{\R^d} \abs{G(x)}\, \brac{\mathcal{M}\Ds{\alpha} \vec{u}(x)}^2\, \lapms{(2\alpha-1)} \abs{F}(x) dx\\
&+\int_{\R^d} \brac{\mathcal{M}\Ds{\alpha} \vec{u}(y)}^2\, \abs{F(y)} \lapms{(2\alpha-1)} \abs{G}(y)  dy\\
\aleq& \|F\|_{L^2(\R^d)}\, \|\Dso \vec{u}\|_{L^{(2d,2)}(\R^d)}^2\\
&+\|G\|_{L^2(\R^d)}\, \|\brac{\mathcal{M}\Ds{\alpha} \vec{u}}^2\|_{L^{\frac{2d}{2(2\alpha-1)}}(\R^d)}  \|\lapms{(2\alpha-1)} \abs{F}\|_{L^{\frac{2d}{d-(2\alpha-1)2}}(\R^d)}\\
&+\|\brac{\mathcal{M}\Ds{\alpha} \vec{u}}^2\|_{L^{\frac{2d}{2(2\alpha-1)}}(\R^d)}\, \|F\|_{L^2(\R^d)} \|\lapms{(2\alpha-1)} \abs{G} \|_{L^{\frac{2d}{d-(2\alpha -1)2}}(\R^d)}\\
\aleq& \|F\|_{L^2(\R^d)}\, \|\Dso \vec{u}\|_{L^{(2d,2)}(\R^d)}^2\\
&+\|G\|_{L^2(\R^d)}\, \|\mathcal{M}\Ds{\alpha} \vec{u}\|_{L^{\frac{2d}{2\alpha-1}}(\R^d)}^2  \|F\|_{L^{2}(\R^d)}\\
&+\|\mathcal{M}\Ds{\alpha} \vec{u}\|_{L^{\frac{2d}{2\alpha-1}}(\R^d)}^2\, \|F\|_{L^2(\R^d)} \|G \|_{L^{2}(\R^d)}\\
\aleq& \|F\|_{L^2(\R^d)}\, \|\Dso \vec{u}\|_{L^{(2d,2)}(\R^d)}^2\\
&+\|G\|_{L^2(\R^d)}\, \|\Dso \vec{u}\|_{L^{2d}(\R^d)}^2  \|F\|_{L^{2}(\R^d)}\\
&+\|\Dso \vec{u}\|_{L^{2d}(\R^d)}^2\, \|F\|_{L^2(\R^d)} \|G \|_{L^{2}(\R^d)}\\
 \end{split}
\]
We can conclude.
\end{proof}

\begin{lemma}\label{la:reallyworsttermv2}
Set
\[
H_{\Dso,3}(f,g,h)(x) := c\int_{\R^d} \frac{ \brac{f(x)-f(y)}\, (g(x)-g(y))\, \brac{h(x)-h(y)}}{|x-y|^{d+1}}\, dy
\]
Denote by $\tilde{D}^s$ either $\Ds{s}$ or $D^{\lfloor s\rfloor} \Ds{s-\lfloor s\rfloor}$. Then if $d \geq 4$ is even and $t=0$ or if $d\geq 5$ and $t=\frac{1}{2}$, assuming that $s_1,s_2,s_3 \in [0,\frac{d}{2}-t)$ and
\[
 s_1+s_2+s_3 + t = \frac{d}{2}
\]
Then
\[
\|\Ds{t} H_{\Dso,3}(\tilde{D}^{s_1}u^i,\tilde{D}^{s_2} u^j,\tilde{D}^{s_3}u^k)\|_{L^2(\R^d)} \aleq \|\Dso \vec{u}\|_{L^{2d}(\R^d)}^2\, \|\Ds{\frac{d}{2}} \vec{u}\|_{L^{2}(\R^d)}
\]
\end{lemma}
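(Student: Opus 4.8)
The plan is to prove this along the same lines as \Cref{la:reallyworstterm}, of which the present statement is the natural ``all three factors are derivatives'' analogue (now without the summation simplification): dualise the $L^2$-norm, pass to the kernel representations, dominate difference quotients by the Hardy-Littlewood maximal function $\mathcal M$, and close the estimate with fractional integration (Hardy-Littlewood-Sobolev), $L^p$-continuity of $\mathcal M$, and the Gagliardo-Nirenberg inequality \eqref{eq:GN:1}; the key multilinear fractional-integral input is \cite[Lemma~A.7]{ERFS22}. I would first dispose of the outer $\Ds t$. For even $d$ one has $t=0$ and there is nothing to do. For odd $d\ge5$ one has $t=\tfrac12$, and I would use the commutator identity \eqref{eq:hHdsofgdecomp},
\[
 H_{\Dso,3}(f,g,h)=h\,H_{\Dso}(f,g)-H_{\Dso}(fh,g)+f\,H_{\Dso}(h,g),
\]
together with the Leibniz rule \Cref{la:basicLeibniz} (and its $\alpha=0$ endpoint), to move the half-derivative onto the individual factors; this produces finitely many terms of the same multilinear shape carrying one extra half-derivative, and the exponent count below still balances because $d\ge5$.

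For the core case $t=0$ write $F_\ell:=\tilde D^{s_\ell}u^{i_\ell}$ (we may replace $\tilde D^{s}$ by $\Ds s$, the two differing only by bounded Riesz-transform operators). Dualising,
\begin{align*}
 &\big\|H_{\Dso,3}(F_1,F_2,F_3)\big\|_{L^2(\R^d)}\\
 =& \sup_{\|G\|_{L^2}\le 1}\Big|\int_{\R^d}H_{\Dso,3}(F_1,F_2,F_3)\,G\Big|\\
 \aleq& \sup_{\|G\|_{L^2}\le 1}\int_{\R^d}\int_{\R^d}\frac{\big(\prod_{\ell=1}^3|F_\ell(x)-F_\ell(y)|\big)\,|G(x)|}{|x-y|^{d+1}}\,dy\,dx .
\end{align*}
On the difference factors I would use $|F_\ell(x)-F_\ell(y)|\aleq|x-y|^{\beta_\ell}\big(\mathcal M|\Ds{s_\ell+\beta_\ell}u|(x)+\mathcal M|\Ds{s_\ell+\beta_\ell}u|(y)\big)$ for suitable $\beta_\ell\in(\tfrac12,1]$, retaining enough un-expanded difference structure (or enough of the factor $G$) that the integrals remain convergent at the diagonal and at infinity; expanding the maximal factors and applying H\"older reduces the whole expression, exactly as in \cite[Lemma~A.7]{ERFS22}, to a bounded sum of terms controlled by $\prod_{\ell=1}^3\|\Ds{s_\ell+\beta_\ell}u\|_{L^{p_\ell}(\R^d)}$ for suitable $p_\ell$. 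Each factor is then estimated by fractional integration from $\|\Dso\vec u\|_{L^{2d}(\R^d)}$ when $s_\ell+\beta_\ell<1$ (contributing one power of $\|\Dso\vec u\|_{L^{2d}}$), and by \eqref{eq:GN:1} interpolating $\|\Dso\vec u\|_{L^{2d}(\R^d)}$ and $\|\Ds{\frac{d}{2}}\vec u\|_{L^2(\R^d)}$ when $s_\ell+\beta_\ell\ge1$; because $s_1+s_2+s_3+t=\tfrac d2$, the exponents and powers add up to exactly $\|\Dso\vec u\|_{L^{2d}(\R^d)}^2\,\|\Ds{\frac{d}{2}}\vec u\|_{L^2(\R^d)}$.

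The step I expect to be the main obstacle is the combinatorics of choosing the $\beta_\ell$: they must simultaneously be admissible for the maximal-function difference bound ($\beta_\ell\in(\tfrac12,1]$, with $\beta_\ell=1$ forced on any factor having $s_\ell=0$), keep the residual Riesz potential of positive order with all H\"older/Young exponents in $(1,\infty)$, and match the homogeneity so that the three factors produce precisely $\|\Dso\vec u\|_{L^{2d}}^2\,\|\Ds{\frac{d}{2}}\vec u\|_{L^2}$. The hypotheses $0\le s_\ell<\tfrac d2-t$ and $d\ge4$ (resp.\ $d\ge5$ for $t=\tfrac12$) are exactly what make this system feasible; verifying feasibility --- in particular in the borderline configurations where some $s_\ell=0$ or some $s_\ell$ is close to $\tfrac d2-t$, and in the $t=\tfrac12$ reduction --- is the only genuinely delicate point, the rest running parallel to the proof of \Cref{la:reallyworstterm} and to \cite[Appendix~A]{ERFS22}.
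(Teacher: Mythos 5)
Your plan for the even case ($t=0$) is in the right spirit: it amounts to re-deriving the trilinear estimate that the paper simply imports as \cite[Lemma A.6]{ERFS22} (by duality, maximal-function bounds on the differences and \cite[Lemma A.7]{ERFS22}), followed by the Gagliardo--Nirenberg bookkeeping via \eqref{eq:GN:1}. The part you defer as "delicate" is in fact where the paper spends its effort: after Lemma A.6 one has factors $\Ds{s_\ell+\alpha_\ell}u$ with $\alpha_1+\alpha_2+\alpha_3=1$, and the admissible interpolation weights must satisfy $\sigma_\ell\geq\max\{\tfrac{2(s_\ell+\alpha_\ell-1)}{d-2},0\}$ with $\sum\sigma_\ell=1$; the paper verifies feasibility by a three-case analysis according to the signs of $s_\ell+\alpha_\ell-1$, using $\sum(s_\ell+\alpha_\ell-1)=\tfrac{d-4}{2}-t$. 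You assert feasibility but do not check it; that is an omission, not an error.

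The genuine gap is in your treatment of $t=\tfrac12$ (odd $d\geq5$). Decomposing via \eqref{eq:hHdsofgdecomp}, $H_{\Dso,3}(F_1,F_2,F_3)=F_3\,H_{\Dso}(F_1,F_2)-H_{\Dso}(F_1F_3,F_2)+F_1\,H_{\Dso}(F_3,F_2)$, destroys exactly the structural gain you need: in $H_{\Dso,3}$ every slot sees a difference, whereas after the decomposition the factors $F_3$ and $F_1$ appear bare. The hypotheses allow $s_\ell=0$ (and this is precisely the configuration in which the lemma is applied in \Cref{s:halfwavewave}, where one slot is $u^j$ itself), and then a term such as $u^{i_3}\,\Ds{\frac12}H_{\Dso}(\Ds{s_1}u^{i_1},\Ds{s_2}u^{i_2})$ with $s_1+s_2=\tfrac d2-\tfrac12$ cannot be bounded by $\|\Dso\vec u\|_{L^{2d}}^2\|\Ds{\frac d2}\vec u\|_{L^2}$: replacing $u^{i_3}$ by $u^{i_3}+c$ leaves both $H_{\Dso,3}$ and the right-hand side unchanged but multiplies this term's size linearly in $c$; and even granting $\|u\|_{L^\infty}\leq1$ (which the lemma does not assume), a scaling count shows the remaining bilinear piece would have to be controlled by $\|\Dso\vec u\|_{L^{2d}}^2$ alone, which carries only two derivatives against the $\tfrac d2+1$ present, so no such bound holds for $d\geq5$. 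In other words, the individual terms of your decomposition are worse than their sum, and the reduction cannot be closed. The paper avoids this by keeping the trilinear difference structure intact: it proves a half-derivative analogue of \cite[Lemma A.6]{ERFS22} directly, writing $H_{\Dso,3}$ through a kernel $K(x,y,z_1,z_2,z_3)$ acting on $\Ds{\alpha_\ell}$ of the three factors and estimating the $\tfrac12$-H\"older-in-$x$ variation of $K$ with the techniques of \cite[Section 3]{MSY21}, after which the same GN case analysis applies. To repair your argument you would need either this kernel route or some other device that retains a fractional difference on every slot (e.g.\ distributing the outer $\Ds{\frac12}$ without ever producing an underived bare factor), not the bilinear splitting \eqref{eq:hHdsofgdecomp}.
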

\begin{proof}
If $d$ is even, and thus $t=0$, from \cite[Lemma A.6.]{ERFS22}, for $\sigma_1,\sigma_2,\sigma_3 \in [0,1]$ such that $\sigma_1+\sigma_2+\sigma_3 = 1$ we have
\[
\begin{split}
& \|\Ds{t} H_{\Dso,3}(\tilde{D}^{s_1}u^i,\tilde{D}^{s_2} u^j,\tilde{D}^{s_3}u^k)\|_{L^2(\R^d)}\\
\aleq&\|\Ds{s_1+\alpha_1} u^i\|_{L^{\frac{2d}{1+\sigma_1+2(s_1+\alpha_1-1)}}(\R^d)}\, \|\Ds{s_2 + \alpha_2} u^j \|_{L^{\frac{2d}{1+\sigma_2+2(s_2+\alpha_2-1)}}(\R^d)}\, \|\Ds{s_3+\alpha_3} u^k\|_{L^{\frac{2d}{1+\sigma_3+2(s_3+\alpha_3-1)}}(\R^d)}\\
 \overset{\eqref{eq:GN:1}}{\aleq}&\|\Dso \vec{u}\|_{L^{2d}(\R^d)}^2\, \|\Ds{\frac{d}{2}} \vec{u}\|_{L^2(\R^d)}.
 \end{split}
\]
For this to be correct, we need some assumptions.

For the application of  \cite[Lemma A.6.]{ERFS22} observe that \cite[(A.4)]{ERFS22} is naturally satisfied since $s_1,s_2,s_3 < \frac{d}{2}$.

For the application of \eqref{eq:GN:1} we need $s_j+\alpha_j-1 < \frac{d-2}{2}$ which is easy to establish since $\alpha_1+\alpha_2+\alpha_3 = 1$, $s_1+s_2+s_3 = \frac{d}{2}$, and each $s_i < \frac{d}{2}$ (and $d \geq 4$).

So the conditions we need to ensure are
\[
\sigma_j \geq \max\{\frac{2(s_j+\alpha_j-1)}{d-2},0\}, \quad j=1,2,3.
\]
If all $s_j+\alpha_j-1 > 0$ for $j=1,2,3$ then we see that
\[
\sum_{j=1}^3 \max\{\frac{2(s_j+\alpha_j-1)}{d-2},0\} = \frac{d-4}{d-2} <1
\]
so we can easily choose $\sigma_j$ with the requirements above.

If (say) $s_1+\alpha_1-1 <0$ and $s_j+\alpha_j-1 >0$, $j=2,3$ then we can assume that $\alpha_1$ is as close to $1$ as possible, which means
\[
 s_2+\alpha_2-1+s_3+\alpha_3-1 \aeq  \sum_{j=1}^3 \frac{2(s_j+\alpha_j-1)}{d-2} = \frac{d-4}{d-2} < 1.
\]
Then we choose
\[
\sigma_j \geq \max\{\frac{2(s_j+\alpha_j-1)}{d-2},0\}, \quad j=2,3
\]
such that $\sigma_2+\sigma_3 < 1$, and pick $\sigma_3 = 1-\sigma_2-\sigma_3$. Clearly any permutation of indices does not affect the argument.

The last case is that (say) $s_1+\alpha_1-1 <0$ and $s_2+\alpha_2-1 <0$. In this case we can assume (by choice of $\alpha_1,\alpha_2$ as large as possible)
\[
s_j+\alpha_j-1 > -\frac{1}{4},
\]
so that
\[
1+2(s_j+\alpha_j-1) > 0 \quad j=1,2
\]
Then we simply choose
\[
 \sigma_3 \in (\frac{2(s_3+\alpha_3-1)}{d-2},1)
\]
and for any choice of $\sigma_1$, $\sigma_2$ such that $\sigma_1+\sigma_2+\sigma_3$ we found an admissible tuple for the application above of \eqref{eq:GN:1}.

The same principle can be applied for $t=\frac{1}{2}$ and $d\geq 5$ odd. In this case we need to replace  \cite[Lemma A.6.]{ERFS22} to show that
\[
\begin{split}
\|\Ds{\frac{1}{2}} H_{\Dso,3}(f,g,h)\|_{L^2(\R^d)}
 \aleq &\|\Ds{\alpha_1+\frac{1}{2}} f\|_{L^{p_{1;1}}(\R^d)}\, \|\Ds{\alpha_2} g\|_{L^{p_{2;1}}(\R^d)}\, \|\Ds{\alpha_3} h\|_{L^{p_{3;1}}(\R^d)}\\
 &\|\Ds{\alpha_1} f\|_{L^{p_{1;2}}(\R^d)}\, \|\Ds{\alpha_2+\frac{1}{2}} g\|_{L^{p_{2;2}}(\R^d)}\, \|\Ds{\alpha_3} h\|_{L^{p_{3;2}}(\R^d)}\\
 &\|\Ds{\alpha_1} f\|_{L^{p_{1;3}}(\R^d)}\, \|\Ds{\alpha_2} g\|_{L^{p_{2;3}}(\R^d)}\, \|\Ds{\alpha_3+\frac{1}{2}} h\|_{L^{p_{3;3}}(\R^d)}.
 \end{split}
\]
The proof is left to the reader, it follows from the arguments \cite[Lemma A.6.]{ERFS22} combined with the estimates in \cite[Section 3]{MSY21} after representing
\[
\begin{split}
H_{\Dso,3}(f,g,h)(x) := c\int_{\R^{3d}}  K(x,y,z_1,z_2,z_3) \Ds{\alpha_1}f(z_1)\, \Ds{\alpha_2}g(z)\, \Ds{\alpha_3}h(z)\, dz_1 dz_2 dz_3,
 \end{split}
\]
where
\[
\begin{split}
& K(x,y,z_1,z_2,z_3)\\
 =& \int_{\R^d} \frac{ \brac{|x-z_1|^{\alpha_1-d} -|y-z_1|^{\alpha_1-d}}\, \brac{|x-z_2|^{\alpha_2-d} -|y-z_2|^{\alpha_2-d}}\, \brac{|x-z_3|^{\alpha_3-d} -|y-z_3|^{\alpha_3-d}}}{|x-y|^{d+1}}\, dy
 \end{split}
\]
and estimating with the arguments of \cite[Section 3]{MSY21}
\[
 \int_{\R^n} \frac{K(x_1,y,z_1,z_2,z_3)-K(x,y,z_1,z_2,z_3)}{|x_1-x|^{d+\frac{1}{2}}}\, dx_1
\]
We leave the details to the reader.
\end{proof}

\subsection{Gagliardo-Nirenberg type inequalities}
We also will use versions of the Gagliardo-Nirenberg inequality, which we collect in the following lemma

\begin{lemma}[Higher order Gagliardo-Nirenberg]
Throughout this lemma we assume $d \geq 3$.
\begin{enumerate}
\item
For $s \in [0,\frac{d-2}{2}]$ and $\sigma \in [\frac{2s}{d-2},1]$
we have
\begin{equation}\label{eq:GN:1}
 \|\Ds{s} f\|_{L^{\frac{2d}{1+\sigma+2s}}(\R^d)} \aleq \|f\|_{L^{2d}(\R^d)}^{1-\sigma} \ \|\Ds{\frac{d-2}{2}} f\|_{L^2(\R^d)}^{\sigma}.
\end{equation}
The same holds for $s < 0$ as long as $\sigma \in [0,1]$ and we have $\frac{2d}{1+\sigma+2s} \in (1,\infty)$.

\item For $s \geq 0$, $t \in [0,s]$ and $\sigma \in [\frac{t}{s},1]$
\begin{equation}\label{eq:GN:1v2}
 \|\Ds{t} f\|_{L^{\frac{2d}{d-2(s-t)-(1-\sigma)(d-1-2s)}}(\R^d)} \aleq \|f\|_{L^{2d}(\R^d)}^{1-\sigma} \ \|\Ds{s} f\|_{L^2(\R^d)}^{\sigma}.
\end{equation}

\item For  any $\sigma \in [\frac{2\alpha}{d-2},1]$, $\alpha \in \R$ with $\frac{2d}{2+2\alpha+\sigma} \in (1,\infty)$ we have
\begin{equation}\label{eq:GN:234}
 \|\Ds{\alpha} (fg)\|_{L^{\frac{2d}{2+2\alpha+\sigma}}(\R^d)} \aleq \max_{\tilde{f},\tilde{g} \in \{f,g\}}\|\tilde{g}\|_{L^{2d}(\R^d)}^{2-\sigma} \ \|\Ds{\frac{d-2}{2}} \tilde{f}\|_{L^2(\R^d)}^{\sigma}
\end{equation}

\item We have
\begin{equation}\label{eq:gagliardo5}
\begin{split}
 &\|\Ds{\frac{d-2}{2}} \brac{{Q} f\, H_{\Dso}(\Ds{-1}g,\Ds{-1}h)}\|_{L^2}\\
 \aleq &
 \brac{1+\|Q\|_{L^\infty(\R^d)}}\, \max_{\tilde{f} \in \in \{f,g,h, {\Dso \tilde{Q}}\}} \brac{\|\Ds{\frac{d-2}{2}} \tilde{f}\|_{L^2(\R^d)}+\|\Ds{\frac{d-2}{2}} \tilde{f}\|_{L^2(\R^d)}}\, \max_{\tilde{g}\in \{f,g,h, {\Dso \tilde{Q}}\}} \|\tilde{g}\|_{L^{(2d,2)}(\R^d)}^{{2}}
 \end{split}
 \end{equation}
\item Recall the commutator notation
\[
 [T,f](g) = T(fg)-fT(g).
\]
Let $s \leq \frac{d-2}{2}$, $\sigma \in [0,1]$ such that $1+2s  -\sigma > 0$. If $s \geq 1$ assume additionally $(1-\sigma) \in {(}\frac{2(s-1)}{d-2},1]$ (observe the strict inequality!)

Then
\begin{equation}\label{eq:GN:gagliardo6}
\|\Ds{s} \brac{[\Rz,\Ds{-1} f] (g)}\|_{L^{\frac{2d}{1+2s -\sigma}}} \aleq \max_{\tilde{f}, \tilde{g} \in \{f,g\}}\|\tilde{g}\|_{L^{2d}(\R^d)}^{1+\sigma} \ \|\Ds{\frac{d-2}{2}} \tilde{f}\|_{L^2(\R^d)}^{1-\sigma}
\end{equation}
\end{enumerate}
\end{lemma}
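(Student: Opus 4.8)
The plan is to deduce all five estimates from three standard tools, used for Schwartz $f,g,h$ and then extended by density: the classical homogeneous Gagliardo--Nirenberg inequality, the Sobolev embedding $\dot{H}^{\alpha}(\R^d)\hookrightarrow L^{q}$ together with its Lorentz refinement $\dot{H}^{\frac{d-2}{2}}\hookrightarrow L^{(d,2)}$, and the Leibniz estimate \Cref{la:basicLeibniz}. Estimates \eqref{eq:GN:1} and \eqref{eq:GN:1v2} are two-parameter interpolation inequalities: for fixed smoothness the reciprocal Lebesgue exponent is affine in $\sigma$, while the right-hand side is a product of the \emph{same} two quantities with exponents summing to $1$. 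I would prove each at its two extreme values of $\sigma$ and interpolate. At the minimal admissible $\sigma$ (namely $\sigma=\tfrac{2s}{d-2}$ in \eqref{eq:GN:1}, resp.\ $\sigma=\tfrac ts$ in \eqref{eq:GN:1v2}) a direct computation identifies the exponent with the one produced by the homogeneous Gagliardo--Nirenberg inequality $\|\Ds{s}f\|_{L^{p_0}}\aleq\|\Ds{\frac{d-2}{2}}f\|_{L^2}^{\theta}\|f\|_{L^{2d}}^{1-\theta}$, $\theta=\tfrac{2s}{d-2}$ (for $s<0$ in \eqref{eq:GN:1} this endpoint is instead the Hardy--Littlewood--Sobolev bound for the Riesz potential of order $-s$), while at $\sigma=1$ the claim is precisely the Sobolev embedding applied to $\Ds{s}f\in\dot{H}^{\frac{d-2}{2}-s}$ (resp.\ $\Ds{t}f\in\dot{H}^{s-t}$). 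For intermediate $\sigma=(1-\lambda)\sigma_{\min}+\lambda$ one uses log-convexity of Lebesgue norms, $\|\Ds{s}f\|_{L^{p_\lambda}}\le\|\Ds{s}f\|_{L^{p_0}}^{1-\lambda}\|\Ds{s}f\|_{L^{p_1}}^{\lambda}$, since $\tfrac1{p_\lambda}$ is the corresponding convex combination of $\tfrac1{p_0},\tfrac1{p_1}$; the powers on the right-hand side then recombine to $\|\Ds{\frac{d-2}{2}}f\|_{L^2}^{\sigma}\|f\|_{L^{2d}}^{1-\sigma}$.

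For \eqref{eq:GN:234} I would first reduce the product to single factors: for $\alpha\ge0$ by the fractional Leibniz (Kato--Ponce) rule $\|\Ds{\alpha}(fg)\|_{L^p}\aleq\|\Ds{\alpha}f\|_{L^{p_1}}\|g\|_{L^{p_2}}+\|f\|_{L^{p_3}}\|\Ds{\alpha}g\|_{L^{p_4}}$, and for $\alpha<0$ by Hardy--Littlewood--Sobolev $\|\Ds{\alpha}(fg)\|_{L^p}\aleq\|fg\|_{L^r}\le\|f\|_{L^{r_1}}\|g\|_{L^{r_2}}$. To each summand I apply \eqref{eq:GN:1} to the two factors with a split $\sigma=\sigma_1+\sigma_2$ of the budget, taking $\sigma_1=\max\{0,\tfrac{2\alpha}{d-2}\}$ (so that \eqref{eq:GN:1} applies to $\Ds{\alpha}f$) and $\sigma_2=\sigma-\sigma_1\ge0$ by hypothesis; adding the reciprocal exponents gives exactly $\tfrac{2+2\alpha+\sigma}{2d}$, and since $\sigma_1+\sigma_2=\sigma$ and $(1-\sigma_1)+(1-\sigma_2)=2-\sigma$ the four norms combine to the asserted $\max_{\tilde g}\|\tilde g\|_{L^{2d}}^{2-\sigma}\max_{\tilde f}\|\Ds{\frac{d-2}{2}}\tilde f\|_{L^2}^{\sigma}$.

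Estimates \eqref{eq:gagliardo5} and \eqref{eq:GN:gagliardo6} are the substantive part. For \eqref{eq:gagliardo5} I would treat $Q\,f\,H_{\Dso}(\Ds{-1}g,\Ds{-1}h)$ as a triple product and distribute $\Ds{\frac{d-2}{2}}$ by iterating \Cref{la:basicLeibniz} (after the standard reduction of $\Ds{\frac{d-2}{2}}$ to $\nabla^{\lfloor(d-2)/2\rfloor}\Ds{t}$, $t\in\{0,\tfrac12\}$, which lets the integer derivatives be handled by the ordinary Leibniz rule and the integral representation of $H_{\Dso}$); the factor $H_{\Dso}(\Ds{-1}g,\Ds{-1}h)$ is handled by \Cref{la:basicLeibniz} with $\beta=1$, which transfers the outer derivatives onto $\Ds{-1}g,\Ds{-1}h$ and leaves ordinary fractional derivatives of $g,h$ of total order one less, so that $H_{\Dso}(\Ds{-1}g,\Ds{-1}h)$ effectively behaves like a quantity of order $-1$ in $(g,h)$. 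Each $Q$-derivative is either absent (bounded by $\|Q\|_{L^\infty}$) or rewritten $\Ds{a}Q=\Ds{a-1}\Dso Q$, so that $\Dso Q$ enters as an ordinary function. One is left with a finite sum of products of at most four fractional derivatives $\Ds{a_i}w_i$, $w_i\in\{f,g,h,\Dso Q\}$ with $\sum_i a_i$ controlled, each estimated by \eqref{eq:GN:1}--\eqref{eq:GN:234} with the $\sigma$-budget split so that exactly one factor lands in $\dot{H}^{\frac{d-2}{2}}$ and the rest in $L^{(2d,2)}$; here one uses the Lorentz versions of Sobolev, Leibniz and Hardy--Littlewood--Sobolev (e.g.\ O'Neil's inequality), and $d\ge3$ ensures the resulting system of index constraints is solvable. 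For \eqref{eq:GN:gagliardo6} the analogue of \Cref{la:basicLeibniz} is a commutator estimate expressing that $[\Rz,\Ds{-1}f]$ gains one derivative: $\|\Ds{s}[\Rz,\Ds{-1}f](g)\|_{L^p}\aleq\|\Ds{\gamma-1}f\|_{L^{p_1}}\|\Ds{s-\gamma}g\|_{L^{p_2}}$ for suitable $\gamma$ and $\tfrac1p=\tfrac1{p_1}+\tfrac1{p_2}$. This can be obtained from the pointwise bound $|\Ds{-1}f(x)-\Ds{-1}f(y)|\aleq|x-y|\,(\mathcal{M}\Rz f(x)+\mathcal{M}\Rz f(y))$, which reduces the commutator to Riesz potentials of $|\mathcal{M}\Rz f|\,|g|$ and is then closed by Hardy--Littlewood--Sobolev and the maximal inequality, exactly as in the proof of \Cref{la:reallyworstterm} (alternatively by Coifman--Meyer paraproduct estimates); inserting it into \eqref{eq:GN:1} for the two factors $\Ds{\gamma-1}f$ and $\Ds{s-\gamma}g$, with $\sigma$ split between them, yields \eqref{eq:GN:gagliardo6}, and the hypothesis $(1-\sigma)\in(\tfrac{2(s-1)}{d-2},1]$ for $s\ge1$ is exactly the condition that makes \eqref{eq:GN:1} applicable to the factor carrying the top order $\approx s-1$.

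The analytic ingredients are all classical; the genuine difficulty is confined to \eqref{eq:gagliardo5} and \eqref{eq:GN:gagliardo6}, namely: pinning down the correct commutator-gain estimate for $[\Rz,\Ds{-1}f]$ with its sharp (strict) index range, and the combinatorial bookkeeping in the iterated Leibniz expansion, where for each of the many terms one must exhibit an admissible splitting of the $\sigma$-budget together with compatible Lorentz exponents -- this is the step where the dimensional restriction $d\ge3$ (and, in places, $d\ge4$) is genuinely used.
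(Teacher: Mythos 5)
For items (1)--(3) and (5) your route is essentially the paper's: classical Gagliardo--Nirenberg plus Sobolev embedding for \eqref{eq:GN:1}--\eqref{eq:GN:1v2} (the paper reduces to the balanced case $\sigma=\frac{2s}{d-2}$, resp.\ $\sigma=\frac ts$, by Sobolev embedding rather than interpolating between the two extreme values of $\sigma$, but the two mechanisms are equivalent), a fractional Leibniz rule with a split $\sigma=\sigma_1+\sigma_2$ of the budget for \eqref{eq:GN:234}, and for \eqref{eq:GN:gagliardo6} a commutator estimate with a one-derivative gain followed by \eqref{eq:GN:1} on both factors; the paper quotes this commutator bound from \cite[(2.3.3)]{Ingmanns20}, and your Coifman--Meyer fallback is an acceptable substitute. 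One caveat on (5): your proposed derivation from the pointwise bound $|\Ds{-1}f(x)-\Ds{-1}f(y)|\aleq |x-y|\,\brac{\mathcal{M}\Rz f(x)+\mathcal{M}\Rz f(y)}$ only treats the commutator \emph{without} the outer $\Ds{s}$, since the absolute values destroy the cancellation needed to apply $\Ds{s}$ afterwards; for $s>0$ you genuinely need the derivative-level commutator estimate, which you do acknowledge. The role of the strict inequality $(1-\sigma)>\frac{2(s-1)}{d-2}$ for $s\geq 1$ is as you describe (in the paper it is what permits the small shift $\beta$ close to $1$, resp.\ the parameter $\delta$, in the case analysis).

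The gap is in \eqref{eq:gagliardo5}. Your plan --- iterate \Cref{la:basicLeibniz} and estimate every resulting product by \eqref{eq:GN:1}--\eqref{eq:GN:234}, placing exactly one factor in $\dot{H}^{\frac{d-2}{2}}$ and the rest in $L^{(2d,2)}$ --- does not cover the extreme term of the Leibniz expansion in which all $\frac{d-2}{2}$ derivatives fall on $f$ (with $Q$ taken in $L^\infty$), leaving the bare factor $H_{\Dso}(\Ds{-1}g,\Ds{-1}h)$. Since $\Ds{\frac{d-2}{2}}f$ occupies the $L^2$ slot, H\"older forces this factor into $L^\infty$, and one needs the endpoint bound $\|H_{\Dso}(\Ds{-1}g,\Ds{-1}h)\|_{L^\infty}\aleq \|g\|_{L^{(2d,2)}}\,\|h\|_{L^{(2d,2)}}$. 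This is not a consequence of \Cref{la:basicLeibniz} (whose exponents are restricted to $p,p_1,p_2\in(1,\infty)$) nor of \eqref{eq:GN:1}--\eqref{eq:GN:234}; note that $\Ds{-1}g$ lies in no Lebesgue space when $g\in L^{(2d,2)}$, so it is precisely the difference structure of $H_{\Dso}$ at the Lorentz endpoint (second index $2$) that makes the estimate true. The paper imports exactly this bound from \cite[Lemma A.8, (A.7)]{ERFS22}, first proves the intermediate estimate \eqref{eq:gagliardo6v2} for $f\,H_{\Dso}(\Ds{-1}g,\Ds{-1}h)$, and only then peels off $Q$ by a second Leibniz step. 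With that one ingredient supplied, your bookkeeping scheme for the remaining terms (sign cases for the exponents hitting $g,h$, choice of $\sigma_i$, and the $\Dso Q$-terms) does go through along the lines of the paper.
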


\begin{proof}
\begin{enumerate}
\item We prove \eqref{eq:GN:1}.

If $s <0$ then from the Sobolev inequality,
\[
\|\Ds{s} f\|_{L^{\frac{2d}{1+\sigma+2s}}(\R^d)} \aleq
\|f\|_{L^{\frac{2d}{1+\sigma}}(\R^d)}.
\]
Then the claim follows from the case $s=0$ (observe that the assumptions of $\sigma$ are still satisfied).

So, from now on we assume that $s \geq 0$.

For $\sigma = 1$ \eqref{eq:GN:1} is simply Sobolev inequality. For $\sigma = 0$, the assumption $\sigma \geq \frac{2s}{d-2}$ implies that $s \leq 0$, i.e. $s=0$, and then \eqref{eq:GN:1} is trivial.

So from now on assume $\sigma \neq 0,1$ and $s \geq 0$.

If $\sigma \geq \frac{2s}{d-2}$ then $t$ defined by
\[
\sigma= \frac{2t}{d-2} \quad \Leftrightarrow \quad t=\frac{d-2}{2}\sigma
\]
satisfies $t \in [s,\frac{d-2}{2})$.

Moreover, from Sobolev embedding
\[
 \|\Ds{s} f\|_{L^{\frac{2d}{1+\sigma+2s}}(\R^d)} \aleq  \|\Ds{t} f\|_{L^{\frac{2d}{1+\sigma+2t}}(\R^d)}.
\]
By this argument, otherwise replacing $s$ with $t$, we can reduce the claim to the case
\[
 \sigma =\frac{2s}{d-2}.
\]
Then,
\[
 s = (1-\sigma) 0 + \sigma \frac{d-2}{2},
\]
and
\[
 \frac{1+\sigma+2s}{2d} = \brac{1-\sigma} \frac{1}{2d} + \sigma \frac{1}{2}.
 \]
Consequently, classical Gagliardo-Nirenberg (see, e.g., \cite[Lemma 3.1.]{BM01})  implies
\[
 \|\Ds{s} f\|_{L^{\frac{2d}{1+\sigma+2s}}(\R^d)} \aleq \|f\|_{L^{2d}(\R^d)}^{1-\sigma} \ \|\Ds{\frac{d-2}{2}} f\|_{L^2(\R^d)}^{\sigma}.
\]

\item The proof of  \eqref{eq:GN:1v2} is very similar to \eqref{eq:GN:1}.

For $\sigma = 1$ it is Sobolev embedding. For $\sigma = 0$ we must have $t=s=0$ and then \eqref{eq:GN:1v2} is trivial. So from now on $\sigma \neq 0,1$.

If we set $\tilde{t}$ defined by
\[
\frac{\tilde{t}}{s} := \sigma \quad \Leftrightarrow \tilde{t} = s\sigma \geq t
\]
then from Sobolev embedding we have
\[
 \|\Ds{t} f\|_{L^{\frac{2d}{d-2(s-t)-(1-\sigma)(d-1-2s)}}(\R^d)} \aleq
  \|\Ds{\tilde{t}} f\|_{L^{\frac{2d}{d-2(s-\tilde{t})-(1-\sigma)(d-1-2s)}}(\R^d)}
\]
Now we have
\[
 \tilde{t} = (1-\sigma) 0  + \sigma s
\]
and
\[
\frac{d-2(s-\tilde{t})-(1-\sigma)(d-1-2s)}{2d} = (1-\sigma) \frac{1}{2d} + \sigma \frac{1}{2}
\]
so we have
\[
 \|\Ds{\tilde{t}} f\|_{L^{\frac{2d}{d-2(s-\tilde{t})-(1-\sigma)(d-1-2s)}}(\R^d)} \aleq \|f\|_{L^{2d}(\R^d)}^{1-\sigma} \ \|\Ds{s} f\|_{L^2(\R^d)}^{\sigma},
\]
from the usual Gagliardo-Nirenberg inequality.

\item Proof of \eqref{eq:GN:234}: The case $\alpha < 0$ can be reduced to $\alpha = 0$ by means of Sobolev embedding
\[
 \|\Ds{-\alpha} (fg)\|_{L^{\frac{2d}{2-2\alpha+\sigma}}(\R^d)} \aleq
 \|fg\|_{L^{\frac{2d}{2+\sigma}}(\R^d)}
 \]
So assume from now on $\sigma \in [\frac{2\alpha}{d-2},1]$ and $\alpha \in [0,\frac{d}{2}-1]$.

For each $\beta \in [0,\alpha]$ we pick $\sigma_1 \in [\frac{2\beta}{d-2},1]$, $\sigma_2 \in [\frac{2(\alpha-\beta)}{d-2},1]$ with $\sigma_1 + \sigma_2 = \sigma$.

Then, from the fractional Leibniz rule and \eqref{eq:GN:1},
\[
\begin{split}
 \|\Ds{\alpha} (fg)\|_{L^{\frac{2d}{2+2\alpha+\sigma}}(\R^d)} \aleq&  \max_{\beta \in [0,\alpha]} \|\Ds{\beta} f\|_{L^{\frac{2d}{1+\sigma_1+2\beta}}(\R^d)}\, \|\Ds{\alpha-\beta} g \|_{L^{\frac{2d}{1+\sigma_2+2(\alpha-\beta)}}(\R^d)} \\
 \aleq&\|f\|_{L^{2d}(\R^d)}^{1-\sigma_1} \ \|\Ds{\frac{d}{2}-1} f\|_{L^2(\R^d)}^{\sigma_1}\ \|g\|_{L^{2d}(\R^d)}^{1-\sigma_2} \ \|\Ds{\frac{d}{2}-1} g\|_{L^2(\R^d)}^{\sigma_2}\\
 =&\max_{\tilde{f},\tilde{g} \in \{f,g\}}\|\tilde{g}\|_{L^{2d}(\R^d)}^{2-\sigma} \ \|\Ds{\frac{d-2}{2}} \tilde{f}\|_{L^2(\R^d)}^{\sigma}
 \end{split}
\]
Here and throughout the paper we mean by an abuse of notation with $\max_{\beta \in [0,\alpha]}$ the maximum over finitely many specific $\beta \in \{0, \beta_1 ,\beta_2 ,\beta_{[a]} , \alpha\} \subset [0,\alpha]$ (that can be computed from the Leibniz rule).

\item In preparation of the proof of \eqref{eq:gagliardo5} we first prove for any $\alpha \geq 0$ such that $\frac{2d}{d-2\alpha} \in (1,\infty)$,
 \begin{equation}\label{eq:gagliardo6v2}
  \|\Ds{\frac{d-2}{2}-\alpha}\brac{f\, H_{\Dso} (\Ds{-1}g,\Ds{-1}h)}\|_{L^{\frac{2d}{d-2\alpha}}(\R^d)} \aleq \max_{\tilde{f},\tilde{g} \in \{f,g,h\}} \|\tilde{f}\|_{L^2}\, \|\tilde{g}\|_{L^{(2d,2)}(\R^d)}^2
 \end{equation}
By Sobolev inequality
\[
\begin{split}
  &\|\Ds{\frac{d-2}{2}-\alpha}\brac{f\, H_{\Dso} (\Ds{-1}g,\Ds{-1}h)}\|_{L^{\frac{2d}{d-2\alpha}}(\R^d)}\\
  \aleq&\|\Ds{\frac{d-2}{2}}\brac{f\, H_{\Dso} (\Ds{-1}g,\Ds{-1}h)}\|_{L^{2}(\R^d)}\\
  \end{split}
\]
so in order to prove \eqref{eq:gagliardo6v2} we may assume w.l.o.g. $\alpha = 0$.

We have from the Leibniz rule
\begin{equation}\label{eq:asuioxcvxuiocv23}
\begin{split}
& \|\Ds{\frac{d-2}{2}} \brac{f\, H_{\Dso}(\Ds{-1}g,\Ds{-1}h)}\|_{L^2} \\
\aleq & \|\Ds{\frac{d-2}{2}} f\|_{L^{2}(\R^d)}\ \|H_{\Dso}(\Ds{-1}g,\Ds{-1}h)\|_{L^{\infty}(\R^d)}\\
&+\max_{\gamma_1+\gamma_2+\gamma_3 = \frac{d-2}{2}} \|\Ds{\gamma_1} f\|_{L^{p_1}} \|\Ds{\gamma_2} g\|_{L^{p_2}} \|\Ds{\gamma_3} h\|_{L^{p_3}}
\end{split}
\end{equation}
Here, in the max we have $\gamma_1 \in [0,\frac{d}{2}-1)$, $\gamma_2 > -1$, $\gamma_3 > -1$. Indeed, by the fractional Leibniz rule we can ensure for any fixed $\eps_0 > 0$
\[
 \gamma_2,\gamma_3 > -\eps_0
\]
Moreover $p_1,p_2,p_3$ can be chosen differently for each tuple $(\gamma_1,\gamma_2,\gamma_3)$.

The first term in \eqref{eq:asuioxcvxuiocv23} can be estimated by \cite[Lemma A.8, (A.7)]{ERFS22}
\[
 \|\Ds{\frac{d-2}{2}} f\|_{L^2} \|H_{\Dso}(\Ds{-1}g,\Ds{-1}h)\|_{L^\infty} \aleq  \|\Ds{\frac{d-2}{2}} f\|_{L^2} \|g\|_{L^{(2d,2)}(\R^d)}\, \|h\|_{L^{(2d,2)}(\R^d)}
\]

For the second term in \eqref{eq:asuioxcvxuiocv23} we first consider the case where \underline{$\gamma_3 \leq 0$ and $\gamma_2 \leq 0$}. Observe that $\gamma_2,\gamma_3 > -1$ so the following makes sense. We estimate the second line of \eqref{eq:asuioxcvxuiocv23} by
\[
\begin{split}
\aleq&\|\Ds{\gamma_1} f\|_{L^{\frac{d2}{2+2\gamma_1}}(\R^d)}\ \|\Ds{\gamma_2} g\|_{L^{\frac{2d}{1+\gamma_2 2}}(\R^d)}\, \|\Ds{\gamma_3} h\|_{L^{\frac{2d}{1+\gamma_3 2}}(\R^d)}\\
 \overset{\gamma_2,\gamma_3 \leq 0}\aleq& \|\Ds{\frac{d-2}{2}} f\|_{L^2(\R^d)}\, \|g\|_{L^{2d}(\R^d)}\, \|h\|_{L^{2d}(\R^d)}
 \end{split}
\]
Now we assume that \underline{$\gamma_3 \leq 0$ and $\gamma_2 > 0$}. Then we estimate the second line of \eqref{eq:asuioxcvxuiocv23} by
\[
\begin{split}
 \aleq&\|\Ds{\gamma_1} f\|_{L^{\frac{2d}{1+\sigma_1+2\gamma_1}}(\R^d)}\ \|\Ds{\gamma_2} g\|_{L^{\frac{2d}{1+\sigma_2+2\gamma_2}}(\R^d)}\, \|\Ds{\gamma_3} h\|_{L^{\frac{2d}{1+\gamma_3 2}}(\R^d)}\\
 \overset{\gamma_3 < 0}{\aleq}&\|\Ds{\gamma_1} f\|_{L^{\frac{2d}{1+\sigma_1+2\gamma_1}}(\R^d)}\ \|\Ds{\gamma_2} g\|_{L^{\frac{2d}{1+\sigma_2+2\gamma_2}}(\R^d)}\, \|h\|_{L^{2d}(\R^d)},
 \end{split}
\]
where we choose $\sigma_1+\sigma_2 = 1$ so that
\[
 \frac{1+\sigma_1+2\gamma_1}{2d}+\frac{1+\sigma_2+2\gamma_2}{2d}=\frac{1}{2}
\]
We need to ensure that
\[
 \sigma_i \geq \frac{2\gamma_i}{d-2} \quad i=1,2
\]
Observe that since $\gamma_3$ is negative, but very close to zero,
\[
\sum_{i=1}^2 \frac{2\gamma_i}{d-2} =\frac{d-4}{d-2} +\frac{-2\gamma_3}{d-2} \overset{|\gamma_3| \ll 1}{<} 1 =\sigma_1 + \sigma_2
\]
so the conditions on $\sigma_i$ can be satisfied, and we have
\[
\begin{split}
&\|\Ds{\gamma_1} f\|_{L^{\frac{2d}{1+\sigma_1+2\gamma_1}}(\R^d)}\ \|\Ds{\gamma_2} g\|_{L^{\frac{2d}{1+\sigma_2+2\gamma_2}}(\R^d)}\, \|h\|_{L^{2d}(\R^d)}\\
\aleq&\|f\|_{L^{2d}(\R^d)}^{1-\sigma_1}\, \|\Ds{\frac{d-2}{2}} f\|_{L^2}^{\sigma_1}\
\|g\|_{L^{2d}(\R^d)}^{1-\sigma_2}\, \|\Ds{\frac{d-2}{2}} g\|_{L^2}^{\sigma_2}\ \|h\|_{L^{2d}(\R^d)}\\
\aleq&\max_{\tilde{f}} \|\Ds{\frac{d-2}{2}} \tilde{f}\|_{L^2(\R^d)} \, \max_{\tilde{g}\in \{f,g,h\}} \|\tilde{g}\|_{L^{(2d,2)}(\R^d)}^2
 \end{split}
\]

The case \underline{$\gamma_3 \leq 0$ and $\gamma_2 > 0$} is almost verbatim to the previous one.

Now we assume \underline{$\gamma_1,\gamma_2,\gamma_3 \geq 0$}. In this case we estimate the second line of \eqref{eq:asuioxcvxuiocv23} by
\[
\begin{split}
\aleq&\max_{\gamma_1+\gamma_2+\gamma_3 = \frac{d-2}{2}}\|\Ds{\gamma_1} f\|_{L^{\frac{2d}{1+\sigma_1+2\gamma_1}}}\, \|\Ds{\gamma_2} g\|_{L^{\frac{2d}{1+\sigma_2+2\gamma_2}}}\, \|\Ds{\gamma_3} h\|_{L^{\frac{2d}{1+\sigma_3+2\gamma_3}}}\\
 \end{split}
\]
where and we choose $\sigma_1 + \sigma_2 + \sigma_3=1$. In order to conclude with the help of \eqref{eq:GN:1} we set
\[
 \sigma_i := \frac{2\gamma_i}{d-2}, \quad i=1,2,3.
\]
We have established \eqref{eq:gagliardo6v2}.

\underline{We now prove \eqref{eq:gagliardo5}}
Using the fractional Leibniz rule we have
\[
\begin{split}
 &\|\Ds{\frac{d-2}{2}} \brac{{Q} f\, H_{\Dso}(\Ds{-1}g,\Ds{-1}h)}\|_{L^2}\\
 \aleq&\|Q\|_{L^\infty}\, \|\Ds{\frac{d-2}{2}} \brac{f\, H_{\Dso}(\Ds{-1}g,\Ds{-1}h)}\|_{L^2}\\
 &+\|\Ds{\frac{d}{2}-1}Q\|_{L^\frac{2d}{d-2}(\R^d)}\, \|f\|_{L^d} \|H_{\Dso}(\Ds{-1}g,\Ds{-1}h)\|_{L^{\infty}}\\
 &+\max_{\alpha \in (0,\frac{d}{2}-1)}\|\Ds{\alpha}Q\|_{L^{\frac{d}{\alpha}}(\R^d)}\, \|\Ds{\frac{d}{2}-1-\alpha} \brac{f\, H_{\Dso}(\Ds{-1}g,\Ds{-1}h)}\|_{L^{\frac{2d}{d-2\alpha}}}.
 \end{split}
\]
The first line is estimated by \eqref{eq:gagliardo6v2} ($\alpha = 0$). The second line is estimated by Sobolev embedding and  \cite[Lemma A.8, (A.7)]{ERFS22},
\[
\begin{split}
 &\|\Ds{\frac{d}{2}-1}Q\|_{L^\frac{2d}{d-2}(\R^d)}\, \|f\|_{L^d} \|H_{\Dso}(\Ds{-1}g,\Ds{-1}h)\|_{L^{\infty}}\\
 \aleq &\|\Ds{\frac{d}{2}} Q\|_{L^2(\R^d)}\, \|\Ds{\frac{d-2}{2}} f\|_{L^2(\R^d)}\ \|f\|_{L^{(2d,2)}(\R^d)}\, \|g\|_{L^{(2d,2)}(\R^d)}.
\end{split}
 \]
For the third line we use again \eqref{eq:gagliardo6v2} and Sobolev inequality and conclude
\[
 \begin{split}
 &\|\Ds{\alpha}Q\|_{L^{\frac{d}{\alpha}}(\R^d)}\, \|\Ds{\frac{d}{2}-1-\alpha} \brac{f\, H_{\Dso}(\Ds{-1}g,\Ds{-1}h)}\|_{L^{\frac{2d}{d-2\alpha}}}\\
 \aleq&\|\Ds{\frac{d}{2}}Q\|_{L^{2}(\R^d)}\, \max_{\tilde{f} \in \{f,g,h\}} \|\Ds{\frac{d-2}{2}} \tilde{f}\|_{L^2(\R^d)}\, \max_{\tilde{g}\in \{f,g,h\}} \|\tilde{g}\|_{L^{(2d,2)}(\R^d)}^{{2}}\end{split}
 \]

\item Now we prove \eqref{eq:GN:gagliardo6}:

The case \underline{$s < 0$} can be reduced to the case $s=0$, since by assumption, $\frac{2d}{1+2s -\sigma} \in (1,\infty)$, and by Sobolev embedding,
\[
 \|\Ds{s} \brac{[\Rz,\Ds{-1} f] (g)}\|_{L^{\frac{2d}{1+2s -\sigma}}} \aleq
 \|\brac{[\Rz,\Ds{-1} f] (g)}\|_{L^{\frac{2d}{1 -\sigma}}}.
\]
So assume now \underline{$s \in [0,1)$, $\sigma \in [0,1]$, and either $\sigma < 1$ or $s>0$}. From commutator estimates, \cite[(2.3.3)]{Ingmanns20}, for $\sigma_1,\sigma_2 \in [0,1]$, $\sigma_1+\sigma_2 = 1-\sigma$, and
for any $\beta \in (0,1)$.
\[
\begin{split}
& \|\Ds{s} \brac{[\Rz,\Ds{-1} f] (g)}\|_{L^{\frac{2d}{1+2s -\sigma}}}\\
\aleq &\| \Ds{\beta-1} f\|_{L^{\frac{2d}{1+\sigma_1+2(\beta-1)}}} \|\Ds{s-\beta} g\|_{L^{\frac{2d}{1+\sigma_2+2(s-\beta)}}}\\
\end{split}
\]
Since $s < 1$ we can choose $s<\beta < 1$, and then with \eqref{eq:GN:1},
\[
\aleq \| f\|_{L^{2d}(\R^d)}^{1-\sigma_1} \|\Ds{\frac{d-2}{2}}f\|_{L^2}^{\sigma_1} \| g\|_{L^{2d}(\R^d)}^{1-\sigma_2} \|\Ds{\frac{d-2}{2}}f\|_{L^2}^{\sigma_2}.
\]

If \underline{$s=1$} we can make the same argument, only that we have $\beta<1=s$. However we can make $s-\beta$ arbitrarily small. So if $1-\sigma>0$ then we can take $\beta$ suitably close to $1$ so that $\sigma_2$ additionally satisfies $\sigma_2 \geq \frac{2(s-\beta)}{d-2}$, and then the same argument as above is true.

We argue similarly for \underline{$s > 1$}, $\sigma \in [0,1]$ and $(1-\sigma) \in {(}\frac{2(s-1)}{d-2},1]$. From commutator estimates, \cite[(2.3.3)]{Ingmanns20}, for any $\delta > 0$
\[
\begin{split}
&\|\Ds{s} \brac{[\Rz,\Ds{-1} f] (g)}\|_{L^{\frac{2d}{1+2s  -\sigma}}}\\
=&\|\Ds{s} \brac{[\Rz,\Ds{-1} f] (g)}\|_{L^{\frac{2d}{2+2(s-1) + 1-\sigma}}}\\
\aleq &\max_{t \in [-\delta,s-1]} \|\Ds{t} f\|_{L^{\frac{2d}{1+\sigma_1+2t}}(\R^d)} \|\Ds{s-1-t} g\|_{L^{\frac{2d}{1+\sigma_2+2(s-1-t)}}(\R^d)}\\
\overset{\eqref{eq:GN:1}}{\aleq}&\|f\|_{L^{2d}(\R^d)}^{1-\sigma_1} \ \|\Ds{\frac{d}{2}-1} f\|_{L^2(\R^d)}^{\sigma_1}\ \|g\|_{L^{2d}(\R^d)}^{1-\sigma_2} \ \|\Ds{\frac{d}{2}-1} g\|_{L^2(\R^d)}^{\sigma_2}\\
 =&\max_{\tilde{f},\tilde{g} \in \{f,g\}}\|\Dso \tilde{g}\|_{L^{2d}(\R^d)}^{1+\sigma} \ \|\Ds{\frac{d}{2}} \tilde{f}\|_{L^2(\R^d)}^{1-\sigma}.
 \end{split}
\]
For this to make sense we must choose $\sigma_1 + \sigma_2 = 1-\sigma$, $\sigma_1,\sigma_2 \in [0,1]$ such that additionally
\[
 \sigma_1 \in [\frac{2t}{d-2},1], \quad
 \sigma_2 \in [\frac{2(s-1-t)}{d-2},1].
\]
If $t \geq 0$ this is no problem, by the assumptions, since
\[
 (1-\sigma) \in {(}\frac{2(s-1)}{d-2},1]
\]
For the case $t < 0$, we need to assume that $\delta \ll 1$ so that
\[
 (1-\sigma)  >\frac{2(s-1+\delta)}{d-2}.
\]
Then pick $\sigma_1 = 0$ and we observe
\[
 \sigma_2 =  (1-\sigma) \in [\frac{2(s-1-t)}{d-2},1].
\]
\end{enumerate}
\end{proof}

\section{Strichartz estimates}\label{s:strichartz}
The following estimate is a consequence of a careful inspection of the arguments in \cite[p.566]{SS02}. See also \cite[Corollary 1.3]{KT98}. It follows from Duhamel principle and the semigroup-representation for the homogeneous wave equation
\begin{equation}\label{eq:strich:veq}
\begin{cases}
 (\partial_{tt} - \lap )v =0 \quad &\text{in $\R^d \times [0,T] $}\\
 v = f\quad &\text{in $\R^d \times \{0\}$}\\
 \partial_t v = g\quad &\text{in $\R^d \times \{0\}$}.
 \end{cases}
\end{equation}
Namely we have
\[
 v(x,t) = \frac{e^{\i t \sqrt{-\lap}} + e^{-\i t \sqrt{-\lap}}}{2} f(x)+\frac{e^{\i t \sqrt{-\lap}} - e^{-\i t \sqrt{-\lap}}}{2\i} \sqrt{-\lap}^{-1}g(x)
\]
and thus
\[
  \sqrt{-\lap}^{-1} \partial_t v(x,t) = -\frac{e^{\i t \sqrt{-\lap}} {-}  e^{-\i t \sqrt{-\lap}}}{2{\i}}  f(x)+\frac{e^{\i t \sqrt{-\lap}} {+}  e^{-\i t \sqrt{-\lap}}}{{2}} \sqrt{-\lap}^{-1} g(x)
\]

\begin{lemma}\label{la:strichartz}
Let $d \geq 4$. Assume for $T > 0$
\begin{equation}\label{eq:strich:ueq}
\begin{cases}
 (\partial_{tt} - \lap )u =h \quad &\text{in $\R^d \times [-T,T]$}\\
 u  = f\quad &\text{in $\R^d\times\{0\}$}\\
 \partial_t u = g\quad &\text{in $\R^d \times \{0\}$}\\
 \end{cases}
\end{equation}
then for all $s \in (\frac{1}{2},\frac{d^2-4d+1}{2(d-1)}+1]$,
\[
\begin{split}
 &\|\Ds{\frac{d-2}{2}}du(t)\|_{C^0_t L^{2}_x(\R^d) \times (-T,T)}  + \|\Ds{s} u(t)\|_{L^2_t L^{(\frac{2d}{2s-1},2)}_x(\R^d \times (-T,T)}\\
 \aleq& \|\Ds{\frac{d}{2}} f\|_{L^2(\R^d)} + \|\Ds{\frac{d-2}{2}} g\|_{L^2(\R^d)}\\
 &+\|\Ds{\frac{d-2}{2}}h\|_{L^1_t L^2_x (\R^d \times (-T,T))}.
 \end{split}
\]
as well as
\[
\begin{split}
 \|\Ds{s-1} du(t)\|_{L^2_t L^{(\frac{2d}{2s-1},2)}_x(\R^d \times (-T,T))} \aleq &\|\Ds{\frac{d}{2}} f\|_{L^2(\R^d)} + \|\Ds{\frac{d-2}{2}} g\|_{L^2(\R^d)}\\
 &+\|{\Ds{\frac{d-2}{2}}h}\|_{L^1_t L^2_x (\R^d \times (-T,T))}.
 \end{split}
\]
In particular,
\[
\begin{split}
 &\|\Ds{\frac{d-4}{2}}du(t)\|_{C^0_t L^{2}_x(\R^d) \times (-T,T)}  + \|\Ds{s -1}u(t)\|_{L^2_t L^{(\frac{2d}{2s-1},2)}_x(\R^d \times (-T,T)} \\
 \aleq& \|\Ds{\frac{d-2}{2}} f\|_{L^2(\R^d)} + \|\Ds{\frac{d-4}{2}} g\|_{L^2(\R^d)}\\
 &+\|\Ds{\frac{d-4}{2}}h\|_{L^1_t L^2_x (\R^d \times (-T,T))}.
 \end{split}
\]

\end{lemma}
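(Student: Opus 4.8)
The plan is to reduce everything to the explicit solution formula recalled just before the statement, and then apply the classical energy inequality together with the endpoint Strichartz estimate of Keel--Tao \cite{KT98}. First I would dispose of the homogeneous problem $h\equiv 0$. By the representation above, $u(t)=\cos(t\sqrt{-\lap})f+\frac{\sin(t\sqrt{-\lap})}{\sqrt{-\lap}}g$, hence $\partial_t u(t)=-\sqrt{-\lap}\,\sin(t\sqrt{-\lap})f+\cos(t\sqrt{-\lap})g$, and $\nabla u$ is of the same type up to the (Lorentz-bounded) Riesz transforms $R_j=\partial_j\Dso^{-1}$. The $C^0_t L^2$-control of $\Ds{\frac{d-2}{2}}d\vec u$ is then the conservation-of-energy estimate for the wave equation applied to $\Ds{\frac{d-2}{2}}u$, giving $\sup_t\|\Ds{\frac{d-2}{2}}d\vec u(t)\|_{L^2(\R^d)}\aleq \|\Ds{\frac d2}f\|_{L^2(\R^d)}+\|\Ds{\frac{d-2}{2}}g\|_{L^2(\R^d)}$, the one-derivative gain on $g$ coming from the $\sqrt{-\lap}^{-1}$ in the formula.

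Second, for the space-time norm I would invoke the Keel--Tao endpoint estimate: for $d\geq4$ the pair $(q,r)=(2,\tfrac{2(d-1)}{d-3})$ is wave-admissible and $\|e^{\pm\i t\sqrt{-\lap}}\phi\|_{L^2_tL^{(r,2)}_x(\R^d\times\R)}\aleq\|\phi\|_{\dot{H}^{\gamma}(\R^d)}$ with $\gamma=\tfrac d2-\tfrac dr-\tfrac12$. Applying the fractional-integration bound $\|\Ds{-\sigma}v\|_{L^{(r,2)}(\R^d)}\aleq\|v\|_{L^{(r_0,2)}(\R^d)}$ for $\tfrac1{r_0}-\tfrac1r=\tfrac\sigma d$ (which preserves the second Lorentz exponent) upgrades this to the whole family $(2,r)$ with $r\geq\tfrac{2(d-1)}{d-3}$; writing $r=\tfrac{2d}{2s-1}$, so that $\tfrac dr=s-\tfrac12$ and $\gamma=\tfrac d2-s$, this is exactly the range $s\in(\tfrac12,\tfrac{d^2-4d+1}{2(d-1)}+1]$. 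Composing with $\Ds{s}$ and the solution formula yields $\|\Ds{s}u\|_{L^2_tL^{(\frac{2d}{2s-1},2)}_x}\aleq\|\Ds{s}f\|_{\dot{H}^{\frac d2-s}}+\|\Ds{s-1}g\|_{\dot{H}^{\frac d2-s}}=\|\Ds{\frac d2}f\|_{L^2}+\|\Ds{\frac{d-2}{2}}g\|_{L^2}$, and the estimate for $\Ds{s-1}d\vec u$ is the same computation since $\partial_t$ (resp.\ $\nabla$) turns the two half-wave propagators into $\Ds{s}$-order operators in the same manner.

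Third, for the inhomogeneous term I would use Duhamel, $u_h(t)=\int_0^t\frac{\sin((t-\tau)\sqrt{-\lap})}{\sqrt{-\lap}}h(\tau)\,d\tau$, and the Christ--Kiselev lemma — applicable since the output time exponent $2$ is strictly larger than the dual source exponent $1$ — to deduce from the homogeneous Strichartz bound that $\|\Ds{s}u_h\|_{L^2_tL^{(\frac{2d}{2s-1},2)}_x}\aleq\int\|\Ds{s-1}h(\tau)\|_{\dot{H}^{\frac d2-s}}\,d\tau=\|\Ds{\frac{d-2}{2}}h\|_{L^1_tL^2_x}$, and similarly for $\Ds{s-1}d u_h$; the $C^0_tL^2$-bound on $\Ds{\frac{d-2}{2}}d u_h$ is the energy inequality with a source term integrated in $t$. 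Adding the homogeneous and inhomogeneous contributions gives the first two displayed estimates, and the ``in particular'' statement follows by applying them to $\Ds{-1}u$, which solves \eqref{eq:strich:ueq} with data $\Ds{-1}f$, $\Ds{-1}g$ and forcing $\Ds{-1}h$.

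The hard part will not be any single estimate but the bookkeeping: checking that the algebraic interval $s\in(\tfrac12,\tfrac{d^2-4d+1}{2(d-1)}+1]$ is precisely the one for which $\tfrac1q+\tfrac{d-1}{2r}\leq\tfrac{d-1}{4}$ holds at the fixed endpoint time exponent $q=2$ with $r=\tfrac{2d}{2s-1}$ — this is exactly where $d\geq4$ is needed and where the Lorentz refinement $L^{(r,2)}_x$, rather than $L^r_x$, is forced — and tracking the single derivative lost on $g$ and $h$ through the $\sqrt{-\lap}^{-1}$ in the semigroup representation. No estimate beyond those in \cite{KT98,SS02} is required.
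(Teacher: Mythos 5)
Your proposal is correct and follows essentially the route the paper itself indicates, which is proved there only by citation to Shatah--Struwe and Keel--Tao together with the Duhamel principle and the half-wave representation: energy estimate for the $C^0_tL^2$ part, the Keel--Tao endpoint $(2,\tfrac{2(d-1)}{d-3})$ bound in the Lorentz refinement $L^{(r,2)}_x$ upgraded by fractional integration to all $r=\tfrac{2d}{2s-1}$, and Duhamel (Christ--Kiselev, or simply Minkowski since the source is in $L^1_tL^2_x$) for the inhomogeneity. Your arithmetic also checks out, since $\tfrac{d^2-4d+1}{2(d-1)}+1=\tfrac{d^2-2d-1}{2(d-1)}$ is exactly the condition $r\geq\tfrac{2(d-1)}{d-3}$ at $q=2$, and the ``in particular'' statement follows as you say by applying the first two bounds to $\Ds{-1}u$.
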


\bibliographystyle{abbrv}%
\bibliography{bib}%

\end{document}